\documentclass[12pt,a4paper, oneside, regno]{amsart} 
\usepackage[utf8]{inputenc}
\usepackage[english]{babel}
\usepackage[T1]{fontenc}


\usepackage{braket}
\usepackage{stmaryrd}
\usepackage{amsmath}
\usepackage{amsthm}
\usepackage{comment}
\usepackage{amsfonts}
\usepackage{amssymb}
\usepackage{multicol}
\usepackage{xcolor}
\numberwithin{equation}{section}

\usepackage{url}
\usepackage{pdfpages}

\usepackage{todonotes}

\usepackage{lscape}
\usepackage{graphicx}
\usepackage{tikz-qtree}
\usepackage[totoc]{idxlayout}
\usepackage{enumitem}   
\usepackage[intoc,refpage]{nomencl}

\usepackage{soul} 

\usepackage{graphicx}
\graphicspath{ {images/} }
\usepackage[all,cmtip]{xy}

\usepackage[toc,page]{appendix}
\usepackage{pgf,tikz}
\usetikzlibrary{decorations.pathreplacing,calligraphy} 
 
\usepackage[font=small,skip=0pt]{caption}

\usetikzlibrary{arrows}

\makeatletter
\newcommand\ackname{Acknowledgements}
\if@titlepage
   \newenvironment{acknowledgements}{%
       \titlepage
       \null\vfil
       \@beginparpenalty\@lowpenalty
       \begin{center}%
         \bfseries \ackname
         \@endparpenalty\@M
       \end{center}}%
      {\par\vfil\null\endtitlepage}
\else
   
\fi
\makeatother

\def\restriction#1#2{\mathchoice
              {\setbox1\hbox{${\displaystyle #1}_{\scriptstyle #2}$}
              \restrictionaux{#1}{#2}}
              {\setbox1\hbox{${\textstyle #1}_{\scriptstyle #2}$}
              \restrictionaux{#1}{#2}}
              {\setbox1\hbox{${\scriptstyle #1}_{\scriptscriptstyle #2}$}
              \restrictionaux{#1}{#2}}
              {\setbox1\hbox{${\scriptscriptstyle #1}_{\scriptscriptstyle #2}$}
              \restrictionaux{#1}{#2}}}
\def\restrictionaux#1#2{{#1\,\smash{\vrule height .8\ht1 depth .85\dp1}}_{\,#2}}

\usepackage{mathtools}


\DeclareMathOperator{\tp}{tp}

\DeclareMathOperator{\val}{val}

\DeclareMathOperator{\Th}{Th}

\DeclareMathOperator{\supp}{supp}
\DeclareMathOperator{\Inv}{Inv}

\renewcommand{\div}{\mathop{div}}

\usepackage{scalerel}[2016/12/29]


\makeatletter
\DeclareRobustCommand\bigop[1]{%
  \mathop{\vphantom{\sum}\mathpalette\bigop@{#1}}\slimits@
}
\newcommand{\bigop@}[2]{%
  \vcenter{%
    \sbox\z@{$#1\sum$}%
    \hbox{\resizebox{\ifx#1\displaystyle.9\fi\dimexpr\ht\z@+\dp\z@}{!}{$\m@th#2$}}%
  }%
}
\makeatother

\newcommand{\hprod}{\DOTSB\bigop{\mathrm{H}}}


\theoremstyle{plain}
\newtheorem{theorem}{Theorem}[section]
\newtheorem{lemma}[theorem]{Lemma}
\newtheorem{proposition}[theorem]{Proposition}
\newtheorem{claim}{Claim}
\newtheorem{remark}[theorem]{Remark}

\newtheorem{observation}[theorem]{Observation}
\newtheorem{fact}[theorem]{Fact}
\newtheorem{maintheorem}[theorem]{Main Theorem}
\theoremstyle{definition}
\newtheorem{definition}[theorem]{Definition}
\newtheorem{corollary}[theorem]{Corollary}

\newtheorem{question}[theorem]{Question}
\newtheorem*{Notation}{Notation}
\theoremstyle{remark}
\newtheorem*{remark*}{Remark}

\definecolor{black}{rgb}{0,0,0}
\newtheorem{example}[theorem]{Example}

\newcommand{\Z}{\mathbb{Z}}
\newcommand{\Q}{\mathbb{Q}}

\title{Stably Embedded Pairs of Ordered Abelian Groups}
 \subjclass[2020]{Primary 03C10
; Secondary 03C64, 06F20}
\author{Martin Hils}
\address{Institut f\"{u}r Mathematische Logik und Grundlagenforschung, Universit\"{a}t M\"{u}nster, Einsteinstr. 62, D-48149 M\"{u}nster, Germany}
\email{hils@uni-muenster.de}
\thanks{MH was partially supported by the German Research Foundation (DFG) via HI 2004/1-1 (part of the French-German ANR-DFG project GeoMod) and under Germany's Excellence Strategy EXC 2044-390685587, `Mathematics M\"unster: Dynamics-Geometry-Structure'.}

\author{Martina Liccardo}
\address{Dipartimento di Matematica e Applicazioni "Renato Caccioppoli", Università degli Studi di Napoli Federico II}

\author{Pierre Touchard}
\address{Dipartimento di matematica e fisica, Università degli Studi della campania ``Luigi Vanvitelli" .}
\curraddr{Institut für Algebra, Technische Universität Dresden, 01062 Dresden, Germany}
\email{pierre.pa.touchard@gmail.com}

\thanks{PT was supported by a grant of the University of Campania ?Luigi Vanvitelli? in the framework of V:ALERE 2019 (GoAL project) and partially supported by KU Leuven IF
C14/17/083 and C16/23/010.
}

\date{\today}

\begin{document}

\begin{abstract}
        We investigate when an ordered abelian group $G$ is stably embedded in a given elementary extension $H$. We focus on a large class of ordered groups which includes maximal ordered groups with interpretable archimedean valuation. We give a complete answer for groups in this class which takes the form of a transfer principle for valued groups. It follows in particular that all types over the lexicographic product $\prod_{i\in \omega} \mathbb{Z}$ are definable. 
\end{abstract}

\maketitle


\section*{Introduction}
Every stable structure $\mathcal{M}$ satisfies the following property: all externally definable sets -- sets definable with parameters in an elementary extension -- are definable internally -- i.e., with parameters from $\mathcal{M}$. This property of $\mathcal{M}$, called here \textit{stable embeddedness}, is not equivalent to stability. For instance, the field of real numbers $(\mathbb{R},+,\cdot,0,1)$ and Presburger arithmetic $(\mathbb{Z},+,0,<)$ are both stably embedded structures (see, e.g., \cite{VdD86} and  \cite{CV}, respectively). More generally, it is of interest to characterise in the context of an unstable theory the stably embedded elementary pairs $(\mathcal{N},\mathcal{M})$, that is the elementary extensions $\mathcal{M}\preccurlyeq \mathcal{N}$ such that any trace of an $\mathcal{N}$-definable set in $\mathcal{M}$ is $\mathcal{M}$-definable. 

\subsection*{Space of definable types}
The space of definable types is known to carry useful information of the structure. In various contexts, it has been shown that this space can be equipped with a (strict) pro-definable structure, which allows for a finer model theoretic analysis  with various tools. The pro-definability of the space of definable types has been recently quite intensively studied in \cite{CD16,CY21,CHY26}. In particular, the first author together with Cubides and Ye extended in \cite{CHY26} Poizat's notion of beautiful pairs to an unstable context, establishing a close relationship between the axiomatisability of these pairs with the strict prodefinability of the space of definable types. In this framework, it is primordial to discuss the uniform definability of definable types and the axiomatisation of stably embedded pairs.  In this paper, we contribute to this framework by studying stably embedded pairs of ordered abelian groups. 
This contribution also extends a previous work of the third author in \cite{Tou20b}, where the analysis of elementary pairs of stably embedded (henselian) benign valued fields is reduced to that of pure fields and pure ordered abelian groups.

\subsection*{Classes of ordered abelian groups} 

The class of ordered abelian groups (OAG) has been intensively studied, and despite its apparent simplicity, lengthy and difficult analyses are often required in order to study its model theoretic properties. In the literature, authors tend to restrict their study to the much smaller classes of ordered abelian groups of \emph{finite regular rank}  (FRR, with finitely many definable convex subgroups; see Subsection~\ref{S:finite-rk}). 
A larger well studied subclass consists of ordered abelian groups with \emph{finite spines} (FS, i.e. of finite $n$-regular rank for all $n$). For instance, elimination of imaginaries is known in FS (see \cite{Vic22}), as well as a characterisation of the distality (see \cite{ACGZ20}) but these results have not yet been extended beyond this class. Halevi and Hasson compute dp-rank in all ordered abelian groups in \cite{HH19A}. Note that this computation is only interesting within the class FS, as no ordered abelian group with an infinite spine is strong NIP. 

In this paper, we will focus on another large class of ordered abelian groups, where a certain interpretable valuation, called the regular valuation, plays an important role. This class consists of ordered abelian groups satisfying a certain property denoted by (UR) (see Subsection~\ref{S:UniformValuation}). In some sense, this class is orthogonal to the class FS -- one can show that their intersection is exactly FRR-- and it contains maximal groups with interpretable archimedean valuation such as $\sum_\omega \mathbb{Z}$. 
In addition to this property (UR), we will assume a certain simplifying property, denoted by (M), which is a first order pendant of maximality for the regular valuation. 
Our approach to study this class is to apply methods from valuation theory.

\begin{figure}
    \centering
\begin{tikzpicture}[scale=1.5]
    \begin{scope}
  \clip (0,-0.48) ellipse (1.5 and 2);
  \fill[red!20,rotate=-45] (1.24,0) ellipse (1 and 2);
\end{scope}
    \draw (0,2.2) node{OAG};

    \draw (-1.9,-0.3) node{FS};
    \draw (0,-1.5) node{FRR};
        \draw (0,0.8) node{(M)};
    \draw (1.9,-0.3) node{(UR)};
    \draw[rotate=45] (-1.24,0) ellipse (1 and 2);
    \draw[rotate=-45](1.24,0) ellipse (1 and 2);
    \draw  (0,-0.48) ellipse (1.5 and 2);
    \draw (-2.47,0.05) arc (180:0:2.47);

\end{tikzpicture}

    \caption*{In red, the class of ordered abelian groups studied in this paper.}
\end{figure}


\subsection*{Relative quantifier elimination}
To study stable embeddedness of elementary pairs of such ordered abelian groups, our main tool is a result of relative quantifier elimination. A language for quantifier elimination in the class of all ordered abelian groups is known since the work of Gurevich and Schmitt \cite{GS84}. This work has been revisited and simplified by Cluckers and Halupczok \cite{CH11}, who gave a multisorted language $\mathcal{L}_{syn}$ for relative quantifier elimination. It involves a main sort for the group itself, and sorts for the \emph{(definable) spines} - these are chains of certain uniformly definable families of convex subgroups. Despite its apparent complexity, the language $\mathcal{L}_{syn}$ seems to be optimised for the full class of ordered abelian groups and admits good syntactical properties. One can observe that in this language, the set of sorts for the spines is closed in the sense of \cite[Appendix A]{Rid17}. 
For the class of ordered abelian groups which satisfy (UR) and (M), one can prove  a simpler result of relative quantifier elimination, in a language that we simply denote by $\mathcal{L}$ (see Subsection~\ref{SS:QuantifierEliminationUniformValuation}). The property (UR) ensures indeed that one valuation (namely the  regular valuation) and thus one spine (the \emph{regular spine}) suffices to describe the convex subgroups, and the property (M) allows to avoid some pathological behavior with respect to this valuation. A natural consequence is that the induced structure on the regular spine is that of a pure coloured chain $(\Gamma,<,(C_\phi)_\phi)$, i.e. a total order endowed with (countably many) unary predicates $C_\phi$.

This language $\mathcal{L}$ also  allows us to highlight the fundamental role that valuations play in the study of ordered abelian groups. 
In particular, it exhibits the role of the  \emph{regular ribs} (analogous to that of the residue field in valued fields). 
Indeed, we use this result of relative quantifier elimination to prove our main theorem, which takes the form of a transfer principle for valued groups down to the spine and the ribs.

\subsection*{Transfer principles in ordered abelian groups}
It is well understood that relative quantifier elimination in the context of valued fields leads to (other) transfer principles: for instance from the theorem of Pas one deduces that henselian valued fields of equicharacteristic $0$ are complete and model-complete (resp. NIP) relative to the value group and residue field -- this is the famous Ax-Kochen-Ershov (resp. Delon's) principle.

Analogously, one can see that relative quantifier elimination in valued abelian groups leads to transfer principles for the group down to the spine (a coloured chain) and the set of ribs (regular ordered abelian groups). 
Indeed, some well known results hide such a latent transfer principle:
\begin{itemize}
\item Gurevich's decidability result follows from the decidability of the class of regular ordered abelian groups and the decidablity of the class of coloured chains (see \cite{Gur64}).
\item That all ordered abelian groups are NIP (Gurevich-Schmitt) follows from the fact that all regular ordered abelian groups and all coloured chains are NIP.
\end{itemize}
The reader can refer to the introduction of Schmitt in \cite{Sch84}. It is noticeable that, in this pioneering work, the role of the ribs is anecdotal and not explicitly mentioned: if some of their statements mentioned the necessary role of the spines, that of the ribs is often relegated to the proof. There exists however a more explicit transfer principle down to the ribs:

\begin{itemize}
\item In \cite{ACGZ20}, Aschenbrenner, Chernikov, Gehret and Ziegler study  distality in ordered abelian groups with finite spines, by reducing this property to regular ordered abelian groups.
\end{itemize}

Contrarily to the works mentioned above, in this paper, for the first time, we prove a transfer principle for ordered abelian groups which involves simultaneously the ribs and the spines.

\subsection*{Main result and plan of the paper}
We characterise as follows stably embedded ordered abelian groups satisfying (UR) and (M):

{
    \renewcommand{\thetheorem}{(Theorem~\ref{TheoremCharacterisationStableEmbeddednessCaseInterpretableArchimedeanSpineAbsolut})}
\begin{maintheorem}
     An ordered abelian group $(G,+,0,<)$ satisfying (UR) and (M) is stably embedded if and only if it is maximal, its regular ribs ${(R_\gamma,+,0,<)}$ are stably embedded as ordered abelian groups and its regular spine $(\Gamma,(C_\phi)_{\phi\in \mathcal{L}_{\text{oag}}},<)$ is stably embedded as a coloured chain.
\end{maintheorem}

}

Our analysis allows us in fact to study more generally stably embedded elementary pairs of ordered abelian groups (Theorem~\ref{TheoremCharacterisationStableEmbeddednessCaseInterpretableArchimedeanSpineForPairs}). These theorems give new instances of transfer principles for ordered abelian groups, which involve both the regular ribs and the regular spine. It follows for example that the Hahn product $\hprod_{i\in \omega} \mathbb{Z}$ is the unique model of its theory which is stably embedded  (Example~\ref{ExampleHprodZStablyEmbedded}). We deduce as a corollary that an ordered abelian group satisfying (UR) and (M) is stably embedded if and only if all cuts are definable (Corollary \ref{CorMainTheorem2}).

\subsection*{A Kaplansky theory for valued abelian groups}
This approach of using valuations to study ordered abelian groups has been considered on many occasions
(see, e.g., \cite{Hol63}). To some extent, many methods and traditional tools for valued fields can be  adapted for valued groups (see, e.g., \cite{SS91,KuhFV}). However, the literature remains  sparse and incomplete. We take the opportunity here to develop a  Kaplansky theory for valued abelian groups and study maximal and pseudo-complete valued abelian groups. In particular, we show that a $\mathbb{Z}$-invariant valued abelian group is maximal if and only it is pseudo-complete (see Theorem~\ref{TheoremMaximalityEquivalentPseudo-completeness}). 

Some of these results, in particular the one mentioned above concerning $\hprod_{i\in \omega} \mathbb{Z}$, were shown during the PhD of the second author, and partial results, including a variant of the main theorem can be found in her thesis \cite{Lic22}. 

\subsection*{A brief overview of the paper}
In Section~\ref{S:Prelim}, we recall some necessary background: the definition of stable embeddedness and some algebraic and model theoretic facts on ordered abelian groups. We also briefly discuss stably embedded coloured chains. In Subsection~\ref{SS:KaplanskyTheory}, we develop a Kaplansky theory for valued abelian groups and pairs of valued abelian groups. In Subsection~\ref{SS:ValueGroupModm}, we show that completeness of a $\mathbb{Z}$-invariant valued abelian group $(G,\val)$ passes to the quotient valued abelian group $(G/mG,\val^m)$. 

Section~\ref{S:UniformValuation} is devoted to the proof of our main theorem. Subsection~\ref{S:finite-rk} deals with regular ordered abelian groups and ordered abelian groups with finite regular rank. In that context, we extend our analysis to include the study of uniformly stably embedded models. We show in particular that  $\mathbb{Z}^n$ and $\mathbb{Z}^n\times\mathbb{R}$, for $n\in \mathbb{N}$, are the only FRR groups which are \emph{uniformly} stably embedded (see Corollary~\ref{CorollaryZnUniformlySE}).
Then, we define the regular valuation and related objects in Subsection~\ref{SS:RegularValuation} and prove the main theorem in Subsection~\ref{SS:MainTheorem}. 
Finally, in Section~\ref{S:Examples and Counter-examples}, we apply the main theorem to some examples and we give some counter-examples, thus showing that certain hypotheses cannot be dropped.

\section{Notation and preliminaries}\label{S:Prelim}
We begin by presenting some definitions and notions that we will use in this paper. We use the convention that $0 \in \mathbb{N}$ and we write $\mathbb{P}$ for the set of primes. If $\mathcal{M},\mathcal{N} \dots$ are structures, we denote by $M,N,\dots$ their respective base sets.
\subsection{Stable embeddedness}
Let $\mathcal{M}$ be an $\mathcal{L}$-structure, $\mathcal{L}$ any first order language. 
\begin{definition}
Let $\mathcal{N}$ be an elementary extension of $\mathcal{M}$. We say that $\mathcal{M}$ is \emph{stably embedded} in $\mathcal{N}$, and write $\mathcal{M}\preccurlyeq^{st} \mathcal{N}$, if for every definable set $\phi(N^m,\bar{a})$, $\bar{a} \subset N$, its trace $\phi(N^m,\bar{a}) \cap M^m$ is $\mathcal{L}(M)$-definable, i.e., there exist an $\mathcal{L}$-formula $\psi(\bar{x},\bar{z})$ and a tuple $\bar{b}$ of parameters in $M$ such that
\begin{equation}
\phi(N^m,\bar{a}) \cap M^m = \psi(M^m,\bar{b}).
\end{equation}
\end{definition}
Note that $\psi(\bar{x},\bar{z})$ may depend on the parameters $\bar{a}$. If $\psi(\bar{x},\bar{z})$ may be chosen depending only on the formula $\phi(\bar{x},\bar{y})$ and not on $\bar{a}$, $\mathcal{M}$ is said to be \emph{uniformly stably embedded} in $\mathcal{N}$, and we write $\mathcal{M}\preccurlyeq^{ust}\mathcal{N}$.

\begin{definition}
We say that $\mathcal{M}$ is \emph{stably embedded} if $\mathcal{M}$ is stably embedded in every elementary extension. Similarly, we say that $\mathcal{M}$ is \emph{uniformly stably embedded}.
\end{definition} 
These conditions can be equivalently formulated in terms of definability of types. We recall the definition:  
\begin{definition}
A type $p(\bar{x}) \in S_n(M)$ is said to be \emph{definable} if for every $\mathcal{L}$-formula $\phi(\bar{x},\bar{y})$, there exists an $\mathcal{L}(M)$-formula $d_p\phi(\bar{y})$ such that for all $\bar{a} \subset M$
\begin{equation}
p(\bar{x}) \vdash \phi(\bar{x},\bar{a}) \text{ if and only if } M \models d_p\phi(\bar{a}).
\end{equation}
The collection $(d_p\phi)_\phi$ is called a \emph{defining scheme} for $p$.
\end{definition}

\begin{example}
By a fundamental result of Shelah, $T$ is stable if and only if all types over all models of $T$ are definable. Moreover, let $\phi(\bar{x},\bar{y})$ be an $\mathcal{L}$-formula, then there is a formula $\psi(\bar{y},\bar{z})$ such that for every type $p(\bar{x})$ there is $\bar{b}$ such that 
\begin{equation}
d_p\phi(\bar{y})=\psi(\bar{y},\bar{b}).
\end{equation}
In this case, we say that the set of types is \emph{uniformly definable}.
\end{example}

By the duality between object variables and parameter variables, we have:
\begin{fact}
$\mathcal{M}$ is stably embedded in $\mathcal{N}$ if and only if all $n$-types over $M$ realised in $N$ are definable, i.e., for every  $\bar{\alpha} \subset N$, $p(\bar{x})=tp(\bar{\alpha}/M)$ is definable. 

Similarly, $\mathcal{M}$ is uniformly stably embedded in $\mathcal{N}$ if and only if all $n$-types over $M$ realised in $N$ are uniformly definable.
\end{fact}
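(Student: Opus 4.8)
The plan is to prove both equivalences by the standard device of exchanging object and parameter variables, using throughout that the only types we need to handle are those of tuples $\bar\alpha \subset N$, which are automatically realised in $N$; in particular no compactness or global-extension argument will be needed.

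\textbf{From stable embeddedness to definability of types.} First I would fix $\bar\alpha \subset N$, set $p(\bar x) := \tp(\bar\alpha/M)$, and fix an $\mathcal{L}$-formula $\phi(\bar x,\bar y)$ (so that $\bar y$ is the parameter variable in the sense of the definition of a defining scheme). For $\bar a \subset M$ one has $p \vdash \phi(\bar x,\bar a)$ if and only if $N \models \phi(\bar\alpha,\bar a)$, since $\bar\alpha$ realises $p$ in $N$. Consider the $\mathcal{N}$-definable set $E := \{\bar c \in N^{|\bar y|} : N \models \phi(\bar\alpha,\bar c)\}$, which is definable with the parameter $\bar\alpha \subset N$. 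By stable embeddedness of $\mathcal{M}$ in $\mathcal{N}$, its trace $E \cap M^{|\bar y|}$ is $L(M)$-definable, say $E \cap M^{|\bar y|} = \theta(M^{|\bar y|})$ for some $\mathcal{L}$-formula $\theta(\bar y)$ over $M$. Then for $\bar a \subset M$ we have $p \vdash \phi(\bar x,\bar a)$ iff $\bar a \in E \cap M^{|\bar y|}$ iff $M \models \theta(\bar a)$, so $d_p\phi := \theta$ works and $p$ is definable.

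\textbf{From definability of types to stable embeddedness.} Conversely, suppose every type of a tuple from $N$ over $M$ is definable. Let $\phi(\bar x,\bar y)$ be an $\mathcal{L}$-formula with $|\bar x| = m$ and let $\bar\alpha \subset N$; I want the trace $\phi(N^m,\bar\alpha) \cap M^m$ to be $L(M)$-definable. Write $\chi(\bar y,\bar x) := \phi(\bar x,\bar y)$, now regarding the $\bar y$-slot as the object variable and the $\bar x$-slot as the parameter variable, and let $q(\bar y) := \tp(\bar\alpha/M)$. For $\bar b \in M^m$ one has $\bar b \in \phi(N^m,\bar\alpha)$ iff $N \models \phi(\bar b,\bar\alpha)$ iff $N \models \chi(\bar\alpha,\bar b)$ iff $q \vdash \chi(\bar y,\bar b)$ (legitimate since $\bar b \subset M$) iff $M \models d_q\chi(\bar b)$. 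Hence $\phi(N^m,\bar\alpha) \cap M^m = d_q\chi(M^m)$ is $L(M)$-definable, and $\mathcal{M}$ is stably embedded in $\mathcal{N}$.

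\textbf{Uniformity and the main subtlety.} For the uniform versions I would run exactly the same two arguments, keeping track of the dependence of the various formulas: in the first argument, uniform stable embeddedness lets one choose $\theta$ of the form $\psi(\bar y,\bar b)$ with $\psi$ depending only on $\phi$, which is precisely uniform definability of $p$; in the second, uniform definability of types gives $d_q\chi$ of the form $\psi(\bar x,\bar b)$ with $\psi$ depending only on $\chi$, hence only on $\phi$, which is precisely uniform stable embeddedness. There is essentially no hard step here; the only thing requiring care is the bookkeeping of which block of variables plays the role of ``object'' and which of ``parameter'' under the swap $\phi(\bar x,\bar y) \leftrightarrow \phi^{*}(\bar y,\bar x)$, together with the observation that matching ``depends only on the formula'' on the two sides is automatic once the swap is set up consistently.
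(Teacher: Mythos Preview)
Your proof is correct and is exactly the argument the paper has in mind: the paper states this as a Fact without proof, prefacing it only with ``By the duality between object variables and parameter variables,'' and your write-up simply spells out that duality carefully in both directions and for the uniform variant.
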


Let us recall another important result on the definability of types. 

\begin{theorem}[Marker-Steinhorn \cite{MS94}]\label{TheoremMarkerSteinhorn}
Let $T$ be an o-minimal theory, and let $\mathcal{M} \preccurlyeq \mathcal{N}\models T$. Then, all types over $M$ realised in $N$ are uniformly definable if and only if all $1$-types over $M$ realised in $N$ are definable. 
\end{theorem}

We will extend this principle to some class of ordered abelian groups, showing that an elementary pair $\mathcal{M}\preccurlyeq\mathcal{N}$ (in that class) is stably embedded if and only if every 1-type over $M$ realised in $N$ is definable (see Corollary \ref{Cor:StEmbAllCutsDefinable}).

\subsection{Generalities on  chains}\label{SS:GeneralitiesonChains}
\subsubsection*{Cuts in ordered sets} 
Let $X$ be a totally ordered set. Recall that a \emph{cut} in $X$ is a pair $(L,R)$ of subsets of $X$ such that $L \cup R = X$ and $L < R$. The cuts $(\emptyset,X)$ and $(X,\emptyset)$ are respectively denoted by $-\infty$ and $+\infty$. If $Y \subseteq X$, then $Y^+$ denotes the cut $(L,R)$ with $R=\Set{x \in X \mid x > Y}$. Similarly, $Y^-$ is the cut $(L,R)$ with $L=\Set{x \in X \mid x < Y}$. For $a\in X$, we write $a^+$ for $\{a\}^+$, and $a^-$ for $\{a\}^-$.

\subsubsection*{Coloured Chains}\label{ColouredChains}

Coloured chains appear naturally in the study of ordered abelian groups, as certain chains of convex subgroups called \emph{spines} play an important role. We briefly discuss stably embedded pairs of coloured chains. 
 
\begin{definition}
A \textit{coloured chain} is a structure of the form $(C,<,(P_i)_{i \in I})$, where $(C,<)$ is a totally ordered set, and where each $P_i$, for $i\in I$, is a unary predicate.

\end{definition}

Let us observe that, as there is no reasonable language in which all coloured chains admit quantifier elimination, there is also a priori no good general characterisation of stably embedded coloured chains. But chains have been quite intensively studied. In \cite[Section~12.6]{Poi00}, Poizat has shown that a non-realised $1$-type $p(x)=\tp(a/C)$ over a coloured chain $C$ is definable if and only if the cut $(L_a,R_a)$ of $a$ over $C$, where $L_a:=\{c\in C\mid c<a\}$ and $R_a=\{c\in C\mid c>a\}$, is definable. This result and its proof extend to types in an arbitrary number of variables:

\begin{proposition}
Let $C\preccurlyeq D$ be coloured chains and $a_1,\ldots,a_n\in D$. Then $tp(\overline{a}/C)$  is definable if and only if $tp(a_i/C)$ is definable for all $i$ if and only if whenever $a_i\not\in C$ the cut of $a_i$ over $C$ is definable. 
\end{proposition}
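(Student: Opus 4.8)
The plan is to prove the cycle of implications ``(i)$\Rightarrow$(ii), (ii)$\Leftrightarrow$(iii), (iii)$\Rightarrow$(i)'', writing (i), (ii), (iii) for the three conditions in the order in which they appear. Two of these are immediate. For (i)$\Rightarrow$(ii): if $(d_p\phi)_\phi$ is a defining scheme for $p=\tp(\bar a/C)$ and $\psi(x_i,\bar y)$ is a formula in the single object variable $x_i$, then $d_p\psi'(\bar y)$, where $\psi'(\bar x,\bar y):=\psi(x_i,\bar y)$, serves as $d_q\psi$ for $q=\tp(a_i/C)$; hence each $\tp(a_i/C)$ is definable. For (ii)$\Leftrightarrow$(iii): if $a_i\in C$ then $\tp(a_i/C)$ is realised, hence definable, and the cut condition is vacuous; if $a_i\notin C$, then ``$\tp(a_i/C)$ definable $\Leftrightarrow$ cut of $a_i$ over $C$ definable'' is exactly the $1$-variable statement of Poizat recalled above, applied to $C\preccurlyeq D$. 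So the whole content is the implication (ii)$\Rightarrow$(i), equivalently (iii)$\Rightarrow$(i).

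For that, I would run Poizat's argument in $n$ variables. First reduce to the case that the $a_i$ are pairwise distinct and outside $C$: a coordinate equal to another one, or lying in $C$, can be substituted away -- for $a_i\in C$ it becomes a named parameter, and a defining scheme for the shorter tuple yields one for $\bar a$ -- and, permuting object variables (which transforms defining schemes into defining schemes), I may assume $a_1<\dots<a_n$ in $D$. The structural point, proved by a back-and-forth argument exactly as in the $1$-variable case, is that in a coloured chain the complete type of a tuple over an elementary submodel is determined by its quantifier-free type over that submodel: when extending a partial isomorphism one only has to realise cuts with prescribed colours, and a sufficiently saturated coloured chain does so, so over $C$ any two tuples with the same quantifier-free type are conjugate. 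Since $\qftp(\bar a/C)$ is implied, over the elementary diagram of $C$, by the (relabelled) $1$-types $\tp(a_i/C)$ together with the finitely many facts $x_i<x_j$ ($i<j$), it follows that $\tp(\bar a/C)$ is axiomatised over $\mathrm{eldiag}(C)$ by these data.

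It remains to assemble a defining scheme for $\tp(\bar a/C)$ out of those of the $\tp(a_i/C)$. Fix $\phi(\bar x,\bar y)$. The input here is relative quantifier elimination for coloured chains -- the $n$-variable form of the combinatorial fact underlying Poizat's $1$-variable result, obtained by working in the language expanded by the definable unary ``neighbourhood'' predicates (existence of an immediate successor or predecessor, position relative to limit points, and so on). It provides, uniformly in $\bar y$, a description of $\phi(\bar x,\bar y)$ modulo the theory as a finite Boolean combination of the order relations among $\bar x,\bar y$, of formulas each involving a single one of the object variables $x_i$ (with parameters among $\bar y$), and of finitely many ``same-block'' relations between two object variables. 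Combined with the previous step, this shows that whether $\phi(\bar x,\bar c)\in\tp(\bar a/C)$ is a fixed Boolean function of the order type of $\bar a$, of finitely many truth values not depending on $\bar c$ (the two-variable ``same-block'' data between distinct coordinates, all outside $C$), and of finitely many queries of the form ``$\psi_k(x_i,\bar c)\in\tp(a_i/C)$''. Each such query is $\mathcal{L}(C)$-definable in $\bar c$ by hypothesis -- it is $d_{p_i}\psi_k(\bar c)$ for the defining scheme of $p_i=\tp(a_i/C)$ -- so the corresponding Boolean combination of the $d_{p_i}\psi_k(\bar y)$ and of truth constants is the desired $d_p\phi(\bar y)$.

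The main obstacle is the relative quantifier elimination invoked in the last step: coloured chains do not eliminate quantifiers in the bare language, so one must identify the correct (definable) family of unary neighbourhood predicates and verify that, in $n$ variables, every formula is controlled, uniformly in the parameters, by the position (cut and colour) of each object variable separately together with finitely much ``local'' two-variable data. This is the real work -- and it is precisely Poizat's $1$-variable analysis carried out coordinate by coordinate; it is the one-dimensionality of the order that makes the $n$-variable case a bookkeeping extension of the $1$-variable one rather than a genuinely new phenomenon.
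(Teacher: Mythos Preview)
The paper does not actually prove this proposition; it only asserts that Poizat's $1$-variable result ``and its proof extend to types in an arbitrary number of variables.'' Your overall outline---handle the easy implications, reduce to distinct $a_i\notin C$, then assemble a defining scheme for $\tp(\bar a/C)$ from the $1$-variable ones---is the right shape for that extension.

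There is, however, a genuine error in your step~3. The claim that in a coloured chain ``the complete type of a tuple over an elementary submodel is determined by its quantifier-free type over that submodel'' is false. Take $C=(\mathbb{Z},<)$ (a coloured chain with no colours) and $D\succcurlyeq C$ containing $a_1<a_2$ both in the cut at $+\infty$. Their quantifier-free type over $C$ records only $a_1<a_2$ and that both dominate every integer; it cannot see whether $a_2$ is the immediate successor of $a_1$. So two such pairs with the same qf-type over $C$ need not be conjugate, and the back-and-forth you sketch does not go through. You are implicitly aware of this in step~4, where you reintroduce ``same-block'' two-variable data---but then step~3 as stated is wrong, not merely redundant, and its conclusion (that $\tp(\bar a/C)$ is axiomatised by the $1$-types together with the order relations $x_i<x_j$) is false.

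The repair is to discard step~3 and make step~4 precise via the composition (Feferman--Vaught--L\"auchli) theorem for coloured chains: for each formula $\phi$ there is $k'$ such that whether $D\models\phi(\bar a,\bar c)$ depends only on the $\equiv_{k'}$-types of the intervals of $D$ cut out by the finite set $\{\bar a,\bar c\}$, together with the colours of these points. Intervals with both endpoints in $C\cup\{\pm\infty\}$ contribute $\mathcal{L}(C)$-definable conditions on $\bar c$; an interval with one endpoint $a_i$ and the other in $C\cup\{\pm\infty\}$ is controlled by a formula $\psi(x_i,\bar y)$ in a single object variable, handled by the defining scheme of $\tp(a_i/C)$; and an interval between two $a$'s lying in the same cut over $C$ is controlled by a parameter-free formula in $x_i,x_j$, hence a fixed truth value. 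Assembling these gives $d_p\phi(\bar y)$ as you describe at the end of step~4---but the justification is the composition theorem, not a quantifier elimination or a back-and-forth reducing to qf-types.
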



We thus obtain the following characterisation of stably embedded coloured chains.

\begin{corollary}\label{FactStablyEmbeddedChainIFFAllCutDefinable}
A coloured chain $C$ is stably embedded in an elementary extension $D$ if and only if all cuts of $C$ realised in $D$ are definable.
In particular, a chain $C$ is stably embedded if and only if all cuts of $C$ are definable.
\end{corollary}

We will (abusively) denote by $\omega$ the chain $(\omega,<)$ and by $\omega^*$ the same underlying set equipped with the reversed order.

\begin{example}\label{ExampleStablyEmbeddedChains}
\begin{enumerate}
    \item The following total orders, expanded with arbitrary colours, are stably embedded:
    \begin{itemize}
        \item $\omega$, $\omega^*$, $(\mathbb{Z},<)$,
        \item $(\mathbb{R}, <)$.
    \end{itemize}
    
    \item Let $C$ be an arbitrary coloured chain and let $(P_i,Q_i)_{i\in\lambda}$ be an enumeration of all non-principal\footnote{Recall that a \emph{prinicpal} is a cut of the form $a^+$ or $a^-$ for some element $a$.} cuts of $C$. Then, as all its cuts are trivially definable, $(C,(P_i)_{i\in\lambda})$ is a stably embedded coloured chain by Corollary \ref{FactStablyEmbeddedChainIFFAllCutDefinable}. For example,  the ordered sum $(C,<)=\omega+\omega^*$ is not stably embedded, but equipped with a predicate for $\omega$, it becomes stably embedded.
    \end{enumerate}
\end{example}

The study of stably embedded coloured chains could be pushed further. For instance, a characterisation of the class of stably embedded chains, given by an inductive construction in a similar way as in \cite[Definition~5.8]{Rub74}, would be useful. This question however will not be explored in this paper.

\subsection{Generalities on ordered abelian groups}\label{SS:GeneralitiesonOrderedAbelianGroups}
 All ordered abelian groups will be considered as structures in the language ${\mathcal{L}_{\text{oag}}:=\{0,+,-,<\}}$. 
We say that an ordered abelian group $G$ is \emph{discrete} if there exists a minimal positive element, and \emph{dense} otherwise. Note that the set of all convex subgroups of $G$ is linearly ordered by inclusion, and any intersection (and also any union) of convex subgroups is again a convex subgroup. Given $X$ a subset of $G$, we define the convex subgroup generated by $X$, denoted by $\Braket{X}^{\text{conv}}$, as the smallest convex subgroup containing $X$. In particular, we mean by a \emph{principal convex subgroup} $C$ one generated by a single element $a \in G$, i.e. $C=\Braket{a}^{\text{conv}}$. We denote by $\div(G)$ the divisible hull of $G$. 

\subsubsection*{Cuts in ordered abelian groups} 
Let $(L,R)$ be a cut in an ordered abelian group $G$. For any $g \in G$, we define $g+(L,R)$ as the cut $(g+L,g+R)$. Then, to every cut $(L,R)$ in $G$, we may associate the following convex subgroup 
\[
\Inv(L,R) := \Set{g \in G \mid g+(L,R)=(L,R)},
\]
of $G$, called the \emph{invariance group} of $(L,R)$. Note that, for every convex subgroup $C$ of $G$, we have $\Inv(g+C^+)=\Inv(C^+)=C=\Inv(C^-)=\Inv(g+C^-)$ for any $g \in G$.
\subsubsection*{Hahn product and lexicographic sum}
 Let $(I,<)$ be a chain, and for each $i \in I$ let $R_i$ be an ordered abelian group. We denote the direct product of the groups $R_i$ by $\prod_{i\in I} R_i$. For every $f\in \prod_{i\in I} R_i$, the \emph{support} of $f$ is the set $\supp(f):=\Set{i \in I \mid f(i) \neq 0}$.
The \emph{Hahn product} of $\{R_i\}_{i \in I}$,  denoted by $\hprod_{i\in I} R_i$ is the abelian group 
\begin{equation*}
\Set{f \in \prod_{i \in I}R_i \mid \supp(f) \text{ is a well-ordered subset of } (I,<)}.
\end{equation*}
equipped with the \emph{lexicographic order}. It is an ordered abelian group (i.e., the addition is compatible with the order) and the positive cone is given by 
    \[ f>_{lex}0 \iff f({\min(\supp(f))})>0.\]
The \emph{lexicographic sum} of $\{R_i\}_{i \in I}$, denoted by $\sum_{i \in I} R_i$, is the ordered subgroup 
\begin{equation*}
\Set{ f \in \hprod_{i \in I} R_i \mid \supp(f) \text{ is finite }}.
\end{equation*}
The lexicographic sum and the Hahn product are indistinguishable by first order properties:

\begin{proposition}[{\cite[Corollary 6.3]{Sch82}}]
\label{PropositionLexicographicSum}
Let $(I,<)$ be a linearly ordered set and for each $i \in I$ let $R_i$ be an ordered abelian group. Then $\sum_{i\in I}R_i\preccurlyeq\hprod_{i \in I}R_i$ as ordered abelian groups.
\end{proposition}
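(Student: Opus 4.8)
This is \cite[Corollary 6.3]{Sch82}; the route I would take is to deduce it from relative quantifier elimination, in the spirit of the rest of the paper. The plan is to work in the Cluckers--Halupczok multisorted language $\mathcal{L}_{syn}$ (see \cite{CH11}), with main sort the group and auxiliary sorts forming the spine, in which every formula is equivalent to a Boolean combination of quantifier-free main-sort formulas and formulas whose quantifiers range only over the auxiliary sorts. Since the $\mathcal{L}_{syn}$-structure on an ordered abelian group is canonically interpretable in it, it is enough to produce an $\mathcal{L}_{syn}$-embedding $\iota\colon\sum_{i\in I}R_i\hookrightarrow\hprod_{i\in I}R_i$ extending the natural inclusion such that (a) $\iota$ preserves every quantifier-free main-sort $\mathcal{L}_{syn}$-formula, and (b) the map induced by $\iota$ on the auxiliary (spine) sorts is an isomorphism. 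Granting (a) and (b): for a formula $\phi(\bar x)$ and a tuple $\bar b$ from $\sum_{i\in I}R_i$, the elimination rewrites $\phi(\bar b)$ as a Boolean combination of quantifier-free main-sort conditions on $\bar b$ and auxiliary-sort conditions on the spine-images of $\bar b$ under the canonical maps; the former are preserved by (a), and the latter by (b) together with the fact that $\iota$ commutes with those canonical maps, so $\sum_{i\in I}R_i\models\phi(\bar b)$ iff $\hprod_{i\in I}R_i\models\phi(\iota\bar b)$, i.e. $\sum_{i\in I}R_i\preccurlyeq\hprod_{i\in I}R_i$.

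Point (a) should be routine (the atomic main-sort formulas of $\mathcal{L}_{syn}$ are order, divisibility and congruence conditions together with membership in the convex subgroup attached to a spine element, all preserved by a group embedding compatible with the spine), so the work is point (b). The first step would be to identify, assuming harmlessly that $R_i\neq 0$ for all $i$, the lattice of convex subgroups of $\sum_{i\in I}R_i$ with the lattice of final segments of $(I,<)$: a convex subgroup $C$ corresponds to $F_C:=\bigcup_{f\in C}\supp(f)$, with inverse $F\mapsto\{f:\supp(f)\subseteq F\}$; surjectivity uses that if $i\in F_C$ and $j>i$ then, fixing $f\in C$ with $f(i)\neq0$, any $g$ supported on $\{j\}$ satisfies $-Nf\le g\le Nf$ for $N$ large, hence $g\in C$ by convexity. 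The identical computation applies to $\hprod_{i\in I}R_i$, and one checks $F_{\iota(C)}=F_C$ and that, for a convex subgroup $C'$ of the Hahn product, $\iota^{-1}(C')=C'\cap\sum_{i\in I}R_i$ corresponds to $F_{C'}$; thus $\iota$ induces a bijection of convex-subgroup lattices respecting inclusion. It then remains to see that the remaining structure on the spine sorts --- the colour predicates and the finitely many local invariants attached to each layer (discreteness, $p$-divisibility of the relevant quotients, etc.) --- transfers along this bijection; for this one uses that the quotient of $\sum_{i\in I}R_i$ (resp. $\hprod_{i\in I}R_i$) by the convex subgroup attached to a final segment $F$ is again a lexicographic sum (resp. Hahn product) over $I\setminus F$, so that the relevant archimedean and congruence data are computed in matching pieces and agree.

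The main obstacle is exactly this last verification: pinning down precisely which data the auxiliary sorts of $\mathcal{L}_{syn}$ encode and checking, uniformly in the arbitrary chain $(I,<)$ and the groups $R_i$, that passing from the lexicographic sum to the Hahn product neither creates new spine points nor alters their $\mathcal{L}_{syn}$-structure. A more self-contained alternative --- the method behind the original Gurevich--Schmitt treatment \cite{GS84} --- would instead be a direct Ehrenfeucht--Fra\"iss\'e argument: for each $n$, exhibit a winning strategy for Duplicator in the $n$-round game on $\sum_{i\in I}R_i$ and $\hprod_{i\in I}R_i$, where, when Spoiler plays an element $g$ of the Hahn product, Duplicator answers with a finite-support truncation of $g$ chosen to preserve all order, congruence and convex-subgroup relations of complexity below the current depth; showing that such a truncation can always be found is the technical core of that approach.
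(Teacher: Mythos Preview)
The paper does not give its own proof of this proposition --- it is simply cited from \cite[Corollary~6.3]{Sch82} as a known fact --- so there is nothing to compare against directly. Your $\mathcal{L}_{syn}$-based strategy is a reasonable modern approach, but as written it contains a genuine gap.

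Your identification of the lattice of convex subgroups of $\sum_{i\in I}R_i$ with the lattice of final segments of $(I,<)$ is wrong as soon as some $R_i$ fails to be archimedean, and the proposition is stated for arbitrary ordered abelian groups $R_i$. Take $I=\{0\}$ and $R_0=\mathbb{Z}\times\mathbb{Z}$ with the lexicographic order: then $\{0\}\times\mathbb{Z}$ and $\mathbb{Z}\times\mathbb{Z}$ are distinct convex subgroups, but your map $C\mapsto F_C$ sends both to $\{0\}$. Your argument for surjectivity of $C\mapsto F_C$ is fine, but injectivity fails, and with it your computation of the spine in step~(b). The correct parametrisation of convex subgroups of $\sum_{i\in I}R_i$ (and of $\hprod_{i\in I}R_i$) is by a cut of $I$ together with, when the right part of the cut has a minimum $i_0$, a choice of convex subgroup of $R_{i_0}$; the bijection you want then holds at this finer level, essentially because at each fixed $i_0$ the inclusion restricts to the identity on $R_{i_0}$. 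Alternatively, you could first reduce to the archimedean case by replacing $I$ with the ordered sum of the archimedean spines of the $R_i$, after which your argument applies verbatim.

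Even with that repaired, you rightly flag the transfer of the full $\mathcal{L}_{syn}$-structure on the auxiliary sorts as the real work and do not carry it out; the predicates $D_{p^r}^{[p^s]}$ and the dimension predicates in part~(h) of Definition~\ref{DefinitionLsyn} need a genuine check, not just a wave at ``matching pieces''. Your alternative suggestion of a direct Ehrenfeucht--Fra\"iss\'e argument is in fact closer to Schmitt's original method and is probably the more honest route to a self-contained proof here.
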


Lexicographic sums and Hahn products can be naturally equipped with a valuation map, namely $(a_i)_i \mapsto \min \{i  \mid a_i \neq 0\}$. 
Valuations on abelian groups plays an important role in our study, and we should define it now:

\begin{definition}\label{DefinitionValuedGroup}
A \emph{valued abelian group} is an abelian group $G$ equipped with a surjective map $val:G \rightarrow \Gamma$, where $\Gamma$ is a linearly ordered set with maximal element $\infty$, and where $val$ satisfies the following axioms:
\begin{enumerate}[label=(\roman*)]
\item\label{(i)} for all $g \in G$, $\val(g)=\infty \iff g=0$, 
\item\label{(ii)}  for all $g,h \in G$,
$\val(g-h) \ge \min \{\val(g),\val(h)\}$.
\end{enumerate}

If \ref{(i)} is replaced by the condition $val(0)=\infty$, the map $val$ is called a \emph{pre-valuation} on $G$.

 A valued abelian group $(G,val)$ which satisfies moreover the following axiom will be called $\mathbb{Z}$-\emph{invariant}:
 \begin{enumerate}[label=(\roman*)]
 \setcounter{enumi}{2}
     \item\label{(iii)} for all $g\in G$ and $n\in \mathbb{N}_{>0}$, $\val(ng)=\val(g)$. 
 \end{enumerate}
\end{definition}

As an easy consequence of the axioms,  one sees that if $val$ is a pre-valuation on the abelian group $G$, then for any $g,h\in G$ one has  $\val(-g)=\val(g)$ and $\val(g-h) > \min \{\val(g),\val(h)\}$ only if $\val(h)=\val(g)$. 
We may also note that $\mathbb{Z}$-invariance implies torsion-freeness.

We will see in the next paragraph that all ordered abelian groups are naturally equipped with a $\mathbb{Z}$-invariant valuation.

\subsubsection*{The archidemean  skeleton}
We introduce a fundamental algebraic invariant of an ordered abelian group.

\begin{definition}\label{DefinitionArchimedeanSkeleton}
Let $G$ be an ordered abelian group. We denote by $\Gamma_G^a$ or simply $\Gamma^a$ a set indexing the set $\{\Braket{g}^{\text{conv}}\}_{g \in G}$ of principal convex subgroups of $G$, and reversely ordered with respect to the inclusion: for any $\gamma,\delta \in \Gamma_G^a$,
\begin{equation*}
\gamma<\delta \iff C_\delta^a \subsetneq C_\gamma^a,
\end{equation*}
where, for any $\gamma \in \Gamma_G^a$, $C_\gamma^a$ denotes the corresponding principal convex subgroup. Then, $\Gamma_G^a$ has a maximal element corresponding to $\{0\}$, which we denote by $\infty$. \\ For every $\gamma \in \Gamma_G^a \setminus \{\infty\}$, we denote by $V_\gamma^a$  the union of all convex subgroups strictly contained in $C_\gamma^a$. For $\gamma=\infty$, set $V_\gamma^a=\emptyset$. Note that for any $\gamma \in \Gamma_G^a\setminus \{\infty\}$, the quotient $R_\gamma^a=C_\gamma^a/V_\gamma^a$ is a non-trivial archimedean ordered abelian group, called the \textit{rib} of $\gamma$.

If $g \in G, g \neq 0$ such that $C_\gamma^a=\Braket{g}^{\text{conv}}$, we may also write $C^a_g$ (resp. $V^a_g$ or $R^{a}_g$) instead of $C^a_\gamma$ (resp. $V^a_\gamma$ or $R^{a}_\gamma$), and call it the \textit{cover} of $g$ (resp. the \textit{fundament} or the \textit{rib} of $g$). The cover $C^a_g$ (resp. the fundament $V^a_g$) is by definition the smallest convex subgroup containing $g$ (resp. the largest convex subgroup not containing $g$).

The pair
\begin{equation*}
(\Gamma_G^a, (R_\gamma^a)_{\gamma \in \Gamma_G^a\setminus \{\infty\}})
\end{equation*}
is called the archimedean \emph{skeleton} of $G$. Moreover, we call the set $\Gamma_G^a$ the \emph{archimedean spine} of $G$, a pair $(\gamma, R_\gamma^a)$ a \emph{bone} of $G$, and $R_\gamma^a$ a \emph{rib} of $G$, for any $\gamma \in \Gamma_G^a$. 
\end{definition}

\begin{example}Assume that $\{R_i\}_{i \in I}$ is a family of non-trivial archimedean ordered abelian groups.
The archimedean skeleton of the lexicographic sum $\sum_{i \in I}R_i$ and of the Hahn product $\hprod_{i \in I}R_i$ are both given by 
\[(I\cup \{\infty\},(R_i)_{i \in I}).\]
\end{example}


\begin{definition}\label{DefinitionNaturalValuation}
Let $G$ be an ordered abelian group. The \emph{natural valuation} (also called \emph{archimedean} valuation) on $G$ is the map 
\begin{equation*}
\val^a \colon G \to \Gamma_G^a
\end{equation*}
defined by
\begin{equation*}
\val^a(g) = \gamma,\text{where } \Braket{g}^{\text{conv}}=C_\gamma^a.
\end{equation*}
\end{definition}

Note that, for every $g \in G$, $g \neq 0$, 
\[
C_g^a=\Set{h \in G \mid \val^a(h) \ge \val^a(g)}, \quad \text{and } V_g^a=\Set{h \in G \mid \val^a(h)>\val^a(g)}.
\]

The following fact is immediate: 

\begin{fact}Any ordered abelian group is a $\mathbb{Z}$-invariant valued group with respect to the natural valuation. 
\end{fact}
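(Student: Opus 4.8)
The statement amounts to checking that the natural valuation $\val^a\colon G\to\Gamma_G^a$ of Definition~\ref{DefinitionNaturalValuation} satisfies the three axioms \ref{(i)}, \ref{(ii)}, \ref{(iii)} of Definition~\ref{DefinitionValuedGroup}, together with surjectivity. Surjectivity is immediate: $\Gamma_G^a$ is \emph{defined} as an index set for the principal convex subgroups $\{\Braket{g}^{\text{conv}}\}_{g\in G}$, and $\val^a$ is by construction the map sending $g$ to the index of $\Braket{g}^{\text{conv}}$. Axiom \ref{(i)} is equally direct: the top element $\infty\in\Gamma_G^a$ corresponds to the convex subgroup $\{0\}$, and $\Braket{g}^{\text{conv}}=\{0\}$ holds if and only if $g=0$.

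For axiom \ref{(ii)}, I would translate the ultrametric inequality into a statement about convex subgroups, using that the order on $\Gamma_G^a$ is \emph{reversed} with respect to inclusion. Thus $\val^a(g-h)\ge\min\{\val^a(g),\val^a(h)\}$ is equivalent to $\Braket{g-h}^{\text{conv}}\subseteq C$, where $C$ is the larger of the two principal convex subgroups $\Braket{g}^{\text{conv}}$ and $\Braket{h}^{\text{conv}}$ (these are comparable, since the convex subgroups of $G$ are linearly ordered by inclusion). Now $C$ is a convex subgroup containing both $g$ and $h$, hence contains $g-h$, hence contains the smallest convex subgroup $\Braket{g-h}^{\text{conv}}$ containing $g-h$. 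This gives \ref{(ii)}.

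For axiom \ref{(iii)}, fix $g\in G$ and $n\in\mathbb{N}_{>0}$; I would show $\Braket{ng}^{\text{conv}}=\Braket{g}^{\text{conv}}$. The inclusion $\Braket{ng}^{\text{conv}}\subseteq\Braket{g}^{\text{conv}}$ is clear since $ng\in\Braket{g}^{\text{conv}}$. For the reverse inclusion it suffices to note that $\Braket{g}^{\text{conv}}=\Braket{|g|}^{\text{conv}}$ and that in an ordered abelian group $|g|\le n|g|=|ng|$ (as $n\ge 1$), so $g$ lies in the convex subgroup generated by $ng$; hence $\Braket{g}^{\text{conv}}\subseteq\Braket{ng}^{\text{conv}}$. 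Consequently $\val^a(ng)=\val^a(g)$.

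\textbf{Main difficulty.} There is no real obstacle here: each verification is a one-line consequence of the definitions and of the fact, recalled in Subsection~\ref{SS:GeneralitiesonOrderedAbelianGroups}, that the convex subgroups of $G$ are linearly ordered by inclusion and closed under arbitrary unions. The only point requiring a moment's care is keeping track of the order reversal in $\Gamma_G^a$ when reading axiom \ref{(ii)}.
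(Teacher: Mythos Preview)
Your proposal is correct and is precisely the routine verification the paper has in mind: in the paper the fact is stated as ``immediate'' with no proof given, and your argument simply unpacks why, checking axioms \ref{(i)}--\ref{(iii)} and surjectivity directly from the definitions of $\val^a$ and $\Gamma_G^a$.
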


To close this paragraph, and for completeness, we state a fundamental result in the theory of ordered abelian groups, which highlights the important role of the skeleton and the Hahn product (see, e.g., the proof of \cite[Corollary~3.11]{SS95}).

\begin{fact}[Hahn Embedding Theorem]
Let  $G$ be an ordered abelian group with archimedean skeleton $(\Gamma_G^{a}, (R_\gamma^{a})_{\gamma \in \Gamma_G^{a}})$.
Then $G$ embeds, as an ordered abelian group, into $\hprod_{\gamma \in \Gamma_G^{a}\setminus\{\infty\}} \div(R_\gamma^{a})$.
\end{fact}

\subsubsection*{Regular ordered abelian groups}
In the language $\mathcal{L}_{\text{oag}}$, ordered abelian groups do usually not eliminate quantifiers.
It is a classical result that \emph{Presburger Arithmetic} PRES, i.e., the theory of the ordered abelian group of the integers $\Z$, admits quantifier elimination in the \emph{Presburger language} $\mathcal{L}_{\text{Pres}}:=\{0,1,+,-,<,(\equiv_m)_{m \in \mathbb{N}}\}$ (see for example \cite[Theorem 2.5]{Wei81}). Note that every ordered abelian group can be seen as an $\mathcal{L}_{\text{Pres}}$-structure: the symbols $0,+,-,<$ are interpreted in the obvious way; the constant symbol $1$ is interpreted by the least positive element if $G$ is discrete, and by $0$ otherwise; for each $m \in \mathbb{N}$, the binary relation symbol $\equiv_m$ is interpreted as congruence modulo $mG$.

Archimedean ordered abelian groups do not form an elementary class. Robinson and Zakon \cite{RoZa60,Zak61} have introduced and studied the class of regular ordered abelian groups which is the smallest elementary class containing all archimedean ordered abelian groups.

\begin{definition} Let $G$ be an ordered abelian group.
\begin{itemize}
        \item $G$ is called \emph{regular} if for any non-trivial convex subgroup $H$ the quotient group $G/H$ is divisible.
        \item For $n>1$, $G$ is called \emph{$n$-regular} if any interval in $G$ containing at least $n$ elements contains an element divisible by $n$. 
    \end{itemize}
\end{definition}

\begin{fact}[\cite{RoZa60,Zak61,Con62,Wei86}]\label{F:RegChar}
For $G$ an ordered abelian group the following are equivalent:
\begin{enumerate}
\item $G$ is regular.
\item $G$ is $n$-regular for every positive integer $n>1$.
\item The theory of $G$ eliminates quantifiers in $\mathcal{L}_{\text{Pres}}$.

\item $G$ is elementarily equivalent to some archimedean ordered abelian group.
\item The only definable convex subgroups of $G$ are $\{0\}$ and $G$.
\item For any prime number $p$ and any infinite convex subset $A\subseteq G$ there is an element $a\in A$ such that $a=p\cdot b$ for some $b\in G$.
\end{enumerate}
\end{fact}

\begin{fact}[\cite{RoZa60}]\label{convex Sg. elt. equiv.}
    All regular discrete groups are models of Presburger arithmetic. 
    Two non-trivial regular dense groups $G$ and $H$ are elementary equivalent if and only if for any prime $p$ we have that both $|G/pG|$ and $|H/pH|$ are infinite or $|G/pG|=|H/pH|$ holds.
\end{fact}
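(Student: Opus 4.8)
The plan is to derive both assertions from quantifier elimination in $\mathcal{L}_{\text{Pres}}$ (Fact~\ref{F:RegChar}(3)) together with a completeness computation in each case. For the discrete case, fix the usual axiomatisation $\mathrm{PRES}$ of the theory of $\mathbb{Z}$: the axioms of a non-trivial discretely ordered abelian group in which $1$ denotes the least positive element, together with, for each $m>1$, the division-with-remainder axiom $\forall x\,\bigvee_{r=0}^{m-1}\exists y\,(x=my+r\cdot 1)$ (the elements $0,1,\dots,(m-1)\cdot 1$ being automatically pairwise distinct, since an ordered group is torsion-free). If $G$ is regular and discrete it is non-trivial, hence has no endpoints, and by Fact~\ref{F:RegChar}(2) it is $m$-regular for every $m>1$. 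Given $x\in G$, the interval $[x-(m-1)\cdot 1,\,x]$ consists of exactly $m$ consecutive elements of $G$ — consecutive because $1$ is the least positive element — so $m$-regularity yields a multiple of $m$ inside it, which is precisely the witness demanded by the division-with-remainder axiom for $m$. Hence $G\models\mathrm{PRES}$, and as $\mathrm{PRES}$ is complete we conclude $G\equiv\mathbb{Z}$.

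For the dense case, observe first that $G/pG$ is an $\mathbb{F}_p$-vector space, so $|G/pG|=p^{\delta_p(G)}$ with $\delta_p(G):=\dim_{\mathbb{F}_p}(G/pG)\in\mathbb{N}\cup\{\infty\}$; the condition in the statement therefore says exactly that $\delta_p(G)=\delta_p(H)$ for every prime $p$. The left-to-right implication is immediate: for each $p$ and $k$ the inequality ``$\delta_p\ge k$'' is expressed by a single $\mathcal{L}_{\text{oag}}$-sentence $\varepsilon_{p,k}$ (there exist $x_1,\dots,x_k$ no non-trivial $\mathbb{F}_p$-linear combination of which lies in $pG$), so elementary equivalence forces $\delta_p(G)=\delta_p(H)$. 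For the converse, assume $\delta_p(G)=\delta_p(H)$ for all $p$ and let $T$ be the $\mathcal{L}_{\text{Pres}}$-theory whose axioms are: ``non-trivial densely ordered regular abelian group'' (regularity expressed by the scheme of $n$-regularity for $n>1$, and $1=0$, as is forced for dense groups), together with, for each prime $p$, the sentence $\varepsilon_{p,\delta_p(G)}\wedge\neg\varepsilon_{p,\delta_p(G)+1}$ when $\delta_p(G)$ is finite and the scheme $\{\varepsilon_{p,k}:k\in\mathbb{N}\}$ when $\delta_p(G)=\infty$. Then $G,H\models T$, so it suffices to prove that $T$ is complete.

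I would prove $T$ complete by verifying the back-and-forth criterion for quantifier elimination: given $M_1\models T$ and a sufficiently saturated $M_2\models T$, any isomorphism $f$ from a finitely generated $\mathcal{L}_{\text{Pres}}$-substructure $A\le M_1$ onto a substructure of $M_2$ extends to $A\langle a\rangle$ for every $a\in M_1$. Concretely, one must find $b\in M_2$ realising over $f(A)$ both the cut of $a$ over $A$ and the congruence type of $a$ over $A$ (the relations $m\mid a-\alpha$, $\alpha\in A$, $m\in\mathbb{N}$). Density of $M_2$ makes every non-empty $f(A)$-interval infinite and so accommodates the cut, and $m$-regularity (Fact~\ref{F:RegChar}(2)) shows that every infinite interval of $M_2$ meets every coset of $mM_2$, so the cut and the congruences can be satisfied at once — provided the congruence type of $a$ admits a counterpart on the $M_2$-side, which is exactly what the matching invariants $\delta_p(M_1)=\delta_p(M_2)$ secure (they give $M_2$ the same divisibility defects as $M_1$). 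The degenerate subcase $na\in A$, i.e.\ $a\in\div(A)$, is immediate: $f$ preserves $\equiv_n$, so $f(na)\in nM_2$ and one takes $b:=f(na)/n$. Once the criterion holds, $T$ eliminates quantifiers; and since every quantifier-free $\mathcal{L}_{\text{Pres}}$-sentence is decided by the base axioms (in the dense case all closed terms equal $0$), $T$ is complete, whence $G\equiv H$.

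The main obstacle is precisely this one-point extension inside the back-and-forth: realising the cut of $a$ over $A$ \emph{simultaneously} with its congruence type in $M_2$. This is where regularity is indispensable — for a non-regular dense group an element's order type and congruence data over a substructure need not be jointly realisable — and where matching the invariants $\delta_p$ is needed so that enough residue classes modulo $mM_2$ are available.
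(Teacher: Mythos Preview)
The paper does not give a proof of this fact; it is simply cited from Robinson--Zakon, so there is nothing in the paper to compare your argument against. Your approach is the standard one and is essentially correct. The discrete case is complete as written, as is the forward implication in the dense case.

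For the backward implication in the dense case, your back-and-forth outline is the right strategy, and you correctly identify the only non-routine step: producing $b\in M_2$ whose congruence type over $f(A)$ matches that of $a$ over $A$. Your justification (``matching invariants $\delta_p$ secure \ldots\ enough residue classes'') is a little thin, and as stated it is not quite an argument. What makes it work is the following module-theoretic fact: for a torsion-free abelian group $G$ and any prime power $p^k$, the quotient $G/p^kG$ is a \emph{free} $\mathbb{Z}/p^k$-module of rank $\delta_p(G)$ (the successive quotients $p^iG/p^{i+1}G$ of the obvious filtration are all isomorphic to $G/pG$, which forces freeness). Thus $M_1/mM_1\cong M_2/mM_2$ for every $m$, and since $f$ preserves all $\equiv_n$, the induced isomorphism between the images of $A$ and $f(A)$ in these quotients respects the ambient filtrations and hence (by Smith normal form over the principal ideal ring $\mathbb{Z}/p^k$, applied primewise) extends to an isomorphism of the full quotients; the image of $\bar a$ then names the coset in which $b$ must lie. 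Once that coset is fixed, your use of density plus $m$-regularity to place $b$ in the required cut is exactly right. With this point made explicit, the argument is complete.
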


\subsubsection*{Quantifier elimination}We have seen that (precisely) the regular ordered abelian groups admit quantifier elimination in $\mathcal{L}_{\text{Pres}}$. Roughly speaking, in order to eliminate quantifiers in the class of all ordered abelian groups, we need a (many-sorted) expansion of $\mathcal{L}_{\text{Pres}}$ that can deal with the ordered abelian groups $G/H$, where $H$ is a definable convex subgroup of $G$. To this end, we recall the language $\mathcal{L}_{\text{syn}}$ introduced by Cluckers and Halupczok \cite{CH11}. We begin by describing the set of \emph{auxiliary sorts} $\mathcal{A}:= \{\mathcal{S}_n,\mathcal{T}_n, \mathcal{T}_n^+ \mid n \in \mathbb{N}, n>0 \}$ of $\mathcal{L}_{\text{syn}}$.
\begin{definition} \label{DefinitionAuxiliarySorts}
Fix a natural number $n>0$.
\begin{enumerate}
    \item For $g \in G \setminus nG$, let $G_g^n$ be the largest convex subgroup $H$ of $G$ such that $g \notin H+nG$; for $g \in nG$, set $G_g^n=\{0\}$. Define $\mathcal{S}_n := G/ \sim$, with $g \sim g'$ if and only if $G_g^n=G_{g'}^n$, and let $\mathfrak{s}_n \colon G \twoheadrightarrow \mathcal{S}_n$ be the canonical map. Denote by $G_{\alpha}$ the convex subgroup $G_g^n$, with $\alpha=\mathfrak{s}_n(g)$.
    \item For $g \in G$, set $H_g^n=\bigcup_{h \in G, g \notin G_h^n}G_h^n$, where the union over the empty set is $\emptyset$
    \footnote{ With this convention, we have $H_g^n=\emptyset$ if $g\in nG$. In \cite{CH11}, we would have $H_g^n=\{0\}$ instead. We adopt this convention to avoid some case distinctions and turn $\mathfrak{t}_n$ into a pre-valuation. }.

    Define $\mathcal{T}_n := G/ \sim$, with $g \sim g'$ if and only if $H_g^n=H_{g'}^n$, and let $\mathfrak{t}_n \colon G \twoheadrightarrow \mathcal{T}_n$ be the canonical map. Denote by $G_{\alpha}$ the convex subgroup $H_g^n$, with $\alpha=\mathfrak{t}_n(g)$.
    \item Denote by $\mathcal{T}_n^+$ a copy of $\mathcal{T}_n$, i.e., $\mathcal{T}_n^+:=\{\beta^+\}_{\beta \in \mathcal{T}_n}$. For each $\beta^+ \in \mathcal{T}_n^+$, let $G_{\beta^+}=\bigcap_{\alpha \in \mathcal{S}_n, G_{\alpha} \supsetneq G_{\beta}} G_{\alpha}$, where the intersection over the empty set is $G$. In particular, if $\beta=\mathfrak{t}_n(g)$, we have $G_{\beta^+}=\bigcap_{h \in G, g \in G_h^n}G_h^n$.
\end{enumerate}
\end{definition}

In \cite{CH11}, it is proved that, for each $n>0$, the convex subgroups of the three families in the above definition $\{G_{\alpha}\}_{\alpha \in \mathcal{S}_n}, \{G_{\alpha}\}_{\alpha \in \mathcal{T}_n}$ and $ \{G_{\alpha}\}_{\alpha \in \mathcal{T}_n^+}$ are uniformly definable in  $\mathcal{L}_{\text{oag}}=\{0,+,-,<\}$. It follows that in any theory of ordered abelian groups, all the auxiliary sorts are imaginary sorts of $\mathcal{L}_{\text{oag}}$.

For any $\alpha \in \bigcup_{n \in \mathbb{N},n>0} \mathcal{S}_n\cup \mathcal{T}_n \cup \mathcal{T}_n^+$ and $m \in \mathbb{N}, m>0$, we also set
\begin{equation}
G_{\alpha}^{[m]} := \bigcap_{H \supsetneq G_\alpha, H \text{ convex subgroup of } G} (H + mG).
\end{equation}
Now we are able to present the complete definition of $\mathcal{L}_{\text{syn}}$. 
\begin{definition}\label{DefinitionLsyn}
The language $\mathcal{L}_{\text{syn}}$ consists of the following:
\begin{enumerate}[label=(\alph*)]
    \item\label{a} The main sort $(G,0,+,-,<,(\equiv_m)_{m \in \mathbb{N}})$;
    \item\label{b} the auxiliary sorts $\mathcal{S}_n,\mathcal{T}_n, \mathcal{T}_n^+$, for each $n \in \mathbb{N} \setminus \{0\}$, with the binary relations $\le$ on ($\mathcal{S}_n \cup \mathcal{T}_n \cup \mathcal{T}_n^+) \times (\mathcal{S}_m \cup \mathcal{T}_m \cup \mathcal{T}_m^+)$ (each pair $(m,n)$ giving rise to nine binary relations), defined by $\alpha \le \alpha'$ if and only if $G_{\alpha'} \subseteq G_{\alpha}$\footnote{In \cite{CH11}, the order is defined so that it corresponds to the inclusion of convex subgroups. Again, we use the reverse order so that $\mathfrak{t}_n$ is a pre-valuation.};
    \item\label{c} the canonical maps $\mathfrak{s}_n \colon G \twoheadrightarrow \mathcal{S}_n$ and $\mathfrak{t}_n \colon G \twoheadrightarrow \mathcal{T}_n$, for each $n \in \mathbb{N} \setminus \{0\}$;

    \item\label{d} a unary predicate $x=_\bullet k_\bullet$ on $G$, for each $k \in \Z \setminus \{0\}$, defined by $g=_\bullet k_\bullet$ if and only if there exists a convex subgroup $H$ of $G$ such that $G/H$ is discrete and $g \mod H$ is equal to $k$ times the smallest positive element of $G/H$, for every $g \in G$; 
    \item\label{e} a unary predicate $x\equiv_{\bullet m} k_\bullet$ on $G$, for each $m \in \mathbb{N} \setminus \{0\}$ and $k \in \{1,\dots,m-1\}$, defined by $g \equiv_{\bullet m} k_\bullet$ if and only if there exists a convex subgroup $H$ of $G$ such that $G/H$ is discrete and $g \mod H$ is congruent modulo $m$ to $k$ times the smallest positive element of $G/H$, for every $g \in G$;
    \item\label{f} a unary predicate $D_{p^r}^{[p^s]}(x)$ on $G$, for each prime $p$ and each $r,s \in \mathbb{N} \setminus \{0\}$ with $s \ge r$, defined by $D_{p^r}^{[p^s]}(g)$ if and only if there exists an $\alpha \in \mathcal{S}_p$ such that $g \in G_{\alpha}^{[p^s]}+p^rG$ and $g \notin G_{\alpha}+p^rG$, for every $g \in G$;
    \item \label{g}a unary predicate discr$(x)$ on the sort $\mathcal{S}_p$, with $p$ prime, defined by discr$(\alpha)$ if and only if $G/G_{\alpha}$ is discrete, for every $\alpha \in \mathcal{S}_p$;
    \item\label{h} two unary predicates on the sort $\mathcal{S}_p$, with $p$ prime, for each $l,n \in \mathbb{N} \setminus \{0\}$, defining the sets
    \begin{gather*}
    \{\alpha \in \mathcal{S}_p \mid \dim_{\mathbb{F}_p}(G_{\alpha}^{[p^n]}+pG)/(G_{\alpha}^{[p^{n+1}]}+pG)=l\} \text{ and} \\
    \{\alpha \in \mathcal{S}_p \mid \dim_{\mathbb{F}_p}(G_{\alpha}^{[p^n]}+pG)/(G_{\alpha}+pG)=l\}.
    \end{gather*}

\end{enumerate}   
\end{definition}

\begin{fact}[{\cite[Theorem 1.13]{CH11}}]\label{FactQUantifierEliminationLsyn}
The $\mathcal{L}_{syn}$-theory of ordered abelian groups eliminates quantifiers from the main sort $G$.
\end{fact}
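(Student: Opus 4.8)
The statement is a \emph{relative} quantifier elimination (elimination only from the main sort), so the natural strategy is the standard embedding/back-and-forth criterion for quantifier elimination, carried out building on Gurevich--Schmitt \cite{GS84}. Concretely, I would fix two $|A|^+$-saturated models $\mathcal{M},\mathcal{N}$ of the $\mathcal{L}_{\text{syn}}$-theory of ordered abelian groups sharing a common $\mathcal{L}_{\text{syn}}$-substructure $A$, and reduce the theorem to the following \emph{one-point extension} statement: for every $b\in M$ there is $b'\in N$ realising the same quantifier-free $\mathcal{L}_{\text{syn}}$-type over $A$. (Since we only eliminate from the main sort, the auxiliary parts are allowed to be as different as the quantifier-free data permits; no elementarity hypothesis on them is needed.) By the usual inductive reduction it then suffices to eliminate a single main-sort existential quantifier from a conjunction of $\mathcal{L}_{\text{syn}}$-literals, so everything comes down to: describe, by a quantifier-free condition on the parameters, exactly when a system of $\mathcal{L}_{\text{syn}}$-literals in one main-sort variable $x$ is solvable in a model.

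The literals in $x$ split into three groups: (i) \emph{order/equality literals} $\,\ell x + t\ \square\ 0$ with $\square\in\{=,<\}$ and $t$ a term not involving $x$; (ii) \emph{congruence and discreteness literals} $\ell x + t\equiv_m 0$, $(\ell x+t)=_\bullet k_\bullet$, $(\ell x + t)\equiv_{\bullet m}k_\bullet$, and the divisibility predicates $D_{p^r}^{[p^s]}(\ell x+t)$ of item~\ref{f}; (iii) \emph{spine literals} comparing $\mathfrak{s}_n(\ell x+t)$, $\mathfrak{t}_n(\ell x+t)$ (and the $(\cdot)^+$ variants) with auxiliary-sort parameters via $\le$. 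First I would normalise (i): after clearing a common multiple, the order literals either pin $x$ (up to bounded index) into a genuine interval $(c,d)$ whose endpoints are cuts over $A$, or force an equation $kx=t$; in the latter case solvability is a divisibility statement about $t$ in $G$, expressible through the $D_{p^r}^{[p^s]}$ predicates and the $\mathbb{F}_p$-dimension predicates on $\mathcal{S}_p$ of item~\ref{h}, and we are done. So assume $x$ ranges over a genuine interval.

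Now comes the \emph{valuation-theoretic core}. Each auxiliary parameter $\alpha$ names a convex subgroup $G_\alpha$; the finitely many convex subgroups occurring among the parameters, together with the $G_\alpha^{[m]}=\bigcap_{H\supsetneq G_\alpha}(H+mG)$, cut $G$ and the relevant quotients $G/(H+nG)$ into finitely many pieces, on each of which the residue group is controlled -- it is regular, with $p$-ranks recorded by the predicates of items~\ref{f} and~\ref{h} and discreteness recorded by items~\ref{d}--\ref{g}. The spine literals of group (iii) then amount to prescribing, for each relevant $n$, the value of $\mathfrak{t}_n$ (a pre-valuation, by the footnoted convention) and the refinement $\mathfrak{s}_n$ on finitely many affine functions of $x$, i.e. in which piece each $\ell x+t$ must land. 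Using the ultrametric inequalities for $\mathfrak{t}_n$ together with the way $\mathcal{S}_n$ refines $\mathcal{T}_n$, such a prescription is consistent if and only if a finite Boolean combination of order relations among the parameter-cuts and the named convex subgroups holds -- a quantifier-free condition. Finally, with the interval and the spine position fixed, the congruence literals of group (ii) are to be solved inside one quotient $G/(H+\mathrm{lcm}(m)\,G)$: by the Robinson--Zakon description of regular groups (Fact~\ref{F:RegChar} and Fact~\ref{convex Sg. elt. equiv.}) the simultaneous congruences have a solution in the prescribed interval iff the relevant $\mathbb{F}_p$-dimensions (items~\ref{f},~\ref{h}) and discreteness data (items~\ref{d},~\ref{e},~\ref{g}) are large enough, once again a quantifier-free condition. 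Conjoining the three quantifier-free conditions and invoking $|A|^+$-saturation of $\mathcal{N}$ to actually produce $b'$ completes the one-point extension, hence the quantifier elimination.

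The main obstacle, exactly as in \cite{CH11}, lies in the core step: one must exhibit $b'$ satisfying \emph{simultaneously} the interval constraint, all spine constraints for all $n$, and all congruences modulo all $m$, and prove that the only obstruction to doing so is the one captured by the listed predicates. This is precisely what forces the elaborate design of $\mathcal{L}_{\text{syn}}$ -- the auxiliary sorts $\mathcal{S}_n$ tracking the largest convex $H$ with $g\notin H+nG$, the predicates $D_{p^r}^{[p^s]}$, and the dimension predicates on $\mathcal{S}_p$ -- and verifying that these are enough (no further invariant of the quotients $G/(H+nG)$ is needed to decide solvability) is the heart of the argument; by contrast the reductions in the first two paragraphs are comparatively routine.
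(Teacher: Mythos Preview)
The paper does not prove this statement at all: Fact~\ref{FactQUantifierEliminationLsyn} is simply quoted from \cite[Theorem~1.13]{CH11}, with no argument given beyond the citation and a short remark on the precise form of the language. There is therefore no ``paper's own proof'' to compare your attempt to; the authors treat this as background and use it as a black box (notably in the proof of Theorem~\ref{TheoremQuantifierEliminationOneValuation}).

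As for your sketch itself, the overall architecture --- an embedding test reducing to a one-point extension, followed by a case analysis of the literals into order, congruence, and spine constraints --- is the natural shape of such an argument and is indeed the shape of the proof in \cite{CH11}. Two cautions, though. First, your parenthetical ``no elementarity hypothesis on [the auxiliary sorts] is needed'' is not quite right for relative quantifier elimination: the test for elimination \emph{relative} to the auxiliary sorts requires that the partial isomorphism be elementary (not merely quantifier-free) on those sorts, since auxiliary-sort quantifiers are allowed to survive. Second, your ``valuation-theoretic core'' paragraph is where essentially all the difficulty lies, and phrases like ``such a prescription is consistent if and only if a finite Boolean combination of order relations among the parameter-cuts and the named convex subgroups holds'' and ``the simultaneous congruences have a solution in the prescribed interval iff the relevant $\mathbb{F}_p$-dimensions \ldots\ are large enough'' are exactly the statements whose proof occupies most of \cite{CH11}; they are not routine consequences of regularity of the quotients. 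So your outline is correct in spirit, but what you have written is closer to a statement of what needs to be shown than to a proof.
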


    Notice that in \cite{CH11}, $\mathcal{L}_{syn}$ only includes the sort $\mathcal{S}_p,\mathcal{T}_p$ and $\mathcal{T}_p^+$ for $p$ running over the prime numbers. In particular, the statement of \cite[Theorem 1.13]{CH11} is slightly stronger than the result cited above, and we have that any $\mathcal{L}_{\text{syn}}$-formula $\phi(\bar{x},\bar{\eta})$, with $G$-variables $\bar{x}$ and $\mathcal{A}$-variables $\bar{\eta}$, is a boolean combination of formulas of the form
\begin{itemize}
	\item $\psi(\bar{x})$, where $\psi$ is quantifier free in the language $(G,0,+,-,<,(\equiv_m)_{m \in \mathbb{N}})$ of the main sort, and
	\item $\chi(\bar{x},\bar{\eta}):=\xi((\mathfrak{s}_{p^r}(\sum_{i<n}z_ix_i),\mathfrak{t}_p(\sum_{i<n}z_ix_i))_{p\in \mathbb{P},r\in \mathbb{N},z_0, \dots, z_{n-1} \in \Z},\bar{\eta})$, where $\xi$ is an $\mathcal{A}$-formula.
\end{itemize}

The following fact will be useful as well:
\begin{fact}[{\cite[Lemma 2.12]{CH11}}]\label{FactQE}
For any $g \in G$, we have the following equivalences.
\begin{enumerate}
	\item \label{1} $g =_\bullet k_\bullet$ if and only if $G/G_{\mathfrak{t}_2(g)}$ is discrete and $g \mod G_{\mathfrak{t}_2(g)}$ is equal to $k$ times the smallest positive element of $G/G_{\mathfrak{t}_2(g)}$.
	\item $g \equiv_{\bullet m} k_\bullet$ if and only if $G/G_{\mathfrak{s}_m(g)}$ is discrete and $g \mod G_{\mathfrak{s}_m(g)}$ is congruent modulo $m$ to $k$ times the smallest positive element of $G/G_{\mathfrak{s}_m(g)}$.
\end{enumerate}
\end{fact}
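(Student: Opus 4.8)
The plan is to reduce both equivalences to a \emph{uniqueness} statement about the convex subgroup witnessing the left-hand side. Each ``if'' direction is immediate: $G_{\mathfrak{t}_2(g)}$ (for (1)) and $G_{\mathfrak{s}_m(g)}$ (for (2)) are concrete convex subgroups of $G$, so if the right-hand condition holds for that particular subgroup it already witnesses the existential statement on the left. For the converse one is handed an \emph{arbitrary} convex subgroup $H$ with $G/H$ discrete realising the defining property, and I would prove that $H$ is in fact forced to equal $G_{\mathfrak{t}_2(g)}$, resp.\ $G_{\mathfrak{s}_m(g)}$. The only external ingredient is the elementary observation that a nontrivial convex subgroup of a discrete ordered abelian group contains its least positive element, together with the fact (recalled above) that the convex subgroups of $G$ form a chain under inclusion.

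For (1): let $H$ be convex with $G/H$ discrete, pick $u>0$ in $G$ representing the least positive element $\bar u$ of $G/H$, and suppose $g-ku\in H$ with $k\neq 0$. Since $\bar u$ cannot be divisible by $2$ in $G/H$ (otherwise $\bar u=2\bar a$ forces $0<\bar a<\bar u$), we get $u\notin H+2G$; as $G_u^2$ is the largest convex subgroup with that property, $H\subseteq G_u^2$. Since moreover $u\notin G_u^2+2G\supseteq G_u^2$, the convex subgroup $G_u^2/H$ of $G/H$ misses $\bar u$ and is therefore trivial, i.e.\ $G_u^2=H$. Now $g\bmod H=k\bar u\neq 0$, so $g\notin G_u^2$ and hence $h=u$ contributes to the union $H_g^2=\bigcup_{g\notin G_h^2}G_h^2$, giving $H=G_u^2\subseteq H_g^2=G_{\mathfrak{t}_2(g)}$. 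For the reverse inclusion, take any $h$ with $g\notin G_h^2$; if $G_h^2\not\subseteq H$ then $H\subsetneq G_h^2$, so $G_h^2/H$ is a nontrivial convex subgroup of $G/H$, hence contains $\bar u$, i.e.\ $u\in G_h^2$, and then $g=(g-ku)+ku\in H+G_h^2=G_h^2$, contradicting $g\notin G_h^2$. So $G_h^2\subseteq H$ for every relevant $h$, whence $G_{\mathfrak{t}_2(g)}=H_g^2\subseteq H$ and therefore $G_{\mathfrak{t}_2(g)}=H$. This argument is uniform in $g$; in particular no case distinction on whether $g\in 2G$ is needed.

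For (2) the scheme is identical, now using $G_{\mathfrak{s}_m(g)}=G_g^m$ directly. Given $H$ convex with $G/H$ discrete and $g-ku\in mG+H$ for $k\in\{1,\dots,m-1\}$, one first notes that $k\bar u$ is not divisible by $m$ in $G/H$: from $k\bar u=m\bar a$ one would get $0<m\bar a<m\bar u$, hence $0<\bar a<\bar u$, against minimality of $\bar u$. Therefore $g\notin H+mG$, so $H\subseteq G_g^m$; and if $G_g^m/H$ were a nontrivial convex subgroup of $G/H$ it would contain $\bar u$, forcing $u\in G_g^m$ and $g=(g-ku)+ku\in (mG+H)+G_g^m=mG+G_g^m$, which contradicts the definition of $G_g^m$. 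Hence $G_g^m=H$, and the equivalence follows. The remaining case $g\in mG$ is degenerate: the same divisibility observation shows that both sides are then false, so the equivalence holds vacuously. The single step carrying real weight is the identification of the witnessing $H$ with the distinguished auxiliary subgroup — that is where the definitions of the sorts $\mathcal{S}_n$, $\mathcal{T}_n$ enter essentially — and it is there that I would expect the only genuine difficulty; everything else is bookkeeping with the order on $G/H$.
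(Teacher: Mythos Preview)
The paper does not prove this statement; it is recorded as a fact and attributed to \cite[Lemma~2.12]{CH11}, so there is no proof in the paper to compare against.

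Your argument is correct. The key idea --- showing that any convex subgroup $H$ witnessing the existential on the left must coincide with $G_{\mathfrak{t}_2(g)}$ (resp.\ $G_{\mathfrak{s}_m(g)}$) --- is exactly the right one, and your execution is clean. The two facts you isolate (convex subgroups of $G$ are linearly ordered by inclusion, and any nontrivial convex subgroup of a discrete ordered abelian group contains its least positive element) are precisely what is needed to pin down $H$. In part~(1), the step $H=G_u^2$ via ``$G_u^2/H$ misses $\bar u$, hence is trivial'' is the crux, and the two inclusions $H\subseteq H_g^2$ and $H_g^2\subseteq H$ are handled correctly. In part~(2), the non-divisibility of $k\bar u$ by $m$ in $G/H$ (using $0<k<m$) is the right replacement for the non-divisibility of $\bar u$ by $2$, and the rest follows by the same convexity argument. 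The degenerate case $g\in mG$ is indeed vacuous on both sides, as you observe.
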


As remarked in \cite{CH11}, the map $\mathfrak{t}_2$ can be replaced by any other map $\mathfrak{t}_p$, with $p \in \mathbb{P}$.

\subsubsection*{Induced structure on convex subgroups}
Let us finally mention another useful tool for the study of ordered abelian groups. First, we recall the notion of a pure exact sequence:

\begin{definition}
    An embedding $\iota: A\rightarrow B$ of abelian groups is said \emph{pure} if for all positive integers $n$ and all elements $a\in A$, $\iota(a)$ is divisible by $n$ in $B$ if and only if $a$ is divisible by $n$ in $A$. 

    We say by extension that the short exact sequence 
    \[0 \overset{\iota}{\rightarrow} A \rightarrow B \rightarrow C \rightarrow 0\]
    is pure if the embedding $\iota: A\rightarrow B$ is pure. 
\end{definition}
Notice that if $C$ is torsion-free, then automatically a short exact sequence $0 \rightarrow A \rightarrow B \rightarrow C \rightarrow 0$ is pure. 
To avoid any confusion with the model-theoretic property of purity, we will avoid saying that $A$ is a pure subgroup of $B$.

\begin{fact}\label{Fact:Purity-convex}
    Let $G$ be an ordered abelian group and let $H\leq G$ be a convex subgroup. Consider $G$ in the language $\mathcal{L}_{\text{oag}}\cup\{P\}$, where $P$ is a unary predicate singling out $H$. Then $H$ and $G/H$ are stably embedded in $G$, with induced structure that of a pure ordered abelian group. 
    
    In particular, this holds when $H$ is an $\mathcal{L}_{\text{oag}}$-definable convex subgroup of $G$.
\end{fact}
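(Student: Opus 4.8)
The plan is to deduce everything from the Cluckers--Halupczok quantifier elimination (Fact~\ref{FactQUantifierEliminationLsyn}) together with one elementary algebraic observation. The ``in particular'' clause is immediate: when $H$ is $\mathcal{L}_{\text{oag}}$-definable the language $\mathcal{L}_{\text{oag}}\cup\{P\}$ is a definitional expansion of $\mathcal{L}_{\text{oag}}$, so neither stable embeddedness nor the shape of the induced structure changes. For the general statement the key lemma is that a convex subgroup $H\leq G$ is \emph{pure}: $mG\cap H=mH$ for every $m>0$ (if $mg\in H$ then $0\leq |g|\leq m|g|=|mg|\in H$, so $g\in H$ by convexity). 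Two consequences will be used throughout: (i) for $g\in H$ and $n>0$ the convex subgroups $G_g^n$ and $H_g^n$, and all the further auxiliary data of $\mathcal{L}_{\text{syn}}$ attached to $g$ (the predicates $=_\bullet k_\bullet$, $\equiv_{\bullet m}k_\bullet$, $D_{p^r}^{[p^s]}$, $\mathrm{discr}$, the dimension predicates, the groups $G_\alpha^{[m]}$), are \emph{intrinsic to $H$}: they agree with the ones computed when $H$ is regarded as a pure ordered abelian group, because their definitions only involve the subgroups $mG$, which meet $H$ in $mH$; and (ii) every $g\in G\setminus H$ satisfies $|g|>H$, so for $c\in G$ and any $\mathbb Z$-linear combination $t$ of elements of $H$ one has $t+c\in H\iff c\in H$, and when $c\notin H$ the sign of $t+c$ equals the sign of $c$.

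With this in hand I would prove the statement about $H$ as follows; note that, the converse being obvious, it suffices to show that every subset of $H^k$ definable in $\mathcal U:=(G,+,0,<,P)$ with $P=H$ is definable in the pure ordered abelian group $H$ (with parameters from $H$, quantifiers allowed, including over imaginary auxiliary sorts). Fix such an $X\subseteq H^k$. Its atomic subformulas $P(\sigma)$, with $\sigma$ a term in the object variables and the parameters $\bar a$, restrict on $H^k$ to a truth value that is constant on each cell of the finite partition of the parameter tuple given by ``$\sum w_j a_j\in H$ or not'' (using (ii)), so $P$ contributes nothing. The remaining content is $\mathcal{L}_{\text{oag}}$, hence $\mathcal{L}_{\text{syn}}$, and by the refined form of Fact~\ref{FactQUantifierEliminationLsyn} quoted in the excerpt, $X$ is a Boolean combination of quantifier-free main-sort formulas $\psi(\bar x,\bar a)$ (Boolean combinations of linear (in)equalities and congruences $\equiv_m$ in $\mathbb Z$-linear combinations of $\bar x$ and $\bar a$) and of formulas $\xi\big((\mathfrak s_{p^r}(\sum z_i x_i+c_l),\mathfrak t_p(\sum z_i x_i+c_l))_l,\bar\eta\big)$ with $\xi$ an $\mathcal A$-formula and $c_l$ a $\mathbb Z$-linear combination of $\bar a$. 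Restricting the object variables to $H$, a case distinction on whether each $c_l$ lies in $H$, and when $c_l\notin H$ a further one on whether $c_l\in H+nG$ (writing $c_l=h_l+ng_l'$ with $h_l\in H$ in that case), reduces each atom, via purity and (ii), to either a truth value independent of $\bar x$ or an $\mathcal{L}_{\text{oag}}$-condition on a $\mathbb Z$-linear combination $\sum z_i x_i+h_l$ with $h_l\in H$ computed inside $H$: congruence atoms use the identity $mG\cap H=mH$ directly; inequality atoms use the sign remark; the $\mathfrak s/\mathfrak t$-atoms use (i) for the arguments that land in $H$, the remaining ones being constant on $H^k$. Hence $X\cap H^k$ is definable in the pure ordered abelian group $H$, which is the claim for $H$.

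For $G/H$ the extra point is that a subset $S\subseteq(G/H)^k$ is $\mathcal U$-definable iff $Z:=\rho^{-1}(S)\subseteq G^k$ (with $\rho\colon G^k\to(G/H)^k$ the quotient map) is $\mathcal U$-definable and invariant under translation by $H^k$; note $P$ is itself $H$-invariant and descends to the condition ``$\,\cdot=0$ in $G/H$''. Writing $Z$ in the same normal form, translation by $H^k$ changes each atom only by a shift lying in a subgroup $dH\subseteq H$ (with $d$ the gcd of the relevant coefficients), hence fixes exactly those atoms whose argument ``lies above $H$'' (inequalities and $\mathfrak s/\mathfrak t$-conditions, by (ii) and (i)) or is ``$H$-trivial'' (congruences $\equiv_m$ with $dH\subseteq mG$); each such atom descends to an $\mathcal{L}_{\text{oag}}$-condition on the induced linear combinations in $G/H$. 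I would then argue, by induction on the normal form and splitting $(G/H)^k$ along the finitely many conditions ``$\sum z_i\xi_i+\bar c=0$ in $G/H$'', that the global $H^k$-invariance of $Z$ forces it to be equivalent, on each such piece, to a Boolean combination of these invariant atoms alone; equivalently, one establishes directly a relative quantifier elimination for the pair language $\mathcal{L}_{\text{oag}}\cup\{P\}$ down to $P$, the quotient $G/H$ and the $\mathcal{L}_{\text{syn}}$ auxiliary sorts, from which both halves of the Fact follow at once. This last reorganisation — turning ``$Z$ is globally $H^k$-invariant'' into ``$Z$ is, piecewise, a combination of manifestly invariant atoms'' — is the only delicate point; it is a descent-of-definability statement, and it is essentially what one also has to verify by hand if one prefers to run a back-and-forth for the pair language instead. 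Everything else is the purity observation plus bookkeeping already contained in the proof of Fact~\ref{FactQUantifierEliminationLsyn}.
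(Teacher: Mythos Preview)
Your approach is genuinely different from the paper's, and considerably harder. The paper does not touch Cluckers--Halupczok at all. Its proof is a soft structural argument: pass to an $\aleph_1$-saturated elementary extension (purity of the induced structure is a property of the theory of the pair, so this is harmless); then $H$ is pure-injective, so the pure short exact sequence $0\to H\to G\to G/H\to 0$ splits and $G\cong G/H\times H$ as a lexicographic product; in a product both factors are trivially purely stably embedded; and $(G,P)$ is a reduct of the product structure, so the conclusion persists. The alternative the paper mentions is to quote the relative quantifier elimination for pure short exact sequences of abelian groups from \cite{ACGZ20}. Either way the proof is a few lines, with no formula manipulation.

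Your route has two gaps that are not merely bookkeeping. First, for $H$: your claim~(i) correctly gives $G_h^n=H_h^n$ for $h\in H$ and matches the main-sort predicates, so you get an embedding $\iota\colon\mathcal{A}(H)\hookrightarrow\mathcal{A}(G)$ of coloured chains. But the formula $\xi$ may quantify over $\mathcal{A}(G)$, and what you need is that $\iota^{-1}$ of every $\mathcal{A}(G)$-definable set is $\mathcal{A}(H)$-definable, i.e., that $\mathcal{A}(H)$ is stably embedded in $\mathcal{A}(G)$. That is not automatic for embeddings of coloured chains and you do not address it. (It is salvageable --- the image of $\iota$ is the end-segment $\{\alpha:G_\alpha\subsetneq H\}$ in each sort, and one can argue further --- but this is exactly the missing step.) Second, for $G/H$: you explicitly flag the descent step as ``the only delicate point'' and do not carry it out. ``By induction on the normal form and splitting'' is not a proof; an $H^k$-invariant Boolean combination of atoms need not be a combination of $H^k$-invariant atoms, and producing the required reorganisation is the whole content. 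Your fallback, ``equivalently, establish relative QE for the pair language'', is precisely the \cite{ACGZ20} result the paper cites --- at which point the rest of your analysis becomes redundant.
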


\begin{proof}
    If $G$ is an ordered abelian group and $H\leq G$ is a convex subgroup, then 
one obtains the pure short exact sequence $$0\rightarrow H\rightarrow G\rightarrow G/H\rightarrow0$$
of abelian groups. 


We note that the ordering on $G$ may be defined using the orderings on $H$ and on $G/H$, and thus the structure can be seen as a pure short exact sequence of abelian groups $0 \rightarrow A \rightarrow B \rightarrow C \rightarrow 0$, where $A \coloneqq H$ and $C \coloneqq G/H$ are enriched with orderings. The statement then follows directly from \cite[Remark~4.4]{ACGZ20}.
\end{proof}

\subsection{Valued abelian groups}\label{SS:KaplanskyTheory}
In this section, we develop some tools for valued abelian groups. In particular, we will need the notion of \emph{maximality} from general valuation theory, and the characterisation of this notion in terms of pseudo-completeness (Theorem \ref{TheoremMaximalityEquivalentPseudo-completeness}). This relationship between these two properties was originally proved by Kaplansky in the context of valued fields \cite{Kap42}. His ideas have been adapted to other contexts; see e.g. \cite{Grav55} for the case of valued vector spaces and \cite{Hol63} for the case of ordered groups, even non-commutative. We present here a Kaplansky theory for extensions $H/G$  of valued abelian groups.  

Following the usual convention, we call "convex subgroup" a subgroup of a valued group $G$ which is the inverse image of an end segment of $\Gamma$, even if $G$ is not equipped with a compatible order. We also extend the other notions introduced for ordered abelian groups: 
\begin{definition}\label{DefinitionSkeleton}
If $g$ is an element of value $\gamma$, we call \emph{fundament of $g$} the largest convex subgroup not containing $g$, i.e., 
\[V_\gamma\coloneqq V_g \coloneqq \{x\in G \ \vert \ \val(x)> \gamma \},\]

and \emph{cover of $g$} the smallest convex subgroup containing $g$, i.e.,
\[C_\gamma\coloneqq C_g\coloneqq \{x\in G \ \vert \ \val(x)\geq \gamma \}. \]
If $\gamma<\infty$, the \emph{rib of $g$} is defined to be the quotient
\[R_\gamma \coloneqq R_g \coloneqq C_\gamma/V_\gamma.\]

     We call 
    \[(\Gamma, (R_\gamma)_{\gamma \in \Gamma})\]
    the \emph{skeleton} of $(G,\val)$, and for $a\in G$ of value $\gamma$, we call
     a pair \[(\gamma, R_\gamma)\]
     the \emph{bone at $a$} or the \emph{bone at $\gamma$} of $G$. 
     \end{definition}

Firstly, notice that an embedding of valued abelian groups induces an embedding between their skeleta. Particularly noticeable is the case where the skeleta are actually equal:

\begin{definition}
\label{DefinitionMaximalOrderedAbelianGroups}
Let $G$ be a valued abelian group.
\begin{itemize}
    \item An extension $H\supseteq G$ of valued abelian groups is called \textit{immediate} if it preserves the skeleton, i.e., if $\Gamma_G=\Gamma_H$ and for each value $\gamma \in \Gamma_G$, $R_{G,\gamma}= R_{H,\gamma}$. 
    \item $G$ is called \emph{maximal} if it admits no proper immediate extension.
\end{itemize}
\end{definition}

\begin{example}
Let $I$ be a linearly ordered set and $\{G_i\}_{i \in I}$ a family of archimedean ordered abelian groups. Then,  working with the natural valuation (see Definition \ref{DefinitionNaturalValuation}), the extension of valued abelian groups $\hprod_{i \in I}G_i\supseteq\sum_{i \in I}G_i$ is immediate.
\end{example}


Our treatment of maximal valued abelian groups follows closely that of Kaplansky for maximal valued fields (\cite{Kap42}). We will also be interested in a relative notion of maximality, and will introduce, therefore, a Kaplansky theory for extensions of valued groups.

\begin{definition}
We say that an extension $H/G$ of valued abelian groups is \emph{maximal} if there is no non-trivial intermediate immediate extension, i.e., if for every valued abelian group $K$ with $G \subsetneq K \subseteq H$, either $\Gamma_G \subsetneq \Gamma_K$ or $R_{G,\gamma} \subsetneq R_{K,\gamma}$ for some $\gamma \in \Gamma_G$. 
\end{definition}

We have the following characterisation of immediate extensions.
\begin{fact}
\label{FactCharacterisationImmediateExtension} 
For an extension $H/G$ of valued abelian groups, the following are equivalent:
\begin{enumerate}
\item $H$ is an immediate extension of $G$,
\item for every $h \in H \setminus G$, there exists $g \in G$ such that $\val(h-g)>\val(h)$,
\item for every $h \in H \setminus G$, the set $\Delta=\Set{ \val(h-g) \mid g \in G}$ does not admit a maximum.
\end{enumerate}
\end{fact}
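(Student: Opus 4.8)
The plan is to prove the cycle of implications $(1)\Rightarrow(2)\Rightarrow(3)\Rightarrow(1)$. Throughout I would use the elementary consequence of the ultrametric inequality recorded just after Definition~\ref{DefinitionValuedGroup}: if $\val(a)\neq\val(b)$ then $\val(a-b)=\min\{\val(a),\val(b)\}$; in particular, whenever $h\in H$ and $g\in G$ satisfy $\val(h-g)>\val(h)$, writing $g=h-(h-g)$ gives $\val(g)=\val(h)$. For $(1)\Rightarrow(2)$, fix $h\in H\setminus G$ and put $\gamma:=\val(h)$. Since $H/G$ is immediate, $\gamma\in\Gamma_G=\Gamma_H$ and the inclusion $C_{G,\gamma}\hookrightarrow C_{H,\gamma}$ induces an isomorphism $R_{G,\gamma}=C_{G,\gamma}/V_{G,\gamma}\xrightarrow{\ \sim\ }C_{H,\gamma}/V_{H,\gamma}=R_{H,\gamma}$, hence a surjection. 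As $h\in C_{H,\gamma}$, there is $g\in C_{G,\gamma}$ with $h-g\in V_{H,\gamma}$, i.e. $\val(h-g)>\gamma=\val(h)$, which is exactly $(2)$.

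For $(2)\Leftrightarrow(3)$: the direction $(3)\Rightarrow(2)$ is immediate, since $\val(h)=\val(h-0)\in\Delta$, so if $\Delta$ has no maximum there must be $g\in G$ with $\val(h-g)>\val(h)$. For $(2)\Rightarrow(3)$, suppose towards a contradiction that for some $h\in H\setminus G$ the set $\Delta$ attains its maximum at $g_1\in G$. Then $h-g_1\in H\setminus G$, so by $(2)$ applied to $h-g_1$ there is $g_2\in G$ with $\val((h-g_1)-g_2)>\val(h-g_1)$; but $\val(h-(g_1+g_2))$ lies in $\Delta$ and strictly exceeds $\val(h-g_1)$, contradicting maximality.

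For $(3)\Rightarrow(1)$, I must check $\Gamma_G=\Gamma_H$ and $R_{G,\gamma}=R_{H,\gamma}$ for every $\gamma$. The inclusion $\Gamma_G\subseteq\Gamma_H$ and the injectivity of $C_{G,\gamma}/V_{G,\gamma}\to C_{H,\gamma}/V_{H,\gamma}$ hold for any extension, the latter because $C_{G,\gamma}\cap V_{H,\gamma}=V_{G,\gamma}$. Now take $h\in H\setminus G$; by $(3)\Rightarrow(2)$ choose $g\in G$ with $\val(h-g)>\val(h)$, so $\val(g)=\val(h)$, whence $\val(h)\in\Gamma_G$ and therefore $\Gamma_H=\Gamma_G$. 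For surjectivity onto $R_{H,\gamma}$, let $h\in C_{H,\gamma}\setminus G$: if $\val(h)>\gamma$ then $h\in V_{H,\gamma}$ and its class is $0$, while if $\val(h)=\gamma$ the element $g$ just found has $\val(g)=\gamma$, so $g\in C_{G,\gamma}$ and $h-g\in V_{H,\gamma}$, meaning the class of $h$ in $R_{H,\gamma}$ is the image of the class of $g$ in $R_{G,\gamma}$. Hence $H/G$ is immediate. All steps are elementary; the only point requiring a little care is the bookkeeping of covers, values and ribs in $(3)\Rightarrow(1)$ together with the repeated use of the ultrametric value computation, and I do not anticipate a genuine obstacle.
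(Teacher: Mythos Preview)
Your proof is correct and follows essentially the same approach as the paper. The only difference is organizational: the paper establishes $(2)\Leftrightarrow(3)$ and $(1)\Leftrightarrow(2)$ separately, whereas you run the cycle $(1)\Rightarrow(2)\Rightarrow(3)\Rightarrow(1)$; your $(3)\Rightarrow(1)$ passes through $(2)$ anyway, so the underlying arguments coincide, and your version is if anything slightly more detailed in spelling out the rib surjectivity.
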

\begin{proof}
It is easy to show the equivalence $(2) \Leftrightarrow (3)$. We prove $(1) \Leftrightarrow (2)$. 

$(1) \Rightarrow (2)$: Let $H$ be an immediate extension of $G$ and $h \in H\setminus G$. Then, there is some $g \in G$ such that $\val(h)=\val(g)$ and $h  \mod V_g=g \mod V_g$. It follows that $\val(h-g)>\val(h)$. 

$(2) \Rightarrow (1)$: Let $h \in H\setminus G$. By (2) we find $g \in G$ such that $\val(h-g)>\val(h)$. Then, $\val(h)=\val(g)$ and $h \mod V_{g}=g \mod V_{g}$, so the bone of $h$ is equal to the the bone of $g$, proving that $H/G$ is immediate.
\end{proof}

To characterise maximal pairs of valued abelian groups, we need the following definitions.

\begin{definition}
Let $(G,\val)$ be a valued abelian group, and let $(a_i)_{i\in I}$ be a sequence in $G$, with $I$ a well-ordered index set with no maximal element. 
    \begin{itemize}
        \item  The sequence $(a_i)_{i \in I}$ is called  \textit{pseudo-Cauchy} if there is $\alpha \in I$ such that for all $\alpha< i<j<k$, $\val(a_i-a_j) < \val(a_j -a_k)$.
        \item If $(a_i)_{i \in I}$ is a pseudo-Cauchy sequence, an element $a\in G$ is called a \emph{pseudo-limit} of $(a_i)_{i \in I}$ if there exists $\alpha \in I$ such that for all $\alpha< i<j$, $\val(a_i-a)=\val(a_i-a_j)$.
        \item $(G,\val)$ is called \emph{pseudo-complete} if every pseudo-Cauchy sequence in $G$ admits a pseudo-limit in $G$.
    \end{itemize}
\end{definition}

\begin{remark}\label{rmk:EventuallyCstorIncreasing}
    If $(a_i)_{i\in I}$ is a pseudo-Cauchy sequence in some valued group $(G,\val)$, then $(\val(a_i))_i$ is eventually strictly increasing or eventually constant.  
\end{remark}
As for the notion of maximality, we relativise the notion of pseudo-completeness to an extension $H/G$ of valued abelian groups.
\begin{definition}
We say that $H/G$ is \emph{pseudo-complete} if every pseudo-Cauchy sequence in $G$ admitting a pseudo-limit in $H$ admits a pseudo-limit in $G$.
\end{definition}

Throughout the paper, if $(a_i)_{i \in I}$ is a sequence of elements in a valued group $G$, we will say that an assertion about its elements holds for $a_i$ \emph{eventually} if there is some $i_0 \in I$ such that it holds for all $a_i$ with $i \ge i_0$.

\begin{proposition}
\label{PropositionPseudo-completenessImpliesMaximality}
Let $K/G$ be an immediate extension of valued abelian groups, and let $h \in K \setminus G$. Then there is a pseudo-Cauchy sequence in $G$ with pseudo-limit $h$ and with no pseudo-limit in $G$. In particular, if $H/G$ is pseudo-complete, then $H/G$ is maximal; and if $G$ is pseudo-complete, then $G$ is maximal.
\end{proposition}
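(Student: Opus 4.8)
The plan is to build the pseudo-Cauchy sequence directly out of the set of approximations to $h$ from within $G$. Fix $h\in K\setminus G$ and consider the set $\Delta=\Set{\val(h-g)\mid g\in G}\subseteq\Gamma_G$. By Fact~\ref{FactCharacterisationImmediateExtension}, since $K/G$ is immediate, $\Delta$ has no maximum. First I would let $(\gamma_i)_{i\in I}$ be a strictly increasing cofinal sequence in $\Delta$ indexed by a well-ordered set $I$ with no last element (such a sequence exists because $\Delta$ is a nonempty totally ordered set with no maximum: take $I$ to be an ordinal cofinal in $\Delta$), and for each $i$ choose $a_i\in G$ with $\val(h-a_i)=\gamma_i$.

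Next I would check that $(a_i)_{i\in I}$ is pseudo-Cauchy with pseudo-limit $h$. For $i<j$ we have $a_i-a_j=(h-a_j)-(h-a_i)$, and since $\val(h-a_j)=\gamma_j>\gamma_i=\val(h-a_i)$, the ultrametric inequality (in its sharpened form, valid because the two values differ) gives $\val(a_i-a_j)=\gamma_i$. Hence for $i<j<k$ one gets $\val(a_i-a_j)=\gamma_i<\gamma_j=\val(a_j-a_k)$, so the sequence is pseudo-Cauchy; and $\val(a_i-h)=\gamma_i=\val(a_i-a_j)$ for all $i<j$, so $h$ is a pseudo-limit. It remains to see that no element of $G$ is a pseudo-limit of $(a_i)_{i\in I}$. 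Suppose $b\in G$ were one; then eventually $\val(a_i-b)=\val(a_i-a_j)=\gamma_i$. But then $\val(h-b)\ge\min\{\val(h-a_i),\val(a_i-b)\}$ and in fact, computing $h-b=(h-a_i)-(b-a_i)$ with both terms of value $\gamma_i$, we would only get $\val(h-b)\ge\gamma_i$ for all sufficiently large $i$, hence $\val(h-b)\ge\sup_i\gamma_i$. Since $(\gamma_i)$ is cofinal in $\Delta$ and $\Delta$ has no maximum, this forces $\val(h-b)\notin\Delta$ unless $h=b$; but $\val(h-b)\in\Delta$ by definition of $\Delta$ (as $b\in G$), so $h=b\in G$, contradicting $h\notin G$.

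For the ``in particular'' clauses: if $H/G$ is pseudo-complete but not maximal, there is a non-trivial intermediate immediate extension $G\subsetneq K\subseteq H$, and applying the first part to some $h\in K\setminus G$ yields a pseudo-Cauchy sequence in $G$ with pseudo-limit $h\in H$ but none in $G$, contradicting pseudo-completeness of $H/G$. Taking $H=G$ (so that the only intermediate extension is $G$ itself) and noting that $G/G$ is vacuously pseudo-complete exactly when $G$ is pseudo-complete, the same argument shows that a pseudo-complete $G$ is maximal: any proper immediate extension $K\supsetneq G$ would, via the first part, produce a pseudo-Cauchy sequence in $G$ with no pseudo-limit in $G$.

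The main obstacle I anticipate is purely bookkeeping around the index set: one must be careful that the chosen cofinal sequence $(\gamma_i)_{i\in I}$ in $\Delta$ is genuinely well-ordered with no maximal element so that it is a legitimate index set for a pseudo-Cauchy sequence, and that all the ``eventually'' statements are interpreted with respect to this $I$. The valuation-theoretic computations themselves are routine applications of axiom~\ref{(ii)} and its sharpened form recorded after Definition~\ref{DefinitionValuedGroup}; the only subtlety is remembering that the sharpened inequality $\val(g-h)>\min\{\val(g),\val(h)\}$ forces $\val(g)=\val(h)$, which is exactly what makes the equalities $\val(a_i-a_j)=\gamma_i$ come out on the nose rather than as mere inequalities.
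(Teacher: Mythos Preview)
Your proposal is correct and follows essentially the same approach as the paper: both use Fact~\ref{FactCharacterisationImmediateExtension} to see that $\Delta$ has no maximum, choose $a_i\in G$ with $\val(h-a_i)$ strictly increasing and cofinal in $\Delta$, and verify the pseudo-Cauchy and pseudo-limit properties via the sharpened ultrametric inequality. You are in fact more explicit than the paper on why no $b\in G$ can be a pseudo-limit; the only minor wobble is the aside ``Taking $H=G$\ldots'' in the absolute case, which is not the right reduction (the extension $G/G$ is vacuously pseudo-complete regardless), but you immediately give the correct direct argument anyway.
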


\begin{proof}
By Fact \ref{FactCharacterisationImmediateExtension}, the set $\Delta=\{\val(h-g) \ \vert \ g\in G\}$ does not admit a maximum. Consider a sequence $(a_i)_{i\in I}$ of elements in $G$ such that $\val(a_i-h)$ is strictly increasing and  cofinal in  $\Delta$. It follows that, for any $i<j<k$, $\val(a_i-a_j)=\min \{\val(a_i-h),\val(a_j-h)\}=\val(a_i-h) < \val(a_j-h)=\min \{\val(a_j-h),\val(a_k-h)\}=\val(a_j-a_k)$. Hence, the sequence $(a_i)_{i \in I}$ is pseudo-Cauchy. By construction $h$ is a pseudo-limit of $(a_i)_{i \in I}$, and $(a_i)_{i \in I}$ does not admit a pseudo-limit in $G$.
\end{proof}

We will prove the converse, but we need first the following lemma:

\begin{lemma}\label{LemmaMaximalImpliesDeltaMaximum}
    Let $(G,\val)$ be a $\mathbb{Z}$-invariant maximal valued abelian group. Then for any $g\in G$ and $m \in \mathbb{N}$, the set  $\Delta_m(g):=\{\val(g-mg')\mid g'\in G\}$ admits a maximum. In other words, the extension $G\supseteq mG$ of valued abelian groups is pseudo-complete.
\end{lemma}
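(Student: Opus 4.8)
The goal is to show that for a $\mathbb{Z}$-invariant maximal valued abelian group $G$, the set $\Delta_m(g) = \{\val(g - mg') \mid g' \in G\}$ has a maximum for every $g \in G$ and $m \in \mathbb{N}$. The strategy is to argue by contradiction: if $\Delta_m(g)$ has no maximum, then (exactly as in the proof of Proposition~\ref{PropositionPseudo-completenessImpliesMaximality}) one can build a pseudo-Cauchy sequence $(mg'_i)_{i\in I}$ inside $mG \subseteq G$ whose values of differences with $g$ are strictly increasing and cofinal in $\Delta_m(g)$, so that $g$ is a pseudo-limit of this sequence. I want to turn this into a proper immediate extension of $G$, contradicting maximality — but since $g$ already lies in $G$, I cannot use $g$ itself. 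Instead, the plan is to produce a *new* element, in a proper extension of $G$, that is also a pseudo-limit of the same sequence and whose bone coincides with one already present in $G$.

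Concretely, first I check the equivalence reformulation: "$\Delta_m(g)$ has a maximum for all $g$" is equivalent to "$G \supseteq mG$ is pseudo-complete", since a pseudo-Cauchy sequence $(h_i)$ in $mG$ with a pseudo-limit in $G$ is, after writing $h_i = mg'_i$ and noting its pseudo-limit $h$ satisfies $\val(h_i - h)$ eventually strictly increasing, exactly witnessing that $\{\val(h-g') \mid g' \in G\} \supseteq \{\val(h - mg'_i)\}$; the details here are routine bookkeeping about pseudo-limits and I would just sketch them. So it suffices to show $G \supseteq mG$ is pseudo-complete. For $m = 0$ or $m = 1$ this is trivial ($0G = \{0\}$, $1G = G$), so assume $m \geq 2$.

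Now suppose $(h_i)_{i\in I}$ is a pseudo-Cauchy sequence in $mG$ with a pseudo-limit $h \in G$. Consider $g'_i \in G$ with $h_i = m g'_i$. By $\mathbb{Z}$-invariance, $\val(g'_i - g'_j) = \val(m g'_i - m g'_j) = \val(h_i - h_j)$, so $(g'_i)_{i\in I}$ is itself pseudo-Cauchy in $G$. If $(g'_i)$ has a pseudo-limit $g^\ast \in G$, then $m g^\ast$ is a pseudo-limit of $(h_i) = (m g'_i)$ in $mG$ (again using $\mathbb{Z}$-invariance to transport the defining condition), and we are done. So the problem reduces to: a $\mathbb{Z}$-invariant maximal valued abelian group is pseudo-complete — but that is false in general unless we are careful, so the real point must be that $(g'_i)$ *does* have a pseudo-limit in $G$ precisely because $G$ is maximal. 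Here is where maximality enters: if $(g'_i)$ had no pseudo-limit in $G$, adjoin a formal pseudo-limit $t$; by the standard construction (analogous to Kaplansky's for fields, and to Proposition~\ref{PropositionPseudo-completenessImpliesMaximality} run in reverse) the subgroup $G + \mathbb{Z}t$ of an abelian group extension can be equipped with a valuation extending that of $G$ so that the extension is immediate — the value of $g'' + kt$ being $\val(g'' + k g'_i)$ for $i$ large, which stabilizes because $(g'_i)$ is pseudo-Cauchy with no pseudo-limit and $k g'_i$ is again pseudo-Cauchy with no pseudo-limit by $\mathbb{Z}$-invariance. This contradicts maximality of $G$. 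Hence $(g'_i)$ has a pseudo-limit in $G$, and we conclude.

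The main obstacle is the last step: verifying that adjoining a formal pseudo-limit $t$ to a pseudo-Cauchy sequence $(g'_i)$ with no pseudo-limit in $G$ yields a well-defined *immediate* extension of valued abelian groups. One must check that the recipe $\val(g'' + kt) := \val(g'' + k g'_i)$ for $i$ eventually large is independent of the choice of representative and of $i$ (using that $(k g'_i)$ remains pseudo-Cauchy without pseudo-limit — this is exactly where $\mathbb{Z}$-invariance is indispensable, since otherwise $\val(k g'_i)$ could behave badly), that it satisfies the ultrametric inequality, that it is $\mathbb{Z}$-invariant, and that no new values or ribs appear. I expect this to be packaged as a separate lemma on the existence of immediate pseudo-limit extensions, and the proof of Lemma~\ref{LemmaMaximalImpliesDeltaMaximum} would cite it; if such a lemma is not yet available in the text, I would prove the needed instance inline, following the blueprint of Proposition~\ref{PropositionPseudo-completenessImpliesMaximality}.
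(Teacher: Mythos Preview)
Your argument has a genuine gap at the crucial step. In your construction of the immediate extension $G+\mathbb{Z}t$, you assert that for every nonzero integer $k$ the sequence $(kg'_i)_{i\in I}$ is pseudo-Cauchy \emph{with no pseudo-limit in $G$}, citing $\mathbb{Z}$-invariance. The pseudo-Cauchy part is fine, but the ``no pseudo-limit'' part fails in your own setup: for $k=m$ you have $(mg'_i)=(h_i)$, and by hypothesis this sequence \emph{does} admit the pseudo-limit $h\in G$. Consequently the recipe $\val(g''+kt):=\val(g''+kg'_i)$ for $i$ large does not stabilise at the element $-h+mt$, since $\val(-h+mg'_i)=\val(h_i-h)$ is strictly increasing. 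You therefore cannot assign a valuation to $mt-h\neq 0$ in $G+\mathbb{Z}t$ in a way that is simultaneously well-defined, satisfies axiom~(i), and keeps the extension immediate.

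More broadly, the implication ``$(a_i)$ has no pseudo-limit in $G$ $\Rightarrow$ $(ka_i)$ has no pseudo-limit in $G$'' is \emph{not} a consequence of $\mathbb{Z}$-invariance alone: a pseudo-limit $h$ of $(ka_i)$ would give one for $(a_i)$ only if $h$ were divisible by $k$ in $G$, which need not hold. In fact, unwinding the definitions shows that ``$(ka_i)$ has a pseudo-limit $h$ while $(a_i)$ has none'' is exactly the situation in which $\Delta_k(h)$ has no maximum. So the missing implication is equivalent to the very lemma you are proving, and the argument is circular.

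The paper avoids this trap by working on the other side of the extension. Since multiplication by $m$ is an isomorphism $(G,\val)\cong(mG,\val)$ of valued groups, $mG$ is itself maximal. For $m=p$ prime and $g\in G\setminus pG$, one shows that $\Delta_p(g')=\Delta_p(g)$ for every $g'\in\langle pG\cup\{g\}\rangle\setminus pG$ (because any such $g'$ lies in $ig+pG$ for some $0<i<p$, and $i$ is invertible modulo $p$). Hence if $\Delta_p(g)$ had no maximum, Fact~\ref{FactCharacterisationImmediateExtension} would make $\langle pG\cup\{g\}\rangle\supseteq pG$ an immediate extension, contradicting maximality of $pG$. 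The point is that over $pG$ the quotient $\langle pG,g\rangle/pG$ is cyclic of prime order, so there is only one nonzero coset up to units and no analogue of the problematic element $mt-h$ arises. The general $m$ then follows by induction on the number of prime factors, using transitivity of pseudo-completeness along $G\supseteq m'G\supseteq mG$.
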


\begin{proof}
The cases $m=0$ and $m=1$ trivially hold. Assume now that $m\geq2$. As $(G,val)$ is $\Z$-invariant, multiplication by $m$ induces an isomorphism 
of valued groups $$\rho_m:(G,val)\cong(mG,val)$$ which induces the identity on the value set $\Gamma$. In particular $(mG,val)$ is maximal, since $(G,val)$ is maximal by assumption.

Let us now first consider the case where $m=p$ is a prime number, and let $g\in G\setminus pG$. 

\begin{claim}
    For any $g'\in \langle pG\cup\{g\}\rangle\setminus pG$ one has $\Delta_p(g')=\Delta_p(g)$.
\end{claim}

\begin{proof}[Proof of the claim] We note first that we have:
\[ \langle pG\cup\{g\}\rangle\setminus pG=\bigcup_{0<i<p}(pG+ig).\] Since $p$ is prime, one has ${g\in \bigcup_{0<i<p}(pG+ig')}$ for any such $g'$, so by symmetry it is enough to show that $\Delta_p(g)\subseteq \Delta_p(g')$. Let $0<i<p$ and $h'\in G$ such that $g'=ig+ph'$. For any $h\in G$ we have $val(g-ph)=val(ig-iph)=val(g'-p(ih+h'))$. This shows that any $\gamma=val(g-ph)\in\Delta_p(g)$ lies in $\Delta_p(g')$, proving the claim.
\end{proof}

It follows in particular from the claim that if for $g\in G\setminus pG$ the set $\Delta_p(g)$ has no maximum, the same holds for any $g'\in \langle pG\cup\{g\}\rangle\setminus pG$, and so 
$\langle pG\cup\{g\}\rangle\supseteq pG$ is an immediate extension by Fact~\ref{FactCharacterisationImmediateExtension}. This contradicts maximality of $pG$, thus proving the case $m=p$.

The general case now follows by induction on $m$. Indeed, if $m=pm'$ for some prime number $p$, then inductively the extension $G\supseteq m'G$ is pseudo-complete, as is the extension $m'G\supseteq pm'G=mG$ by the prime number case. The result follows, since if $G/H$ and $H/I$ are pseudo-complete extensions of valued abelian groups, then $G/I$ is pseudo-complete as well.
\end{proof}

\begin{remark}
    Since the conclusion of Lemma~\ref{LemmaMaximalImpliesDeltaMaximum} is first order, we deduce that it holds in any $\Z$-invariant valued abelian group which is elementarily equivalent to a maximal one. 
\end{remark}

\begin{theorem}
\label{TheoremMaximalityEquivalentPseudo-completeness}
Let $G$ be a $\mathbb{Z}$-invariant valued abelian group.
Assume that for all $g\in G$ and $m\in \mathbb{N}$, the set $\Delta_m(g)=\{\val(g-mg')\mid g'\in G\}$ admits a maximum. Let $H/G$ be an extension of valued abelian groups, where $H$ is not necessarily $\mathbb{Z}$-invariant.
Then $H/G$ is maximal if and only if $H/G$ is pseudo-complete.

In particular, a $\mathbb{Z}$-invariant valued group $G$ is maximal if and only if it is pseudo-complete.
\end{theorem}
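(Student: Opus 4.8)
The plan is to prove the two directions separately. The implication ``pseudo-complete $\Rightarrow$ maximal'' is already Proposition~\ref{PropositionPseudo-completenessImpliesMaximality}, so nothing new is needed there; the content of the theorem is the converse, ``$H/G$ maximal $\Rightarrow$ $H/G$ pseudo-complete'', under the hypothesis that every $\Delta_m(g)$ (computed inside $G$) has a maximum. So the real work is: given a pseudo-Cauchy sequence $(a_i)_{i\in I}$ in $G$ that has a pseudo-limit $h\in H$, produce a pseudo-limit of $(a_i)$ already in $G$. Suppose not; then in particular $h\notin G$. I will show the extension $G\langle h\rangle := \langle G\cup\{h\}\rangle \supseteq G$ is immediate, contradicting maximality of $H/G$ (note $G\subsetneq G\langle h\rangle\subseteq H$).

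To show $G\langle h\rangle/G$ is immediate I would use Fact~\ref{FactCharacterisationImmediateExtension}: it suffices to check that for every $k\in G\langle h\rangle\setminus G$ the set $\Delta(k) = \{\val(k-g)\mid g\in G\}$ has no maximum. A general element of $G\langle h\rangle$ has the form $k = g_0 + n h$ for some $g_0\in G$ and $n\in\mathbb{Z}$, and $k\notin G$ forces $n\neq 0$; multiplying through, $\Delta(k)$ is an affine translate of $\Delta(nh) := \{\val(nh-g)\mid g\in G\}$, so it is enough to treat $k = nh$ with $n\geq 1$. Now comes the key step: I claim $\Delta(nh) = \{\val(a_i - ng')\mid i\in I,\ g'\in G\}^{\uparrow}$, more precisely that $\Delta(nh)$ has a maximum iff $\Delta_n(g)$ has a maximum for a suitable $g\in G$ linked to the sequence. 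The mechanism: since $h$ is a pseudo-limit of $(a_i)$, for $g'\in G$ one has $\val(nh - ng') = \val(n(h-a_i) + n(a_i-g')) $, and using $\Z$-invariance of $\val$ on $G$ together with $\val(h-a_i)$ eventually dominating (the defining property of pseudo-limit relative to the strictly increasing sequence $\val(a_i-h)$), for each fixed $g'$ we get $\val(nh-ng') = \min\{\val(a_i-ng'),\,\val(a_i-h)\}$ for $i$ large. Letting $i\to$ cofinal and using that $\val(a_i-h)=\val(a_i-a_j)$ is strictly increasing and cofinal in $\Delta = \{\val(h-g)\mid g\in G\}$, one checks $\Delta(nh)$ admits no maximum, exactly because $\Delta_n(\tilde g)$ for the relevant $\tilde g\in G$ does admit one by hypothesis — so a putative maximal value $\val(nh - ng')$ would be forced to equal one of the $\val(a_i-h)$, which can be strictly increased by taking a later term of the sequence. (Here one has to be a little careful: $\Delta_n$ is defined with the element being approximated in $G$; the hypothesis is applied to $g := a_{i_0}$ for $i_0$ past the pseudo-Cauchy threshold, and one transfers the max of $\Delta_n(a_{i_0})$ into a statement about $\Delta(nh)$.)

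Once $G\langle h\rangle/G$ is shown immediate and nontrivial, this contradicts the maximality of $H/G$, so $(a_i)$ must have a pseudo-limit in $G$, i.e.\ $H/G$ is pseudo-complete. For the final sentence of the theorem, take $H = G$: the hypothesis on $\Delta_m(g)$ holds for a maximal $\Z$-invariant $G$ by Lemma~\ref{LemmaMaximalImpliesDeltaMaximum}, and ``$G/G$ maximal'' is vacuous while ``$G/G$ pseudo-complete'' is literally ``$G$ pseudo-complete'', so the equivalence ``$G$ maximal $\iff$ $G$ pseudo-complete'' drops out of the relative statement together with Proposition~\ref{PropositionPseudo-completenessImpliesMaximality}.

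I expect the main obstacle to be the bookkeeping in the key step: carefully handling the passage from the element $nh$ (with $h\notin G$ and $H$ possibly not $\Z$-invariant, so I may only use $\val(nh)=\val(n(h-a_i)+na_i)$ and cannot invoke $\val(nh)=\val(h)$ directly) to the set $\Delta_n(g)$ for $g\in G$, and making the ``the max would have to be some $\val(a_i-h)$, contradiction'' argument watertight for all $n\geq 1$ simultaneously rather than just for primes. The prime-by-prime inductive structure used in Lemma~\ref{LemmaMaximalImpliesDeltaMaximum} suggests that if a direct argument for general $n$ gets unwieldy, one can instead reduce to $n$ prime and compose pseudo-complete extensions, exactly as in that lemma's proof.
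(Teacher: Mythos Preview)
Your overall strategy matches the paper's: assume a pseudo-Cauchy sequence $(a_i)$ in $G$ with pseudo-limit $h\in H\setminus G$ and no pseudo-limit in $G$, and derive a contradiction by showing $G\langle h\rangle/G$ is immediate. Two points need fixing.

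\medskip

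\textbf{The key step only treats $g\in nG$.} You need $\Delta(nh)=\{\val(nh-g)\mid g\in G\}$ to have no maximum, for \emph{arbitrary} $g\in G$, but your computation $\val(nh-ng')=\val(n(h-a_i)+n(a_i-g'))$ only addresses $g$ of the form $ng'$. The invocation of ``$\Delta_n(\tilde g)$ for the relevant $\tilde g$'' is never made precise. The paper's argument is cleaner and handles general $g$ directly: fix any $g\in G$ and any $m\geq 1$. Since $G$ is $\Z$-invariant, $(ma_i-g)_i$ is pseudo-Cauchy in $G$, so $(\val(ma_i-g))_i$ is eventually constant or eventually strictly increasing. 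If strictly increasing, apply the hypothesis to \emph{this} $g$: let $g'\in G$ realise $\max\Delta_m(g)=\val(g-mg')$. Then $\val(a_i-g')=\val(ma_i-mg')\geq\min\{\val(ma_i-g),\val(g-mg')\}=\val(ma_i-g)$, which is strictly increasing; hence $g'$ is a pseudo-limit of $(a_i)$ in $G$, contradiction. So $(\val(ma_i-g))_i$ is eventually constant, equal to some $\gamma\in\Gamma_G$. One then checks (using $\val(m(h-a_i))\geq\val(h-a_i)$, proved by induction on $m$ since $H$ need not be $\Z$-invariant) that $mh-g$ is a pseudo-limit of $(ma_i-g)_i$, whence $\val(mh-g)=\gamma$ and $mh-g$ has the same bone as $ma_{i_0}-g\in G$. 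This is immediacy.

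\medskip

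\textbf{The ``In particular'' derivation is wrong.} Taking $H=G$ yields nothing: both ``$G/G$ maximal'' and ``$G/G$ pseudo-complete'' are vacuously true (there are no intermediate extensions, and every pseudo-Cauchy sequence in $G$ with a pseudo-limit in $G$ trivially has one in $G$). To get the absolute statement, assume $G$ is maximal; then the $\Delta_m$ hypothesis holds by Lemma~\ref{LemmaMaximalImpliesDeltaMaximum}. If $G$ were not pseudo-complete, take a pseudo-Cauchy sequence without pseudo-limit in $G$ and, by compactness, pass to an elementary extension $H\supsetneq G$ containing a pseudo-limit. Then $H/G$ is not pseudo-complete, hence by the first part $H/G$ is not maximal, so $G$ admits a proper immediate extension inside $H$, contradicting maximality of $G$. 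The converse is Proposition~\ref{PropositionPseudo-completenessImpliesMaximality}.
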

The condition on the sets $\Delta_m(g)$ will be later denoted by (M).  Notice that the characterisation of maximal $\mathbb{Z}$-invariant valued groups is a generalisation of \cite[Theorem C6]{Hol63}.

\begin{proof}
We start with a proof of the first statement: 

        ($\Leftarrow$) This direction follows immediately from Proposition~\ref{PropositionPseudo-completenessImpliesMaximality}.
        
        ($\Rightarrow$) Let $(a_i)_{i\in I}$ be a pseudo-Cauchy sequence in $G$ without pseudo-limit in $G$, and assume that $(a_i)_{i\in I}$ has a pseudo-limit $a$ in $H \setminus G$. Let us show that the valued abelian group $G'$ generated by $a$ and $G$ is an immediate extension of $G$.  
        \begin{claim}
            For all $g\in G$ and $m\in \mathbb{N}$, $(\val(ma_i-g))_{i\in I}$ is eventually constant.
        \end{claim}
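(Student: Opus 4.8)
The plan is to reduce the stabilisation of $\bigl(\val(ma_i-g)\bigr)_{i\in I}$ to that of a single, well-understood quantity. The case $m=0$ is immediate since $\val(-g)=\val(g)$, so assume $m\ge 1$. By the standing hypothesis, $\Delta_m(g)=\{\val(g-mg')\mid g'\in G\}$ has a maximum $\gamma^\ast$, say attained at $g^\ast\in G$, so that $\val(g-mg^\ast)=\gamma^\ast$. Now write
\[
ma_i-g \;=\; m(a_i-g^\ast)\;-\;(g-mg^\ast).
\]
Since $a_i,g^\ast\in G$ and $G$ is $\mathbb{Z}$-invariant, $\val\bigl(m(a_i-g^\ast)\bigr)=\val(a_i-g^\ast)=:w_i$, while $\val(g-mg^\ast)=\gamma^\ast$. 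The ultrametric inequality then gives $\val(ma_i-g)=\min\{w_i,\gamma^\ast\}$ whenever $w_i\ne\gamma^\ast$; and in the remaining case $w_i=\gamma^\ast$ one still has $\val(ma_i-g)=\min\{w_i,\gamma^\ast\}$, because $\val(ma_i-g)=\val(g-ma_i)$ lies in $\Delta_m(g)$ and is therefore $\le\gamma^\ast$. So in all cases
\[
\val(ma_i-g)=\min\{\,\val(a_i-g^\ast),\,\gamma^\ast\,\},
\]
and it suffices to show that $\val(a_i-g^\ast)$ is eventually constant.

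For the latter I would first record the following standard fact about a pseudo-Cauchy sequence $(b_i)_{i\in I}$ in a valued abelian group: if $c$ is an element for which $\val(b_i-c)$ is \emph{not} eventually constant, then $c$ is a pseudo-limit of $(b_i)$. Indeed, let $\gamma_i:=\val(b_i-b_j)$ for $i<j$ past the pseudo-Cauchy threshold (independent of $j$, and strictly increasing in $i$). If $\val(b_{i_0}-c)\ne\gamma_{i_0}$ for some such $i_0$, then for every $j>i_0$ the identity $b_j-c=(b_j-b_{i_0})+(b_{i_0}-c)$ together with $\val(b_j-b_{i_0})=\gamma_{i_0}$ forces $\val(b_j-c)$ to be constant in $j$ (equal to $\val(b_{i_0}-c)$ if this is $<\gamma_{i_0}$, and equal to $\gamma_{i_0}$ if it is $>\gamma_{i_0}$); hence $\val(b_i-c)$ would be eventually constant, contrary to assumption. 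So $\val(b_i-c)=\gamma_i$ for all large $i$, i.e.\ $c$ is a pseudo-limit of $(b_i)$.

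Applying this with $b_i=a_i$ and $c=g^\ast$: since $(a_i)_{i\in I}$ has no pseudo-limit in $G$ and $g^\ast\in G$, the element $g^\ast$ is not a pseudo-limit of $(a_i)$, hence $\val(a_i-g^\ast)$ is eventually constant. Combined with the displayed identity, $\val(ma_i-g)$ is eventually constant, which is the claim.

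The only real work is the pseudo-Cauchy dichotomy in the third paragraph — in particular getting the boundary subcase $\val(b_{i_0}-c)>\gamma_{i_0}$ right — and correctly handling the case $w_i=\gamma^\ast$ in the first paragraph, where one genuinely uses that $\gamma^\ast$ is the \emph{maximum} of $\Delta_m(g)$ and not merely an upper bound for the values along the sequence. All three hypotheses enter: $\mathbb{Z}$-invariance of $G$ for the decomposition identity, attainment of $\sup\Delta_m(g)$ for the choice of $g^\ast$ and the boundary case, and absence of a pseudo-limit of $(a_i)$ in $G$ for the final stabilisation.
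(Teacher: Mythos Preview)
Your proof is correct and uses the same key ingredients as the paper: the element $g^\ast$ (the paper calls it $g'$) realising $\max\Delta_m(g)$, the decomposition $ma_i-g=m(a_i-g^\ast)-(g-mg^\ast)$, $\mathbb{Z}$-invariance to rewrite $\val(m(a_i-g^\ast))$ as $\val(a_i-g^\ast)$, and the absence of a pseudo-limit of $(a_i)$ in $G$ to force stabilisation. The only presentational difference is that the paper first invokes the pseudo-Cauchy dichotomy for $(ma_i-g)$ (eventually constant or eventually strictly increasing) and derives a contradiction in the increasing case by showing $\val(a_i-g')$ would then be strictly increasing, whereas you establish the exact identity $\val(ma_i-g)=\min\{\val(a_i-g^\ast),\gamma^\ast\}$ first and then stabilise the right-hand side via the general ``not eventually constant $\Rightarrow$ pseudo-limit'' fact; your route is slightly cleaner and makes the use of $\gamma^\ast=\max\Delta_m(g)$ in the boundary case $w_i=\gamma^\ast$ fully explicit.
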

        
          \begin{proof}[Proof of the claim]
        The case $m=0$ is trivial. Let $m$ be a positive integer and $g \in G$ be fixed. As $\val(mb)=\val(b)$ for all $b\in G$, $(ma_i)_{i\in I}$ is a pseudo-Cauchy sequence. Thus, $(ma_i-g)_{i\in I}$ is a pseudo-Cauchy sequence, and so in particular $(\val(ma_i-g))_{i\in I}$ is eventually constant or eventually strictly increasing. \\
        Suppose for contradiction that $(\val(ma_i-g))_{i\in I}$  is eventually strictly increasing. We may assume that it is strictly increasing. By assumption, there is $g'\in G$ such that $\val(g-mg')= \max \Delta_m(g)$. Then for all $i\in I$, $\val(a_i-g')=\val(ma_i-mg') \geq \min (\val(ma_i-g),\val(g-mg'))$. Since $(\val(ma_i-g))_{i\in I}$ is strictly increasing and $\val(ma_i-g) \leq \val(g-mg')$, it follows that $\val(a_i-g')$ is strictly increasing, and so $g'$ is a pseudo-limit in $G$ of $(a_i)_{i\in I}$. Contradiction.
        \end{proof}
        
Fix $m \in \mathbb{N}$ and $g\in G$. We have to show that there exists an element in $G$ with the same bone as $ma-g$. We claim that $ma-g$ is a pseudo-limit of the pseudo-Cauchy sequence $(ma_i-g)_{i\in I}$. Indeed, else $(\val(ma_i-ma))_{i\in I}$ would be eventually constant, say eventually equal to $\delta$, with $\delta<\val(ma_i-ma_j)=\val(a_i-a_j)=:\gamma_i$ for all $i<j$ large enough. On the other hand, one shows by induction on $m$ that  $\val(ma_i-ma)\geq\val(a_i-a)$, which equals $\gamma_i$ for $i$ large enough, a contradiction.

Let $\gamma \in \Gamma_G$ and $i_0 \in I$ be such that $\gamma=\val(ma_i-g)$ for every $i \geq  i_0$. These exist by the claim. It follows that $\val(ma-g)= \gamma$, so in particular $\val(ma-g) \in \Gamma_G$, and $ma-g \mod V_{ma-g} =ma-g \mod V_{ma_{i_0}-g} = ma_{i_0}-g \mod V_{ma_{i_0}-g}$.   

Let us now show the second statement. The direction ($\Leftarrow$) holds by Proposition~\ref{PropositionPseudo-completenessImpliesMaximality}. As for ($\Rightarrow$), by Lemma~\ref{LemmaMaximalImpliesDeltaMaximum} we may apply the first part to any maximal $\Z$-invariant $G$. We may thus finish the proof by noticing that whenever $(a_i)_{i<\lambda}$ is a pseudo-Cauchy sequence in a valued abelian group without pseudo-limit in $G$, by compactness there is an (elementary) extension $\widetilde{G}$ of $G$ containing a pseudo-limit of $(a_i)_{i<\lambda}$, and so if $G$ is not pseudo-complete, there is an extension $\tilde{G}/G$ which is not pseudo-complete.
    \end{proof}

Important examples of maximal valued abelian groups are Hahn products of archimedean ordered abelian groups:

\begin{proposition}[{\cite[Lemma C4]{Hol63}}]
\label{PropositionMaximalityofHahnproducts}
Let $I$ be a totally ordered set, and let $(G_i)_{i \in I}$ be a family of archimedian ordered abelian groups. Then the Hahn product $\hprod_{i \in I}G_i$ is pseudo-complete, and thus maximal, with respect to the natural valuation. 
\end{proposition}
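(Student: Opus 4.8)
The plan is to show directly that every pseudo-Cauchy sequence in $\hprod_{i\in I}G_i$ has a pseudo-limit in $\hprod_{i\in I}G_i$; maximality then follows from Proposition~\ref{PropositionPseudo-completenessImpliesMaximality} (more precisely from the ``In particular'' part of Proposition~\ref{PropositionPseudo-completenessImpliesMaximality}, since $\hprod_{i\in I}G_i$ is a $\mathbb{Z}$-invariant valued abelian group with respect to the natural valuation). So let $(a_k)_{k<\lambda}$ be a pseudo-Cauchy sequence in $H:=\hprod_{i\in I}G_i$, with $\lambda$ a limit ordinal; after passing to a tail we may assume $\val(a_k-a_{k'})<\val(a_{k'}-a_{k''})$ for all $k<k'<k''<\lambda$. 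Write $\gamma_k:=\val(a_{k+1}-a_k)\in I$, so that $(\gamma_k)_{k<\lambda}$ is strictly increasing in $I$ and, for fixed $k$, $\val(a_{k'}-a_k)=\gamma_k$ for all $k'>k$.

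The key point is that the increments stabilise coordinate by coordinate. For $i\in I$ let me write $a_k(i)\in G_i$ for the $i$-th coordinate. Since $\val(a_{k'}-a_k)=\gamma_k$ grows strictly and the order on the value set is the index set $I$ with the reversed(/end-segment) convention of Definition~\ref{DefinitionSkeleton}, for each fixed $i\in I$ the coordinate $a_k(i)$ is eventually constant in $k$: indeed once $\gamma_k > i$ (equivalently once $C_{\gamma_k}$ lies strictly inside the convex subgroup cutting at $i$), the difference $a_{k'}-a_k$ has $i$-th coordinate $0$, so $a_{k'}(i)=a_k(i)$. Define $a(i):=\lim_k a_k(i)\in G_i$, the eventual value. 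I must then check that $a:=(a(i))_{i\in I}$ actually lies in $H$, i.e.\ that $\supp(a)$ is well-ordered in $I$: any $i\in\supp(a)$ satisfies $a(i)=a_{k_i}(i)\neq 0$ for some $k_i$, and one argues that $\supp(a)\subseteq \{\gamma_k : k<\lambda\}\cup\bigcup_k\supp(a_k - a_{k_0})$ for a fixed base index $k_0$ — a countable-type union of well-ordered sets bounded appropriately — so that $\supp(a)$ is well-ordered (here one uses that the set of increments $\{\gamma_k\}$ is well-ordered, being order-isomorphic to an ordinal, together with well-orderedness of each $\supp(a_k-a_{k_0})$). This bookkeeping is the part that needs a little care.

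Finally I would verify that $a$ is a pseudo-limit: fix $k$; then for all $k'>k$ the element $a-a_{k'}$ agrees with $a_{k''}-a_{k'}$ in every coordinate $i$ with $i\le\gamma_{k'}$ (those coordinates have already stabilised by stage $k'+1$, or rather by a stage $\le$ some bound that one pins down), and in coordinates $i>\gamma_{k'}$ both $a-a_{k'}$ and $a_{k''}-a_{k'}$ vanish mod the relevant convex subgroup; hence $\val(a-a_{k'})=\val(a_{k''}-a_{k'})=\gamma_{k'}$ for $k''>k'$. More directly: $\min\supp(a-a_{k'})=\min\supp(a_{k'+1}-a_{k'})=\gamma_{k'}$ since the coordinates below $\gamma_{k'}$ all agree with those of $a_{k'+1}$ and the coordinate at $\gamma_{k'}$ equals $a_{k'+1}(\gamma_{k'})\neq 0$. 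Thus $\val(a_{k}-a)=\gamma_k=\val(a_k-a_{k'})$ for $k'>k$, so $a$ is a pseudo-limit of $(a_k)_{k<\lambda}$ in $H$, proving pseudo-completeness. The main obstacle is the support argument showing $a\in H$ — specifically, making precise that the stabilisation stages can be chosen so that $\supp(a)$ sits inside a well-ordered set; everything else is a direct unwinding of the definitions of the lexicographic valuation and of pseudo-Cauchy sequences.
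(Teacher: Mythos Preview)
The paper does not give its own proof of this proposition; it simply cites \cite[Lemma C4]{Hol63}. Your overall strategy --- define the candidate limit coordinatewise using stabilisation, check it lies in the Hahn product, and then verify it is a pseudo-limit --- is exactly the standard one, and your verification that $\val(a-a_{k'})=\gamma_{k'}$ is correct.

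There are, however, two genuine gaps in the support argument, which you yourself flag as the main obstacle. First, your definition $a(i):=\lim_k a_k(i)$ is only well-posed for indices $i$ with $i<\gamma_k$ for some $k$; for $i$ lying above every $\gamma_k$ the coordinate need not stabilise at all. The fix is simply to set $a(i):=0$ for such $i$ (equivalently, set $a(i):=a_k(i)$ for the least $k$ with $\gamma_k>i$, and $0$ if no such $k$ exists). Second, and more seriously, your proposed inclusion
\[
\supp(a)\subseteq\{\gamma_k:k<\lambda\}\cup\bigcup_k\supp(a_k-a_{k_0})
\]
is false (any $i\in\supp(a_{k_0})$ with $i<\gamma_0$ lies in $\supp(a)$ but not in the right-hand side), and even once repaired the argument ``union of well-ordered sets is well-ordered'' is invalid: an arbitrary, even countable, union of well-ordered subsets of a linear order need not be well-ordered. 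The correct argument is local: given a nonempty $S\subseteq\supp(a)$, pick any $i_0\in S$ and some $k$ with $\gamma_k>i_0$; then $S\cap(-\infty,i_0]\subseteq\supp(a)\cap\{i:i<\gamma_k\}\subseteq\supp(a_k)$ (since $a(i)=a_k(i)$ for $i<\gamma_k$), and the latter is well-ordered, so $S$ has a minimum. With these two corrections your proof goes through.
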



\subsection{The induced valued group modulo $m$}\label{SS:ValueGroupModm}

Let $G$ be a valued abelian group. We will show that, for each natural number $m$, the quotient $G/mG$ may naturally be endowed with the structure of a valued group. Let $m \in \mathbb{N}$. Similarly to ordered abelian groups (Definition~\ref{DefinitionAuxiliarySorts}), we define for $a \in G \setminus mG$ the set $V_a^m$ to be the largest convex subgroup $H$ of $G$ such that $a \notin H+mG$, i.e., 
\[V_a^m := \{x\in G \ \vert \ \forall g\in G \ \val(a+mg) < \val(x)\}.\]

For $a \in mG$, we set $V_a^m=\emptyset$. 

\begin{definition}\label{DefinitionmValuation}
Let $m\in \mathbb{N}$. We denote by $\Gamma^m_G$ or simply $\Gamma^m$ the quotient of $G$ modulo the equivalence relation $\sim^m $ given by 
\[\forall a,b,\in G \quad a\sim^m b \quad \Leftrightarrow \quad V_a^m=V_b^m.\]

We denote by $\val^m \colon G \to \Gamma^m$ the natural projection: for $g\in G$,
\[
\val^m(g)= [g]_{\sim^m} = \{ h\in G \ \vert \ h \sim^m g \}.
\]

We abusively denote by $\infty$ (without specifying $m$) the maximal element of $\Gamma^m$, corresponding to $\{h\in G \ \vert \ h \sim^m 0 \}=mG$.

We see $\Gamma^m$ as the set indexing the convex subgroups $\{ V^m_g\}_{g \in G}$ and write $ V^m_\gamma$ if $\gamma=\val^m(g)$.

Finally, we order $\Gamma^m$ with the reverse inclusion: for any $\gamma,\delta \in \Gamma^m$
\[ \gamma < \delta  :\iff V^m_\delta \subsetneq V^m_\gamma.\]
\end{definition}

\begin{lemma}
For any $m\in\mathbb{N}$, $\val^m$ defines a pre-valuation on $G$. Moreover, $\val^m$ induces a valuation $\widehat{\val}^m:G/mG \rightarrow \Gamma^m$ on the abelian group $G/mG$ via $\widehat{\val}^m(g \mod mG):=\val^m(g)$. 
\end{lemma}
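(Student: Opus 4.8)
The plan is to unwind the definitions. Asserting that $\val^m$ is a \emph{pre-valuation} on $G$ amounts to three things: that $\Gamma^m$ is a linearly ordered set with a top element, that $\val^m(0)=\infty$, and that $\val^m$ satisfies the ultrametric inequality $\val^m(g-h)\ge\min\{\val^m(g),\val^m(h)\}$. On top of this one must check that $\val^m$ is constant on cosets of $mG$ (so that $\widehat{\val}^m$ is well defined, and surjective since $\val^m$ is) and that the induced map satisfies the remaining valuation axiom, $\widehat{\val}^m(\bar g)=\infty\iff\bar g=0$. The only substantial point is the ultrametric inequality; everything else is bookkeeping from the description
\[
V^m_a=\{x\in G\mid \val(a+mz)<\val(x)\text{ for all }z\in G\}.
\]

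For the routine parts: $\Gamma^m$ is linearly ordered because the convex subgroups of $G$ form a chain (they are the inverse images of the end segments of the linearly ordered value set $\Gamma$), and $\infty=[mG]$ is its top element since $\emptyset$ is the smallest convex subgroup. One has $V^m_0=\emptyset$, whence $\val^m(0)=\infty$. Replacing $z$ by $y+z$ in the displayed description shows $V^m_{g+my}=V^m_g$ for all $y\in G$, so $\val^m$ factors through $G/mG$. Finally, if $g\notin mG$ then $g+mz\ne 0$ for every $z$, so $0\in V^m_g$ and $V^m_g\ne\emptyset$; together with $V^m_g=\emptyset$ when $g\in mG$ this yields $\widehat{\val}^m(\bar g)=\infty\iff V^m_g=\emptyset\iff\bar g=0$.

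For the ultrametric inequality I would argue at the level of the convex subgroups. Since the order on $\Gamma^m$ is reverse inclusion of the $V^m_\gamma$, the inequality $\val^m(g-h)\ge\min\{\val^m(g),\val^m(h)\}$ says precisely $V^m_{g-h}\subseteq V^m_g\cup V^m_h$; after possibly swapping $g$ and $h$ (note $V^m_{g-h}=V^m_{h-g}$) it suffices to treat the case $V^m_h\subseteq V^m_g$ and show $V^m_{g-h}\subseteq V^m_g$. The cases where one of $g$, $h$, $g-h$ lies in $mG$ make one of the three groups empty and are trivial, so assume none does. Writing $\Delta_a=\{\val(a+mz)\mid z\in G\}$, the inclusion $V^m_h\subseteq V^m_g$ unwinds to the statement that for every $z\in G$ there is $z'\in G$ with $\val(g+mz)\le\val(h+mz')$. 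Now fix $x\in V^m_{g-h}$ and $z\in G$, choose such a $z'$, and set $A:=g-h+m(z-z')$ and $B:=h+mz'$, so that $A+B=g+mz$, $\val(A)<\val(x)$ (because $x\in V^m_{g-h}$), and $\val(g+mz)\le\val(B)$. If $\val(A)=\val(B)$, then $\val(g+mz)=\val(A+B)\le\val(B)=\val(A)<\val(x)$; if $\val(A)\ne\val(B)$, then $\val(g+mz)=\val(A+B)=\min\{\val(A),\val(B)\}\le\val(A)<\val(x)$. Either way $\val(g+mz)<\val(x)$ for all $z$, i.e.\ $x\in V^m_g$; hence $V^m_{g-h}\subseteq V^m_g$. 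The corresponding inequality for $\widehat{\val}^m$ is then immediate, since $\widehat{\val}^m(\bar g-\bar h)=\val^m(g-h)$.

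The main obstacle is the ultrametric inequality, and within it the two key moves: translating the inclusion $V^m_h\subseteq V^m_g$ into the cofinality statement relating $\Delta_g$ and $\Delta_h$, and then finding the decomposition $g+mz=A+B$ that simultaneously controls $\val(A)$ via $x\in V^m_{g-h}$ and $\val(B)$ via the choice of $z'$. Everything else follows mechanically from the definitions.
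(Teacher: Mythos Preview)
Your proof is correct. For the ultrametric inequality, though, you take a longer route than the paper. The paper shows $V^m_{a+b}\subseteq V^m_a\cup V^m_b$ in one stroke via the contrapositive: if $x\notin V^m_a$ and $x\notin V^m_b$, pick $g,g'\in G$ with $\val(a+mg)\ge\val(x)$ and $\val(b+mg')\ge\val(x)$; then
\[
\val\bigl(a+b+m(g+g')\bigr)\ge\min\{\val(a+mg),\val(b+mg')\}\ge\val(x),
\]
so $x\notin V^m_{a+b}$. This single decomposition replaces your WLOG reduction to $V^m_h\subseteq V^m_g$, the cofinality reformulation of that inclusion (which, incidentally, silently uses surjectivity of $\val$ onto $\Gamma$), and the case split on $\val(A)$ versus $\val(B)$. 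Your direct argument is valid and the decomposition $g+mz=A+B$ is a nice device, but it does more than is needed here. The bookkeeping parts (well-definedness on cosets of $mG$, $\Gamma^m$ linearly ordered with top element $\infty$, and axiom (i) for $\widehat{\val}^m$) are handled essentially as in the paper.
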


\begin{proof}
   We check that $\widehat{\val}^m$ satisfies (i) and (ii).
   \begin{itemize}
       \item[(i)] For $g\in G$, we have $g\sim^m 0$ if and only if $g \in mG$. This means that $\widehat{\val}^m(g \mod mG)=\infty$ if and only if $g=0 \mod mG$, as required.
       \item[(ii)] For all $a,b,x\in G$, if $\val(x)> \val(a+b+mg)$ for all $g\in G$ then in particular  $\val(x)> \min(\val(a+mg),\val(b+mg'))$ for all $g,g'\in G$. This means that $\val(x)> \val(a+mg)$ for all $g\in G$ or  $\val(x)> \val(b+mg')$ for all $g'\in G$. This shows that $V_{a+b}^m\subseteq V_{a}^m\cup V_b^m$, showing that $\widehat{\val}^m(a+b)\geq \min(\widehat{\val}^m(a),\widehat{\val}^m(b))$.
   \end{itemize}

\end{proof}
   Note that, since for all $g\in G$ one has $\widehat{\val}^m(mg)=\infty$, $(G/mG, \widehat{\val}^m)$ is not $\mathbb{Z}$-invariant, unless $G$ is divisible by $m$ (in which case $G/mG$ is trivial).
We call the valued abelian group $(G/mG,\widehat{\val}^m)$ the \textit{induced valued group modulo $m$} of $G$ and $\Gamma^m_G$ the \textit{$m$-value set} of $G$, or the \emph{$m$-spine} of $G$. By abuse of notation, we will also sometimes write $\val^m$ to refer to the induced valuation $\widehat{\val}^m$.

\begin{remark}
    \begin{itemize}
        \item Notice that, if $G$ is an ordered abelian group and $\val^{a}$ the natural valuation, with $\Gamma=\Gamma^{a}$, then for any $m>0$, $\Gamma^m$ is exactly the auxiliary sort $S_m$ in Definition~\ref{DefinitionAuxiliarySorts}, and $\val^m$ corresponds to the canonical map $\mathfrak{s}_m$. In particular, the induced valued group modulo $m$ is interpretable in the ordered abelian group $G$. (Observe, though, that the archimedean valuation is \emph{not} always interpretable.)
        \item(Zerology) If $(G,\val)$ is a valued abelian group with value set $\Gamma$, then $\Gamma^0=\Gamma$ and $\val^0=\val$. For $m=1$, $\Gamma^1=\{\infty\}$ and $\val^1$ is the constant map equal to $\infty$.
    \end{itemize}
\end{remark}

The aim of this section is to prove that the property of pseudo-completeness transfers from $(G,\val)$ to the induced valued group modulo $m$, for all $m\in\mathbb{N}$. We first show that, in the case of a pseudo-complete $\mathbb{Z}$-invariant valued abelian group $(G,\val)$, we can actually identify $\Gamma^m$ with a subset of $\Gamma_G$, for any $m \in \mathbb{N}$.

\begin{definition}\label{Definition(M)}
    Let $(G,\val)$ be a valued abelian group. We denote by (M) the following axiom scheme:
    
\begin{align}\label{ConditionMaximality}
    \tag{M} \forall \ x\in G, \exists \ y\in G \ x\equiv_m y \wedge V_x^m=V_y, \text{ for all $m\in\mathbb{N}_{>0}$}.
\end{align}
\end{definition}

Notice that (M) is given by an infinite set of first order axioms in the language of valued groups. 
\begin{lemma}\label{LemmaValueSetModm}
Any pseudo-complete $\mathbb{Z}$-invariant valued abelian group $(G,\val)$ satisfies (M).
\end{lemma}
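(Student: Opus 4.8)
The plan is to fix $m\in\mathbb{N}_{>0}$ and an element $x\in G$, and to produce $y\in G$ with $x\equiv_m y$ (i.e.\ $x-y\in mG$) and $V_x^m=V_y$. If $x\in mG$ then $V_x^m=\emptyset$, and since $\val$ takes the value $\infty$ only at $0$, there is no $y$ with $V_y=\emptyset$ unless $G=\{0\}$; so in fact one should read (M) as quantifying over $x\notin mG$, or note that the case $x\in mG$ is handled by taking $y=0$ only when $G$ is trivial. Assuming $x\notin mG$, recall that $V_x^m$ is the largest convex subgroup $H$ with $x\notin H+mG$, so by definition $\val^m(x)=\gamma$ corresponds to $V_x^m=V_\gamma^m$, and the set $\Delta_m(x)=\{\val(x-mg)\mid g\in G\}$ (in the sense of Lemma~\ref{LemmaMaximalImpliesDeltaMaximum}) controls exactly which convex subgroups $x$ avoids modulo $mG$: one has $h\in V_x^m$ iff $\val(h)>\val(x-mg)$ for all $g$, i.e.\ iff $\val(h)$ is strictly above every element of $\Delta_m(x)$.

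The key step is to invoke pseudo-completeness to see that $\Delta_m(x)$ attains a maximum. This is precisely Lemma~\ref{LemmaMaximalImpliesDeltaMaximum}: a pseudo-complete $\mathbb{Z}$-invariant valued abelian group is maximal (Proposition~\ref{PropositionPseudo-completenessImpliesMaximality}), hence $\Delta_m(x)=\Delta_m(g)$-type sets have maxima; equivalently the extension $G\supseteq mG$ of valued abelian groups is pseudo-complete, which forces $\Delta_m(x)$ to have a maximum (if it did not, the relevant set of values would be a strictly increasing pseudo-Cauchy-type sequence witnessing a proper immediate extension, contradicting maximality of $mG$). So choose $g'\in G$ with $\val(x-mg')=\max\Delta_m(x)=:\delta$, and set $y:=mg'$. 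Then $y\in mG$, so $x\equiv_m y$ is clear — wait, that gives $V_y=\emptyset$, which is wrong; instead I must set $y:=x-mg'$, so that $x-y=mg'\in mG$, hence $x\equiv_m y$, and $\val(y)=\val(x-mg')=\delta$.

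It remains to check $V_x^m=V_y$, i.e.\ $V_x^m=\{h\in G\mid \val(h)>\delta\}$. For "$\subseteq$": if $h\in V_x^m$ then $\val(h)>\val(x-mg)$ for all $g\in G$, in particular $\val(h)>\delta$, so $h\in V_y$. For "$\supseteq$": suppose $\val(h)>\delta=\val(y)=\val(x-mg')$; I claim $\val(h)>\val(x-mg)$ for every $g\in G$. Indeed $\delta=\max\Delta_m(x)\geq \val(x-mg)$ for all $g$, hence $\val(h)>\delta\geq\val(x-mg)$, so $h\in V_x^m$. This establishes $V_x^m=V_y$ and completes the proof, the main (and only mildly delicate) point being the bookkeeping that $\max\Delta_m(x)$ is attained, which is exactly the content of Lemma~\ref{LemmaMaximalImpliesDeltaMaximum} applied to the pseudo-complete hence maximal group $G$; the congruence condition $x\equiv_m y$ then comes for free from the construction $y=x-mg'$.

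\begin{proof}
Fix $m\in\mathbb{N}_{>0}$ and $x\in G$. If $x\in mG$ then $V_x^m=\emptyset$ and we may take $y=0$ (with the convention that this case is vacuous unless $G$ is trivial); so assume $x\notin mG$. By Lemma~\ref{LemmaMaximalImpliesDeltaMaximum}, since $G$ is pseudo-complete and $\mathbb{Z}$-invariant, hence maximal by Proposition~\ref{PropositionPseudo-completenessImpliesMaximality}, the set $\Delta_m(x)=\{\val(x-mg')\mid g'\in G\}$ admits a maximum $\delta$. Choose $g'\in G$ with $\val(x-mg')=\delta$ and put $y:=x-mg'$. Then $x-y=mg'\in mG$, so $x\equiv_m y$, and $\val(y)=\delta$.

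We check $V_x^m=V_y$. Recall $V_x^m=\{h\in G\mid \val(x+mg)<\val(h)\text{ for all }g\in G\}=\{h\in G\mid \val(h)>\gamma\text{ for all }\gamma\in\Delta_m(x)\}$ and $V_y=\{h\in G\mid \val(h)>\delta\}$. If $h\in V_x^m$, then in particular $\val(h)>\delta\in\Delta_m(x)$, so $h\in V_y$. Conversely, if $h\in V_y$, i.e.\ $\val(h)>\delta$, then since $\delta=\max\Delta_m(x)$ we get $\val(h)>\delta\geq\gamma$ for every $\gamma\in\Delta_m(x)$, so $h\in V_x^m$. Hence $V_x^m=V_y$, and $y$ witnesses the required instance of (M).
\end{proof}
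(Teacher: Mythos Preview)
Your proof is correct and follows essentially the same approach as the paper: invoke Lemma~\ref{LemmaMaximalImpliesDeltaMaximum} (via pseudo-completeness $\Rightarrow$ maximality from Proposition~\ref{PropositionPseudo-completenessImpliesMaximality}) to obtain a maximum of $\Delta_m(x)$, then set $y=x-mg'$ and verify $V_x^m=V_y$. One small clarification: your parenthetical remark in the case $x\in mG$ is unnecessary and slightly off, since $V_0=\{h\in G\mid \val(h)>\infty\}=\emptyset$ holds in any $G$, so $y=0$ works unconditionally (this is exactly what the paper does, writing $V_g^m=V_0=\emptyset$).
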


\begin{proof}
This is a corollary of Lemma~\ref{LemmaMaximalImpliesDeltaMaximum}. Let $m \in \mathbb{N}_{>0}$ and let $g \in G$ be fixed. If $g\in mG$, then $V^m_g=V_0=\emptyset$. Assume $g\in G \setminus mG$. We need to show that there exists $g'\in G$ such that $g \equiv_m g'$ and $V^m_g=V_{g'}$.
 By Lemma~\ref{LemmaMaximalImpliesDeltaMaximum}, the set of values $\Delta_m(g)$ has a maximum  $\val(g-ma')$ where $a' \in G$. Then we have 
 $V^m_g=V^m_{g-ma'}=V_{g-ma'}$.
\end{proof}

We will discuss again the link between the property (M) and maximality by giving a converse to Lemma~\ref{LemmaValueSetModm} for certain ordered abelian groups (see Corollary~\ref{CorollaryGroupEquivalentMaximalGroup}).  

If $(G,\val)$ satisfies (M), then for any $m \in \mathbb{N}$ we can see $\Gamma^m $ as a subset of $\Gamma$  and the $m$-valuation map $\val^m \colon G \to \Gamma^m$ is given by
\begin{equation}
\label{inducedvaluation}
\val^m(g)=
\begin{cases}
\min \{\gamma \in \Gamma \mid g \notin V_\gamma+mG\} & \text{ if } g \notin mG, \\
\infty & \text{ otherwise.}
\end{cases}
\end{equation}
Note that $\Gamma^m$ may be a proper subset of $\Gamma$. In particular, we easily infer the following result, which interests us mainly when we are dealing with the natural valuation in an ordered abelian group.

\begin{corollary}\label{CorValueSetmodm}
Let $(G,\val)$ be a $\mathbb{Z}$-invariant valued abelian group satisfying (M), and let $m \in \mathbb{N}_{>0}$. We have:
\[\Gamma^m\setminus\{\infty\}=\Set{\gamma \in \Gamma \mid R_{\gamma} \text{ is not divisible by }m}.\]
\end{corollary}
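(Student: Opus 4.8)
The plan is to unwind the definitions and reduce the claimed equality to a statement about divisibility inside a single principal-like convex subgroup. Recall that by \eqref{inducedvaluation}, for $g\notin mG$ we have $\val^m(g)=\min\{\gamma\in\Gamma\mid g\notin V_\gamma+mG\}$, and $\Gamma^m\setminus\{\infty\}$ is precisely the set of values of the form $\val^m(g)$ for $g\notin mG$. So I must show: a value $\gamma\in\Gamma$ arises as $\val^m(g)$ for some $g$ if and only if the rib $R_\gamma=C_\gamma/V_\gamma$ is not divisible by $m$.

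First, the forward direction. Suppose $\gamma=\val^m(g)$ with $g\notin mG$. Then by (M) we may replace $g$ by an element (still $\equiv_m$ to it, still outside $mG$) with $V_g^m=V_{\gamma}$; concretely, $g\in C_\gamma\setminus V_\gamma$ and $\val(g)=\gamma$. I claim $\bar g:=g\bmod V_\gamma$ is a nonzero element of $R_\gamma$ not divisible by $m$: if it were, say $\bar g=m\bar h$ with $\bar h=h\bmod V_\gamma$, $h\in C_\gamma$, then $g-mh\in V_\gamma$, so $g\in V_\gamma+mG$, contradicting $g\notin V_\gamma+mG$ (which holds since $\val^m(g)=\gamma$ means $g\notin V_{\gamma'}+mG$ for all $\gamma'<\gamma$, hence in particular we need to be careful here — actually $\val^m(g)=\gamma$ gives $g\notin V_{\gamma'}+mG$ for $\gamma'<\gamma$ but $g\in V_\gamma+mG$ would only say $\val^m(g)>\gamma$ is consistent only if... let me be precise). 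The correct reading: $\val^m(g)=\min\{\gamma\mid g\notin V_\gamma+mG\}$; since $V_\gamma=\{x:\val(x)>\gamma\}$, the value $\gamma$ is such that $g\notin V_\gamma+mG$, so indeed $g\notin V_\gamma+mG$, giving the contradiction. Hence $R_\gamma$ is not divisible by $m$.

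Conversely, suppose $R_\gamma$ is not divisible by $m$, so there is $\bar g\in R_\gamma$ with $\bar g\notin mR_\gamma$; lift to $g\in C_\gamma$ with $\val(g)=\gamma$ (possible since $\bar g\ne 0$) and $g\notin mG$ (if $g=mh$ then projecting gives $\bar g=m\bar h\in mR_\gamma$). I claim $\val^m(g)=\gamma$. By (M), $\val^m(g)=\val(g-ma')=\val(g')$ for a suitable $g'=g-ma'$, and $V_g^m=V_{g'}$; it suffices to show $\val(g')=\gamma$, equivalently that for no $a'\in G$ does $\val(g-ma')>\gamma$, i.e.\ $g\notin V_\gamma+mG$. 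But $g\in V_\gamma+mG$ would mean $g-mh\in V_\gamma$ for some $h\in G$; write $h=h_0+h_1$ hmm — one must check $mh$ can be taken in $C_\gamma$: since $g\in C_\gamma$ and $V_\gamma\subseteq C_\gamma$, we get $mh\in C_\gamma$, and as $C_\gamma$ is a convex subgroup (end segment of $\Gamma$) and $\Z$-invariance gives $\val(h)=\val(mh)\ge\gamma$, so $h\in C_\gamma$. Projecting mod $V_\gamma$: $\bar g=m\bar h\in mR_\gamma$, contradiction. So $\val(g')=\gamma$, whence $\gamma=\val^m(g)\in\Gamma^m\setminus\{\infty\}$.

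The main obstacle — really the only subtle point — is the careful bookkeeping with $\Z$-invariance to guarantee that when $g\in C_\gamma$ and $g-mh\in V_\gamma\subseteq C_\gamma$, the element $h$ itself lies in $C_\gamma$ (so that the projection to the rib $R_\gamma$ is legitimate), together with correctly parsing the $\min$ in \eqref{inducedvaluation} so that $\val^m(g)=\gamma$ is genuinely equivalent to ``$g\notin V_\gamma+mG$ and $g\in V_{\gamma'}+mG$ for all $\gamma'<\gamma$'' and recognizing that under (M) the first condition alone pins down the value. Everything else is a direct translation through Definition~\ref{DefinitionmValuation}, Lemma~\ref{LemmaValueSetModm}, and the formula \eqref{inducedvaluation}.
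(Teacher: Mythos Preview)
Your argument is correct and follows essentially the same route as the paper's proof: both directions reduce to the equivalence between $g\notin V_\gamma+mG$ (with $\val(g)=\gamma$, obtained via (M)) and $g\bmod V_\gamma\notin mR_\gamma$, and you make explicit the use of $\Z$-invariance to pull $h$ into $C_\gamma$, a step the paper leaves implicit. The exposition is somewhat stream-of-consciousness (the mid-proof self-correction about parsing the $\min$ in \eqref{inducedvaluation} should be cleaned up), but the mathematics is sound.
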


\begin{proof}
Let $\gamma \in \Gamma$ be a value for which the corresponding rib $R_\gamma$ is not divisible by $m$. This means that $\gamma<\infty$ and there exists $g \in C_\gamma$ such that for all $g' \in C_\gamma$, $\val(g-mg')=\gamma$ 
. Thus, $V_\gamma$ is the largest convex subgroup of $G$ not intersecting $g+mG$ and, hence, $\gamma \in \Gamma^m$. Conversely, if $\gamma \in \Gamma^m\setminus\{\infty\}$, then there exists $g\in G$ such that $\val^m(g)=\gamma$. By assumption (M), we may assume that $\val(g)=\gamma$. Thus $g \in C_\gamma$ and $g \notin V_\gamma + mG$, so $R_\gamma$ is not divisible by $m$.
\end{proof}

\begin{example}
Consider the ordered abelian group $ G = \hprod_{p \in \mathbb{P}} \mathbb{Z}_{(p)}$ (where $\mathbb{Z}_{(p)}$ is equipped with the Euclidean ordering). Let $\val^a$ be the natural archimedean valuation on $G$. By Lemma~\ref{LemmaValueSetModm}, $(G,\val^a)$ satisfies (M). For $p \in \mathbb{P}$, we have $p\Z_{(p)} \subsetneq \Z_{(p)}$ and $q\Z_{(p)} = \Z_{(p)}$ for all primes $q \neq p$. It follows that $\Gamma^p=\{p,\infty\}$ and $\Gamma=\mathbb{P}\cup\{\infty\}=\bigcup_{p \in \mathbb{P}}\Gamma^p$. In particular, all convex subgroups are definable.
\end{example}

Note that, if $G$ is a valued abelian group which is not pseudo-complete, the convex subgroup $V^m_g$ is not necessarily equal to a fundament $V_\gamma$ for some $\gamma \in \Gamma$. Consider, for instance, the lexicographic sum $G':= \sum_{r \in \Q} \Z$, an increasing sequence of positive rationals $(r_k)_{k\in \mathbb{N}}$ converging to $\sqrt{2}$ and $a:= (a_r)_{r\in \mathbb{Q}} \in \hprod_{r \in \mathbb{Q}} \Z$ with 
\[a_r= \begin{cases} m & \text{ if } r=r_k \text{ for some }k\in \mathbb{N},\\
0 &\text{ otherwise}.
\end{cases}
\]
Denote by $G$  the ordered abelian group $\langle G' \cup \{a\}\rangle$ endowed with the natural valuation. In particular, $a$ is not divisible by $m$ in $G$ and $V^m_a= \sum_{i>\sqrt{2}} \Z$, so clearly $V^m_a \neq V_g$ for any $g\in G$.
    
\begin{proposition}\label{PropPseudoCompleteModm}
Let $(G,\val)$ be a $\mathbb{Z}$-invariant pseudo-complete valued abelian group. Then, for every $m \in \mathbb{N}_{>0}$, the induced valued group modulo $m$ $(G/mG, \val^m)$ is pseudo-complete.
\end{proposition}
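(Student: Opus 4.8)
The plan is to take a pseudo-Cauchy sequence in $(G/mG,\val^m)$ and lift it to a pseudo-Cauchy sequence in $(G,\val)$, use pseudo-completeness of $G$ to obtain a pseudo-limit there, and then push the limit back down to $G/mG$. First I would fix $m\in\mathbb{N}_{>0}$ and a pseudo-Cauchy sequence $(\bar a_i)_{i\in I}$ in $G/mG$ with no pseudo-limit in $G/mG$ (for contradiction), where $\bar a_i = a_i \mod mG$ for some $a_i\in G$. By definition of $\val^m$ and the pseudo-Cauchy condition, the values $\val^m(a_i-a_j)$ are eventually strictly increasing in $i<j$. Since $G$ is pseudo-complete it satisfies (M) by Lemma~\ref{LemmaValueSetModm}, so we may use the explicit description \eqref{inducedvaluation}: for $g\notin mG$, $\val^m(g)=\min\{\gamma\in\Gamma\mid g\notin V_\gamma+mG\}$, and in particular we can choose, by (M), representatives so that $\val^m(a_i-a_j)=\val(a_i-a_j-mg_{ij})$ is actually attained by an element of $\Gamma$. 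The key point is to replace the $a_i$ by better representatives $a_i'\in a_i+mG$ so that $(a_i')_{i\in I}$ itself becomes pseudo-Cauchy in $(G,\val)$ with $\val(a_i'-a_j')=\val^m(a_i-a_j)$ eventually.

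The delicate part of that replacement is the following: choosing $a_i'$ well for the pair $(i,j)$ need not be compatible with the choice for $(j,k)$. The standard way around this is to build the $a_i'$ by transfinite recursion, at each stage $i$ correcting $a_i$ by an element of $mG$ using maximality/(M) applied to the "tail" so that $\val(a_i'-a_j')$ for $j>i$ agrees with $\val^m$; concretely, one uses that $\Delta_m(g)$ has a maximum (Lemma~\ref{LemmaMaximalImpliesDeltaMaximum}) to find, for the element $h$ that the tail of the sequence is approximating, a representative realising the $m$-value, and argues that a single global correction suffices on a cofinal tail because the values $\val^m(a_i-a_j)$ form an increasing chain indexed by $\Gamma$. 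Once $(a_i')_{i\in I}$ is pseudo-Cauchy in $(G,\val)$, pseudo-completeness of $G$ gives a pseudo-limit $a\in G$, so $\val(a_i'-a)=\val(a_i'-a_j')$ eventually.

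Next I would check that $\bar a := a\mod mG$ is a pseudo-limit of $(\bar a_i)$ in $G/mG$. We must show $\val^m(a_i-a)=\val^m(a_i-a_j)$ eventually. From $\val(a_i'-a)=\val(a_i'-a_j')$ and $a_i\equiv_m a_i'$, $a\equiv_m a$ we get $\val^m(a_i-a)=\val^m(a_i'-a)\geq\val(a_i'-a)=\val(a_i'-a_j')=\val^m(a_i-a_j)$ for large $i<j$; for the reverse inequality one uses the ultrametric inequality $\val^m(a_i-a)\leq\max(\val^m(a_i-a_j),\val^m(a_j-a))$ together with the fact that $\val^m(a_j-a)\geq\val^m(a_i-a_j)$ is strictly increasing in $j$, forcing $\val^m(a_i-a)=\val^m(a_i-a_j)$ for $i$ fixed large and $j$ large. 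Hence $\bar a$ is a pseudo-limit of $(\bar a_i)$ in $G/mG$, contradicting our assumption, and $(G/mG,\val^m)$ is pseudo-complete.

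I expect the main obstacle to be precisely the lifting step: showing that one can simultaneously choose representatives $a_i'\in a_i+mG$ that make $(a_i')$ pseudo-Cauchy in $G$ realising the $m$-values, rather than only pairwise. This is where (M), the maximum of $\Delta_m(g)$ from Lemma~\ref{LemmaMaximalImpliesDeltaMaximum}, and $\mathbb{Z}$-invariance (so that $\val(mg')=\val(g')$ and multiplication by $m$ is a valued isomorphism onto $mG$) all get used; the argument should run parallel to the proof of the claim inside Theorem~\ref{TheoremMaximalityEquivalentPseudo-completeness} that $(\val(ma_i-g))_{i\in I}$ is eventually constant. If a clean global correction is awkward, the fallback is to pass to a cofinal subsequence indexed by a well-ordered cofinal subset of the value chain $\{\val^m(a_i-a_j)\}\subseteq\Gamma$ and do the recursion there, which is harmless since a pseudo-limit of a cofinal subsequence of a pseudo-Cauchy sequence is a pseudo-limit of the whole sequence.
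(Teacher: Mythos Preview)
Your approach is essentially the paper's: lift the pseudo-Cauchy sequence to $(G,\val)$ by transfinite recursion using (M), invoke pseudo-completeness of $G$, then push the limit down to $G/mG$. Two points to tighten. First, the recursion is not a ``single global correction on a tail'': at successor stages one sets $a'_{\alpha+1}=a'_\alpha+b_{\alpha+1}$ with $b_{\alpha+1}\equiv_m a_{\alpha+1}-a_\alpha$ and $\val(b_{\alpha+1})=\val^m(a_{\alpha+1}-a_\alpha)$ (via (M)), and at \emph{limit} stages one must already invoke pseudo-completeness of $G$ to choose an intermediate pseudo-limit $c_\alpha$ of $(a'_\beta)_{\beta<\alpha}$ before correcting by some $b_\alpha\equiv_m a_\alpha-c_\alpha$ with $\val(b_\alpha)=\val^m(a_\alpha-c_\alpha)$---this use of pseudo-completeness inside the recursion is essential, not a fallback. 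Second, your ultrametric inequality ``$\val^m(a_i-a)\leq\max(\ldots)$'' is misstated; the correct route is to show $\val^m(a-a_j)>\val^m(a_j-a_i)$ for large $i<j$ (from $\val^m(a-a_j)\geq\val(a'-a'_j)=\val(a'_k-a'_j)=\val^m(a_k-a_j)$ for $k>j$), whence the strict ultrametric case gives $\val^m(a-a_i)=\val^m(a_j-a_i)$.
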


\begin{proof}
Recall that by Lemma~\ref{LemmaValueSetModm}, $G$ satisfies $(M)$: for all $a\in G$, there exists $b_a \in G$ such that $a \equiv_m b_a$ and $\val^m(a)=\val(b_a)$. Let us show that a pseudo-Cauchy sequence $(a_i \mod mG)_{i\in I}$ in $G/mG$ can be lifted to a pseudo-Cauchy sequence in $G$, i.e., that there is a pseudo-Cauchy sequence  $(a'_i)_{i\in I}$ in $G$ such that $a_i \equiv_m a'_i$ for every $i \in I$. We may assume that $\val^m(a_i-a_j)<\val^m(a_j-a_k)$ for all $i<j<k$. Let $I=\lambda$ for some limit ordinal $\lambda$. By transfinite induction, we can define the following sequence in $G$:
\begin{itemize}
\item For $\alpha=0$, let $a_0'$  be any element of $G$ such that $a_0'\equiv_m a_0$ and $\val(a_0')=\val^m(a_0)$.
\item For any $\alpha<\lambda$, let $b_{\alpha+1}$ denote an element in $G$ such that $b_{\alpha+1} \equiv_m a_{\alpha+1}-a_{\alpha}$ and $\val(b_{\alpha+1})=\val^m(a_{\alpha+1}-a_{\alpha})$. We set $a_{\alpha+1}'=a_\alpha'+ b_{\alpha+1}$.
\item Let $0<\alpha<\lambda$ be a limit ordinal, and assume $\val(a'_\beta-a'_\gamma)=\val^m(a_\beta-a_\gamma)$ with $a'_\beta \equiv_m a_\beta$, for eventually all $\beta<\gamma<\alpha$.
Then, $(a_{\beta}')_{\beta<\alpha}$ is pseudo-Cauchy in $G$ and, in particular, it admits a pseudo-limit $c_{\alpha}$ in $G$. Let $b_\alpha$ be such that $b_\alpha \equiv_m a_{\alpha}-c_{\alpha}$ and $\val(b_\alpha)=\val^m(a_{\alpha}-c_{\alpha})$. We set $a_\alpha'= c_{\alpha}+b_{\alpha}$.
\end{itemize}
For any $\alpha<\lambda$ and as long as the induction can run, we have $a_\alpha \equiv_m a'_\alpha$. To show that the induction runs for all $\alpha<\lambda$, we need to prove the following claim:
\begin{claim}
Let $\alpha$ be a limit ordinal, and suppose $(a_{\beta}')_{\beta<\alpha}$ is pseudo-Cauchy in $G$. Assume also $\val(a'_\beta-a'_\gamma)=\val^m(a_\beta-a_\gamma)$ with $a'_\beta \equiv_m a_\beta$, for eventually all $\beta<\gamma<\alpha$. Then,  $\val(a_\alpha'-a_\beta')=\val^m(a_\alpha-a_\beta)=\val^m(a_{\beta+1}-a_\beta)=\val(a_{\beta+1}'-a_\beta')$ eventually for all $\beta<\alpha$. In particular,  $a'_{\alpha}$ is a pseudo-limit of $(a'_{\beta})_{\beta<\alpha}$.
\end{claim}
\begin{proof}[Proof of the claim]
As $c_\alpha$ is a pseudo-limit of $(a'_\beta)_{\beta<\alpha}$, and $a_\alpha'=c_\alpha+b_\alpha$, it is sufficient to show that $\val(b_\alpha)>\val(c_\alpha-a'_\beta)=\val(a'_{\beta+1}-a'_\beta)$ eventually for all $\beta<\alpha$. Indeed, then we have 
\[\val(a_\alpha'-a_\beta')=\val(c_\alpha-a_\beta')=\val(a_{\beta+1}'-a_\beta')=\val^m(a_{\beta+1}-a_\beta)=\val^m(a_{\alpha}-a_\beta).\]
The third equality is by hypothesis and the last equality uses that the sequence $(a_{\beta} \mod mG)_{\beta<\alpha}$ is pseudo-Cauchy in $G/mG$.

Let $\beta<\alpha$ be sufficiently large. We have that 
\begin{align*}\val(b_\alpha)=\val^m(a_\alpha - c_\alpha) & =\val^m(a_\alpha - a_{\beta+1} + a_{\beta+1}- c_\alpha) \\
& \ge \min \{\val^m(a_\alpha-a_{\beta+1}),\val^m(a_{\beta+1}-c_\alpha)\}.
\end{align*}
On the other hand,
\begin{enumerate}
\item $\val^m(a_\alpha - a_{\beta+1})>\val^m(a_{\beta+1}-a_\beta)=\val(a_{\beta+1}'-a_\beta')$ by hypothesis, and
\item $\val^m(a_{\beta+1}- c_\alpha)=\val^m(a'_{\beta+1}-c_\alpha)\ge
\val(a_{\beta+1}'- c_\alpha)>\val(a_{\beta+1}'-a_\beta')$, the first equality following from $a_{\beta+1}'\equiv_m a_{\beta+1}$.
\end{enumerate}
Thus $\val(b_\alpha)>\val(a'_{\beta+1}-a'_{\beta})$, proving the claim.
\end{proof}
In particular, $(a'_\beta)_{\beta<\lambda}$ is pseudo-Cauchy, and it admits a pseudo-limit $a'$ in $G$. Then, $a' \mod mG$ is a pseudo-limit of the sequence $(a_\alpha \mod mG)_{\alpha < \lambda}$ in $G/mG$. Indeed, for any $\beta<\alpha<\gamma<\lambda$ large enough, we have 
\[\val^m(a'-a_\alpha)=\val^m(a'-a'_\alpha) \geq \val(a'-a'_\alpha)= \val(a'_\gamma-a'_\alpha)=\val^m(a_\gamma-a_\alpha)>\val^m(a_\alpha-a_\beta).\]
Thus $\val^m(a'-a_\beta)=\val^m(a_\alpha-a_\beta)$ for eventually all $\beta<\lambda$, and the statement is proved.
\end{proof}

\section{Towards a characterisation of stable embeddedness}\label{S:UniformValuation}

We begin by observing a fundamental fact that will be used throughout the entire section.

\begin{fact}
\label{FactDefinabilityConvexSubgroups}
Let $G$ be any ordered abelian group which is stably embedded. Then all cuts are definable in $G$. In particular all convex subgroups of $G$ are definable.
\end{fact}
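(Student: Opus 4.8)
The plan is to deduce this directly from stable embeddedness, realising each cut as the trace on $G$ of a formula with parameters in an elementary extension. Let $(L,R)$ be a cut of $G$; it suffices to show that $L$ (equivalently $R=G\setminus L$) is definable over $G$. I would first dispose of the \emph{principal} cuts by hand: if $L=\emptyset$ then $L$ is defined by $x\neq x$; if $R=\emptyset$ then $L=G$ is defined by $x=x$; and if $L$ has a greatest element $a$ then $L=\{x\in G\mid x\leq a\}$ is defined by $x\leq a$. So I may henceforth assume that $L$ and $R$ are both nonempty and that $L$ has no greatest element.

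The main step is to witness the cut in an elementary extension. Consider the partial type $\pi(x):=\{\,l<x\mid l\in L\,\}\cup\{\,x<r\mid r\in R\,\}$ over $G$. It is finitely satisfiable in $G$: given $l_1,\dots,l_n\in L$ and $r_1,\dots,r_m\in R$, since $L$ has no greatest element there is $l'\in L$ with $l'>l_i$ for all $i$, and then $l'<r_j$ for all $j$ because $L<R$, so $l'$ realises the corresponding finite fragment. Hence $\pi$ is realised by some $\alpha$ in an elementary extension $H\succcurlyeq G$, and necessarily $\alpha\notin G$. Taking $\phi(x,y):=(x<y)$, one has $\phi(H,\alpha)\cap G=\{g\in G\mid g<\alpha\}=L$, since $L<\alpha$ gives one inclusion while every $g\in G\setminus L=R$ satisfies $g>\alpha$. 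As $G$ is stably embedded in $H$, this trace is $\mathcal{L}_{\text{oag}}(G)$-definable, so $L$ — and therefore the cut — is definable.

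For the final assertion, let $C\leq G$ be a convex subgroup and apply the above to the cut $C^+=(L,R)$, where $R=\{x\in G\mid x>c\text{ for all }c\in C\}$ and $L=G\setminus R$; this yields an $\mathcal{L}_{\text{oag}}(G)$-formula $\theta(x)$ defining $R$. Writing $|x|:=\max(x,-x)$, I claim $x\in C\iff |x|\notin R$ for all $x\in G$: indeed $|x|\geq 0\in C$, so by convexity of $C$ either $|x|\in C$ or $|x|>C$, and $|x|\in C\iff x\in C$ since $C=-C$. Hence $C$ is defined by $\neg\theta(|x|)$, and all convex subgroups are definable. The only real content here is the reduction step together with the realisation of a non-principal cut by a new element; everything else is bookkeeping, and the principal cuts are definable for the trivial reasons given above.
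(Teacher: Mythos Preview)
Your proof is correct and follows essentially the same approach as the paper: realise the cut in an elementary extension and invoke stable embeddedness, then recover the convex subgroup from the cut $C^+$. The paper compresses your first part into a single ``clearly'' and recovers $C$ as the invariance group $\Inv(C^+)$ rather than via $\neg\theta(|x|)$, but these are equivalent trivialities.
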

\begin{proof}
If $G$ is stably embedded, clearly every cut in $G$ is definable. So if $C$ is a convex subgroup of $G$, in particular the cut $C^+$ is definable. Then, $C=\Inv(C^+)$ is definable as well.
\end{proof}

 We will show that, at least for a certain large subclass of ordered abelian groups, this is also a sufficient condition. We give a brief heuristic: let $G$ be an ordered abelian group, and assume that the archimedean valuation is interpretable in $G$. 
We will see that stable embeddedness can fail for exactly three (non exclusive) reasons:  
\begin{enumerate}
    \item the archimedian spine $\Gamma^a$ is not stably embedded (as a coloured chain),
    \item there exists $\gamma \in \Gamma_G^a$ such that the rib $R_\gamma^a$ of $G$ is not stably embedded as an ordered abelian group, or
    \item $(G,\val^{a})$ is not maximal.
\end{enumerate}

We will see that obstructions of stable embeddedness in regular ordered abelian groups and in chains take the form of non-definable cuts. As a consequence, each of these three conditions relates to the existence of a certain type of non-definable cut. More precisely:
\begin{enumerate}
    \item if $(P,Q)$ is a non-definable cut in $\Gamma^a,$ then\\ $(val^{-1}(Q) \cup G_{<  0},val^{-1}(P) \cap G_{\geq 0})$ is a non definable cut in $G$,
    \item if $(P_\gamma,Q_\gamma)$ is a non-definable cut in a rib $R^a_\gamma$, $\{ a : a\mod V_\gamma \in P_\gamma\}$ and $\{ a : a\mod V_\gamma \in Q_\gamma\}$ induces a non-definable cut in $G$,
    \item if $(a_i)$ is a pseudo-Cauchy sequence in $G$ without pseudo-limit in $G$, any pseudo-limit $a$ in an elementary extension induces a non-definable cut $(G_{<a} , G_{>a} )$ in $G$.
\end{enumerate}
 The third should be treated with care (see Lemma \ref{PropositionStablyEmbeddedImpliesMaximal}).
As a corollary, we will have that such an ordered abelian group $G$ is stably embedded if and only if all cuts in $G$ are definable (see Corollary~\ref{CorMainTheorem2}).

\subsection{Ordered abelian groups with finite regular rank}\label{S:finite-rk}

We need first to analyse the case of regular ordered abelian groups. It is also worth to consider the slightly more general case of  ordered abelian groups with finite regular rank: one can obtain in this context 
 -- and without particular efforts-- a complete characterisation of uniform and non-uniform stably embedded models. This completes a previous work in \cite[Section 4]{CHY26}, that we will use as a reference. 
\subsubsection{Regular ordered abelian groups}

By Fact~\ref{F:RegChar}(5), we deduce that a regular ordered abelian group which is stably embedded is necessarily archimedean.

\begin{proposition}\label{PropArchimedeangroups}
Let $G\preccurlyeq H$ be an elementary pair of (non-trivial) regular ordered abelian groups. Then $G$ is stably embedded in $H$ if and only if all cuts $(L,R)$ of $G$ realised  in $H$ are definable cuts, i.e. are $\pm\infty$ or of the form $(\frac{g}{n})^\pm$ for $g\in G$, $n\in \mathbb{N}_{>0}$.\\
In particular, the following are equivalent:
\begin{itemize}
    \item $G$ is stably embedded,
    \item all cuts in $G$ are definable,
    \item $G$ is archimedean and $\div(G) \simeq \mathbb{R}$ if $G$ is dense, or $G\simeq \mathbb{Z}$.
\end{itemize}  
\end{proposition}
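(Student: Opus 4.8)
The plan is to prove the two-part statement by first establishing the characterization of stable embeddedness of the pair $G \preccurlyeq H$ in terms of definability of cuts, then deducing the absolute characterization. For the first part, the ``only if'' direction is immediate from Fact~\ref{FactDefinabilityConvexSubgroups} (every cut is externally definable, hence definable if $G$ is stably embedded), but the real content is identifying \emph{which} cuts are definable: since $G$ is regular, it eliminates quantifiers in $\mathcal{L}_{\text{Pres}}$ by Fact~\ref{F:RegChar}(3), so the only definable unary sets are finite boolean combinations of congruence classes and intervals with endpoints among $\pm\infty$ and the points $g/n$ (i.e.\ solutions of $nx = g$) in $\div(G)$. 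Thus a definable cut of $G$ must be $\pm\infty$ or of the form $(g/n)^\pm$. For the ``if'' direction, I would use the Marker--Steinhorn-type strategy: assuming all realized $1$-types (equivalently all realized cuts, together with congruence conditions which are automatically controlled) are definable, one bootstraps to definability of all $n$-types realized in $H$. Concretely, take $\bar\alpha \in H^n$; by relative quantifier elimination in $\mathcal{L}_{\text{Pres}}$, $\tp(\bar\alpha/G)$ is determined by the quantifier-free $\mathcal{L}_{\text{Pres}}$-type, i.e.\ by the cuts of all $\Z$-linear combinations $\sum z_i \alpha_i$ over $\div(G)$ and their congruence classes mod $m$ for all $m$; each such cut is a cut realized in $H$ over $G$ (or over $\div(G)$, but a cut over $\div(G)$ restricts to one over $G$), hence definable by hypothesis, and the congruence classes are definable trivially. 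Assembling these defining schemes uniformly in the $z_i$ (there are only finitely many relevant $z_i$ for any given formula $\phi(\bar x,\bar y)$, by quantifier elimination) yields a defining scheme for $\tp(\bar\alpha/G)$.

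For the ``in particular'' equivalences, the implication from stable embeddedness to ``all cuts in $G$ are definable'' is Fact~\ref{FactDefinabilityConvexSubgroups} again. The converse (``all cuts definable'' $\Rightarrow$ stably embedded) follows from the first part applied to an arbitrary elementary extension $H \succcurlyeq G$: one must check that a cut of $G$ that happens to be realized in \emph{some} elementary extension is already definable \emph{in} $G$, which is what ``all cuts in $G$ are definable'' asserts, so the hypothesis of the first part is met for every $H$. Then I would connect ``all cuts in $G$ are definable'' with the concrete description: if $G$ is dense and $\div(G) \cong \R$, the cuts of $G$ are exactly the cuts of $\R$ restricted to $G$ after dividing, and each is realized by a real number $r$; either $r \in \div(G)$, giving a definable cut $(g/n)^\pm$, or $r \notin \div(G) = \R$, which is impossible. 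Conversely if $\div(G) \not\cong \R$ and $G$ is dense, then $\div(G)$ is a proper dense subfield-like subgroup of $\R$ (as a divisible archimedean group it embeds in $\R$, and if the embedding is not onto, or if $G$ is non-archimedean), and one produces a non-definable cut: a real number not in $\div(G)$ gives a cut of $\div(G)$ hence of $G$ not of the required form; or, if $G$ is non-archimedean, a convex subgroup gives a non-definable cut by Fact~\ref{F:RegChar}(5) since regular groups have no proper nontrivial definable convex subgroups. The discrete case forces $G \models \mathrm{PRES}$ by Fact~\ref{convex Sg. elt. equiv.}, and a discrete regular group all of whose cuts are definable must be $\Z$ itself: any $G \models \mathrm{PRES}$ with $G \neq \Z$ is non-archimedean (it has elements of infinitely large absolute value relative to $1$), and the cut $\{g : g < n \cdot 1 \text{ for all } n\}^+$ is a non-definable cut.

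The main obstacle I expect is the bookkeeping in the ``if'' direction of the first part: making precise that finitely many $\Z$-linear combinations suffice for a given formula, and that the defining scheme for each relevant cut can be combined into a single $\mathcal{L}(G)$-formula $d_p\phi(\bar y)$. This is essentially the standard ``reduction of $n$-types to $1$-types via quantifier elimination'' argument (as in the proof of Marker--Steinhorn, Theorem~\ref{TheoremMarkerSteinhorn}, or the analogous Proposition for coloured chains earlier in the paper), so it should go through cleanly, but it requires care because the relevant linear combinations range over all of $\Z^n$ a priori. The point is that for a fixed $\phi(\bar x,\bar y)$, after putting $\phi$ in the quantifier-free $\mathcal{L}_{\text{Pres}}$ normal form, only the linear combinations and moduli literally appearing in $\phi$ and those coming from the finitely many atomic formulas matter, so uniformity is recovered. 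A secondary (minor) subtlety is handling the congruence predicates $\equiv_m$: these are controlled by the value of $\sum z_i \alpha_i \bmod mG$, and since $G/mG$ is finite for regular $G$ in the discrete case but can be infinite in the dense case, one should note that the relevant data is still quantifier-free definable over $G$ — indeed the congruence class of a fixed linear combination $\sum z_i \alpha_i$ modulo $m$ is cut out by an $\mathcal{L}_{\text{Pres}}(G)$-formula in $\bar y$ directly — so no extra hypothesis is needed beyond definability of cuts.
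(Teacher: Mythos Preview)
Your proposal is correct and follows essentially the same approach as the paper: both use quantifier elimination in $\mathcal{L}_{\text{Pres}}$ (Fact~\ref{F:RegChar}(3)) to reduce definability of $n$-types to that of $1$-types via the $\Z$-linear combinations $\sum z_i a_i$, then observe that a $1$-type is determined by congruence data (always definable) together with a cut, and finally characterize the definable cuts as $\pm\infty$ or $(g/n)^\pm$. The only cosmetic difference is that the paper outsources the cut characterization to \cite[Lemma~4.3.3]{CHY21} whereas you derive it directly from the QE normal form, and the paper's treatment of the discrete/dense split in the ``in particular'' part is terser than yours; the substance is identical.
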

\begin{proof}
    By Fact~\ref{F:RegChar}(3), regular ordered abelian groups eliminate quantifiers in $\mathcal{L}_{\text{Pres}}$. It follows that we can reduce the definability of all $n$-types for all $n$ (over $G$) to the definability of all $1$-types: whenever $G\preccurlyeq H$ is an elementary extension of regular ordered abelian groups, and $\bar{a}=(a_0,\dots,a_{n-1})\in H^n$, we have
    
    \begin{equation}
\label{MarkerSteinhornforPresburger}
\underset{z_0,\dots,z_{n-1} \in \Z}{\bigcup}\text{tp}(\underset{i<n}{\sum}z_ia_i / G) \vdash \text{tp}(\bar{a}/G).
\end{equation}

Thus $tp(\bar{a}/G)$ is  definable if and only if $tp(\sum z_ia_i/G)$ is  definable for all $\bar{z}\in\mathbb{Z}^n$, and all types over $G$ realised in $H$ are (uniformly) definable if and only if all 1-types over $G$ realised in $H$ are (uniformly) definable.

Consider $p(x)=\tp(a/G)$  a non realised 1-type over $G$, where $a \in H$. By quantifier elimination, $p(x)$ is determined by the classes modulo $m$ of $a$ for all $m\in\mathbb{N}$, and by the cut $C^p=(L^p,R^p)$, where $L^p = \{d \in G \mid d<a\}$. (Note that $C^p$ determines the cut of $ma$ over $G$ for any $m$.)  The class modulo $m$ of $a$ is either not represented in $G$ or it is the class modulo $m$ of an element of $G$ --- it is in particular always definable. Thus, $p$ is definable if and only if the cut $C^p$ is definable in $G$. By \cite[Lemma 4.3.3]{CHY26} and quantifier elimination in divisible ordered abelian groups, the definable cuts in $G$ are precisely the cuts of the form $\pm\infty$ or of the form $(\frac{g}{n})^\pm$ for $g\in G$, $n\in \mathbb{N}_{>0}$. This shows the first part.

It also follows that $G$ is stably embedded if and only if all cuts in $G$ are definable, so are all of the form  $\pm\infty$ or $(\frac{g}{n})^\pm$. If $G$ is discrete, this happens if and only if $G \simeq \mathbb{Z}$, as $\mathbb{Z}$ is the only archimedean model of its theory. If $G$ is dense, this happens if and only if $G$ is archimedean and all cuts are of the form $\pm\infty$ or of the form $(\frac{g}{n})^\pm$, i.e., $\div(G)\simeq \mathbb{R}$.
\end{proof}

A consequence of the proof is uniform stable embeddedness of $(\mathbb{R},+,<)$ and of $(\mathbb{Z},+,<)$. More precisely, we get:

\begin{corollary}\label{Cor:StEmb}
\begin{enumerate}
    \item Up to isomorphism, $(\mathbb{R},+,<)$ is the unique stably embedded model of DOAG. Moreover, it is uniformly stably embedded. 
    \item Up to isomorphism, $(\mathbb{Z},+,<)$ is the unique stably embedded model of PRES. Moreover, it is uniformly stably embedded. 
\item Let $G$ be a dense non-divisible regular ordered abelian group. Then $T=\Th(G)$ admits a stably embedded model, but no uniformly stably embedded model (see \cite[Proof of Proposition~4.3.5]{CHY26}). 

Actually, for any such $T$ there are more than one stably embedded models up to isomorphism. There is a non-empty countable set $I$ and a family $(p_i)_{i\in I}$ of prime numbers such that, as groups, $G_1:=\bigoplus_i\mathbb{Z}_{(p_i)}\oplus\mathbb{R}\equiv (G,+)\equiv \bigoplus_i\mathbb{Z}_{p_i}$, as one may choose $(p_i)_{i\in I}$ such that $(G_1:pG_1)=(G_2:pG_2)=(G:pG)$ for every prime $p$.

The groups $G_1$ and $G_2$ are both of cardinality $2^{\aleph_0}$ and non-isomorphic (e.g., as $G_2$ contains no subgroup isomorphic to $\mathbb{Q}$).

It now suffices to choose group embeddings $\iota_i:G_i\hookrightarrow \mathbb{R}$ such that $\div(\iota_i(G_i))=\mathbb{R}$ for $i=1,2$. The ordered abelian groups one obtains on $G_1$ and $G_2$ using the orderings induced from  $\iota_1$ and $\iota_2$ are then non-isomorphic, and they are both stably embedded by Proposition~\ref{PropArchimedeangroups}.
\end{enumerate}
\end{corollary}


\subsubsection{Ordered abelian groups with finite regular rank}
 Let $G$ be an ordered abelian group  with \emph{finite regular rank}, i.e., with finitely many definable convex subgroups
\begin{equation*}
\Delta_0=\{0\} < \dots <\Delta_i < \dots < \Delta_{n}=G
\end{equation*}
In what follows, we will repeatedly use Fact~\ref{Fact:Purity-convex}.

For any $i<n$, we can associate to $G$ the following pure
short exact sequence of ordered abelian groups
\begin{equation}\label{EquationShortExactSequence}
0 \longrightarrow \Delta_i \overset{\iota}{\longrightarrow} \Delta_{i+1} \overset{\nu}{\longrightarrow} Q_i:=\Delta_{i+1}/\Delta_i \longrightarrow 0
\end{equation}
Observe that $Q_i$ can be divisible only if $i=0$. Indeed, if $Q_i$ were divisible for some $i>0$, $\Delta_{i+1}/\Delta_{i-1}$ would be regular by 
Fact~\ref{F:RegChar}(6), and the existence of the definable proper convex subgroup $\Delta_i/\Delta_{i-1}$ would thus contradict Fact~\ref{F:RegChar}(5).  We have the following characterisation for pure short exact sequences of abelian groups:
\begin{proposition}[\cite{Tou20b}]
\label{corPierre}
Let $\mathcal{M}=(A,B,C,\iota,\nu, \dots)$ be a pure short exact sequence of abelian groups, possibly with additional structure on the sort $A$ and on the sort $C$. Let $\mathcal{N}=(A',B',C',\iota',\nu',\ldots)$ be an elementary extension of $\mathcal{M}$. Then, $\mathcal{M}$ is (uniformly) stably embedded in $\mathcal{N}$ if and only if $A$ is (uniformly) stably embedded in $A'$ and $C$ is (uniformly) stably embedded in $C'$. 
\end{proposition}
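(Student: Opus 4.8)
The plan is to prove the proposition by a standard back-and-forth/compactness argument reducing definability of a type over $\mathcal{M}$ to definability of its restrictions to the sorts $A$ and $C$; the point is that in a pure short exact sequence $0\to A\to B\to C\to 0$ every element of $B$ is, after a suitable choice of section on a saturated model, coded by its image in $C$ together with a "correction term" living in $A$. First I would fix notation: let $b\in B'$ (more generally a tuple), put $c=\nu'(b)\in C'$, and — passing to an $\aleph_1$-saturated elementary extension, which is harmless for the purposes of definability of types realised in a fixed $\mathcal{N}$ — use purity to split the sequence over $\mathcal{M}$, so that $B\cong A\times C$ as $\mathcal{L}$-structures with the splitting $\mathcal{L}(M)$-definable. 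One must be slightly careful because we cannot assume $\mathcal{N}$ itself is saturated; the clean way is to argue on the level of types: $\operatorname{tp}(b/M)$ is definable iff $\operatorname{tp}(b/M')$ is definable for some (any) $|M|^+$-saturated $M'\supseteq M$ with a common elementary extension, and then work there.

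The main technical input is a relative quantifier-elimination statement for pure short exact sequences of abelian groups with extra structure on $A$ and $C$ — precisely the kind of result the excerpt attributes to \cite{ACGZ20} (Corollary 4.3) and which underlies Fact~\ref{Fact:Purity-convex}. Granting it, every $\mathcal{L}_{\mathrm{ses}}(M)$-formula $\phi(x,\bar m)$ in a $B$-variable $x$ is equivalent to a Boolean combination of: (i) formulas $\psi(\nu(x),\ldots)$ pulled back from the quotient sort $C$, and (ii) formulas expressing, for finitely many integers $n$, membership of $x$ in cosets $a_n + nB$ controlled by data in the sort $A$ (together with formulas purely about the $A$-side and $C$-side parameters). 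Thus, to define $\operatorname{tp}(b/M)$ it suffices to define: (1) for each $C$-formula $\theta$, whether $\theta(c,\bar m)$ holds — this is exactly $d_{\operatorname{tp}(c/\nu(M))}\theta$, available since $C\preccurlyeq C'$ is stably embedded; (2) for each $n$ and each relevant $a\in A$-data over $M$, whether $b\in a+nB$ — using the splitting, $b\in a+nB$ reduces to an $A$-condition on the $A$-component of $b$ relative to $a$, hence is controlled by $\operatorname{tp}(\pi_A(b)/A(M))$, definable since $A\preccurlyeq A'$ is stably embedded. Assembling the defining schemes for $\operatorname{tp}(\pi_A(b)/A(M))$ and $\operatorname{tp}(\nu(b)/C(M))$ along the (finitely many, formula-by-formula) clauses of the quantifier elimination yields a defining scheme for $\operatorname{tp}(b/M)$ over $M$. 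The uniform version follows verbatim, since quantifier elimination is uniform in parameters and the uniform stable embeddedness of $A$ and $C$ supplies defining schemes depending only on the formula.

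For the converse direction one observes that $A$ and $C$ are themselves stably embedded in $\mathcal{M}$ (this is essentially the content of Fact~\ref{Fact:Purity-convex}, or immediate from the quantifier elimination), and that stable embeddedness of $\mathcal{M}$ in $\mathcal{N}$ restricts to stable embeddedness of these stably embedded substructures: a trace on $A^k$ of an $\mathcal{N}$-definable set is the trace on $A^k$ of an $\mathcal{M}$-definable set, which by stable embeddedness of $A$ in $\mathcal{M}$ is $A(M)$-definable; likewise for $C$. The uniform statement again transfers with no change.

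I expect the main obstacle to be the bookkeeping in step (2): making precise, from the relative quantifier elimination, exactly which $A$-side data over $M$ are needed to decide the cosets $a+nB$ containing $b$, and checking that those data are captured by $\operatorname{tp}(\pi_A(b)/A(M))$ rather than requiring parameters from $B\setminus(A\cup C)$. Handling the discrete/dense distinction for the $Q_i$ (and the possibility that $B$ is not $\mathbb{Z}$-invariant in the applications) is where one pays attention, but since the proposition is stated at the level of abstract pure short exact sequences of abelian groups this is subsumed in the cited relative quantifier elimination. Because of this, I would present the proof as a short reduction to \cite{ACGZ20} plus the abstract "splitting + stably embedded factors" argument, rather than re-deriving any group-theoretic facts.
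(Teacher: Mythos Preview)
The paper does not contain a proof of this proposition: it is stated with a citation to \cite{Tou20b} and used as a black box. There is therefore nothing in the paper to compare your attempt against.

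That said, your outline is broadly along the lines one would expect such a proof to take, namely a reduction via the relative quantifier elimination for pure short exact sequences (the result of \cite{ACGZ20} you invoke, and which the paper already uses in the proof of Fact~\ref{Fact:Purity-convex}). A few remarks on the sketch itself. First, the detour through an $\aleph_1$-saturated extension and a splitting section is not really needed and introduces the awkwardness you yourself flag (the section is not in the language, and replacing $\mathcal{M}$ by a saturated model interacts badly with ``stably embedded in $\mathcal{N}$''): the relative QE already gives, purely syntactically, that every $B$-formula is equivalent to a Boolean combination of formulas pulled back from $C$ via $\nu$ and formulas pulled back from the imaginary sorts $A/nA$ via the canonical maps $\rho_n$ arising from purity. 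Working directly with these maps avoids any splitting. Second, your step (2) is slightly mis-stated: the conditions ``$b\in a+nB$'' are not controlled by a single element $\pi_A(b)$ (there is no such element without a section), but rather by the images $\rho_n(b)\in A/nA$; once phrased this way the definability over $A$ is immediate. Third, the converse direction is fine as you wrote it.

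In short: your approach is essentially the standard one and would go through with the bookkeeping cleaned up, but the paper itself simply quotes the result.
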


This proposition applies in particular to short exact sequences of ordered abelian groups. We deduce the following:

\begin{theorem}
Let $G$ be an ordered abelian group with finite regular rank, and let $\Delta_0=\{0\} < \dots <\Delta_i < \dots < \Delta_{n}=G$ be all the definable convex subgroups of $G$. Set $Q_i:=\Delta_{i+1}/\Delta_i$, for $i<n$. Then, the  following are equivalent:
\begin{enumerate}
    \item $G$ is stably embedded (resp. uniformly stably embedded).
    \item $Q_i$ is stably embedded (resp. uniformly stably embedded) for every $i<n$.
    \item For every $i<n$, $Q_i\cong\Z$ or $Q_i$ is dense and $\div(Q_i)\cong\mathbb{R}$ (resp. $Q_i\cong\Z$ or $Q_i\cong\mathbb{R}$). 
\end{enumerate}
\end{theorem}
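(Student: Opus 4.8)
The plan is to prove the three conditions equivalent by induction on the regular rank $n$, using the pure short exact sequences \eqref{EquationShortExactSequence} together with Proposition~\ref{corPierre} and Proposition~\ref{PropArchimedeangroups}. The base case $n=1$ is exactly Proposition~\ref{PropArchimedeangroups}: here $G=Q_0$ is regular, and the equivalence of (1), (2), (3) for a single regular group is precisely what that proposition gives (with the caveat about the uniform version supplied by Corollary~\ref{Cor:StEmb}, which is where the parenthetical ``$Q_i\cong\mathbb{R}$'' in (3) comes from — a dense non-divisible regular group can be stably embedded but never \emph{uniformly} so).

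For the inductive step, I would peel off the top quotient: consider the pure short exact sequence $0\to\Delta_{n-1}\to G\to Q_{n-1}\to 0$. By Fact~\ref{Fact:Purity-convex} this is a pure short exact sequence of ordered abelian groups (with no extra structure on either end, since the $\Delta_i$ are $\mathcal{L}_{\mathrm{oag}}$-definable), so Proposition~\ref{corPierre} applies: $G$ is (uniformly) stably embedded if and only if $\Delta_{n-1}$ is (uniformly) stably embedded and $Q_{n-1}$ is (uniformly) stably embedded. Now $\Delta_{n-1}$ is an ordered abelian group whose definable convex subgroups are exactly $\Delta_0<\dots<\Delta_{n-1}$ — one must check that no \emph{new} definable convex subgroups appear and none of the old ones disappear, which follows from Fact~\ref{F:RegChar}(5) applied to the regular quotients $\Delta_{i+1}/\Delta_i$, or more directly from the fact that convex subgroups of $\Delta_{n-1}$ are convex subgroups of $G$ and definability is witnessed by the same formulas via purity. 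So $\Delta_{n-1}$ has regular rank $n-1$ with quotients $Q_0,\dots,Q_{n-2}$, and the induction hypothesis gives: $\Delta_{n-1}$ is (uniformly) stably embedded $\iff$ each $Q_i$, $i<n-1$, is (uniformly) stably embedded $\iff$ each $Q_i$, $i<n-1$, satisfies the condition in (3). Combining with Proposition~\ref{corPierre} and, for the remaining factor $Q_{n-1}$, Proposition~\ref{PropArchimedeangroups} / Corollary~\ref{Cor:StEmb}, yields the equivalence of (1), (2), (3) for $G$.

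For the precise shape of (3), one needs the observation already made in the text that $Q_i$ can be divisible only when $i=0$: for $i>0$, divisibility of $Q_i$ would make $\Delta_{i+1}/\Delta_{i-1}$ regular with a proper nontrivial definable convex subgroup, contradicting Fact~\ref{F:RegChar}(5). Hence for $i>0$ the group $Q_i$ is a non-divisible regular ordered abelian group, and Proposition~\ref{PropArchimedeangroups} tells us it is stably embedded iff $Q_i\cong\mathbb{Z}$ or ($Q_i$ dense with $\div(Q_i)\cong\mathbb{R}$); the uniform version excludes the dense case entirely, leaving $Q_i\cong\mathbb{Z}$ or $Q_i\cong\mathbb{R}$. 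For $i=0$, $Q_0=\Delta_1$ may be divisible, and then ``$\div(Q_0)\cong\mathbb{R}$'' just reads ``$Q_0\cong\mathbb{R}$''; so the stated condition in (3) is uniform over all $i<n$ and no separate bookkeeping for $i=0$ is needed.

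The main obstacle I anticipate is purely bookkeeping rather than conceptual: one has to be careful that the ``definable convex subgroups'' of the subgroup $\Delta_{n-1}$ are exactly $\Delta_0,\dots,\Delta_{n-1}$ and not more, so that the induction hypothesis applies verbatim. This is where Fact~\ref{Fact:Purity-convex} (purity of the sequence, hence pure stable embeddedness with no induced structure) does the real work — it guarantees that a convex subgroup of $\Delta_{n-1}$ definable in $\Delta_{n-1}$ is the trace of something definable in $G$, and conversely, so the two notions of ``definable convex subgroup'' agree. Once that is in place, the rest is a direct assembly of Propositions~\ref{corPierre} and \ref{PropArchimedeangroups}. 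An alternative to the induction, if one prefers, is to note that purity lets one split $G\cong Q_{n-1}\times\Delta_{n-1}\cong Q_{n-1}\times Q_{n-2}\times\cdots\times Q_0$ after passing to an $\aleph_1$-saturated elementary extension (as in the proof of Fact~\ref{Fact:Purity-convex}), reducing everything in one step to a repeated application of Proposition~\ref{corPierre}; but the inductive phrasing keeps the statement about definable convex subgroups cleanest.
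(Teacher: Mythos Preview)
Your proposal is correct and follows essentially the same approach as the paper: induction on $n$ via Proposition~\ref{corPierre} for (1)$\Leftrightarrow$(2), then Proposition~\ref{PropArchimedeangroups} and Corollary~\ref{Cor:StEmb} for (2)$\Leftrightarrow$(3). The paper's proof is a two-line sketch that leaves implicit exactly the bookkeeping you spell out (that the definable convex subgroups of $\Delta_{n-1}$ are precisely $\Delta_0,\dots,\Delta_{n-1}$, via purity), so your version is a faithful expansion rather than a different argument.
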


\begin{proof}
(1)$\Leftrightarrow$(2) follows from Proposition~\ref{corPierre}, by induction on $n$.
(2)$\Leftrightarrow$(3) follows from the  previous characterisation of (uniformly) stably embedded regular ordered abelian groups (Proposition~\ref{PropArchimedeangroups} and Corollary~\ref{Cor:StEmb}), since the $Q_i$'s are regular ordered abelian groups for all $i < n$ (by Fact~\ref{F:RegChar}).
\end{proof}

\begin{corollary}\label{CorollaryZnUniformlySE}
The ordered abelian groups $\Z^n$ and $\Z^n \times \mathbb{R}$  are uniformly stably embedded, for every $n \in \mathbb{N}$, and they are the unique models of their own theory which are stably embedded. 
\end{corollary}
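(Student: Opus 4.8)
The plan is to derive Corollary~\ref{CorollaryZnUniformlySE} directly from the preceding Theorem by a careful bookkeeping of the definable convex subgroups of $\Z^n$ and $\Z^n\times\mathbb{R}$. First I would identify, for $G=\Z^n$ (with the lexicographic order), the chain of definable convex subgroups: these are precisely $\Delta_i=\{0\}^{n-i}\times\Z^i$ for $0\le i\le n$, so the successive quotients are $Q_i\cong\Z$ for every $i<n$. By the equivalence (1)$\Leftrightarrow$(3) in the Theorem, $G$ is uniformly stably embedded. For $G=\Z^n\times\mathbb{R}$, the definable convex subgroups form the chain $\{0\}\times\mathbb{R}$ inside $\{0\}^{n-1}\times\Z\times\mathbb{R}$ inside $\ldots$ inside $\Z^n\times\mathbb{R}$; here the bottom quotient is $Q_0\cong\mathbb{R}$ and all higher quotients are $Q_i\cong\Z$, so again condition (3) (the uniform version) holds and $G$ is uniformly stably embedded.

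Next I would address uniqueness. Suppose $G'\equiv\Z^n$ is stably embedded; by the Theorem, $G'$ has exactly $n+1$ definable convex subgroups with successive quotients $Q_i'$ each elementarily equivalent to the corresponding $Q_i\cong\Z$ (so each $Q_i'\equiv\mathrm{PRES}$), and stable embeddedness of $G'$ forces each $Q_i'$ to be stably embedded, hence $Q_i'\cong\Z$ by Corollary~\ref{Cor:StEmb}(2). An easy induction using the split short exact sequences from Fact~\ref{Fact:Purity-convex} (each $\Delta'_{i+1}\cong \Delta'_i\times Q_i'$ as ordered abelian groups, after passing to a saturated model — but here the groups are already the standard ones, so the splitting is literal) then yields $G'\cong\Z^n$. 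The same argument for $G'\equiv\Z^n\times\mathbb{R}$ gives $Q_0'$ stably embedded and dense with $\div(Q_0')\cong\mathbb{R}$, forcing $Q_0'\cong\mathbb{R}$, and $Q_i'\cong\Z$ for $i>0$, whence $G'\cong\Z^n\times\mathbb{R}$.

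The one subtlety — and the step I expect to require the most care — is the passage from "the successive quotients $Q_i'$ are isomorphic to the expected groups" to "$G'$ itself is isomorphic to $\Z^n$ (resp.\ $\Z^n\times\mathbb{R}$)". In general a finite tower of split extensions only determines $G'$ up to iterated lexicographic product, so one must observe that $\Z$ and $\mathbb{R}$ admit no nontrivial self-extensions of the relevant shape and that the lexicographic product is associative and commutes with the orderings in the way needed; concretely, each extension $0\to\Delta_i'\to\Delta_{i+1}'\to Q_i'\to 0$ splits as ordered abelian groups by the pure-injectivity argument of Fact~\ref{Fact:Purity-convex} (the ambient group being $\aleph_1$-saturated after an elementary extension, which changes neither the isomorphism type of the standard models nor the conclusion), and reassembling gives $G'\cong Q_0'\times Q_1'\times\cdots\times Q_{n-1}'$ lexicographically. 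Plugging in $Q_i'\cong\Z$ (resp.\ $Q_0'\cong\mathbb{R}$, $Q_i'\cong\Z$) finishes the proof.
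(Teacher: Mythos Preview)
Your approach is correct and is precisely the natural derivation the paper intends (the corollary is stated without proof, as an immediate consequence of the preceding theorem together with Corollary~\ref{Cor:StEmb}).

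One remark on the reconstruction step you flag as subtle: invoking pure-injectivity and $\aleph_1$-saturation is the wrong tool here, and as stated it does not quite work---passing to a saturated elementary extension of $G'$ would split the sequence \emph{there}, not in $G'$ itself, and ``the groups are already the standard ones'' is what you are trying to prove. The cleaner observation is that each quotient $Q_i'\cong\Z$ is free as an abelian group, so $\mathrm{Ext}^1(\Z,\Delta_i')=0$ and the pure short exact sequence $0\to\Delta_i'\to\Delta_{i+1}'\to Q_i'\to 0$ splits outright; since $\Delta_i'$ is convex in $\Delta_{i+1}'$, any abelian-group splitting is automatically a splitting of ordered abelian groups (the order is recovered lexicographically). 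For $\Z^n\times\mathbb{R}$ the bottom step gives $\Delta_1'\cong Q_0'\cong\mathbb{R}$ directly, and all higher steps again have $\Z$ as quotient. This removes the need for any saturation argument and cleanly closes the gap you anticipated.
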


\begin{remark}
In fact, $\Z^n$ and $\Z^n \times \mathbb{R}$ are the only ordered abelian groups of finite regular rank which are uniformly stably embedded. Any other complete theory of ordered abelian groups of finite regular rank does admit  stably embedded models, but no uniformly stably embedded model. (This may be easily derived from our analysis. The details are left to the reader.) 
\end{remark}

\subsection{Ordered abelian groups with interpretable regular valuation} \label{SS:RegularValuation}
In this subsection, we focus on a large class of ordered abelian groups, interpreting a compatible valuation that we call the \emph{regular valuation}. This assumption allows us to use the tools of valuation theory described in the preliminary section; our characterisation of stably embedded pairs will then take the form of a transfer principle for valued abelian groups.

Recall that, as in Definition~\ref{DefinitionAuxiliarySorts}, we use the convention that the union over the empty set is $\emptyset$ and the intersection over the empty set is $G$. We cite first a result of Delon and Farré, adapted to our notation\footnote{
One needs to see that for $g\in G\setminus \{0\}$ the group denoted by $\mathcal{A}_N(g)$ in \cite{Sch82} is equal to $G_{\mathfrak{t}_N(g)}$.  This is observed in \cite[Section 1.5]{CH11} and follows from the fact that the quotient $G_{\mathfrak{t}_N^+(g)}/G_{\mathfrak{t}_N(g)}$ is $N$-regular.}: 

\begin{fact}[{\cite[Theorem 4.1]{DF96}}]\label{FactDelonFarre}
Let $G$ be an ordered abelian group. If $H$ is a definable convex subgroup, then there is an integer $N$ such that:
\[H := \bigcap_{g\notin H } G_{\mathfrak{t}_N(g)}
 \]
\end{fact}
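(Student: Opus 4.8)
The plan is to prove that a definable convex subgroup $H$ of an ordered abelian group $G$ can be recovered as an intersection $\bigcap_{g \notin H} G_{\mathfrak{t}_N(g)}$ for a suitable fixed integer $N$. First I would observe the easy inclusion $H \subseteq \bigcap_{g \notin H} G_{\mathfrak{t}_N(g)}$ for \emph{every} $N$: for $g \notin H$, by definition $H_g^N = \bigcup_{h \in G,\, g \notin G_h^N} G_h^N$ is a convex subgroup, and since $H$ is convex and $g \notin H$ while every element of $H$ has smaller value than... — more carefully, one checks that $H \subseteq G_g^N$ whenever $g \notin H + NG$, hence $H \subseteq H_g^N = G_{\mathfrak{t}_N(g)}$ provided some $h \notin G_g^N$ exists with the right property; the cleanest route is to recall from \cite{CH11} that the families $\{G_\alpha\}_{\alpha \in \mathcal{T}_N}$ are exactly chains of convex subgroups and that $H_g^N$ is the convex subgroup generated by all $G_h^N$ avoiding $g$, from which $H \subseteq H_g^N$ follows directly once $g \notin H$. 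So the content is entirely in the reverse inclusion, i.e. finding $N$ large enough that for every $g \notin H$ we have $G_{\mathfrak{t}_N(g)} \subseteq H$, equivalently that $G_{\mathfrak{t}_N(g)}$ never overshoots $H$.

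The key step is to translate the hypothesis that $H$ is \emph{definable} into a statement about the spines. By Fact~\ref{FactQUantifierEliminationLsyn} and the syntactic description of $\mathcal{L}_{\text{syn}}$-formulas recalled after it, the definable convex subgroup $H$ is cut out by a boolean combination of main-sort quantifier-free formulas and formulas pulled back from the auxiliary sorts via the maps $\mathfrak{s}_{p^r}$ and $\mathfrak{t}_p$. A convex subgroup is determined by which principal convex subgroups $\langle g \rangle^{\text{conv}}$ it contains, so definability of $H$ forces the "boundary" of $H$ — the cut it induces on the archimedean spine — to be controlled by finitely many of the auxiliary predicates, hence by finitely many primes $p$ and bounded exponents. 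Here I would invoke the cited translation (the footnote's point that $\mathcal{A}_N(g) = G_{\mathfrak{t}_N(g)}$, together with the $N$-regularity of $G_{\mathfrak{t}_N^+(g)}/G_{\mathfrak{t}_N(g)}$): as $N$ grows, the subgroups $G_{\mathfrak{t}_N(g)}$ shrink down towards the value $V_g^a$ of $g$, and the "jumps" that remain visible at level $N$ are precisely those detected by primes dividing $N$. Choosing $N$ divisible by all the finitely many primes relevant to the defining formula of $H$ — and with exponents at least as large as those appearing there — ensures $G_{\mathfrak{t}_N(g)}$ is small enough to sit inside $H$ for every $g \notin H$.

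Concretely, I would argue by contradiction: suppose no $N$ works, so for every $N$ there is $g_N \notin H$ with $G_{\mathfrak{t}_N(g_N)} \not\subseteq H$, i.e. $G_{\mathfrak{t}_N(g_N)}$ contains some element of $H^c$... — rather, contains an element outside $H$ whose presence would contradict that $H$ is defined by the chosen formula at level $N$. Using that $G_{\mathfrak{t}_N(g)}$ is the largest convex subgroup $K$ such that the quotient $C_g^a/K$ stays $N$-regular "up to" $g$, and that $H$ definable means $H$ equals the set of $g$ satisfying some fixed $\xi$ in the auxiliary sorts evaluated at $\mathfrak{s}_{p^r}(g), \mathfrak{t}_p(g)$, one gets that the relation "$g \in H$" is invariant under the equivalence $\mathfrak{t}_N(g) = \mathfrak{t}_N(g')$ once $N$ absorbs all primes and exponents in $\xi$; but $G_{\mathfrak{t}_N(g_N)} \not\subseteq H$ exhibits $g' \in G_{\mathfrak{t}_N(g_N)} \setminus H$ with $\mathfrak{t}_N(g') $ not below $\mathfrak{t}_N(g_N)$ in a way incompatible with this invariance — a contradiction.

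\textbf{Main obstacle.} The hard part will be making the passage "definable $\Rightarrow$ only finitely many primes/exponents matter" fully rigorous: one must carefully use the precise normal form for $\mathcal{L}_{\text{syn}}$-formulas, control how the sorts $\mathcal{S}_{p^r}$, $\mathcal{T}_p$, $\mathcal{T}_p^+$ and the predicates $D_{p^r}^{[p^s]}$, $\mathrm{discr}$, $=_\bullet k_\bullet$ interact with the chain of $G_{\mathfrak{t}_N(g)}$'s as $N$ varies, and verify that increasing $N$ multiplicatively (rather than just taking $N$ prime, as in the footnote's remark that $\mathfrak{t}_2$ may be replaced by any $\mathfrak{t}_p$) genuinely refines all the relevant spines simultaneously. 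Since this is exactly the content of \cite[Theorem 4.1]{DF96} as cited, in the paper I would simply invoke the reference after recording the easy inclusion and the reduction above; a self-contained proof would require reproducing a fair amount of the Delon–Farré / Cluckers–Halupczok machinery.
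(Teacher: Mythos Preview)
The paper does not prove this statement at all: it is recorded as a \emph{Fact} with a bare citation to \cite[Theorem~4.1]{DF96}, so there is no ``paper's own proof'' to compare against. Your proposal is therefore an attempt to supply what the paper deliberately omits.

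That said, your sketch has a genuine orientation error. You call $H \subseteq \bigcap_{g\notin H} G_{\mathfrak{t}_N(g)}$ the ``easy inclusion'' holding for every $N$, and locate the content in the reverse inclusion. It is the other way around. The inclusion $\bigcap_{g\notin H} G_{\mathfrak{t}_N(g)} \subseteq H$ is automatic for \emph{any} $N$: if $x\notin H$, take $g=x$ and note that $G_{\mathfrak{t}_N(x)}=H_x^N=\bigcup_{h:\,x\notin G_h^N} G_h^N$ is a union of convex subgroups each of which, by the indexing condition, omits $x$; hence $x\notin G_{\mathfrak{t}_N(x)}$. Conversely, $H\subseteq G_{\mathfrak{t}_N(g)}$ for all $g\notin H$ genuinely fails for bad $N$ --- e.g.\ if $G$ is $N$-divisible then every $G_h^N=\{0\}$, so every $G_{\mathfrak{t}_N(g)}=\{0\}$, and the inclusion forces $H=\{0\}$. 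So the substantive direction, the one requiring a well-chosen $N$, is exactly the one you dismissed as easy.

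Relatedly, your reformulation of the ``reverse inclusion'' as ``for every $g\notin H$ we have $G_{\mathfrak{t}_N(g)}\subseteq H$'' is strictly stronger than $\bigcap_{g\notin H} G_{\mathfrak{t}_N(g)}\subseteq H$ and is not what is being claimed (nor is it true in general). Your high-level heuristic --- that definability of $H$ forces only finitely many primes to matter, and one takes $N$ divisible by all of them --- is the right intuition and is indeed how the Delon--Farr\'e argument proceeds, but it must be deployed to establish $H\subseteq\bigcap$, not the other direction. Once you swap the roles of the two inclusions, the rest of your plan (invoke the $\mathcal{L}_{\text{syn}}$ normal form, extract the finite set of relevant primes, take $N$ as their product with suitable exponents) is a reasonable outline, though as you acknowledge, making it precise is essentially reproducing \cite{DF96}.
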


We wish to study the following property (U), which states that all fundaments of $G$ are uniformly definable, in a strong sense:
\begin{enumerate}
    \item[(U)] The archimedian valuation is interpretable in the language of ordered groups: there is a formula $\phi(x,y)$ with $\vert y \vert =1$ such that for all $g\in G$, $\phi(G,g)=V_g^a$.
\end{enumerate}

Notice first that this property is not preserved under elementary equivalence. Indeed, we can observe the following:
\begin{remark}
     \begin{enumerate}
         \item Let $G$ be an ordered abelian group and $H=\phi(G)\leq G$ a proper convex definable subgroup, such that $G/H$ is dense. Then there is $G'\succcurlyeq G$ such that, setting $H'=\phi(G')$, the quotient group $G'/H'$ has no smallest non-trivial convex subgroup. 
         \item Let $G$ be an ordered abelian  group and $g\in G$. Assume that $V_g^a$ is definable given by a formula $\phi(x,g)$ and $G/V_g^a$ is dense. Then there is an elementary extension $G'$ of $G$ such that $\phi(G',g)$ is not an archimedean fundament.
         \end{enumerate}
\end{remark}

\begin{proof}
Assume that $H=\phi(G)$ is such that $G/H$ is a non-trivial dense ordered abelian group. We construct by induction an increasing sequence of elementary extensions $(G_n)_n$, where for $n\geq 0$, $G_{n+1}/\phi(G_{n+1})$ contains a realisation of $0^+$ in $G_{n}$. Then the union $G':= \bigcup_n G_n$ is an elementary extension of $G=G_0$ such that $G'/\phi(G'$) has no smallest non-trivial convex subgroup. This proves (1).

(2) follows from (1), setting $H:=V_g^a$.
\end{proof}

We want to characterise ordered abelian groups which are elementarily equivalent to a group satisfying (U). For that, we introduce the notion of regular spine, which is the definable counterpart of the archimedean spine.
\begin{definition}[Regular valuation]
    Let $G$ be an ordered abelian group, $g\in G$. We denote by
    \begin{itemize}
        \item $V^r_g$ the union of all \emph{definable} convex subgroups not containing $g$ if $g\neq 0$, and $\emptyset$ otherwise, called the \emph{regular fundament of $g$}; 
        \item $C^r_g$ the intersection of all \emph{definable} convex subgroup containing $g$, called the \emph{regular cover of $g$}; 
        \item (assuming $g\neq 0$) $R^r_g$ the group quotient $C^r_g/V^r_g$ called the \emph{regular class} or the \emph{regular rib} at $g$; 
        \item $\Gamma^r$ the quotient $G/\sim^r$ called the \emph{regular spine} (where the equivalence relation $\sim^r$ is given by: $g \sim^r {g'}$ if and only if $V^r_g=V^r_{g'}$, equivalently $C^r_g=C^r_{g'}$; it is equipped with the reverse order with respect to the inclusion);
        \item $\val^r:G \rightarrow \Gamma^r$ the canonical projection map, called the \emph{regular valuation}.
    \end{itemize}
\end{definition}

\begin{lemma}
If $G$ is an ordered abelian group, $(G,val^r)$ is a $\Z$-invariant valued abelian group. Moreover, for any $g\in G$, the regular rib $R^r_g$ is a regular ordered abelian group.    
\end{lemma}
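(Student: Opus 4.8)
The plan is to verify the two assertions separately, both by unwinding definitions and reducing to facts already available in the excerpt.

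For the first assertion, that $(G,\val^r)$ is a $\mathbb{Z}$-invariant valued abelian group, I would check axioms \ref{(i)}, \ref{(ii)}, \ref{(iii)} of Definition~\ref{DefinitionValuedGroup} for the map $\val^r\colon G\to\Gamma^r$ (after noting surjectivity is automatic, as $\val^r$ is a quotient map, and $\Gamma^r$ has a maximal element $\infty=\val^r(0)$ since $C^r_0=\{0\}$ is the intersection over the family containing $\{0\}$ itself). Axiom \ref{(i)}: $\val^r(g)=\infty$ iff $V^r_g=V^r_0=\emptyset$; by definition $V^r_0=\emptyset$, and for $g\neq 0$ the subgroup $\{0\}$ is a definable convex subgroup not containing $g$, so $V^r_g\supseteq\{0\}\supsetneq\emptyset$; hence $\val^r(g)=\infty\iff g=0$. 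Axiom \ref{(ii)}: given $g,h\in G$, I want $\val^r(g-h)\geq\min\{\val^r(g),\val^r(h)\}$, i.e. (reverse inclusion order on $\Gamma^r$) $V^r_{g-h}\subseteq V^r_g\cup V^r_h$. This holds because any definable convex subgroup $D$ containing neither $g$ nor $h$ (so $D\subseteq V^r_g\cap V^r_h$) cannot contain $g-h$ either; hence $V^r_{g-h}$, being the union of definable convex subgroups not containing $g-h$, is contained in $V^r_g\cup V^r_h$ — here one uses that the definable convex subgroups form a chain, so the union of those inside $V^r_g$ together with those inside $V^r_h$ is $V^r_g\cup V^r_h$. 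Axiom \ref{(iii)}: for $n>0$ and $g\neq0$, a definable convex subgroup $D$ contains $g$ iff it contains $ng$ (one direction is clear; conversely a convex subgroup containing $ng>0$ contains $g$ since $0<g\leq ng$), so $V^r_{ng}=V^r_g$ and $\val^r(ng)=\val^r(g)$; the case $g=0$ is trivial.

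For the second assertion, that each regular rib $R^r_g=C^r_g/V^r_g$ is a regular ordered abelian group, I would argue via Fact~\ref{F:RegChar}, specifically criterion~(5): an ordered abelian group is regular iff its only definable convex subgroups are the trivial ones. So suppose for contradiction that $R^r_g$ has a definable proper nontrivial convex subgroup $\bar D$. Its preimage $D$ under $C^r_g\to C^r_g/V^r_g$ is a convex subgroup of $C^r_g$ with $V^r_g\subsetneq D\subsetneq C^r_g$. The point is that $D$ is then definable as a convex subgroup of $G$: by Fact~\ref{Fact:Purity-convex} the convex subgroups $C^r_g$ and $V^r_g$ are definable convex subgroups of $G$ (being intersections/unions of definable convex subgroups — here one should note $C^r_g$ and $V^r_g$ are themselves $\mathcal{L}_{\text{oag}}$-definable; this uses the chain structure together with e.g. Fact~\ref{FactDelonFarre} to reduce infinite intersections/unions of uniformly definable convex subgroups to definable ones, or simply that a convex subgroup obtained as an intersection of a $\mathcal{L}_{\text{oag}}$-definable family of convex subgroups is itself a definable convex subgroup by compactness/the chain property), and $C^r_g$ with the induced $\mathcal{L}_{\text{oag}}$-structure and a predicate for $V^r_g$ has induced structure that of a pure ordered abelian group together with a predicate for the convex subgroup $V^r_g$, so a definable set of $C^r_g/V^r_g$ pulls back to an $\mathcal{L}_{\text{oag}}(V^r_g)$-definable, hence (since $V^r_g$ is already $G$-definable) $G$-definable, convex subgroup $D$ of $C^r_g$, which is a $G$-definable convex subgroup of $G$ as $C^r_g$ is convex. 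But then $D$ is a definable convex subgroup of $G$ not containing $g$ with $D\supsetneq V^r_g$, contradicting the maximality built into the definition of $V^r_g$ as the union of \emph{all} definable convex subgroups not containing $g$ (note $g\notin D$ since $\bar D\neq R^r_g$ means the image of $g$ is not in $\bar D$). This contradiction shows $R^r_g$ has no proper nontrivial definable convex subgroup, hence is regular.

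\textbf{Main obstacle.} The first assertion is routine unwinding. The delicate point in the second is the definability bookkeeping: establishing that $C^r_g$ and $V^r_g$ are themselves $\mathcal{L}_{\text{oag}}$-definable convex subgroups of $G$ (a priori they are only intersections/unions of a definable \emph{family}), and then that definable subsets of the quotient $R^r_g$ correspond to definable convex subgroups of $G$. The first is handled by the chain structure of convex subgroups together with Fact~\ref{FactDelonFarre} (and Fact~\ref{Fact:Purity-convex} to control induced structure on quotients), and the second by Fact~\ref{Fact:Purity-convex}; once these are in place the contradiction with the definition of $V^r_g$ is immediate. I would expect the write-up to spend most of its words making this reduction precise.
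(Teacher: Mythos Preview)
Your verification that $(G,\val^r)$ is a $\mathbb{Z}$-invariant valued abelian group is correct and essentially what the paper has in mind when it says ``it is clear''.

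For the regularity of $R^r_g$, however, your approach has a genuine gap. You want to pull back a hypothetical definable proper nontrivial convex subgroup $\bar D\subseteq R^r_g$ to a definable convex subgroup $D$ of $G$ strictly between $V^r_g$ and $C^r_g$, and for this you need $V^r_g$ and $C^r_g$ to be $\mathcal{L}_{\text{oag}}$-definable in $G$. You flag this as the main obstacle and gesture at Fact~\ref{FactDelonFarre} and ``compactness/the chain property'', but neither of these gives the conclusion: the lemma is stated for \emph{arbitrary} ordered abelian groups, before (UR) is assumed, and in general $C^r_g$ and $V^r_g$ are \emph{not} definable. For a concrete counterexample, take $G=\hprod_{i\in\omega+1}R_i$ with $R_n=\mathbb{Z}_{(p_n)}$ for an enumeration $(p_n)_n$ of all primes and $R_\omega=\mathbb{Q}$, and let $g$ have support $\{\omega\}$. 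Then $V^r_g=\{0\}$ and $C^r_g=\bigcap_{n\in\omega}H_{>n}$ is the copy of $\mathbb{Q}$ at position $\omega$. If $C^r_g$ were definable, Fact~\ref{FactDelonFarre} would give a single $N$ with $C^r_g=\bigcap_{h\notin C^r_g}G_{\mathfrak{t}_N(h)}$; but $N$ has only finitely many prime divisors, so for $h$ supported at any position $m\in\omega$ beyond all of them one computes $G_{\mathfrak{t}_N(h)}=\{0\}$, forcing the intersection to be $\{0\}\neq C^r_g$. Thus your pullback step cannot be carried out, even though $R^r_g\cong\mathbb{Q}$ is indeed regular here.

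The paper avoids this entirely by a different route: for each $n>1$ one has the \emph{definable} convex subgroups $G_{\mathfrak{t}_n(g)}\subseteq V^r_g\subseteq C^r_g\subseteq G_{\mathfrak{t}_n^+(g)}$ (the outer containments hold by the very definitions of $V^r_g$ and $C^r_g$), so $R^r_g$ is a convex subquotient of $G_{\mathfrak{t}_n^+(g)}/G_{\mathfrak{t}_n(g)}$. The latter is $n$-regular (a fact from \cite{CH11}), and $n$-regularity passes to convex subgroups and to quotients by convex subgroups; hence $R^r_g$ is $n$-regular for every $n>1$, so regular by Fact~\ref{F:RegChar}. This argument never needs $V^r_g$ or $C^r_g$ themselves to be definable.
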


\begin{proof}
It is clear that  $(G,val^r)$ is a $\Z$-invariant valued abelian group. Moreover, for every $n>1$ and $g\in G$, the regular rib $R^r_g$ is a subquotient of the $n$-regular ordered abelian group $G_{\mathfrak{t}_n^+(g)}/G_{\mathfrak{t}_n(g)}$. Thus $R^r_g$ is $n$-regular for every $n \in \mathbb{N}_{>1}$, so regular by Proposition~\ref{F:RegChar}. 
\end{proof}

\begin{definition}\label{{DefinitionRegularValuation}}
    We denote the following property for ordered abelian groups:
    \begin{itemize}
        \item[(UR)$_{\phantom{N}}$] The regular valuation is interpretable in the language of ordered groups: there is a formula $\phi(x,y)$ where $\vert y \vert =1$ such that for all $g\in G$, $\phi(G,g)=V^r_g$. 
    \end{itemize}

    This property will be assumed for the rest of the paper. In this paragraph, we will also need to discuss the following properties: for $N\in \mathbb{N}_{>1}$
    \begin{itemize}
        \item[(UR)$_N$] For all $g\in G$, $V^r_g=G_{\mathfrak{t}_N(g)}$.
    \end{itemize}
    For the notation involved, see Definition~\ref{DefinitionLsyn}.

\end{definition}   

\begin{remark}
    (UR)$_N$ is an elementary property, for it is equivalent to the following:
    \begin{itemize}
        \item[(UR')$_N$] for all $g\in G$, $G_{\mathfrak{t}_N(g)}$ is maximal within $\{G_\alpha \ \mid \ \alpha \in \bigcup_n \mathcal{T}_n \}$ with the property of not containing $g$.
    \end{itemize}

\end{remark}
Notice that if $G$ satisfies (UR)$_N$ for some $N\in \mathbb{N}$, we can identify $\Gamma^r$ with $\mathcal{T}_N$ and $\val^r$ with $\mathfrak{t}_N$.

\begin{proof} 
    This is almost a reformulation of (UR)$_N$, the non-trivial implication being (UR')$_N \Rightarrow $(UR)$_N$.  Let $g\in G$ and assume  $G_{\mathfrak{t}_N(g)}$ is maximal  within $\{G_\alpha \ \mid \ \alpha \in \bigcup_n \mathcal{T}_n \}$ with the property of not containing $g$. By Fact~\ref{FactDelonFarre}, any definable convex subgroup which doesn't contain $g$ is an intersection of convex subgroups in $\{G_\alpha \ \mid \ \alpha \in \bigcup_n \mathcal{T}_n \}$ which do not contain $g$. It follows therefore that $G_{\mathfrak{t}_N(g)}$ is the largest definable convex subgroup which does not contain $g$. So $G_{\mathfrak{t}_N(g)}=V^r_g$, as wanted.
\end{proof}

If a group $G$ satisfies (U), clearly the regular valuation coincides with the archimedean valuation and it will also satisfy (UR). The next proposition shows that (UR) is preserved under elementary equivalence. In particular if an ordered abelian group $G$ is elementarily equivalent to another ordered group satisfying (U), then it satisfies (UR).

\begin{proposition}\label{PropositionUR}
    Let $G$ be an ordered abelian group. Then the following are equivalent:
    \begin{enumerate}
        \item $G$ satisfies (UR),
        \item there is $N\in \mathbb{N}$ such that $G$ satisfies (UR)$_N$.
    \end{enumerate}
\end{proposition}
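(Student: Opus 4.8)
The plan is to prove the two implications separately, with the easy direction being $(2)\Rightarrow(1)$ and the substantive work lying in $(1)\Rightarrow(2)$.

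\textbf{The direction $(2)\Rightarrow(1)$.} Suppose $G$ satisfies (UR)$_N$ for some $N\in\mathbb{N}$. By the remark preceding the proposition, we may identify $\val^r$ with the canonical map $\mathfrak{t}_N\colon G\twoheadrightarrow\mathcal{T}_N$, and $V^r_g=G_{\mathfrak{t}_N(g)}$ for all $g\in G$. Recall from the discussion after Definition~\ref{DefinitionAuxiliarySorts} that the family $\{G_\alpha\}_{\alpha\in\mathcal{T}_N}$ is uniformly definable in $\mathcal{L}_{\text{oag}}$: there is an $\mathcal{L}_{\text{oag}}$-formula $\theta(x,\bar{y})$ such that for every $\alpha\in\mathcal{T}_N$ there is a parameter tuple $\bar{a}$ with $G_\alpha=\theta(G,\bar{a})$. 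Composing with $\mathfrak{t}_N$ (which itself is $\mathcal{L}_{\text{oag}}$-interpretable, since the $G_g^n$ are uniformly $\mathcal{L}_{\text{oag}}$-definable), one obtains a single $\mathcal{L}_{\text{oag}}$-formula $\phi(x,y)$ with $|y|=1$ such that $\phi(G,g)=G_{\mathfrak{t}_N(g)}=V^r_g$ for all $g\in G$. This is exactly (UR), so $(1)$ holds.

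\textbf{The direction $(1)\Rightarrow(2)$.} Assume (UR), witnessed by a formula $\phi(x,y)$, $|y|=1$, with $\phi(G,g)=V^r_g$. The aim is to find a uniform bound $N$ with $V^r_g=G_{\mathfrak{t}_N(g)}$ for all $g$, and then to conclude via (UR')$_N$ that $N$ works. First I would observe that each $V^r_g$ (for $g\neq 0$) is by definition a definable convex subgroup of $G$ not containing $g$, indeed the largest such; applying Fact~\ref{FactDelonFarre} to the definable convex subgroup $V^r_g$ yields an integer $N_g$ with $V^r_g=\bigcap_{h\notin V^r_g}G_{\mathfrak{t}_{N_g}(h)}$. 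Since $g\notin V^r_g$, the set on the right is contained in $G_{\mathfrak{t}_{N_g}(g)}$; conversely $G_{\mathfrak{t}_{N_g}(g)}$ is a definable convex subgroup not containing $g$, hence contained in the largest such, namely $V^r_g$. Thus $V^r_g=G_{\mathfrak{t}_{N_g}(g)}$, i.e.\ (UR)$_{N_g}$ holds "at $g$". The point is now to make $N_g$ independent of $g$: since $\phi$ is a fixed formula, the family $\{V^r_g\}_{g\in G}$ is a \emph{single} uniformly definable family of convex subgroups, and I would invoke a uniform version of the Delon--Farré fact (which is how \cite{DF96} is actually stated, or can be obtained by compactness from Fact~\ref{FactDelonFarre} applied across $G$ and its elementary extensions together with the fact that "$V^r_g=G_{\mathfrak{t}_N(g)}$ for all $g$" is an elementary property): there is a single $N$ such that $V^r_g=\bigcap_{h\notin V^r_g}G_{\mathfrak{t}_N(h)}$ uniformly in $g$, and then the argument above gives $V^r_g=G_{\mathfrak{t}_N(g)}$ for all $g\in G$, which is (UR)$_N$.

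\textbf{The main obstacle.} The delicate point is precisely the passage from the pointwise statement "for each $g$ there is $N_g$" to the uniform statement "there is $N$ working for all $g$" — i.e.\ extracting a uniform bound from Fact~\ref{FactDelonFarre}. The clean way to handle this is the compactness argument: the property (UR')$_N$ — that $G_{\mathfrak{t}_N(g)}$ is $\subseteq$-maximal among the $G_\alpha$, $\alpha\in\bigcup_n\mathcal{T}_n$, not containing $g$ — is first-order, and (UR) forces, in $G$ and in every elementary extension, that \emph{some} finite $N$ witnesses it at each point (by Fact~\ref{FactDelonFarre} applied to the definable group $\phi(G',g)$); a standard compactness/saturation argument then yields a single $N$ that works uniformly across the elementary class, hence in $G$. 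Once (UR)$_N$ is established, the identification of $\Gamma^r$ with $\mathcal{T}_N$ and of $\val^r$ with $\mathfrak{t}_N$ is immediate, completing the proof.
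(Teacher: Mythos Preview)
Your proposal is correct and follows essentially the same route as the paper: both directions rely on the interpretability of $\mathfrak{t}_N$ for $(2)\Rightarrow(1)$, and on Delon--Farr\'e (Fact~\ref{FactDelonFarre}) combined with a compactness argument for $(1)\Rightarrow(2)$. The only cosmetic difference is that the paper makes the compactness step more explicit---it extracts finitely many integers $N_0,\ldots,N_{k-1}$ that uniformly witness Delon--Farr\'e for the family $\{\phi(G,g)\}_g$ and then sets $N=\prod_i N_i$, using $G_{\mathfrak{t}_N(g)}\supseteq G_{\mathfrak{t}_{N_i}(g)}$---whereas you invoke a uniform Delon--Farr\'e (equivalently, compactness on the first-order condition (UR')$_N$) to get a single $N$ directly.
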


We derive a proof of this proposition from Fact~\ref{FactDelonFarre}:

\begin{proof}

The proof of $(2) \Rightarrow (1)$ is immediate by interpretability of $\mathfrak{t}_N$.

We show now $(1) \Rightarrow (2)$. Assume $(1)$, witnessed by a formula $\phi(x,y)$. For all $N\in \mathbb{N}_{>1}$ and $g\in G$, $\phi(G,g) \supseteq G_{\mathfrak{t}_N(g)}$ as $\phi(G,g)$ is the largest definable convex subgroup not containing $g$. This inclusion still holds in an $\omega$-saturated elementary extension, and so we may assume that $G$ is $\omega$-saturated. Let us prove $\phi(G,g) \subseteq G_{\mathfrak{t}_N(g)}$ for some $N$.  By Fact~\ref{FactDelonFarre} and compactness, there are integers $N_0,\dots, N_{k-1}$ such that for all $g\in G$, $\phi(G,g)$ is an intersection of $G_\alpha$ for $\alpha\in  \mathcal{T}_{N_i}$, $i<k$ which do not contain $g$. Let $N$ be the product of all $N_i$. Since $G_{\mathfrak{t}_N(g)}$ is the largest $G_\alpha$ with the property of not containing $g$ for $\alpha \in \mathcal{T}_N$ and $G_{\mathfrak{t}_N(g)} \supseteq G_{\mathfrak{t}_{N_i}(g)}$ for all $i<k$, we have $\phi(G,g) \subseteq G_{\mathfrak{t}_N(g)}$. 
\end{proof}

\begin{Notation}
The property (UR) will be assumed for most of the remainder of the paper. In order to simplify the notation, if $G$ satisfies (UR), we will drop the exponent $r$ and write 
    $V_g, C_g, \Gamma$ and $\val$ instead of $V_g^r, C_g^r, \Gamma^r$ and $\val^r$, respectively. 
\end{Notation}

In the same way that an ordered abelian group $G$ is regular if and only if it is elementarily equivalent to an archimedean one, we will see, under some additional hypothesis, that $G$ satisfies (UR) if and only if it is elementarily equivalent to an ordered abelian group satisfying (U).

\begin{remark}\label{RemarkIdentificationSmVm}
    Assume $G$ satisfies (UR) and let $\val$ be the regular valuation. Then the induced $m$-valuation (as defined for arbitrary valued groups in Definition~\ref{DefinitionmValuation})  $\val^m$ can be identified with $\mathfrak{s}_m$. Indeed for $a\in G$, $V^m_a:=\{x\in G \ \vert \ \forall \ g\in G, \ \val(x)>\val(a+mg)\} 
    = \bigcap_{c\in a+mG}V_c$ is a convex subgroup not intersecting $a+mG$, and therefore it is a subgroup of $G_{\mathfrak{s}_m(a)}$.  If $c\in a+mG$, then $c\notin G_{\mathfrak{s}_m(a)}$ and $G_{\mathfrak{s}_m(a)} \subseteq V_c$ (since $G_{\mathfrak{s}_m(a)}$ is an intersection of $V_b$'s not containing $c$, and $V_c$ is the largest $V_b$ not containing $c$ ).  We get this other inclusion: $G_{\mathfrak{s}_m(a)}\subseteq V^m_a$. 
\end{remark}

\subsection{A quantifier elimination result for ordered abelian groups with interpretable regular valuation.}\label{SS:QuantifierEliminationUniformValuation}

Let $G$ be an ordered abelian group satisfying (UR).  For the rest of the paper, we will restrict our study to a smaller class, by assuming furthermore that $(G,\val)$ satifies the property (M). Recall that (M) consists of the following scheme of axioms:
\begin{align}
    \tag{M} (\forall \ x\in G)(\exists \ y\in G) \ x\equiv_n y \wedge \val^n(x)=\val(y), \text{ for all $n\in\mathbb{N}_{>0}$}.
\end{align}

Notice that these axioms are expressible in the language of ordered abelian groups since the regular valuation is interpretable in this language. This extra assumption will allow us to explicit a simple and useful multisorted language where these ordered abelian groups eliminate group-sorted quantifiers. We already observed in Lemma~\ref{LemmaValueSetModm} that (M) holds in particular if $(G,\val)$ is maximal.

\begin{remark}
    The property (UR) and  maximality with respect to the archimedean valuation are unrelated. For instance, the ordered abelian group $\hprod_{p \in \mathbb{P}} \mathbb{Z}_{(p)}$ is a maximal ordered abelian group with all principal convex subgroups definable, but it does not interpret the archimedean valuation (by Fact~\ref{FactDelonFarre} and compactness). Conversely, the ordered abelian group generated by $\sum_{i\in \mathbb{N}}\mathbb{Z}$ and the element $(2,2,\dots)$ satisfies (U) but not (M). 
\end{remark}

We will see that if $G$ satisfies (UR) and $(G,\val)$ satisfies (M) then it is elementarily equivalent to a maximal ordered abelian group satisfying (U). We define first the language with which we will work for the rest of the paper.

\begin{definition}\label{DefinitionRegularValuation}
Assume $G$ satisfies (UR) and assume $(G,\val)$ satisfies (M).
 Let $\mathcal{L}$ be the language consisting of 
\begin{itemize}        
\item the main sort $G$, with the symbols $+,-,0,<$ interpreted in the obvious way,
\item an auxiliary sort $\Gamma$ for $\Gamma_G$, with a binary relation $<$, interpreted by the ordering relation on $\Gamma_G$; a unary predicate $C_\phi(x)$ on $\Gamma$ for each sentence $\phi$ in $\mathcal{L}_{\text{oag}}$, defined by, for any $\gamma \in \Gamma$, $C_\phi(\gamma)$ if and only if  $R_\gamma \models \phi$, 
\item a function symbol $\val^{m}$ for each non-negative integer $m$, interpreted by the map $\val^{m}:G \rightarrow \Gamma^m$ as defined in (\ref{inducedvaluation}),
\item a unary predicate $x=_\bullet k_\bullet$ on $G$ for each $k \in \Z \setminus \{0\}$, defined by, for any $a \in G$, $a =_\bullet k_\bullet$ if the quotient $G/V_{\val(a)}$ is discrete and $a \mod V_{\val(a)}$  is $k$ times the minimal positive element of $G/V_{\val(a)}$,
\item a unary predicate $x \equiv_{\bullet m} k_\bullet$ on $G$ for each $m \in \mathbb{N}, m>0$ and $k \in \{1,\dots,m-1\}$, defined by, for any $a \in G$, $a \equiv_{\bullet m} k_\bullet$ if $G/V_{\val^m(a)}$ is discrete and $a \mod V_{\val^m(a)}$ is congruent modulo $m$ to $k$ times the minimal positive element of  $G/V_{\val^m(a)}$.
\end{itemize}
\end{definition}
By Corollary~\ref{CorValueSetmodm}, for an integer $m>1$,  if $\phi_m$ denotes the  $\mathcal{L}_{\text{oag}}$-sentence $\neg\forall x \exists y\ my=x$, then the unary predicate $C_{\phi_m}$ defines the $m$-value set $\Gamma^m$.

\begin{theorem}\label{TheoremQuantifierEliminationOneValuation}
For every $N \in \mathbb{N}_{>1}$, the theory $T_N$ of ordered abelian groups satisfying (UR)$_N$ and (M) eliminates quantifiers relatively to the sort $\Gamma$ in the language $\mathcal{L}$. 
\end{theorem}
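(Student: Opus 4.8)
The plan is to deduce the theorem from the Cluckers--Halupczok relative quantifier elimination (Fact~\ref{FactQUantifierEliminationLsyn} and the normal form recalled immediately after it), by showing that over $T_N$ the whole auxiliary structure of $\mathcal{L}_{\text{syn}}$ collapses to the single coloured chain $\Gamma$ together with the valuation maps $\val^m$, all of which are available in $\mathcal{L}$. Fix $G\models T_N$ and expand it to an $\mathcal{L}_{\text{syn}}$-structure. Using (UR)$_N$ we identify $\Gamma=\Gamma^r$ with the sort $\mathcal{T}_N$ and the regular valuation $\val$ with $\mathfrak{t}_N$; by Remark~\ref{RemarkIdentificationSmVm} together with (M) and Corollary~\ref{CorValueSetmodm}, each $\val^m$ is $\mathfrak{s}_m$ followed by an $\mathcal{L}_{\text{syn}}$-definable inclusion of $\mathcal{S}_m$ onto the definable subset $\{\gamma:R_\gamma\text{ is not }m\text{-divisible}\}\cup\{\infty\}$ of $\mathcal{T}_N$. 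The $\mathcal{L}$-predicates $x=_\bullet k_\bullet$ and $x\equiv_{\bullet m}k_\bullet$ are, by Fact~\ref{FactQE} and (UR)$_N$, expressed by the same conditions as their $\mathcal{L}_{\text{syn}}$-namesakes. Finally, for an $\mathcal{L}_{\text{oag}}$-sentence $\phi$ the colour $C_\phi=\{\gamma:R_\gamma\models\phi\}$ is $\mathcal{L}_{\text{syn}}$-definable on $\mathcal{T}_N$: each regular rib $R_\gamma$ is a regular ordered abelian group, so $\Th(R_\gamma)$ is determined by the invariants ``$R_\gamma$ is discrete'' and ``$|R_\gamma/pR_\gamma|=p^l$'' ($p$ prime, $l\in\mathbb N$) by Fact~\ref{convex Sg. elt. equiv.}, and these are recorded by the predicate $\mathrm{discr}$ and the $\mathbb F_p$-dimension predicates on $\mathcal{S}_p$ from items~\ref{g} and~\ref{h} of Definition~\ref{DefinitionLsyn}, read at the point of $\mathcal{S}_p\subseteq\mathcal{T}_N$ attached to $\gamma$. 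Hence every $\mathcal{L}$-formula has an $\mathcal{L}_{\text{syn}}$-translation.

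Next, I would apply the Cluckers--Halupczok normal form to this translation: modulo the theory of ordered abelian groups it becomes a Boolean combination of, on the one hand, formulas $\psi(\bar x)$ quantifier-free in the pure main-sort language $(G,0,+,-,<,(\equiv_m)_m)$ --- together with the unary main-sort predicates $=_\bullet k_\bullet$, $\equiv_{\bullet m}k_\bullet$ and $D_{p^r}^{[p^s]}$ applied to $\mathbb Z$-linear combinations of the $x_i$ --- and, on the other hand, formulas $\xi\big((\mathfrak{s}_{p^r}(\sum_i z_ix_i),\mathfrak{t}_p(\sum_i z_ix_i))_{p,r,\bar z},\bar\eta\big)$ with $\xi$ a formula purely in the auxiliary sorts. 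The pure main-sort pieces and the predicates $=_\bullet k_\bullet,\equiv_{\bullet m}k_\bullet$ are already $G$-quantifier-free $\mathcal{L}$-formulas. The predicate $D_{p^r}^{[p^s]}(g)$ carries an existential quantifier over $\mathcal{S}_p$, which (M) lets us remove: one checks its witness may be taken to be $\val^{p^r}(g)$, turning $D_{p^r}^{[p^s]}(g)$ into a quantifier-free condition on $\val^{p^r}(g)$ expressed through the colours $C_\phi$ and the remaining $\mathcal{L}$-predicates. For the auxiliary pieces, Fact~\ref{FactDelonFarre} shows that every $G_{\mathfrak{t}_p(g)}$ and $G_{\mathfrak{t}_p^+(g)}$ is an $\mathcal{L}_{\text{oag}}$-definable convex subgroup, hence contained in $V_g=G_{\mathfrak{t}_N(g)}$; combining this with (UR)$_N$ and (M), one verifies that the induced structure on the auxiliary sorts is interdefinable with that of the pure coloured chain $(\Gamma,<,(C_\phi)_\phi)$ equipped with the maps $\val^m$, the elements $\mathfrak{s}_{p^r}(g)$ and $\mathfrak{t}_p(g)$ being recovered from the values $\val^m(g)$ and the colours. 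Each $\xi$-piece therefore translates into an $\mathcal{L}$-formula of the $\Gamma$-sort applied to terms of the form $\val^m(\sum_i z_ix_i)$, which contains no $G$-quantifier. Reassembling the Boolean combination yields an $\mathcal{L}$-formula with quantifiers only over $\Gamma$, which is the asserted relative quantifier elimination.

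I expect the substantial part of the argument to be the identification used in the previous paragraph: checking that, under (UR)$_N$ and (M), the $\mathcal{L}_{\text{syn}}$-induced structure on all the auxiliary sorts $\mathcal{S}_n,\mathcal{T}_n,\mathcal{T}_n^+$ --- their orderings, the predicate $\mathrm{discr}$, the $\mathbb F_p$-dimension predicates, and the subgroups $G_\alpha^{[m]}$ occurring in $D_{p^r}^{[p^s]}$ --- carries no more information than the coloured chain $\Gamma$ together with the maps $\val^m$; concretely, that every relevant convex subgroup $G_{\mathfrak t_p(g)}$, $G_{\mathfrak s_{p^r}(g)}$, $G_\alpha^{[m]}$ is recovered from $\val(g)$, the $\val^m(g)$ and the $\mathcal{L}_{\text{oag}}$-theories of the ribs, which in turn rests on regularity of the ribs. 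This is precisely where (M) is indispensable: without it $V^m_a$ need not coincide with any value $V_\gamma$ --- as the example with $\sum_{r\in\mathbb Q}\mathbb Z$ following Corollary~\ref{CorValueSetmodm} shows --- and the $m$-spines could not be absorbed into $\Gamma$. The rest is routine bookkeeping.
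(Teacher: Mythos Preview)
Your approach is essentially the paper's: deduce the result from Cluckers--Halupczok by showing that, under (UR)$_N$ and (M), every piece of the $\mathcal{L}_{\text{syn}}$ auxiliary structure is recoverable group-quantifier-freely from the coloured chain $\Gamma$ and the maps $\val^m$. The paper carries this out more explicitly than your sketch---in particular it writes down $\mathcal{T}_m$ as the quotient $\Gamma/\!\sim_m$ for a concrete $\Gamma$-definable equivalence relation and computes $G_\alpha^{[m]}=G_{\alpha'}+mG$ (with $\alpha'$ the predecessor of $\alpha$ in $\Gamma^m$), which is what makes $D_{p^r}^{[p^s]}$ disappear; your ``witness $=\val^{p^r}(g)$'' is in the right spirit but would need this computation to be justified. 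One small slip: $G_{\mathfrak{t}_p^+(g)}$ contains $g$ and is therefore \emph{not} contained in $V_g$, so your sentence invoking Fact~\ref{FactDelonFarre} for both $\mathfrak{t}_p$ and $\mathfrak{t}_p^+$ should be adjusted (and Fact~\ref{FactDelonFarre} is used in the paper for the converse direction, expressing definable convex subgroups via $\mathfrak{t}_N$, not for definability of $G_{\mathfrak{t}_p(g)}$ itself).
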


 In particular, maximal ordered abelian groups with interpretable archimedean spine eliminate quantifiers in the language $\mathcal{L}$.   
\begin{proof}
We deduce the statement from the more general quantifier elimination result due to Cluckers and Halupczok (Fact~\ref{FactQUantifierEliminationLsyn}). For the notation involved, see Definition~\ref{DefinitionLsyn}. We have to interpret every symbol in $\mathcal{L}_{syn}$ using formulas in the language $\mathcal{L}$ with no group quantifiers. 
 We note first that, for $m>1$, the $\mathcal{L}_{syn}$-formula $x\equiv_m 0$ is equivalent to the quantifier-free formula $\val^m(x)=\infty$. By Remark~\ref{RemarkIdentificationSmVm}, for every $m\in \mathbb{N}_{>1}$, $\mathcal{S}_m$ can be identified with $\Gamma^m$, and the $m$-valuation map $\val^m$ corresponds to the map $\mathfrak{s}_m$. Now we show that we can interpret the sorts $\mathcal{T}_m$, $\mathcal{T}_m^+$ and the projection $\mathfrak{t}_m$ for all $m$ without using quantifiers over the group sort. Take any $G\models T_N$, let $a\in G$ and let $\beta=\mathfrak{t}_m(a)$. The convex subgroup $G_\beta$ is the union of all the convex subgroups $G_\alpha$ which  do not contain $a$ and where $\alpha\in \Gamma^m=\mathcal{S}_m$ i.e. 
\[G_\beta= \{ x\in G \ \vert \ \exists \delta'\in \Gamma^m \ \val(x) \geq \delta' > \val(a)\}.\]

Thus, $\mathcal{T}_m$ can be interpreted in $\Gamma$ as the quotient $\Gamma/ \sim_m$ where the equivalence relation is given by:
\[\gamma \sim_m \gamma' \quad \text{ if and only if }\quad  \forall \delta \in \Gamma \ (\exists \delta'\in \Gamma^m \ \delta \geq \delta' > \gamma) \leftrightarrow \ (\exists \delta'\in \Gamma^m \ \delta \geq \delta' > \gamma'),\]
and the map $\mathfrak{t}_m: G \rightarrow \mathcal{T}_m$ is interpreted by the quotient map $a\in G \mapsto [\val(a)]_{\sim_m}$.

Similarly, if $a\in G$ and $\beta^+=\mathfrak{t}_m^+(a)$, the convex subgroup $G_{\beta^+}$ is the intersection of all convex subgroups $G_\alpha$ which contain $a$, where $\alpha\in \Gamma^m=\mathcal{S}_m$ i.e.
 \[G_{\beta^+}=\{ x\in G \ \vert \ \forall \gamma \in \Gamma^m \ \gamma \leq \val(a) \rightarrow \gamma \leq \val(x)\}.\]
 It follows that $\mathcal{T}_m^+$ can be interpreted as $\mathcal{T}_m$ without quantifier in the group sort. The ordering between elements in $\bigcup_m\mathcal{T}_m \cup \mathcal{T}_m^+$ can also be interpreted without quantifiers in the group sort. For instance, for $m,k\in \mathbb{N}_{>1}$ and $a,b \in G$, we have $\mathfrak{t}_m(a)\leq \mathfrak{t}_k^+(b)$ if and only if
\[  \{\delta \in \Gamma \ \vert \ \exists \delta'\in S_m \ \delta \geq \delta' \geq \val(a)\}\supseteq  \{\delta \in \Gamma \ \vert \ \forall \delta'\in S_k \ \delta' \leq \val(b) \rightarrow \delta' \leq \delta\}. \]

 Finally, we show that the predicates $D_{p^r}^{[p^s]}(x)$ on the group sort are not required, by showing that for all $\alpha \in \mathcal{S}_m$, $G_{\alpha}^{[m]}=C_\alpha' + mG$, where $C_\alpha'$ is a convex subgroup containing $G_{\alpha}$. Recall that for $m\in \mathbb
 {N}_{>1}$ and $\alpha \in  \mathcal{S}_m$:

\begin{equation*}
G_{\alpha}^{[m]} = \bigcap_{\substack{H \supsetneq G_\alpha \\ H \text{ convex subgroup of } G}} (H + mG).
\end{equation*}
This intersection can be restricted to convex subgroups of the form $G_{\beta}$ for $\alpha>\beta \in \mathcal{S}_m$ (\cite[Lemma 2.4]{CH11}). Let $G_\beta$ such a convex subgroup, then $G_\beta+mG= \{x \ \vert \ \val^m(x) > \beta\}$. Indeed $G_\beta+mG \supseteq \{x \ \vert \ \val^m(x) > \beta\}$ follows from condition (M) and the other inclusion is clear. Then going back to the definition of $G_{\alpha}^{[m]}$, we have that 
\begin{align*}
    G_{\alpha}^{[m]} &= \bigcap_{\{ \beta\in \mathcal{S}_m\mid \beta <\alpha\}} G_\beta + mG\\
&   = \{x \ \vert \ \forall \beta \in \mathcal{S}_m\  (\beta< \alpha  \rightarrow \ \val^m(x)> \beta)\}\\
& 
= G_{\alpha'} +mG.
\end{align*}
where $\alpha'$ is the immediate predecessor of $\alpha$ in $\Gamma^m$ if it exists, or is equal to $\alpha$ otherwise. 
It follows that we can express the predicate $D^{[p^s]}_{p^r}(x)$ for $s>r$ in $\mathcal{L}$ without quantifying over the group sort. All unary predicate symbols on the sort $\mathcal{A}$ of $\mathcal{L}_{syn}$ (such as the predicate discr) correspond to predicates $C_\phi$ in $\mathcal{L}$ for some appropriate sentences $\phi$. To conclude, we only need to observe that since we have only quantified over $\Gamma$ in order to recover the language $\mathcal{L}_{syn}$, every group-sorted-quantifier-free formulas in the language $\mathcal{L}_{syn}$ is equivalent to a group-sorted-quantifier-free $\mathcal{L}$-formula. Therefore, by Fact \ref{FactQUantifierEliminationLsyn}, every formula $\phi(x)$ in $\mathcal{L}$ is equivalent to a group-sorted-quantifier-free $\mathcal{L}$-formula $\psi(x)$ and this concludes our proof. 
One can notice also that, we can choose $\psi$ independently on $N$, i.e., such that $T_N\vdash \forall x \  \phi(x)\leftrightarrow \psi(x)$ for every $N\in \mathbb{N}_{>1}$. 
\end{proof}

\begin{corollary}\label{Cor:Chain-Rib-StEmb}
Let $G$ be an ordered abelian group satisfying (UR), such that  $(G,\val)$ satisfies (M). Then the following hold:
\begin{enumerate}
    \item The auxiliary sort $\Gamma$ is stably embedded (as a sort of $G^{eq}$) with induced structure that of the pure coloured chain ${(\Gamma,(C_{\phi})_{\phi\in \mathcal{L}_{\text{oag}}},<)}$.
    \item For every $\gamma \in \Gamma$, the rib $R_\gamma$ is stably embedded (as a sort of $(G,\gamma)^{eq}$, where the parameter $\gamma$ is named as constant) with induced structure that of a pure regular ordered abelian group.
\end{enumerate}
\end{corollary}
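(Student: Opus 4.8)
The plan is to read both assertions off the relative quantifier elimination of Theorem~\ref{TheoremQuantifierEliminationOneValuation}. \emph{Part (1).} Let $X\subseteq\Gamma^{n}$ be defined by an $\mathcal{L}$-formula $\phi(\bar\eta,\bar a)$, with $\bar\eta$ a tuple of variables of sort $\Gamma$ and $\bar a$ a tuple of parameters from $G\cup\Gamma$. By Theorem~\ref{TheoremQuantifierEliminationOneValuation} we may take $\phi$ to have no quantifier over the main sort $G$. The only symbol of $\mathcal{L}$ joining the two sorts is $\val^{m}\colon G\to\Gamma$, and $\Gamma$ carries no operation; since $\bar\eta$ contains no group variable, every group term occurring in $\phi$ is built solely from the $G$-coordinates of $\bar a$ and hence denotes a fixed element of $G$. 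Consequently each atomic subformula is either a closed condition on $G$ with parameters, of fixed truth value, or an atomic formula of the coloured chain $(\Gamma,<,(C_{\phi})_{\phi\in\mathcal{L}_{\mathrm{oag}}})$ in the variables $\bar\eta$, with parameters among the $\Gamma$-coordinates of $\bar a$ and the elements $\val^{m}(\bar a)\in\Gamma$. Thus $X$ is definable, over parameters from $\Gamma$, in the pure coloured chain; since conversely $<$ and the $C_{\phi}$ obviously lie in the induced structure, this gives (1).

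\emph{Part (2).} That each rib $R_\gamma=C_\gamma/V_\gamma$ is a regular ordered abelian group has already been recorded (in the lemma stating that every regular rib $R^{r}_{g}$ is regular), so only the purity statement is at stake: one must show that the structure induced on the interpretable sort $R_\gamma$ is that of a pure ordered abelian group. A subset of $R_\gamma^{n}$ pulls back to a $V_\gamma$-invariant subset $Y$ of $C_\gamma^{n}$, and by Theorem~\ref{TheoremQuantifierEliminationOneValuation} $Y$ is cut out, relative to $C_\gamma$, by an $\mathcal{L}$-formula with no quantifier over the main sort; it therefore suffices to control the trace on $C_\gamma$ of its atomic constituents. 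The key algebraic input is that a convex subgroup is a pure subgroup, so $mG\cap C_\gamma=mC_\gamma$; from this one reads off, for a group term $t$ with values in $C_\gamma$, that $\val^{m}(t)=\gamma$ exactly when $t\bmod V_\gamma\notin mR_\gamma$, while on the complementary region $\val^{m}(t)$ strictly exceeds $\gamma$ and its precise value is invisible to any $V_\gamma$-invariant formula; the ordering- and congruence-atoms descend to $R_\gamma$ in the same way, and the colour predicates $C_\phi$ enter only through $\val^{m}$ and, for $t\in C_\gamma$, reduce to the fixed datum $C_\phi(\gamma)$ together with a divisibility condition on $t\bmod V_\gamma$. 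Hence every atomic constituent, restricted to $C_\gamma$ and made $V_\gamma$-invariant, is already $\mathcal{L}_{\mathrm{oag}}$-definable over $R_\gamma$. (Alternatively, under (UR)---using Fact~\ref{FactDelonFarre}, Proposition~\ref{PropositionUR}, and the uniform $\mathcal{L}_{\mathrm{oag}}$-definability of the groups $G_\alpha$ from \cite{CH11}---the regular value $V_\gamma$ and regular cover $C_\gamma$ are $\mathcal{L}_{\mathrm{oag}}$-definable convex subgroups, so $R_\gamma=C_\gamma/V_\gamma$ is the quotient of one $\mathcal{L}_{\mathrm{oag}}$-definable convex subgroup by another, and applying Fact~\ref{Fact:Purity-convex} first to $V_\gamma\leq G$ and then to $R_\gamma\leq G/V_\gamma$ yields directly that $R_\gamma$ is stably embedded with induced structure a pure ordered abelian group.)

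The one ingredient that genuinely requires care---and really the only obstacle---is the $V_\gamma$-invariance bookkeeping in Part (2): one has to argue that any atomic constituent whose trace on $C_\gamma$ fails to be $V_\gamma$-invariant (typically a comparison of $\val^{m}$-values, or a sign/congruence atom at the boundary $\ell(\bar y)\equiv -c\bmod V_\gamma$) must cancel in the Boolean combination, leaving only the coarse divisibility and order data that lives on $R_\gamma$ itself. Part (1) and the regularity of the ribs are by contrast immediate, the former from the quantifier elimination and the latter from the earlier analysis of the regular valuation.
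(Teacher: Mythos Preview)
Your proof is correct. Part (1) is exactly the paper's argument, just with the details of the syntactic analysis spelled out, and your parenthetical alternative in Part (2)---applying Fact~\ref{Fact:Purity-convex} first to $V_\gamma\leq G$ and then to $C_\gamma/V_\gamma\leq G/V_\gamma$---is precisely the route the paper takes (the paper's proof is two lines: ``Stably embeddedness and purity of each rib $R_\gamma$ follows by applying Fact~\ref{Fact:Purity-convex} twice'').

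The difference is that you lead Part (2) with a direct atomic-by-atomic analysis on $C_\gamma$, whereas the paper skips this entirely in favor of the convex-subgroup fact. Your direct route is in principle viable, and your computation that for $t\in C_\gamma$ one has $\val^m(t)=\gamma$ iff $t\bmod V_\gamma\notin mR_\gamma$ is correct (using purity of $C_\gamma$ in $G$). But you are right to flag the $V_\gamma$-invariance bookkeeping as the genuine obstacle: the claim that non-invariant atomic constituents ``must cancel in the Boolean combination'' is not something one can simply assert, and making it precise would require either a normal-form argument or an explicit case analysis of how comparisons like $\val^{m}(t_1)\leq\val^{m'}(t_2)$ behave on $C_\gamma^n$ modulo $V_\gamma^n$. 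The paper's approach via Fact~\ref{Fact:Purity-convex} sidesteps all of this: since (UR) makes both $V_\gamma$ and $C_\gamma=\{x:\val(x)\geq\gamma\}$ definable in $\mathcal{L}_{\mathrm{oag}}$, the purity of the rib follows from a general structural fact and no formula-level bookkeeping is needed. Given that you already state this alternative, you should promote it to the main argument and drop the direct analysis.
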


\begin{proof}
Stable embeddedness and purity of $\Gamma$ follow directly from relative quantifier elimination. The ribs $R_\gamma$ do not have definable convex subgroups and thus must be regular. Stably embeddedness and purity of each rib $R_\gamma$ follow by applying Fact~\ref{Fact:Purity-convex} twice.
\end{proof}

Notice that a morphism of ordered abelian groups $i:G \hookrightarrow G'$ induces a morphism of chains $f_\Gamma^a:\Gamma^a \rightarrow {\Gamma^a}', \val(a)  \mapsto  \val'(i(a))$ between the archimedean spines $\Gamma^a$ and ${\Gamma^a}'$.
It is easy to see that this definition does not depend of the choice of a representative. However, in general, a morphism of ordered abelian groups does not induce a morphism between the regular spines (consider for instance the embedding $\mathbb{Q}\times \mathbb{Z}_p \hookrightarrow  \mathbb{Q}\times \mathbb{Z}_p\times \mathbb{Z}_p, (a,b)\mapsto (a,0,b)$).
This motivates the following definition:
\begin{definition}
    An embedding of ordered groups $\iota: G \rightarrow G'$ is called \emph{regular} if for every integer $N$ and all elements $a,b\in G$, $\mathfrak{t}_N(a)\leq \mathfrak{t}_N(b)$ if and only if $\mathfrak{t}_N(\iota(a)) \leq \mathfrak{t}_N(\iota(b))$ . \end{definition}
Recall that $G$ is regular if and only if $\{0\}$ is the unique definable convex subgroup. Then, in particular, if $i: G \hookrightarrow G'$ is a regular embedding and $G'$ is regular, then $G$ is regular. More generally:
\begin{observation}
A regular embedding $i: G \hookrightarrow G'$ induces a morphism of chains between the regular spines:
\[\begin{array}{cccc}
     f_\Gamma=f_\Gamma^r: &  \Gamma &\rightarrow & \Gamma'\\
     & \val(a) & \mapsto & \val'(i(a)).
\end{array}  \]

\end{observation}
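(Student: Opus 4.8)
The plan is to verify the three conditions that make $f_\Gamma$ a morphism of coloured chains: that it is well defined (independent of the representative $a$ of $\val^r(a)$), that it is (weakly) order preserving for the reverse-inclusion orderings on $\Gamma$ and $\Gamma'$, and that it preserves each colour predicate $\Gamma^n$. The only delicate point is that $\Gamma$ indexes the \emph{definable} convex subgroups of $G$, and an arbitrary embedding of ordered groups need not pull a definable convex subgroup of $G'$ back to a definable convex subgroup of $G$; controlling this is exactly what the hypothesis that $i$ be regular is for.

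First I would translate everything into the relation $R_n(x,y):\equiv$ ``the interval $[x,y]$ contains an element divisible by $n$'', which by definition is preserved \emph{and} reflected by a regular embedding. Two bookkeeping reductions: (a) by Fact~\ref{FactDelonFarre} together with the $\mathcal{L}_{\mathrm{oag}}$-definability of the Cluckers--Halupczok auxiliary families, every definable convex subgroup of $G$ is an intersection of the $G_{\mathfrak{t}_N(h)}$, so $V^r_g=\bigcup_N G_{\mathfrak{t}_N(g)}$ and $C^r_g=\bigcap_N G_{\mathfrak{t}_N^+(g)}$; and $\val^r(b)\in\Gamma^n$ holds iff some representative $b_0$ of that level can be chosen with $b_0\notin V^r_{b_0}+nG$, in which case automatically $V^r_{b_0}=G^n_{b_0}$. (b) For $c\neq 0$ one has $c\in G^N_h\iff h\notin\langle c\rangle^{\mathrm{conv}}+NG\iff$ for every integer $m\geq1$ the interval $[-m|c|-h,\,m|c|-h]$ contains no multiple of $N$, that is, $c\in G^N_h\iff\bigwedge_{m\geq1}\neg R_N(-m|c|-h,\,m|c|-h)$. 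Since the right-hand side of (b) involves only the group operations and the relations $R_N$ evaluated at explicit points, and since a regular embedding also reflects $nG$ (test $R_n$ on a singleton, so $i^{-1}(nG')=nG$), one obtains at once the block-level equivalence
\[
c\in G^N_h \iff i(c)\in {G'}^N_{i(h)}\qquad\text{for all }c,h\in G,\ N\geq1 .
\]

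From this one reads off the ``forward'' inclusions $i\bigl(G_{\mathfrak{t}_N(b)}\bigr)\subseteq {G'}_{\mathfrak{t}'_N(i(b))}$ and $i\bigl(V^r_b\bigr)\subseteq {V'}^r_{i(b)}$, and also $i^{-1}(nG')=nG$. The whole statement then reduces to the \emph{reverse} inclusion $i^{-1}\bigl({G'}_{\mathfrak{t}'_N(i(b))}\bigr)\subseteq G_{\mathfrak{t}_N(b)}$, equivalently $i^{-1}\bigl({V'}^r_{i(b)}\bigr)=V^r_b$. Granting it, one gets well-definedness and injectivity of $f_\Gamma$ (if $V^r_a=V^r_{a'}$ then $a,a'$ each lie outside the other's regular value, hence so do $i(a),i(a')$, hence ${V'}^r_{i(a)}={V'}^r_{i(a')}$), its order-preservation (the same argument with $\subseteq$ in place of $=$), and colour-preservation (run the reverse inclusion through the regular ribs, i.e.\ through the induced injections $G/V^r_{b}\hookrightarrow G'/{V'}^r_{i(b)}$, to transport ``$b_0\notin V^r_{b_0}+nG$'' to ``$i(b_0)\notin {V'}^r_{i(b_0)}+nG'$'' for a suitable representative $b_0$ of the level). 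I expect this reverse inclusion to be the only real obstacle: unwinding $i(c)\in {G'}_{\mathfrak{t}'_N(i(b))}$ produces a witness $w'\in G'$ with ${G'}^N_{w'}$ separating $i(c)$ from $i(b)$, and since $w'$ need not lie in $i(G)$ one must manufacture a substitute $w\in G$ — concretely, ${G'}^N_{w'}$ is the largest convex subgroup of $G'$ disjoint from the coset $w'+NG'$, the archimedean position of that coset relative to $|i(c)|,|i(b)|$ is thereby pinned down, the coset-avoidance conditions involved are $R_N$-conditions and so transfer down to $G$, and it remains to assemble these into an element $w\in G$ with ${G}^N_{w}$ separating $c$ from $b$. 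When $G$ and $G'$ themselves satisfy (UR) and (M) one can instead appeal to the relative quantifier elimination of Theorem~\ref{TheoremQuantifierEliminationOneValuation}, noting that a regular embedding then respects all of the data defining the $\Gamma$-sort; but for a general regular embedding the interval-divisibility analysis above appears to be what is needed.
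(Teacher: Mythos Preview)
Your approach coincides with the paper's at the core: both reduce membership in $G_{\mathfrak{s}_n(h)}=G^n_h$ to interval-divisibility conditions and then use that a regular embedding preserves and reflects the relations $R_n$. This yields exactly your ``block-level equivalence''
\[
c\in G^n_h\ \Longleftrightarrow\ i(c)\in {G'}^n_{i(h)}\qquad(c,h\in G),
\]
which is what the paper records as $f(G_{\alpha})=G'_{\alpha'}\cap f(G)$ for $\alpha=\mathfrak{s}_n(a)$, $\alpha'=\mathfrak{s}_n(f(a))$. Two differences: the paper does not route through $\mathfrak{t}_N$ at all, but passes directly from the $\mathfrak{s}_n$-equivalence to ``there is a definable convex subgroup separating $a,b$ iff there is one separating $f(a),f(b)$'' (using that, via Delon--Farr\'e, any definable convex subgroup separating $a$ and $b$ may be replaced by some $G_{\mathfrak{s}_N(h)}$ with $h\in G$). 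Your detour through $V^r_g=\bigcup_N G_{\mathfrak{t}_N(g)}$ is thus avoidable.

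However, you are right to flag the ``reverse inclusion'' as the real issue, and this is precisely the step the paper covers with a bare ``in particular''. The difficulty is that if ${G'}_{\mathfrak{s}_N(h')}$ separates $i(a)$ from $i(b)$ with $h'\in G'\setminus i(G)$, the block equivalence gives no direct way to manufacture a separating $G_{\mathfrak{s}_M(h)}$ with $h\in G$. Your sketch for doing so is not complete, and in fact it cannot be completed in the generality stated: take $G=\mathbb{Q}^2$, $G'=\mathbb{Q}\times\mathbb{Z}\times\mathbb{Q}$ (lexicographic), and $i(p,q)=(p,0,q)$. This $i$ is a regular embedding (every element of $i(G)$ lies in $nG'$, and $G$ is divisible, so both sides of the defining biconditional are always true), yet $\val^r((1,0))=\val^r((0,1))$ in $G$ while $\{0\}\times\{0\}\times\mathbb{Q}=G'_{\mathfrak{s}_2((0,1,0))}$ separates $i((1,0))$ from $i((0,1))$ in $G'$, so $\val'^r(i(1,0))\neq\val'^r(i(0,1))$ and $f_\Gamma$ is not well defined. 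So the Observation (and the paper's proof) needs an extra hypothesis ruling out this phenomenon; neither your proposal nor the paper's argument closes the gap as written.
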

\begin{proof}
    By Fact \ref{FactDelonFarre}, every definable convex subgroup $\Delta$ is an intersection of convex subgroups of the form $G_{\mathfrak{t}_N}(g)$, for some $N\in \mathbb{N}$ and $g\in G$. Since $\iota$ is a regular embedding, for all $a,b\in G$ we get the following equivalences: 
\begin{itemize}
    \item[] There is a definable convex subgroup $\Delta \subseteq G$ such that $a \in \Delta$ and  $b\notin \Delta$,
    \item[$\Leftrightarrow$] There is $N \in \mathbb{N}$ such that  $\mathfrak{t}_N(a)>\mathfrak{t}_N(b)$,
    \item[$\Leftrightarrow$] There is $N \in \mathbb{N}$ such $\mathfrak{t}_N(\iota(a))>\mathfrak{t}_N(\iota(b))$,
    \item[$\Leftrightarrow$] There is a definable convex subgroup $\Delta' \subseteq G'$ such that $\iota(a) \in \Delta'$ and  $\iota(b)\notin \Delta'$.
\end{itemize}
It follows that $\iota$ preserves regular classes and induces therefore an embedding $\Gamma \rightarrow \Gamma' $ of the regular spines.
\end{proof}


Another immediate corollary is the following:
\begin{corollary}\label{CorollaryAKEForOrderedAbelianGroups}
     Let $G$ and $G'$ be two ordered abelian groups satisfying $(UR)$ and $(M)$. Then
     \begin{enumerate}
        \item $G \equiv G'$ if and only if \[(\Gamma,(C_{\phi})_{\phi\in \mathcal{L}_{\text{oag}}},<) \equiv (\Gamma',(C_{\phi}')_{\phi\in \mathcal{L}_{\text{oag}}},<)\] 
         \item if $G\subseteq G'$, then $G \preccurlyeq G'$ if and only if the following three conditions hold:
         \begin{enumerate}
             \item The embedding $G\subseteq G'$ is regular.
         \item The induced embedding of the regular spines $f_\Gamma:\Gamma \rightarrow \Gamma'$ preserves the colors $C_\phi$, and $(\Gamma,(C_{\phi})_{\phi},<) \subseteq (\Gamma',(C_{\phi}')_{\phi},<)$ is elementary.
    \item For every $\gamma\in \Gamma$, the induced embedding of regular ribs $(R_\gamma,+,0,<) \subseteq (R_\gamma',+,0,<)$ is elementary.
    \end{enumerate} 
     \end{enumerate}

\end{corollary}
Notice that \cite[Corollary 4.7]{Sch82} is similar to the first point, and holds for arbitrary ordered abelian groups.  However, we are not aware of a similar criterion for elementary extensions.

\begin{proof}
    In both points, the left-to-right implication is immediate.
    The right-to-left implication in the first point follows from the following fact: a partial $\mathcal{L}$-isomorphism $G\rightarrow G'$ between such ordered abelian groups -- where $\mathcal{L}$ is as introduced in Definition \ref{DefinitionRegularValuation}-- is elementary if the induced partial isomorphism $\Gamma \rightarrow \Gamma'$ is elementary. One may simply consider the partial isomorphim $f:\{0\in G\} \rightarrow \{0\in G'\}$, which induces the partial isomorphism $\{\infty\} \rightarrow \{\infty\}$ between $(\Gamma,(C_{\phi})_{\phi\in \mathcal{L}_{\text{oag}}},<)$ and $(\Gamma',(C_{\phi}')_{\phi\in \mathcal{L}_{\text{oag}}},<)$ . Since the latter is elementary by assumption, $f$ is elementary and  $G\equiv G'$.

    To show the second point, let $G\subseteq G'$ be ordered abelian groups satisfying $(M)$ and (UR) such that 
         \begin{align}\label{EquationGammaGammaPrime}
         (\Gamma,(C_{\phi})_{\phi\in \mathcal{L}_{\text{oag}}},<) \preccurlyeq (\Gamma',(C_{\phi}')_{\phi\in \mathcal{L}_{\text{oag}}},<).
     \end{align}
     and for all $\gamma \in \Gamma$, 
      \begin{align}\label{EquationRgammaRgammaPrime}
     (R_\gamma,+,0) \preccurlyeq (R_\gamma',+,0).
     \end{align}
     Then we show first that 
    $G$ is a substructure of $G'$ in the language $\mathcal{L}$.
    We have obviously that $C_\phi^G=C_\phi^{G'}\cap \Gamma$. Let $a\in G$. Then for all $m\geq 0$, there are $g\in G, g'\in G'$
 such that $(\val^m)^{G
 }(a)=\val(a-mg)\eqqcolon\gamma$ and $(\val^m)^{G'
 }(a)=\val(a-mg')$. Notice this holds also for $m=0$ with the convention that $\val^0=\val$. If $(\val^m)^{G
 }(a)<(\val^m)^{G'
 }(a)$, this would mean that $a-mg \mod V_\gamma =mg'-mg \mod V_\gamma $ is divisible by $m$ in $R_\gamma'$ but not in $R_\gamma$, contradicting $R_\gamma \preccurlyeq R_\gamma'$. Thus $(\val^m)^{G}=(\val^m)^{G'}$.
 We deal similarly with the predicate $\equiv_{\bullet m} k$ (with the convention that $\equiv_{\bullet 0}$ is $=_\bullet$), using the fact that  $G\models a \equiv_{\bullet m}k$ if and only if $a-mg \equiv_m k \mod V_\gamma$, where $g$ and $\gamma$ are as previously. This shows that $G$ is a substructure of $G'$ in $\mathcal{L}$.

     We want to show now that $G\preccurlyeq G'$. By relative quantifier elimination, every formula $\phi(x,y)$ with tuples of variables $x,y$ in $G^{\vert x\vert} \times \Gamma^{\vert y\vert}$ is equivalent to a boolean combination of formulas of the form:
\begin{itemize}
		\item[a)] $P(x) > 0$,
		\item[b)] $P(x)\equiv_{\bullet m} k_\bullet$,
		\item[c)] $P(x) =_\bullet k_\bullet$,
		\item[d)] $\psi(\val^{m_0}(P_0(x)),\dots,\val^{m_{h-1}}(P_{h-1}(x)),y)$,
	\end{itemize}
where $\psi$ is a formula in $(\Gamma,(C_{\phi})_{\phi\in \mathcal{L}_{\text{OAG}}},<)$,  $P,P_0,\dots,P_{h-1} \in \Z[X] \setminus \{0\}$, $m_0,\dots,m_{h-1} \in \mathbb{N}$, $m,k \in \mathbb{N} \setminus \{0\}$ .
We may check separately for each of these formula $\chi(x,y)$, and for each tuples $(a,\theta)\in G^{\vert x\vert}\times \Gamma^{\vert y\vert}$, that $G \models \chi(a,\theta) $ if and only if $G' \models \chi(a,\theta)$. The case of the formulas a),b) and c) follows immediately from the fact that $G$ is a substructure of $G'$ in the language $\mathcal{L}$. We now deal with the formulas of the form d). Since $G$ is a substructure of $G'$, for every function symbols $\val^m$ we have $\restriction {(\val^{m})^{G'}}{G}=(\val^{m})^{G}$. By (\ref{EquationGammaGammaPrime}), it follows that  for all $(a,\theta)\in G^{\vert x\vert}\times \Gamma^{\vert y \vert}$, 
\[G' \models \psi((\val^{m_0})^{G'}(P_0(a)),\dots,(\val^{m_{h-1}})^{G'}(P_{h-1}(a)),\theta),\]
if and only if 
\[G' \models \psi((\val^{m_0})^{G}(P_0(a)),\dots,(\val^{m_{h-1}})^{G}(P_{h-1}(a)),\theta),\]
if and only if
\[G \models \psi((\val^{m_0})^{G}(P_0(a)),\dots,(\val^{m_{h-1}})^{G}(P_{h-1}(a)),\theta).\]

\end{proof}

\begin{corollary}\label{CorollaryGroupEquivalentMaximalGroup}
    Assume that $G$ satisfies (UR) such that $(G,\val)$ satisfies (M). Then $G$ is elementarily equivalent to
    \[\hprod_{\gamma\in \Gamma\setminus \{\infty\}} R_\gamma',\]
    where $R_\gamma'$ is an archimedean group elementarily equivalent to the regular ordered abelian group $R_\gamma$.
    In particular, $G$ is equivalent to a maximal ordered abelian group satisfying (U).
\end{corollary}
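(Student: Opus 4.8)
The statement to prove is Corollary~\ref{CorollaryGroupEquivalentMaximalGroup}: an ordered abelian group $G$ satisfying (UR) with $(G,\val)$ satisfying (M) is elementarily equivalent to the Hahn product $\hprod_{\gamma\in\Gamma\setminus\{\infty\}}R'_\gamma$, where each $R'_\gamma$ is an archimedean group elementarily equivalent to the regular rib $R_\gamma$; in particular to a maximal group satisfying (U). The natural strategy is to build a candidate witness group, verify it satisfies the same hypotheses, and then invoke the Ax--Kochen--Ershov style criterion for elementary equivalence established just above, namely Corollary~\ref{CorollaryAKEForOrderedAbelianGroups}(1), which reduces $G\equiv G'$ to elementary equivalence of the regular spines as coloured chains $(\Gamma,(C_\phi)_\phi,<)$.

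\textbf{Step 1: Construct the candidate.} For each $\gamma\in\Gamma\setminus\{\infty\}$, the rib $R_\gamma$ is a regular ordered abelian group (by the lemma following the definition of the regular valuation), hence by Fact~\ref{F:RegChar}(4) it is elementarily equivalent to some archimedean ordered abelian group $R'_\gamma$. Set $G':=\hprod_{\gamma\in\Gamma\setminus\{\infty\}}R'_\gamma$, equipped with the natural valuation $\val^a$. The archimedean skeleton of $G'$ is exactly $(\Gamma\cup\{\infty\},(R'_\gamma)_{\gamma\in\Gamma\setminus\{\infty\}})$, and since each $R'_\gamma$ is archimedean, every convex subgroup of $G'$ is of the form $\{f:\supp(f)\subseteq\{\delta:\delta\geq\gamma\}\}$, so all convex subgroups are values $V^a_\gamma$; in particular every principal convex subgroup is definable, which gives (U), and therefore (UR), with the regular valuation of $G'$ coinciding with $\val^a$ and with regular spine $(\Gamma,(C_\phi)_\phi,<)$ --- note the colours match because $R'_\gamma\equiv R_\gamma$. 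By Proposition~\ref{PropositionMaximalityofHahnproducts}, $G'$ is pseudo-complete, hence maximal, hence by Lemma~\ref{LemmaValueSetModm} it satisfies (M). So $G'$ satisfies the hypotheses of Theorem~\ref{TheoremQuantifierEliminationOneValuation} and Corollary~\ref{CorollaryAKEForOrderedAbelianGroups}.

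\textbf{Step 2: Compare spines and conclude.} By construction the regular spine of $G'$ is the coloured chain $(\Gamma,(C_\phi)_\phi,<)$, literally the same underlying chain with the same colours $C_\phi$ as that of $G$ (since $C_\phi(\gamma)$ holds in $G'$ iff $R'_\gamma\models\phi$ iff $R_\gamma\models\phi$ iff $C_\phi(\gamma)$ holds in $G$). In particular the regular spines of $G$ and $G'$ are not merely elementarily equivalent but equal (equivalently, isomorphic) as coloured chains, which trivially gives the required elementary equivalence. Applying Corollary~\ref{CorollaryAKEForOrderedAbelianGroups}(1) yields $G\equiv G'$. Since $G'$ is maximal and satisfies (U), the final clause follows immediately.

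\textbf{Main obstacle.} The one point requiring care is the verification in Step~1 that the Hahn product $G'$ of \emph{archimedean} groups has \emph{only} $V^a_\gamma$'s as convex subgroups and that its regular valuation coincides with the natural one, so that $\Gamma_{G'}$ is genuinely $\Gamma$ with the matching colour structure --- in other words that passing from regular ribs $R_\gamma$ to archimedean representatives $R'_\gamma$ does not alter the spine. This is where archimedeanness of the $R'_\gamma$ is essential: if one only knew $R'_\gamma$ regular, new definable convex subgroups could appear inside the ribs and the spine would change. Once this is nailed down, everything else is a direct appeal to results already proved (Propositions~\ref{PropositionMaximalityofHahnproducts}, \ref{F:RegChar}, Lemma~\ref{LemmaValueSetModm}, and Corollary~\ref{CorollaryAKEForOrderedAbelianGroups}).
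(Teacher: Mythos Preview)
Your overall strategy is exactly the intended one: build $G'=\hprod_\gamma R'_\gamma$ with archimedean $R'_\gamma\equiv R_\gamma$, verify that $G'$ satisfies (UR) and (M), observe that its coloured regular spine is literally $(\Gamma,(C_\phi)_\phi,<)$, and invoke Corollary~\ref{CorollaryAKEForOrderedAbelianGroups}(1). The paper gives no proof and clearly intends precisely this route.

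There is, however, a genuine gap in your justification of (U) for $G'$. You write that in $G'$ ``every convex subgroup is of the form $V^a_\gamma$'' and that ``every principal convex subgroup is definable, which gives (U)''. The first assertion is false (convex subgroups of a Hahn product correspond to arbitrary cuts of the index chain, not only to points), and more importantly the second implication fails outright: the paper itself exhibits $\hprod_{p\in\mathbb{P}}\mathbb{Z}_{(p)}$ as a Hahn product of archimedean groups in which every principal convex subgroup is definable yet the archimedean valuation is \emph{not} interpretable, so (U) and (UR) both fail there. Being a Hahn product of archimedean groups is therefore not sufficient on its own, and your ``main obstacle'' paragraph does not resolve this.

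What actually carries the argument is that $G$ satisfies (UR)$_N$ for some fixed $N$ (Proposition~\ref{PropositionUR}). In a maximal Hahn product of archimedean groups (which satisfies (M) by Lemma~\ref{LemmaValueSetModm}), the convex subgroups $G'_{\mathfrak{s}_m(\cdot)}$ and $G'_{\mathfrak{t}_m(\cdot)}$ are computed entirely from the $m$-divisibility pattern of the ribs along $\Gamma$ (this is the content of Corollary~\ref{CorValueSetmodm} and the surrounding discussion). Since $R'_\gamma\equiv R_\gamma$ preserves every such divisibility datum, the first-order condition (UR')$_N$ --- which compares the families $\{G_\alpha:\alpha\in\mathcal{T}_m\}$ against one another --- holds in $G'$ for the same $N$, and the regular valuation of $G'$ agrees with the natural one. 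With this in place, the regular spine of $G'$ is $\Gamma$ with the matching colours, and the remainder of your argument is correct.
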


\begin{example}
    The ordered abelian group $G:= \hprod_{\omega} \mathbb{Z}$ eliminates quantifiers in the language 
    \[  \{ (G,+,-,0,(\equiv_m,\equiv_{\bullet m} k_\bullet, =_\bullet k_\bullet)_ {k,m\in \mathbb{N}} ),(\omega\cup \{\infty\},0,s,<,\infty), (\val^n)_{n\in \mathbb{N}}\}\]
    where $s$ is the successor function in $\omega$ (extended by $s(\infty)=\infty$). 
\end{example}

\subsection{Stably embedded ordered abelian groups with interpretable regular valuation}\label{SS:MainTheorem}
We prove in this section our main theorem (Theorem~\ref{TheoremCharacterisationStableEmbeddednessCaseInterpretableArchimedeanSpineAbsolut}), characterizing which ordered abelian groups satisfying (M) and (UR) are stably embedded. 

\begin{proposition}\label{PropositionStablyEmbeddedImpliesMaximal}
Let $G$ be an ordered abelian group satisfying (UR) such that $(G,\val)$ satisfies (M). Let $G\preccurlyeq G'$ be  such that $G \preccurlyeq^{st} G'$. Then $(G,\val)$ is maximal (equivalently, pseudo-complete) in $G'$, and $(G/nG,\val^n)$ is pseudo-complete in $(G'/nG',\val^n)$ for every $n$.

In particular, if $G$ is stably embedded, then $G$ is maximal (equivalently, pseudo-complete) and $(G/nG,\val^n)$ is pseudo-complete for all $n$.
\end{proposition}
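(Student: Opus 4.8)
The plan is to reduce the statement to a pure pseudo-completeness assertion, and then to derive a contradiction from the relative quantifier elimination of Theorem~\ref{TheoremQuantifierEliminationOneValuation} together with the fact that stable embeddedness forces every cut realised in $G'$ to be definable over $G$.

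\emph{Reduction.} Since $G$ satisfies (UR) and $(G,\val)$ satisfies (M), Corollary~\ref{CorollaryGroupEquivalentMaximalGroup} shows that $G$ is elementarily equivalent to a maximal ordered abelian group, so by the remark following Lemma~\ref{LemmaMaximalImpliesDeltaMaximum} the set $\Delta_m(g)$ admits a maximum for every $g\in G$ and every $m\in\mathbb{N}$; hence Theorem~\ref{TheoremMaximalityEquivalentPseudo-completeness} applies. This already yields the equivalence ``maximal in $G'$ $\Leftrightarrow$ pseudo-complete in $G'$'' appearing in the statement, and reduces us to proving that $G'/G$ is pseudo-complete. The assertions on the $(G/nG,\val^n)$ will then follow by transporting the lifting construction in the proof of Proposition~\ref{PropPseudoCompleteModm} to the relative setting --- using (M) for $G$ and the pseudo-completeness of $G'/G$ just obtained to produce pseudo-limits at limit stages --- and the final ``in particular'' follows by applying the relative statement with $G'$ a monster model together with Proposition~\ref{PropPseudoCompleteModm}.

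\emph{Set-up of the contradiction.} Suppose $G'/G$ is not pseudo-complete and fix a pseudo-Cauchy sequence $(a_i)_{i<\lambda}$ in $G$ (with $\lambda$ a limit ordinal) having a pseudo-limit $h\in G'\setminus G$ but none in $G$. A routine ultrametric computation, combined with $\mathbb{Z}$-invariance of $\val$ and the pseudo-completeness of $G\supseteq nG$ from Lemma~\ref{LemmaMaximalImpliesDeltaMaximum}, shows that for all $g\in G$ and $n\geq 1$ the sequence $(na_i-g)_i$ is pseudo-Cauchy without pseudo-limit in $G$; hence $\val(na_i-g)$ is eventually constant, $\val(nh-g)\in\Gamma_G$, the set $\{\val(nh-g)\mid g\in G\}$ has no maximum, and by Corollary~\ref{CorValueSetmodm} the ribs and the $m$-value sets do not change. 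Thus $\langle G,h\rangle/G$ is a \emph{proper immediate} extension. Since $G\preccurlyeq^{st}G'$, every trace on $G$ of a set definable over $G'$ is $\mathcal{L}(G)$-definable; in particular the cut $\mathfrak{c}$ of $h$ over $G$ (equivalently, the cuts of all $nh$) is $\mathcal{L}(G)$-definable, and hence so is its invariance group $\Inv(\mathfrak{c})$.

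\emph{The contradiction, and the main obstacle.} The key is that the relative quantifier elimination severely restricts the $\mathcal{L}(G)$-definable cuts of $G$: modulo a definable convex subgroup such a cut is an ``algebraic'' cut of the divisible hull, any remaining freedom coming only from the induced structure on the spine $\Gamma$ and on the ribs; whereas the cut realised by a pseudo-limit of a pseudo-Cauchy sequence without pseudo-limit in $G$ is not of this kind. Concretely, if $\{\val(h-g)\mid g\in G\}$ has a supremum $\delta_0\in\Gamma_G$ (possibly $\infty$), then $C_{\delta_0}$ is a definable convex subgroup by (UR), and in the pure ordered abelian group $G/C_{\delta_0}$ --- stably embedded in the corresponding quotient of $G'$ by Fact~\ref{Fact:Purity-convex} and $G\preccurlyeq^{st}G'$ --- the image of $h$ is a limit of a Cauchy sequence, so that $G/C_{\delta_0}$ carries an $\mathcal{L}$-definable, non-algebraic Cauchy cut, which is impossible; if no such supremum exists, one argues instead that $\Inv(\mathfrak{c})=\bigcap_{g\in G}V_{\val(h-g)}$ is a definable convex subgroup lying at a gap of $\Gamma_G$, again excluded by the shape of the definable convex subgroups imposed by the relative quantifier elimination. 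The main obstacle is precisely this last step: carrying out the quantifier-elimination bookkeeping that classifies the $\mathcal{L}(G)$-definable cuts (and definable convex subgroups) of $G$ finely enough to rule out Cauchy cuts and ``gap'' cuts. The reduction, the immediacy of $\langle G,h\rangle/G$, and the passage to the quotient $G/C_{\delta_0}$ are comparatively soft.
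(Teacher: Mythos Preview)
Your proposal has a genuine gap at precisely the point you flag as ``the main obstacle'': the classification of $\mathcal{L}(G)$-definable cuts (or definable convex subgroups) fine enough to exclude Cauchy cuts and ``gap'' cuts. You assert that a definable non-algebraic Cauchy cut in $G/C_{\delta_0}$ is ``impossible'', and that a definable convex subgroup at a gap of $\Gamma_G$ is ``excluded by the shape of the definable convex subgroups imposed by the relative quantifier elimination'', but neither claim is established anywhere, and neither follows directly from Theorem~\ref{TheoremQuantifierEliminationOneValuation}. Carrying this out would amount to proving a substantial structure theorem for definable cuts in groups satisfying (UR) and (M), which is considerably harder than the proposition itself. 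There is also a secondary gap: your reduction of the $(G/nG,\val^n)$ case to the $n=0$ case by ``transporting the lifting construction'' of Proposition~\ref{PropPseudoCompleteModm} to the relative setting is not clearly valid, since at limit stages that proof uses absolute pseudo-completeness of $G$ to produce $c_\alpha$, and relative pseudo-completeness of $G'/G$ does not obviously suffice (the lifted sequence need not have a pseudo-limit in $G'$).

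The paper's argument avoids all of this with a much shorter transfer argument that treats all $n$ uniformly. Given a pseudo-Cauchy sequence in $(G/nG,\val^n)$ with pseudo-limit $a\in G'$ but none in $G$, stable embeddedness makes the binary relation $\val^n(a-h)\geq\val^n(a-g)$ on $G^2$ definable over $G$ by some $\phi(x,y,\bar c)$, which one may assume defines a nested family of closed $\val^n$-balls. By Corollary~\ref{CorollaryGroupEquivalentMaximalGroup}, $G$ is elementarily equivalent to a maximal group $\widetilde G$, and in $\widetilde G$ every such definable nested family has nonempty intersection (by Proposition~\ref{PropPseudoCompleteModm} and Theorem~\ref{TheoremMaximalityEquivalentPseudo-completeness}); this is first-order, so it holds in $G$ as well, yielding a pseudo-limit in $G$ and a contradiction. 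No cut classification is needed.
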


\begin{proof}
The equivalence between maximal and pseudo complete for $\val$ follows from Theorem \ref{TheoremMaximalityEquivalentPseudo-completeness}. We (definably) expand $G$ to the many-sorted structure $\mathcal{G}=(G,\Gamma,\val)$. Fix $n\in \mathbb{N}$. We need to show that $(G/nG,\val^n)$ is pseudo-complete in $(G'/nG',\val^n)$. By contradiction, suppose that this is not the case, so there exists a pseudo-Cauchy sequence $(g_i)_{i \in I}$ in $G$ pseudo-converging for $\val^n$ to $a\in G'$ but with no pseudo-limit in $G$ (for $\val^n$)\footnote{The notion of pseudo-limit and pseudo convergence extends naturally to pre-valuations.}. Then, for any $g \in G$, there is $i\in I$ such that $\val^n(a-g)<\val^n(a-g_i)$, since by assumption $g$ is not a pseudo-limit of $(g_i)_{i\in I}$. In particular, we have that $\val^n(a-g)=\val^n(g-g_i) \in \Gamma^n_G$. Therefore, for any $g \in G$, we can consider the ball $B_g:=\Set{h \in G \mid \val^n(a-h) \ge \val^n(a-g)}$. We will show that $\bigcap_{g \in G} B_g \neq \emptyset$. \\ Since $G$ is stably embedded in $G'$, the subset
\[
\Set{(h,g) \in G^2 \mid \val^n(a-h) \geq \val^n(a-g)}
\]
of $G^2$ may be defined with a formula $\phi(x,y,\bar{c})$, where $\phi(x,y,\bar{z})$ is a formula without parameters and $\bar{c}$ is a tuple from $G$. Moreover, we may assume that, for any tuple of parameters $\bar{c}$, the non-empty instances $\phi(x,g,\bar{c})$, with $g \in G$, form a nested family of closed balls (in the sense of the pre-valuation $\val^n$). Notice that the property of $\phi(x,y,\bar{c})$ to define a nested family of closed balls is first-order expressible. By Corollary~\ref{CorollaryGroupEquivalentMaximalGroup}, $G$ is elementarily equivalent to a maximal group $\widetilde{G}$. Then by Theorem~\ref{TheoremMaximalityEquivalentPseudo-completeness} and Proposition~\ref{PropPseudoCompleteModm}:
\[
\widetilde{G} \models \forall \bar{z} \exists x \forall y (\exists w \phi(w,y,\bar{z}) \rightarrow \phi(x,y,\bar{z})),
\] 
and so also
\[
G \models \forall \bar{z} \exists x \forall y (\exists w \phi(w,y,\bar{z}) \rightarrow \phi(x,y,\bar{z})),
\] 
since $G \equiv \widetilde{G}$. In particular, we have that $\bigcap_{g \in G}B_g \neq \emptyset$. Let $b \in \bigcap_{g \in G} B_g$. It follows that $b$ is a pseudo-limit of $(g_i)_{i \in I}$ in $G$. This is a contradiction, proving the first part.

To prove the second part, let $G$ be stably embedded and assume for contradiction that there is a pseudo-Cauchy sequence $(g_i)_{i \in I}$ in $(G/nG,\val^n)$ without pseudo-limit in $G/nG$. By compactness, we find $G\preccurlyeq G'$ such that $G'/nG'$ contains a pseudo-limit of $(g_i)_{i \in I}$. As $G\preccurlyeq^{st} G'$ by assumption, the first part leads to a contradiction.
\end{proof}

We will now prove a slight strengthening of a part of the proposition which we will need later.

\begin{lemma}\label{LemmaPC-CutNondefinable}
Let $G$ be an ordered abelian group satisfying (UR) such that $(G,\val)$ satisfies (M). Let $a$ be a pseudo-limit (in some elementary extension of $G$) of a pseudo-Cauchy sequence $(g_i)_i$ in $G$ without pseudo-limit in $G$, and let $(L,R)$ be the cut of $G$ defined by $a$. Then $L$ is not definable in $G$.
\end{lemma}

\begin{proof}
Let $a$ and $(L,R)$ be as in the statement. Assume for contradiction that $L$ is definable. Note that $\val(g-a)\in\Gamma_G$ for all $g\in G$.
\begin{claim}
    The map $\gamma:g\mapsto\gamma(g):=\val(g-a)$ is definable in $G$.
\end{claim}
\begin{proof}[Proof of the claim]
Let $g\in G$. Choose $g'\in G$ such that $\gamma(g)<\gamma(g')$. Then $B':=B_{\gamma(g')}(g')$ is a proper (closed) subball of $B:=B_{\gamma(g)}(g)$. Note that (closed) balls are convex subsets of $G$. As the non-trivial ordered abelian group $B_{\gamma(g')}(0)/B_{\gamma(g)}(0)$ does not have a maximal or a minimal element, $B'$ is neither an initial nor a final segment of $B$. Thus, by convexity and as $B'$ (interpreted in the elementary extension) contains $a$, $B\cap L\neq \emptyset\neq
B\cap R$.

If $g\in L$, it follows that $\val(g-a)=\val(g-h)$ for all sufficiently small $h\in R$. Similarly, if $g\in R$, then $\val(g-a)=\val(g-h)$ for all sufficiently large $h\in L$. This shows the claim. \end{proof}

By the claim, the nested family of closed balls $\{B_{\gamma(g)}(g) : g\in G\}$ is uniformly definable in $G$. As in the proof of Proposition~\ref{PropositionStablyEmbeddedImpliesMaximal}, it follows from (M) that $G$ contains a pseudo-limit of $(g_i)_{i}$, contradicting the assumption. 
\end{proof}

Now we are able to prove the relative version of our main theorem.

\begin{theorem}\label{TheoremCharacterisationStableEmbeddednessCaseInterpretableArchimedeanSpineForPairs}
Let $G$ be an ordered abelian group satisfying (UR) such that $(G,\val)$ satisfies (M). Let $G\preccurlyeq G'$ be an elementary extension. Then
$G \preccurlyeq^{st} G'$ if and only if the following conditions hold:
\begin{itemize}
    \item The valued abelian group $(G,\val)$ is maximal in  $(G',\val)$, i.e., there is no intermediate immediate extension of $G$ in $G'$ (equivalently $(G,\val)$ is pseudo-complete in $(G',\val)$),
    \item for every $n>1$, $(G/nG,\val^n)$ is pseudo-complete in $(G'/nG',\val^n)$, 
    \item for every $\gamma\in \Gamma$, $R_\gamma \preccurlyeq^{st} R'_\gamma$ (as an ordered abelian group),
    \item $\Gamma \preccurlyeq^{st} \Gamma'$ (as a coloured chain).
\end{itemize}
\end{theorem}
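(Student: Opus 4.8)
The plan is to treat the two implications separately: $(\Rightarrow)$ is essentially bookkeeping on top of earlier results, while $(\Leftarrow)$ is the substance, run through the relative quantifier elimination of Theorem~\ref{TheoremQuantifierEliminationOneValuation}. Throughout I work in the (definable, many-sorted) expansion $\mathcal{G}=(G,\Gamma,\val)$ of $G$, as in the proof of Proposition~\ref{PropositionStablyEmbeddedImpliesMaximal}, so that $G\preccurlyeq^{st}G'$ is read as $\mathcal{G}\preccurlyeq^{st}\mathcal{G}'$.

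For $(\Rightarrow)$, the first two bullet points are precisely Proposition~\ref{PropositionStablyEmbeddedImpliesMaximal}. For the last two, I would invoke Corollary~\ref{Cor:Chain-Rib-StEmb}: in $\mathcal{G}$ the sort $\Gamma$ is stably embedded with induced structure the pure coloured chain $(\Gamma,(C_\phi)_\phi,<)$, and each rib $R_\gamma=C_\gamma/V_\gamma$ is a pure, stably embedded (over the parameter $\gamma$) ordered abelian group. These are theory-level statements, so they persist in $\mathcal{G}'$, and $\mathcal{G}\preccurlyeq\mathcal{G}'$ gives $\Gamma\preccurlyeq\Gamma'$ and $R_\gamma\preccurlyeq R'_\gamma$. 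A standard transfer argument — lift a tuple from the stably embedded sort to the home sort, use $\mathcal{G}\preccurlyeq^{st}\mathcal{G}'$ to obtain a defining scheme there, then push it back through the (pure) induced structure — then yields $\Gamma\preccurlyeq^{st}\Gamma'$ as a coloured chain and $R_\gamma\preccurlyeq^{st}R'_\gamma$ as an ordered abelian group, using Fact~\ref{Fact:Purity-convex} for the ribs.

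For $(\Leftarrow)$, assume the four conditions and fix $\bar a\in G'$ and $\bar\theta\in\Gamma'$; I must produce a defining scheme for $\tp_{\mathcal{L}}(\bar a,\bar\theta/\mathcal{G})$. By Theorem~\ref{TheoremQuantifierEliminationOneValuation} and the explicit list of atoms \textup{a)--d)} recorded in the proof of Corollary~\ref{CorollaryAKEForOrderedAbelianGroups}, it suffices to control, for every $\mathbb{Z}$-linear combination $b$ of the coordinates of $\bar a$ (up to a bounded shift) and every $m$: (i) the cut of $b$ over $(G,<)$; (ii) the predicates $=_\bullet k_\bullet$ and $\equiv_{\bullet m}k_\bullet$ on shifts of $b$ by elements of $G$; (iii) the $\Gamma$-type over $\Gamma$ of the tuple $\bigl(\val^{m_i}(P_i(\bar a))\bigr)_i$ together with $\bar\theta$ occurring inside an atom of type \textup{d)}. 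For (iii), I would use that $\Gamma$ is a stably embedded \emph{pure} coloured chain in $\mathcal{G}$ and that a type over a coloured chain is definable once each of its one-variable cut reducts is (the proposition preceding Corollary~\ref{FactStablyEmbeddedChainIFFAllCutDefinable}); combined with condition~4 this decouples the $\Gamma$-formulas, reducing (iii) to the $\mathcal{G}$-definability, for each single $b$ and $m$, of the cut of $\val^m(b-g)$ over $\Gamma$ as $g$ ranges over $G$. Now conditions~1 and~2 say exactly (via Fact~\ref{FactCharacterisationImmediateExtension} for $m=0$, directly for $m\ge 2$, trivially for $m=1$) that $\Delta_m(b)=\{\val^m(b-g)\mid g\in G\}$ attains a maximum $\gamma=\val^m(b-g_0)$ with $g_0\in G$; writing $d=b-g_0$, maximality forces $\val^m(d-h)\le\gamma$ for all $h\in G$, so there is no boundary cancellation and $\val^m((h')+d)=\min(\val^m(h'),\gamma)$ for $h'\in G$ (with the understanding that $\val^m(h')=\gamma$ forces the value to be $\gamma$). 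Hence the cut of $\val^m(b-g)$ over $\Gamma$ is computed from the $\mathcal{G}$-definable function $\val^m\colon G\to\Gamma$, the element $g_0$, and the cut of $\gamma$ over $\Gamma$ — the latter being $\mathcal{G}$-definable by condition~4. A parallel analysis handles (i): comparing $b$ with $g\in G$ reduces, according to whether $\val(g-g_0)$ is $<$, $>$, or $=\gamma$, either to data read off $g_0$ and the ($\mathcal{G}$-definable) cut of $\gamma$ over $\Gamma$, or — in the equality case, where $\gamma$ is then realised in $\Gamma_G$ by $\val(g-g_0)$ — to the position of the residue $r:=d\bmod V'_\gamma\in R'_\gamma$ relative to $R_\gamma$, i.e.\ to $\tp(r/R_\gamma)$, definable by condition~3; the predicates in (ii) are governed by the same residue data. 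Assembling the case distinctions gives $\mathcal{G}$-definable traces throughout, hence the defining scheme; the absolute Main Theorem (Theorem~\ref{TheoremCharacterisationStableEmbeddednessCaseInterpretableArchimedeanSpineAbsolut}) follows by specialising to a $|G|^+$-saturated $G'$.

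I expect the main obstacle to be the organisation of $(\Leftarrow)$ rather than any single estimate: one must set up the relative quantifier elimination so that definability of an \emph{arbitrary} externally definable trace is reduced cleanly to the three packets (i)--(iii), show that each packet is controlled by exactly one of the four hypotheses, and check that no interaction terms survive. The delicate points are the decoupling of the type-\textup{d)} $\Gamma$-formulas through the coloured-chain criterion, the verification that maximality (conditions~1--2) rules out boundary cancellation so that $\val^m(b-g)$ really is $\min(\val^m(g_0-g),\gamma)$, and the treatment of the case $\gamma\notin\Gamma_G$, where one must argue with the $\Gamma$-definable cut of $\gamma$ and the associated $G$-definable convex subgroups in place of $\gamma$ itself, confirming that condition~3 is only needed (and available) precisely when $\gamma$ is realised in $\Gamma_G$.
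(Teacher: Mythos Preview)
Your proposal is correct and follows essentially the same route as the paper: the $(\Rightarrow)$ direction is handled identically via Proposition~\ref{PropositionStablyEmbeddedImpliesMaximal} and Corollary~\ref{Cor:Chain-Rib-StEmb}, and for $(\Leftarrow)$ the paper likewise uses the relative quantifier elimination, extracts best approximations $a^m_n$ from conditions~1--2 to obtain the identity $\val^m(na-g)=\min\{\val^m(a^m_n-g),\beta^m_n\}$, and then runs the three-case analysis (with condition~3 invoked only in the equality case) for each of the atomic shapes a)--d). The only organisational difference is that the paper first treats $1$-types by introducing the auxiliary parameter set $\Theta=\{\val^n(a-g)\mid n\in\mathbb{N},\,g\in G\}\subseteq\Gamma'$, proves definability of $\tp(a/G\cup\Theta)$, and then eliminates $\Theta$ via condition~4 and purity of $\Gamma$ before extending to tuples at the end --- but this is the same mechanism as your direct decoupling through the coloured-chain criterion.
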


Recall that, by Theorem~\ref{TheoremMaximalityEquivalentPseudo-completeness}, $(G,\val)$ is maximal in $(G',\val)$ if and only if it is pseudo-complete in $(G',\val)$. We do not know if this equivalence holds for the quotients $(G/nG,\val^n)$ as well.

\begin{proof}
(Left-to-right) By Proposition~\ref{PropositionStablyEmbeddedImpliesMaximal}, $(G,\val)$ is maximal in $(G',\val)$ and $(G/nG,\val^n)$ is pseudo-complete in $(G'/nG',\val^n)$ for every $n>1$. To see that $R_\gamma$ is stably embedded in $R_\gamma'$ for any given $\gamma \in \Gamma_G$, it is enough to note that any trace of an $R_\gamma'$-definable subset on $R_\gamma$ is definable with parameters from $G$, since $G\preccurlyeq^{st}G'$, and so definable in the ordered abelian group $R_\gamma$, as $R_\gamma$ is stably embedded and pure in $G$ by Corollary~\ref{Cor:Chain-Rib-StEmb}(2). Stable embeddedness of $\Gamma$ in $\Gamma'$ follows in the exact same way, using Corollary~\ref{Cor:Chain-Rib-StEmb}(1).

\smallskip



(Right-to-left) We show first that every 1-type over $G$ realised in $G'$ is definable. Then, at the end of the proof, we deduce that all types realised in $G'$ are definable, using an argument similar to the case of regularly ordered abelian groups.   Now, let $a\in G'\setminus G$ be a singleton. 
     Define 
     $$\Theta:=\{\val^n(a-g) \ \vert \ n \in \mathbb{N}, g\in G \} \subseteq \Gamma'.$$ 
    We will show that $\tp(a/\Theta G)$ is definable. Then, since by assumption (and pure stable embeddedness of $\Gamma$ in $G$, see Corollary~\ref{Cor:Chain-Rib-StEmb}) $\tp(\beta/G)$ is definable for all finite tuples $\beta \in \Theta$, we deduce that $\tp(a/G)$ is definable. 

\begin{claim}\label{ClaimBestApproximationCaseOneValuation}
        For every $m\in \mathbb{N}$ and $n\in\Z$ there is $a^m_n \in G$ such that $\beta^m_n := \val^m(na-a^m_n)$ is maximal in the set of values $\Delta^m_n=\{\val^m(na-g) \mid g \in G\}$.
            We say that $a^m_n$ is a (representative of a) \emph{best approximation} of $na \mod m$ in $G/mG$. 
\end{claim}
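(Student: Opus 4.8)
The plan is to derive the claim directly from the two valuation-theoretic hypotheses of the theorem, namely that $(G,\val)$ is maximal in $(G',\val)$ and that $(G/kG,\val^k)$ is pseudo-complete in $(G'/kG',\val^k)$ for every $k>1$. The point is that if, for the fixed $m\in\mathbb{N}$ and $n\in\mathbb{Z}$, the set $\Delta^m_n=\{\val^m(na-g)\mid g\in G\}$ had no maximum, then $na$ (or its image modulo $mG'$) would be a pseudo-limit of a pseudo-Cauchy sequence drawn from $G$ having no pseudo-limit in $G$ (resp.\ in $G/mG$), which is exactly what these hypotheses forbid.

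First I would dispose of the easy cases. If the image of $na$ in $G'/mG'$ lies in the image of $G$ --- this covers $m=1$, and for $m=0$ it just means $na\in G$ --- I choose $a^m_n\in G$ with $na-a^m_n\in mG'$, so that $\val^m(na-a^m_n)=\infty=\max\Delta^m_n$. Along the way I would record the harmless observation that $G\cap mG'=mG$, since divisibility by $m$ is given by an existential formula and $G\preccurlyeq G'$; hence $G/mG$ embeds into $G'/mG'$, and, by interpretability of $\val^m$ (i.e.\ of $\mathfrak{s}_m$) in the ordered group together with $G\preccurlyeq G'$, this embedding is compatible with the (pre-)valuations $\val^m$.

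In the remaining case I would assume, towards a contradiction, that $\Delta^m_n\subseteq(\Gamma^m)'$ has no maximum, fix a strictly increasing sequence $(\gamma_i)_{i<\lambda}$ cofinal in $\Delta^m_n$ (so $\lambda$ is a limit ordinal), and pick for each $i$ an element $g_i\in G$ with $\val^m(na-g_i)=\gamma_i$. A routine use of the ultrametric inequality for $\val^m$, together with the strict monotonicity of $(\gamma_i)$, gives $\val^m(g_i-g_j)=\gamma_i$ for $i<j$, so that $(g_i)_{i<\lambda}$ --- or its image in $G/mG$ when $m\ge1$ --- is pseudo-Cauchy with pseudo-limit $na$ (resp.\ $\overline{na}\in G'/mG'$). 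It has no pseudo-limit in $G$ (resp.\ $G/mG$): such a $g$ would satisfy $\val^m(g_i-g)=\gamma_i$ eventually, whence $\val^m(na-g)\ge\gamma_i$ for all large $i$, and cofinality of the $\gamma_i$ would force $\val^m(na-g)=\max\Delta^m_n$, a contradiction. For $m\ge1$ this contradicts pseudo-completeness of $(G/mG,\val^m)$ in $(G'/mG',\val^m)$. For $m=0$, where $\val^0=\val$ and $G/0G=G$, it says the extension $(G,\val)\subseteq(G',\val)$ is not pseudo-complete; since $G$ is elementarily equivalent to a maximal group by Corollary~\ref{CorollaryGroupEquivalentMaximalGroup}, it satisfies the axiom scheme ``$\Delta_k(g)$ has a maximum for all $g,k$'' by the remark following Lemma~\ref{LemmaMaximalImpliesDeltaMaximum}, so Theorem~\ref{TheoremMaximalityEquivalentPseudo-completeness} applies and yields that $(G,\val)$ is not maximal in $(G',\val)$ --- contradicting the first hypothesis.

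I do not anticipate a real obstacle: the argument is essentially an unpacking of the definition of pseudo-completeness. The only steps needing care are the descent to the quotient valued group $G'/mG'$ (checking that the pseudo-Cauchy sequence and its pseudo-limit behave correctly there, and that $G/mG\hookrightarrow G'/mG'$ respects $\val^m$) and the mild bookkeeping separating the case $m=0$, handled through maximality and Theorem~\ref{TheoremMaximalityEquivalentPseudo-completeness}, from the cases $m\ge1$, handled directly by pseudo-completeness modulo $m$.
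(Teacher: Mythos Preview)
Your proposal is correct and follows essentially the same route as the paper. The paper's proof is a terse two-liner: it simply asserts that since $(G/mG,\val^m)$ is pseudo-complete in $(G'/mG',\val^m)$ for all $m\geq0$, the set $\Delta^m_n$ admits a maximum --- leaving to the reader precisely the pseudo-Cauchy construction you spell out, and silently subsuming the case $m=0$ under pseudo-completeness via the equivalence with maximality (Theorem~\ref{TheoremMaximalityEquivalentPseudo-completeness}), which is noted just after the theorem statement. Your version is more careful in separating out $m=0$, $m=1$, and $m\ge2$, and in checking that $G/mG\hookrightarrow G'/mG'$ respects $\val^m$, but the underlying idea is identical.
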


\begin{proof}[Proof of the claim]       
        By hypothesis, $(G/mG,\val^m)$ is pseudo-complete in $(G'/mG',\val^m)$ for all $m\geq0$, so for all $n\in\Z$ the set $\Delta^m_n\subseteq\Gamma'$ admits a maximum $\beta^m_n=\val^m(na-a^m_n)$, where $a^m_n\in G$.
\end{proof}

For all $m\in\mathbb{N}$, $n\in \mathbb{Z}$ and $g\in G$ we have that
        \begin{align}
        \label{EquationBestApproximationALL}
         \val^m(na-g)=\min\{\val^m(a^m_n-g),\beta^m_n\},
        \end{align}
        as follows from the maximality of $\beta^m_n\in\Delta^m_n$ and the ultrametric triangular inequality.

By Theorem~\ref{TheoremQuantifierEliminationOneValuation}, a formula $\phi(x,\bar{g},\bar{\theta})$ in the language $\mathcal{L}$ with parameters in $G \cup \Theta$ is a finite Boolean combination of formulas of the form:
	\begin{itemize}
		\item[a)] $nx-g > 0$,
		\item[b)] $nx-g\equiv_{\bullet m} k_\bullet$,
		\item[c)] $nx-g =_\bullet k_\bullet$,
		\item[d)] $\psi(\val^{m_0}(n_0x-g_0),\dots,\val^{m_{h-1}}(n_{h-1}x-g_{h-1}),\theta_0,\dots,\theta_{h'-1})$,
	\end{itemize}
where $\psi$ is a formula in $(\Gamma,(C_{\phi})_{\phi\in \mathcal{L}_{\text{OAG}}},<)$,  $n,n_0,\dots,n_{h-1}\in\Z$, $k \in \Z \setminus \{0\}$, $m_0,\dots,m_{h-1} \in \mathbb{N}$, $m \in \mathbb{N} \setminus \{0\}$ and $g,g_0,\dots,g_{h-1} \in G$, $\theta_0,\dots,\theta_{h'-1} \in \Theta$.  We may analyse the definability of each type of formula separately. Notice right away that, by (\ref{EquationBestApproximationALL}), the set 
	\[
	D:= \{(\bar{g},\bar{\theta}) \in G^{h} \times \Theta^{h'} \ \vert \ \psi(\val^{m_0}(n_0a-g_0),\dots, \val^{m_{h-1}}(n_{h-1}a-g_{h-1}),\theta_0,\dots,\theta_{h'-1})\}
	\]
has a definition with parameters in $G \cup \{ \beta_n^m\mid n\in\Z,m\in\mathbb{N}\}$.

\begin{claim}\label{ClaimA}
The set
	\[
	A:= \{ g \in G \ \vert \ na-g>0\}
	\]
is definable with parameters in $G\cup \Theta$. 
\end{claim}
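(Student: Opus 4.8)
The plan is to describe $A$ — which is just the set of elements of $G$ lying below $na$, i.e.\ (a half of) the cut of $na$ over $G$ — in terms of the best approximation $a^0_n\in G$ furnished by Claim~\ref{ClaimBestApproximationCaseOneValuation}, applied with $m=0$. Put $\beta:=\beta^0_n=\val(na-a^0_n)$ and $d:=na-a^0_n$, so that $\val(na-g)=\min\{\val(a^0_n-g),\beta\}$ for all $g\in G$ by (\ref{EquationBestApproximationALL}). If $n=0$ then $A=\{g\in G\mid g<0\}$ is $\emptyset$-definable, so assume $n\neq0$; by elementarity $na\notin G$, hence $\beta\neq\infty$ and $d\neq0$. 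The key preliminary step is the identity
\[
\beta=\max\{\val(a-g)\mid g\in G\},
\]
the maximum being attained (Claim~\ref{ClaimBestApproximationCaseOneValuation} again), which in particular places $\beta$ in $\Theta$. The inequality ``$\ge$'' is immediate from $\Z$-invariance of $\val$, since $\val(a-g)=\val(n(a-g))=\val(na-ng)$ and $ng$ ranges over $nG\subseteq G$. For ``$\le$'', suppose $\val(na-g_0)>\delta:=\max\{\val(a-g)\mid g\in G\}$ for some $g_0\in G$; then $g_0=na-(na-g_0)\in nG'+V'_\delta$ (writing $V'_\gamma$, $C'_\gamma$ for the value and cover of level $\gamma$ in $G'$), so $\val^n(g_0)>\delta$, and by (M) there is $h_0\in G$ with $g_0-h_0\in nG$ and $\val(h_0)=\val^n(g_0)>\delta$. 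Writing $g_0=h_0+ng_1$ with $g_1\in G$ yields $n(a-g_1)=(na-g_0)+h_0$, hence $\val(a-g_1)=\val(n(a-g_1))>\delta$, contradicting maximality of $\delta$.

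With $\beta\in\Theta$ in hand, I would partition $G$ according to the position of $\gamma_g:=\val(a^0_n-g)$ relative to $\beta$, using $na-g=(a^0_n-g)+d$. If $\gamma_g<\beta$ then $d\in V'_{\gamma_g}$ while $\val(na-g)=\gamma_g$, so $na-g$ and $a^0_n-g$ lie in the same coset of the convex subgroup $V'_{\gamma_g}$ and hence have the same sign; thus $g\in A$ iff $g<a^0_n$. If $\gamma_g>\beta$ (in particular $g=a^0_n$), then $a^0_n-g\in V'_\beta$ and $na-g\equiv d\pmod{V'_\beta}$, both of value $\beta$, so $na-g$ and $d$ have the same sign; thus $g\in A$ iff $d>0$, a fixed truth value. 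The remaining case $\gamma_g=\beta$ arises only when $\beta\in\Gamma$; there, since $\beta=\max\Delta^0_n$ forces $\val(na-g)=\beta$, the element $na-g$ lies in $C'_\beta\setminus V'_\beta$ and its sign is that of its image $r_0+\overline{a^0_n-g}$ in the rib $R'_\beta$, where $r_0:=d+V'_\beta$ is a fixed nonzero element and $\overline{a^0_n-g}$ runs through $R_\beta\subseteq R'_\beta$. So $g\in A$ iff $\overline{a^0_n-g}>-r_0$ in $R'_\beta$, and since $R_\beta\preccurlyeq^{st}R'_\beta$ the cut of $-r_0$ over $R_\beta$ is definable, so this condition is an $\mathcal L$-formula with parameters from $R_\beta$. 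Assembling the three cases exhibits $A$ as definable with parameters $a^0_n,d\in G$, $\beta\in\Theta$, and the finitely many $R_\beta$-parameters of the last case — all lying in $G\cup\Theta$ and in the sorts interpreted over them.

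The main obstacle is the preliminary identity $\beta=\max\{\val(a-g)\mid g\in G\}$: it is exactly here that the hypotheses ``$(G,\val)$ maximal in $(G',\val)$'', ``$(G/nG,\val^n)$ pseudo-complete in $(G'/nG',\val^n)$'' and (M) genuinely enter (beyond merely producing $a^0_n$), and it is what legitimises $\beta$ as a parameter from $\Theta$. The boundary case $\gamma_g=\beta$ is the only spot where stable embeddedness of a rib is invoked, and what keeps it from spiralling into deeper ribs — i.e.\ what guarantees $na-g\notin V'_\beta$ there — is again the maximality of $\beta$ in $\Delta^0_n$.
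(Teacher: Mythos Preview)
Your argument is correct and follows the paper's three-case analysis (comparing $\val(a^0_n-g)$ with $\beta^0_n$) essentially verbatim; the one substantive addition is your preliminary identity $\beta^0_n=\max\{\val(a-g)\mid g\in G\}$, which makes explicit that $\beta^0_n\in\Theta$---a point the paper leaves tacit when it concludes ``definable with parameters in $G\cup\{\beta^0_n\}$''. One harmless slip: in your final summary you list $d$ among the parameters ``in $G$'', but $d=na-a^0_n\notin G$; this does not matter, since $d$ never actually occurs as a parameter in your defining formula (only its sign and the cut of $-r_0$ over $R_\beta$ do, via $\top/\bot$ and $G$-parameters respectively).
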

\begin{proof}[Proof of the claim]
We may assume $n\neq0$. Let $g\in G$, and set $\beta:=\val(na-g)$. Then, the sign of $na-g$ is the sign of $na-g \mod V_{\beta}'$, since all elements of $V_{\beta}'$ are smaller than $na-g$ in absolute value.  By (\ref{EquationBestApproximationALL}), $\beta=\min\{\val(a^0_n-g),\beta^0_n\}$ and we may distinguish three cases:
\begin{itemize}
\item $\beta=\val(na^0-g)<\beta^0_n$. Then since $\val( (na-g)-(a^0_n-g) )= \beta^0_n$, we have  $na-g \mod V_{\beta}'=a^0_n-g \mod V_{\beta}'$. Therefore, in that case, $g \in A$ if and only if $a^0_n-g>0$. 
\item $\beta = \beta^0_n < \val(a^0_n-g) $. Then  $na-g \mod V_{\beta}'=(na-g)-(a^0_n-g) \mod V_{\beta}'= na-a^0_n\mod V_{\beta}'$. Thus, the sign of $na-g$ is the sign of $na-a^0_n$. 

\item $\val(a^0_n-g)=\beta^0_n=\beta$. This case is possible only if $\beta^0_n\in \Gamma$. In $R'_\beta$ we have 
$$na-g \mod V_\beta' =na-a^0_n\mod V_\beta' +a^0_n-g\mod V_\beta',$$
so we get 
$$na-g >0\Leftrightarrow na-g\mod V_\beta'>0 \Leftrightarrow g-a^0_n\mod V_\beta'<na-a^0_n\mod V_\beta'.$$
Since $R_\beta\preccurlyeq^{st} R_\beta$ by assumption, the cut of $na-a^0_n$ over $R_\beta$ is definable in $R_\beta$, and so by interpretability we find an $\mathcal{L}$-formula 
$\psi(x)$ with parameters from $G$ such that for all $\tilde{g}\in G $ of value $\beta$ one has $G\models \psi(\tilde{g})$ if and only if $\tilde{g}\mod V_\beta'<na-a^0_n$.
\end{itemize}
Therefore, $A$ is definable with parameters in $G \cup \{\beta^0_n\}$ and is given by 
\[ 
\big ( \val(a^0_n-x)<\beta_n^0 \land a^0_n-x>0 \big) \lor \big(\val(a^0_n-x)>\beta_n^0 \land \epsilon \big) \lor \big(\val(a_n^0-x)=\beta_n^0 \land \psi(x-a^0_n)\big).
\]
where $\epsilon\in \{\bot, \top\}$ is the truth value of $na-a^0_n>0$.
\end{proof}

\begin{claim}
The set
	\[
	B:= \{ g \in G \ \vert \ na-g \equiv_{\bullet m} k_\bullet \}
	\]
is definable with parameters in $G \cup \Theta$.
\end{claim}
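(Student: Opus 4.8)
The plan is to run the same three-case analysis as in the proof of Claim~\ref{ClaimA}, now using the best approximation $a^m_n$ of $na\bmod m$ furnished by Claim~\ref{ClaimBestApproximationCaseOneValuation} and writing $\beta^m_n:=\val^m(na-a^m_n)=\max\Delta^m_n$. First I would record an elementary observation about the predicate: by its definition (together with Fact~\ref{FactQE}(2) and Remark~\ref{RemarkIdentificationSmVm}), the truth of $z\equiv_{\bullet m}k_\bullet$ in $G'$ depends on $z\in G'$ only through the value $\gamma:=\val^m(z)$ and the image of $z$ in $G'/(V_\gamma'+mG')$: this uses that $V_z^m=V_\gamma'$, that discreteness of $G'/V_\gamma'$ is a property of $\gamma$ alone, and that $V_\gamma'+mG'=\{z'\mid\val^m(z')>\gamma\}$ (valid by~(M), cf.\ the discussion around Corollary~\ref{CorValueSetmodm}). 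In particular, if $\val^m(z)=\val^m(z')$ and $z-z'\in V_{\val^m(z)}'+mG'$, then $z\equiv_{\bullet m}k_\bullet$ iff $z'\equiv_{\bullet m}k_\bullet$.

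Now fix $g\in G$ and set $\beta:=\val^m(na-g)$, so by~(\ref{EquationBestApproximationALL}) $\beta=\min\{\val^m(a^m_n-g),\beta^m_n\}$. Each of the conditions ``$\val^m(a^m_n-x)<\beta^m_n$'', ``$\val^m(a^m_n-x)>\beta^m_n$'', ``$\val^m(a^m_n-x)=\beta^m_n$'' is definable over $G\cup\{\beta^m_n\}$, and I treat them in turn. If $\val^m(a^m_n-g)<\beta^m_n$, then $\val^m(na-g)=\val^m(a^m_n-g)$ and $\val^m(na-a^m_n)=\beta^m_n>\beta$, so $na-a^m_n\in V_\beta'+mG'$; by the observation, $G'\models(na-g)\equiv_{\bullet m}k_\bullet$ iff $G'\models(a^m_n-g)\equiv_{\bullet m}k_\bullet$ iff $G\models(a^m_n-g)\equiv_{\bullet m}k_\bullet$, an $\mathcal{L}(G)$-condition on $g$. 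If $\val^m(a^m_n-g)>\beta^m_n$, then $\beta=\beta^m_n$ and $a^m_n-g\in V_\beta'+mG'$, so $na-g$ and $na-a^m_n$ agree modulo $V_\beta'+mG'$; hence $(na-g)\equiv_{\bullet m}k_\bullet$ holds iff $(na-a^m_n)\equiv_{\bullet m}k_\bullet$, a fixed truth value $\epsilon'\in\{\bot,\top\}$ not depending on $g$.

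The crux is the boundary case $\val^m(a^m_n-g)=\beta^m_n=:\beta$ (which forces $\beta\in\Gamma_G$, as $a^m_n-g\in G$): I claim $(na-g)\equiv_{\bullet m}k_\bullet$ is then \emph{never} true. Suppose it were. Then $G'/V_\beta'$ is discrete, hence $R_\beta$ is discrete as well ($R_\beta$ is a pure ordered abelian group in $G$ by Corollary~\ref{Cor:Chain-Rib-StEmb}(2), and $R_\beta\preccurlyeq R_\beta'$ since $G\preccurlyeq G'$, and discreteness is first order). Let $u\in C_\beta\subseteq G$ lift the least positive element of $G/V_\beta$, which coincides with that of $G'/V_\beta'$; the hypothesis then yields $na-g-ku\in V_\beta'+mG'$, i.e.\ $\val^m\bigl(na-(g+ku)\bigr)>\beta=\beta^m_n$ with $g+ku\in G$, contradicting the maximality of $\beta^m_n$ in $\Delta^m_n$. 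Combining the three cases exhibits $B$ as a finite union of $\mathcal{L}$-definable sets with parameters in $G\cup\{\beta^m_n\}\subseteq G\cup\Theta$.

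The point requiring care is precisely this vanishing of the boundary case: in contrast with the order predicate of Claim~\ref{ClaimA}, whose corresponding case forced us to define a cut inside $R_\beta$ and hence to invoke stable embeddedness of the rib, the congruence predicate on the boundary is simply incompatible with the best-approximation bound, so no hypothesis on the ribs is needed here beyond the automatic elementarity $R_\beta\preccurlyeq R_\beta'$. The only other thing to check carefully is the auxiliary observation of the first paragraph, in particular the identity $V_\gamma'+mG'=\{z'\mid\val^m(z')>\gamma\}$, which is where~(M) enters.
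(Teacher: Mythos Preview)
Your proof is correct, and in the boundary case you have found a genuinely simpler argument than the paper's. The first two cases ($\val^m(a^m_n-g)<\beta^m_n$ and $\val^m(a^m_n-g)>\beta^m_n$) are handled essentially as in the paper. The difference is in the case $\val^m(a^m_n-g)=\beta^m_n$: the paper further splits according to whether $R_\beta$ is discrete, and in the discrete subcase it invokes the hypothesis $R_\beta\preccurlyeq^{st}R'_\beta$ to produce a defining formula $\psi_3$ via the induced congruence on the rib. You instead observe that the boundary case is \emph{vacuous}: if $(na-g)\equiv_{\bullet m}k_\bullet$ held, then with $u\in G$ a lift of the least positive element of $R_\beta$ one gets $na-(g+ku)\in V'_\beta+mG'$, hence $\val^m(na-(g+ku))>\beta^m_n$ with $g+ku\in G$, contradicting the maximality of $\beta^m_n$ in $\Delta^m_n$.

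What your route buys is that this particular claim does not require the stable embeddedness hypothesis on the ribs at all; only the best-approximation property from Claim~\ref{ClaimBestApproximationCaseOneValuation} (i.e., pseudo-completeness of $(G/mG,\val^m)$ in $(G'/mG',\val^m)$) and the elementarity $R_\beta\preccurlyeq R'_\beta$ are used. The same trick, incidentally, would also simplify the boundary discrete case in the proof of the analogous claim for the predicate $=_\bullet k_\bullet$. The paper's more elaborate treatment is not wrong, merely unnecessarily heavy here; the rib hypothesis is genuinely needed only for the order predicate in Claim~\ref{ClaimA}. One minor remark: the identity $V'_\gamma+mG'=\{z'\mid\val^m(z')>\gamma\}$ that you flag as requiring~(M) actually follows directly from the definition of $\val^m$ (the downward-closedness of $\{H\mid z'\notin H+mG'\}$), so your argument is even more self-contained than you suggest.
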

\begin{proof}[Proof of the claim]
 Similarly to the proof of Claim~\ref{ClaimA}, we show that one can find $\psi_1(x), \psi_2(x), \psi_3(x)$ with parameters in $G$ 
such that $B$ is given by the following disjunction
\[ 
\big ( \val^m(a^m_n-x)<\beta^m_n \land \psi_1(x) \big) \lor \big(\val^m(a^m_n-x)>\beta^m_n \land \psi_2(x)\big) \lor \big(\val^m(a^m_n-x)=\beta^m_n \land \psi_3(x)\big).
\]
In particular, $B$ is definable with parameters in $G \cup \{\beta^m_n\}$. Let $g \in G$, and set $\beta:=\val^m(na-g)=\min\{\val^m(a^m_n-g),\beta^m_n\}$. We distinguish the following cases:

\begin{itemize}
    \item $\beta=\val^m(a^m_n-g)<\beta^m_n$. Observe that since $\val^m(na-a^m_n)>\beta$,  $na-g \mod V_\beta+mG= a^m_n-g \mod V_{\beta}+mG$. In particular, $na-g \equiv_{\bullet m} k_\bullet$ if and only if $a^m_n-g \equiv_{\bullet m} k_\bullet$, so we may set $\psi_1(x):=a^m_n-g \equiv_{\bullet m} k_\bullet$. 
    
    \item $\beta=\beta^m_n<\val^m(a^m_n-g)$. Then we have $na-g \mod V_{\beta}+mG= na-a^m_n \mod V_{\beta^m_n}+mG$. In particular, $na-g \equiv_{\bullet m} k_\bullet$ if and only if $na-a^m_n \equiv_{\bullet m} k_\bullet$, which depends only on $a$. Then, if $na-a^m_n \equiv_{\bullet m} k_\bullet$, we set $\psi_2(x):= \top$; otherwise, we set $\psi_2(x):=\bot$. 
    \item $\beta=\val^m(a^m_n-g)=\beta^m_n \in \Gamma$ and $R_{\beta}=R_{\beta^m_n}$ is not discrete. Then, trivially, there is no $g \in G$ such that $\val^m(a^m_n-g)=\beta$ and $g \in B$. We set $\psi_3(x):=\bot$. 
    \item $\beta=\val^m(a^m_n-g)=\beta^m_n\in \Gamma$ and $R_{\beta}=R_{\beta^m_n}$ is discrete. By (M), there is $a'\in G'$ such that $a'\equiv_m na-a^m_n$ and $\val^m(na-a_n^m)=\val(a') $. Then, $na-g \equiv_{\bullet m} k_\bullet$ if and only if $a'+a_n^m-g \mod V_{\beta} \equiv_m k_{\beta} \mod V_{\beta}$ where $k_{\beta}$ denotes a representative in $G$ of $k$ times the minimal positive element of $R_{\beta}$. Since $R_{\beta}\preccurlyeq^{st}R'_{\beta}$ and the rib $R_{\beta}$ is interpretable in $G$, there is an $\mathcal{L}$-formula $\psi_3'(x)$ with parameters in $G$  defining the set of elements $\tilde{g}$ of valuation $\beta$ such that $a'+\tilde{g} \mod V_{\beta} \equiv_m k_{\beta} \mod V_{\beta}$. We may then set $\psi_3(x):=  \exists x' x'\equiv_m x \wedge \psi_3'(a_n^m-x')$. Since $G$ satisfies (M), $\psi_3(g)$ holds if and only if $a'+a^m_n-g \equiv_{\bullet m} k_\bullet$ and if and only if $na-g\equiv_{\bullet m} k_\bullet$, as wanted.
    \end{itemize}
\end{proof}
\begin{claim}
The set
\[
C:= \{ g \in G \ \vert na-g =_\bullet k_\bullet \}
\] 
is definable with parameters in $G \cup \Theta$.
\end{claim}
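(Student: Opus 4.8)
The plan is to imitate the proof of Claim~\ref{ClaimA} for the set $A$: the predicate $=_\bullet k_\bullet$ records a coset in a discrete quotient rather than a sign, and—in contrast with the set $B$—no congruence modulo $m$ is involved, so no appeal to~(M) is needed. I would first dispose of the trivial case $n=0$, where $C=\{g\in G\mid -g=_\bullet k_\bullet\}$ is already quantifier-free over $G$, and then fix $n\neq 0$. Using Claim~\ref{ClaimBestApproximationCaseOneValuation} for $m=0$, take a best approximation $a^0_n\in G$ of $na$ for $\val=\val^0$ and set $\beta^0_n:=\val(na-a^0_n)=\max\Delta^0_n\in\Theta$. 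For $g\in G$ with $\beta:=\val(na-g)$, equation~(\ref{EquationBestApproximationALL}) gives $\beta=\min\{\val(a^0_n-g),\beta^0_n\}$, and maximality of $\beta^0_n$ in $\Delta^0_n$ forces $\beta=\beta^0_n$ as soon as $\val(a^0_n-g)\geq\beta^0_n$. Since $na-g=_\bullet k_\bullet$ depends only on the class of $na-g$ modulo $V'_\beta$ and on discreteness of $G'/V'_\beta$, I would split into the same three cases as in Claim~\ref{ClaimA}.

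In the case $\val(a^0_n-g)<\beta^0_n$ one has $na-g\equiv a^0_n-g\pmod{V'_\beta}$, so $na-g=_\bullet k_\bullet$ iff $a^0_n-g=_\bullet k_\bullet$, and I would take $\psi_1(x):=\big(a^0_n-x=_\bullet k_\bullet\big)$. In the case $\val(a^0_n-g)>\beta^0_n$ one has $\beta=\beta^0_n$ and $na-g\equiv na-a^0_n\pmod{V'_{\beta^0_n}}$, so $na-g=_\bullet k_\bullet$ iff $na-a^0_n=_\bullet k_\bullet$ in $G'$; this is a fixed truth value $\epsilon$, and I would take $\psi_2(x):=\epsilon$. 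In the remaining case $\val(a^0_n-g)=\beta^0_n$, an element of $G$ of regular value $\beta^0_n$ exists (namely $a^0_n-g$), so $\beta=\beta^0_n\in\Gamma$, and again by maximality of $\beta^0_n$ in $\Delta^0_n$ we get $\val(na-g)=\beta^0_n$ exactly, so $na-g\bmod V'_\beta$ is a non-zero element of $R'_\beta$ with $na-g\equiv(na-a^0_n)+(a^0_n-g)\pmod{V'_\beta}$. If $R_\beta$ is not discrete—equivalently $R'_\beta$ is not, since $R_\beta\preccurlyeq R'_\beta$—no such $g$ lies in $C$ and I would put $\psi_3(x):=\bot$; otherwise $R_\beta$ is discrete with least positive element $e\in R_\beta$, and $na-g=_\bullet k_\bullet$ iff $(a^0_n-g)\bmod V'_\beta$ equals the fixed element $r_0:=ke-\big((na-a^0_n)\bmod V'_\beta\big)$ of $R'_\beta$. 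Since $R_\beta\preccurlyeq^{st}R'_\beta$, the type of $r_0$ over $R_\beta$ is definable, and since $R_\beta$ is interpretable in $G$ (Corollary~\ref{Cor:Chain-Rib-StEmb}(2)), this yields an $\mathcal{L}$-formula $\psi_3'(x)$ over $G$ defining, among elements of regular value $\beta$, those $\tilde g$ with $\tilde g\bmod V'_\beta=r_0$; I would then set $\psi_3(x):=\psi_3'(a^0_n-x)$.

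Collecting the three cases, $C$ will be defined by
\[
\big(\val(a^0_n-x)<\beta^0_n\wedge\psi_1(x)\big)\vee\big(\val(a^0_n-x)>\beta^0_n\wedge\psi_2(x)\big)\vee\big(\val(a^0_n-x)=\beta^0_n\wedge\psi_3(x)\big),
\]
an $\mathcal{L}$-formula with parameters in $G\cup\{\beta^0_n\}\subseteq G\cup\Theta$. As in Claim~\ref{ClaimA}, the only point requiring care is the third case: one must check $\beta^0_n\in\Gamma$ there, use maximality of $\beta^0_n$ in $\Delta^0_n$ to pin down $\val(na-g)=\beta^0_n$, and invoke stable embeddedness together with interpretability of the rib $R_\beta$; the other two cases and the bookkeeping are routine ultrametric computations.
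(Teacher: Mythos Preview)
Your proof is correct and follows essentially the same approach as the paper's own proof: the same three-way split according to the comparison of $\val(a^0_n-x)$ with $\beta^0_n$, with identical handling of each case via the ultrametric identity~(\ref{EquationBestApproximationALL}) and the hypothesis $R_\beta\preccurlyeq^{st}R'_\beta$ in the discrete-rib subcase. You are slightly more explicit than the paper in disposing of $n=0$ and in naming the target element $r_0\in R'_\beta$, but the argument is otherwise the same.
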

\begin{proof}[Proof of the claim]
 Similarly to the previous claims, we will find $\psi_1(x), \psi_2(x), \psi_3(x)$ with parameters in $G$ 
such that $C$ is given by the following disjunction
\[ 
\big ( \val(a_n^0-x)<\beta_n^0 \land \psi_1(x) \big) \lor \big(\val(a_n^0-x)>\beta_n^0 \land \psi_2(x)\big) \lor \big(\val(a_n^0-x)=\beta_n^0 \land \psi_3(x)\big).
\]
In particular, $C$ is definable with parameters in $G \cup \{\beta_n^0\}$. Fix $g\in G$ and set $\beta:=\val(na-g)$. Again by (\ref{EquationBestApproximationALL}), we have $\beta=\min\{\val(a_n^0-g),\beta_n^0\}$ and we distinguish the following cases:
\begin{itemize}
    \item $\beta=\val(a_n^0-g)<\beta_n^0$. Then we simply observe that $na-g \mod V_{\beta}=a_n^0-g +na-a_n^0 \mod V_{\beta}=a_n^0-g \mod V_{\beta}$. In particular, $na-g =_\bullet k_\bullet$ if and only if $a_n^0-g =_\bullet k_\bullet$, so we may set $\psi_1(x):=a_n^0-x =_\bullet k_\bullet$. 
    \item $\beta=\beta_n^0<\val(a_n^0-g)$. Then we have $na-g \mod V_{\beta}=na-a_n^0) \mod V_{\beta_n^0}$. In particular, $na-g =_\bullet k_\bullet$ if and only if $na-a_n^0 =_\bullet k_\bullet$, which depends only on $a$. Then, if $na-a_n^0 =_\bullet k_\bullet$, we set $\psi_2(x):= \top$; otherwise, we set  $\psi_2(x):=\bot$. 
    \item $\beta=\val(a_n^0-g)=\beta_n^0$ and $R_{\beta}=R_{\beta^m_n}$ is not discrete. Then, trivially, there is no $g' \in G'$ (so in particular no $g\in G$) such that $na-g' \mod V'_\beta$ is $k$ times the minimal element of $R'_\beta$. We set $\psi_3(x):=\bot$. 
    \item $\beta=\val(a_n^0-g)=\beta_n^0$ and $R_{\beta}=R_{\beta^m_n}$ is discrete.  In particular, $\beta \in \Gamma$. Since $R_{\beta}\preccurlyeq^{st}R'_{\beta}$ and the rib $R_\beta$ is interpretable in $G$, we find a $\mathcal{G}$-formula $\psi_3(x)$ with parameters in $G$, defining the set of $\tilde{g}\in G$ of value $\beta$ such that $na-\tilde{g} = k_{\beta} \mod V_{\beta}$, where $k_{\beta}$ denotes a representative in $G$ of $k$ times the minimal positive element of $R_{\beta}$. 
\end{itemize}
\end{proof}
Now let $\bar{a}=(a_0,\dots,a_{k-1})$ be any tuple of new elements in $G'$. Then $\tp(\bar{a}/G)$ is determined by the following set of formulas:
\[
\bigcup_{z_0,\dots,z_{k-1} \in \Z} \tp(\underset{i<k}{\sum}z_ia_i / G)  \cup \tp_{\Th(\Gamma_G)}(\Theta(\bar{a})/ \Gamma_G), 
\]
where $\Theta(\bar{a})=\bigcup_{z_0,\dots,z_{k-1} \in \Z}\Theta(\underset{i<k}{\sum}z_ia_i)$. Since any of these types is definable over $G$, so is $\tp(\bar{a}/G)$. This concludes the proof.
\end{proof}

\begin{theorem}\label{TheoremCharacterisationStableEmbeddednessCaseInterpretableArchimedeanSpineAbsolut}
Assume that $G$ satisfies (UR) and that $(G,\val)$ satisfies (M). Then $G$ is stably embedded if and only if it is maximal, its regular ribs ${(R_\gamma,+,0,<)}$ are stably embedded as ordered abelian groups and its regular spine ${(\Gamma,(C_\phi)_{\phi\in \mathcal{L}_{\text{oag}}},<)}$ is stably embedded as a coloured chain. 
\end{theorem}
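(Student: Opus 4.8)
The plan is to deduce this ``absolute'' characterisation from the relative one, Theorem~\ref{TheoremCharacterisationStableEmbeddednessCaseInterpretableArchimedeanSpineForPairs}, by quantifying over all elementary extensions $G\preccurlyeq G'$, supplying the missing ingredients from Proposition~\ref{PropositionStablyEmbeddedImpliesMaximal} and the purity statements of Corollary~\ref{Cor:Chain-Rib-StEmb}.

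For ($\Leftarrow$), assume $G$ is maximal, every $R_\gamma$ is stably embedded as an ordered abelian group, and $\Gamma$ is stably embedded as a coloured chain; fix an arbitrary $G\preccurlyeq G'$. I would verify the four conditions of Theorem~\ref{TheoremCharacterisationStableEmbeddednessCaseInterpretableArchimedeanSpineForPairs}. A maximal valued abelian group has no proper immediate extension whatsoever, so $(G,\val)$ is in particular maximal in $(G',\val)$; and by Theorem~\ref{TheoremMaximalityEquivalentPseudo-completeness} together with Proposition~\ref{PropPseudoCompleteModm}, maximality of $(G,\val)$ forces every $(G/nG,\val^n)$ to be pseudo-complete, hence pseudo-complete in $(G'/nG',\val^n)$. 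Since the regular spine and the ribs are uniformly interpretable and pure in $G$ (Corollary~\ref{Cor:Chain-Rib-StEmb}), the inclusion $G\preccurlyeq G'$ induces elementary extensions $\Gamma\preccurlyeq\Gamma'$ of coloured chains and $R_\gamma\preccurlyeq R'_\gamma$ of ordered abelian groups for each $\gamma\in\Gamma$; the hypothesised absolute stable embeddedness of $\Gamma$ and of the $R_\gamma$ then gives $\Gamma\preccurlyeq^{st}\Gamma'$ and $R_\gamma\preccurlyeq^{st}R'_\gamma$. Theorem~\ref{TheoremCharacterisationStableEmbeddednessCaseInterpretableArchimedeanSpineForPairs} applies and yields $G\preccurlyeq^{st}G'$; as $G'$ was arbitrary, $G$ is stably embedded.

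For ($\Rightarrow$), assume $G$ is stably embedded. Then $(G,\val)$ is maximal and each $(G/nG,\val^n)$ is pseudo-complete by Proposition~\ref{PropositionStablyEmbeddedImpliesMaximal}, so only stable embeddedness of the ribs and of the spine remains. For a fixed $\gamma\in\Gamma$, I would show $R_\gamma$ is stably embedded as follows: given $R_\gamma\preccurlyeq S$ and $\bar s\in S$, the type $p=\tp^S(\bar s/R_\gamma)$ is finitely satisfiable in $R_\gamma$, and since $R_\gamma=C_\gamma/V_\gamma$ is interpretable in $G$ over the parameter $\gamma$, a compactness argument over $\mathrm{ElDiag}(G)$ produces $G\preccurlyeq G'$ containing a tuple in $C_\gamma(G')$ whose class modulo $V_\gamma$ realises $p$ in the rib $R'_\gamma$ of $G'$ (using Corollary~\ref{Cor:Chain-Rib-StEmb} to see $R_\gamma\preccurlyeq R'_\gamma$ and that the rib carries the pure ordered-abelian-group structure). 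Since $G$ is stably embedded, $G\preccurlyeq^{st}G'$, so $R_\gamma\preccurlyeq^{st}R'_\gamma$ by Theorem~\ref{TheoremCharacterisationStableEmbeddednessCaseInterpretableArchimedeanSpineForPairs}, whence $p$ is definable; as $p$ was an arbitrary type over $R_\gamma$ realised in some elementary extension, $R_\gamma$ is stably embedded. Stable embeddedness of the coloured chain $\Gamma$ is obtained the same way, realising a prescribed type over $\Gamma$ in the $\Gamma$-sort of a suitable $G'\succcurlyeq G$ and invoking $\Gamma\preccurlyeq^{st}\Gamma'$.

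The step I expect to need the most care is this lifting in ($\Rightarrow$): one must check that an arbitrary realised type of the rib, resp.\ of the coloured chain, is induced from an elementary extension of $G$, and --- the genuine point --- that the defining scheme obtained can be taken with parameters inside $R_\gamma$, resp.\ inside $\Gamma$, and in the respective home language, rather than merely in $G$ and in $\mathcal{L}$. Here stable embeddedness \emph{and} purity of $R_\gamma$ and of $\Gamma$ in $G$ (Corollary~\ref{Cor:Chain-Rib-StEmb}) are exactly what is needed: a $G$-definable subset of the rib-sort (resp.\ the $\Gamma$-sort) is definable with parameters from $R_\gamma$ (resp.\ $\Gamma$) by stable embeddedness, hence already definable there in the pure ordered-abelian-group (resp.\ coloured-chain) structure by purity. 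One could alternatively isolate this as a general lemma --- an interpretable set that is stably embedded with pure induced structure inside a stably embedded structure is itself stably embedded --- and apply it twice.
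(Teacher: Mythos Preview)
Your proposal is correct and follows essentially the same approach as the paper: both directions are deduced from the pairs theorem (Theorem~\ref{TheoremCharacterisationStableEmbeddednessCaseInterpretableArchimedeanSpineForPairs}), with maximality coming from Proposition~\ref{PropositionStablyEmbeddedImpliesMaximal} and the lifting of elementary extensions of $R_\gamma$ (resp.\ $\Gamma$) to elementary extensions of $G$ handled via interpretability and purity (Corollary~\ref{Cor:Chain-Rib-StEmb}). Your discussion of the ``care'' needed in the lifting step is in fact more explicit than the paper's own argument, which simply asserts the existence of a suitable $G'\succcurlyeq G$ with $R'_\gamma\succcurlyeq\hat{R}$ and then passes from $R_\gamma\preccurlyeq^{st}R'_\gamma$ to $R_\gamma\preccurlyeq^{st}\hat{R}$.
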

\begin{proof}
(Right-to-left) By Proposition~\ref{PropPseudoCompleteModm} and Theorem~\ref{TheoremMaximalityEquivalentPseudo-completeness}, $(G/nG,\val^n)$ is pseudo-complete for every $n$, as $G$ is maximal. Thus this  implication follows from Theorem~\ref{TheoremCharacterisationStableEmbeddednessCaseInterpretableArchimedeanSpineForPairs}.

(Left-to-right) Assume that $G$ is stably embedded. We must show that for all $\gamma$ in $\Gamma$, $R_\gamma$ is stably embedded. Consider a proper elementary extension $\hat{R}$ of $R_\gamma$ in $\mathcal{L}_{\text{oag}}$.  There exists an elementary extension $\mathcal{G}'=(G',\Gamma',\val)$ of $\mathcal{G}=(G,\Gamma,\val)$ such that $R'_\gamma \succeq \hat{R} \succ R_\gamma$. It follows from $G \preccurlyeq^{st} G'$ and Theorem~\ref{TheoremCharacterisationStableEmbeddednessCaseInterpretableArchimedeanSpineForPairs} that $R_\gamma\preccurlyeq^{st}R'_\gamma$, and so in particular $R_\gamma\preccurlyeq^{st}\hat{R}$. A similar argument shows that $\Gamma$ is stably embedded as a coloured chain. Finally, maximality of $G$ follows from Proposition \ref{PropositionStablyEmbeddedImpliesMaximal}, as every pseudo-Cauchy sequence in $G$ without pseudo-limit in $G$ admits a pseudo-limit in some elementary extension $G'\succcurlyeq G$.
\end{proof}

In Subsection~\ref{S:finite-rk} (resp. Subsection~\ref{ColouredChains}) we have characterised stably embedded regular ordered abelian groups (resp. stably embedded coloured chains). Therefore, combining   Proposition~\ref{PropArchimedeangroups} and Corollary~\ref{FactStablyEmbeddedChainIFFAllCutDefinable} with Theorem~\ref{TheoremCharacterisationStableEmbeddednessCaseInterpretableArchimedeanSpineAbsolut}, we get the following corollary:

\begin{corollary}\label{CorMainTheorem1}
Let $G$ be an ordered abelian group satisfying (UR) such that $(G,\val)$ satisfies (M). Then, $G$ is stably embedded if and only if  the following properties hold:
\begin{enumerate}
\item $(G,\val)$ is maximal.
\item For every $\gamma \in \Gamma_G$, either $R_\gamma \cong \Z$, or $R_\gamma$ is densely ordered and $\div(R_\gamma) \cong \mathbb{R}$. 
\item All cuts of $(\Gamma,(C_{\phi})_{\phi\in \mathcal{L}_{\text{oag}}},<)$ are definable.
\end{enumerate}
Moreover, in this case $G$ satisfies (U).
\end{corollary}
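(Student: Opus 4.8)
The plan is to read the statement off directly from Theorem~\ref{TheoremCharacterisationStableEmbeddednessCaseInterpretableArchimedeanSpineAbsolut}, which asserts that $G$ is stably embedded if and only if $(G,\val)$ is maximal, every regular rib $R_\gamma$ is stably embedded as an ordered abelian group, and the regular spine $(\Gamma,(C_\phi)_{\phi\in\mathcal{L}_{\text{oag}}},<)$ is stably embedded as a coloured chain. The first of these is exactly item~(1). For the ribs, recall from Corollary~\ref{Cor:Chain-Rib-StEmb}(2) that each $R_\gamma$ is a \emph{regular} ordered abelian group, so Proposition~\ref{PropArchimedeangroups} applies and tells us that $R_\gamma$ is stably embedded precisely when $R_\gamma\cong\Z$, or $R_\gamma$ is densely ordered with $\div(R_\gamma)\cong\mathbb{R}$; quantifying over $\gamma\in\Gamma_G$, this is item~(2). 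For the spine, Corollary~\ref{FactStablyEmbeddedChainIFFAllCutDefinable} states that a coloured chain is stably embedded if and only if all its cuts are definable, which for $(\Gamma,(C_\phi)_{\phi\in\mathcal{L}_{\text{oag}}},<)$ is item~(3). Assembling the three equivalences, $G$ is stably embedded if and only if (1), (2) and (3) hold.

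It remains to prove the final assertion: assuming (1)--(3), the group $G$ satisfies (U), i.e.\ the archimedean valuation $\val^a$ is interpretable in $\mathcal{L}_{\text{oag}}$. Since $G$ satisfies (UR), the regular valuation $\val$ is already interpretable, so it suffices to show $\val^a=\val$, equivalently that the archimedean skeleton of $G$ coincides with its regular skeleton. By~(2), every regular rib $R_\gamma=C_\gamma/V_\gamma$ embeds into its divisible hull, which is $\Q$ or $\mathbb{R}$, hence $R_\gamma$ is archimedean, so its own archimedean skeleton consists of a single bone. Consequently the canonical order-preserving surjection $\Gamma^a_G\twoheadrightarrow\Gamma$, $\val^a(g)\mapsto\val(g)$, is injective, hence an order isomorphism. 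From this one deduces $C^a_g=C_g$ and $V^a_g=V_g$ for every $g\in G$: if $h\in C_g$ then $h$ lies in every $\mathcal{L}_{\text{oag}}$-definable convex subgroup containing $g$, so $C_h\subseteq C_g$, i.e.\ $\val(h)\ge\val(g)$, whence $\val^a(h)\ge\val^a(g)$ by the order isomorphism, i.e.\ $h\in C^a_g$; the reverse inclusion $C^a_g\subseteq C_g$ is clear, and the argument for $V$ is the same. Therefore $\val^a=\val$ is interpretable in $\mathcal{L}_{\text{oag}}$, so (U) holds.

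The only substantive point is the second paragraph, where (U) must be extracted from the concrete description of the ribs in~(2): the idea is that once every regular rib is archimedean there is no convex subgroup strictly sandwiched between two ``consecutive'' definable ones, forcing the archimedean and regular valuations to agree. Everything else is a direct substitution of the previously established characterisations of stably embedded regular ordered abelian groups (Proposition~\ref{PropArchimedeangroups}) and of stably embedded coloured chains (Corollary~\ref{FactStablyEmbeddedChainIFFAllCutDefinable}) into Theorem~\ref{TheoremCharacterisationStableEmbeddednessCaseInterpretableArchimedeanSpineAbsolut}.
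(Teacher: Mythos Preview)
Your proof is correct and follows essentially the same approach as the paper: combine Theorem~\ref{TheoremCharacterisationStableEmbeddednessCaseInterpretableArchimedeanSpineAbsolut} with Proposition~\ref{PropArchimedeangroups} and Corollary~\ref{FactStablyEmbeddedChainIFFAllCutDefinable} for the equivalence, and for (U) observe that the ribs being archimedean forces $\val^a=\val$. The paper is terser on the last point (it simply says ``every stably embedded regular ordered abelian group is archimedean, so the regular valuation coincides with the archimedean one''), whereas you spell out the argument via the bijection $\Gamma^a_G\to\Gamma$; both are the same idea.
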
 
The last statement follows from the fact that every stably embedded regular ordered abelian group is archimedean. Thus, the regular valuation needs to coincide with the archimedean one if $G$ is stably embedded. Since a regular ordered abelian group is stably embedded if and only if all cuts are definable, we can be even be more concise and state the following corollary:

\begin{corollary}\label{CorMainTheorem2}
Let $G$ be an ordered abelian group satisfying (UR) such that $(G,\val)$ satisfies (M). Then, $G$ is stably embedded if and only if all cuts of $G$ are definable.
 \end{corollary}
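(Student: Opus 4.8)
The plan is to get the forward implication for free from Fact~\ref{FactDefinabilityConvexSubgroups} (which already states ``stably embedded $\Rightarrow$ all cuts definable''), and to obtain the converse by checking the three conditions of Corollary~\ref{CorMainTheorem1} and then quoting that corollary. So assume all cuts of $G$ are definable. The first observation is that then every convex subgroup $C$ of $G$ is definable, since $C=\Inv(C^+)$ and $C^+$ is a definable cut; combined with (UR), this forces the regular valuation to coincide with the natural (archimedean) valuation, so from now on $\val$ will denote both and $\Gamma$ is the archimedean spine.

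Conditions (2) and (3) of Corollary~\ref{CorMainTheorem1} I would obtain by pushing definability of cuts through the pure stably embedded sorts provided by Corollary~\ref{Cor:Chain-Rib-StEmb}. A cut of the coloured chain $\Gamma$ corresponds, in the interpreted structure over $G$, to a convex subgroup of $G$, hence is definable over $G$; by stable embeddedness and purity of $\Gamma$ in $G$ it is then definable in $(\Gamma,(C_\phi)_\phi,<)$, so all cuts of $\Gamma$ are definable (this is condition (3); equivalently, $\Gamma$ is stably embedded as a coloured chain by Corollary~\ref{FactStablyEmbeddedChainIFFAllCutDefinable}). Likewise, a cut of a rib $R_\gamma=C_\gamma/V_\gamma$ lifts to a cut of $C_\gamma$ and then to a cut of $G$; intersecting with the definable subgroup $C_\gamma$ and projecting modulo the definable subgroup $V_\gamma$ shows this cut of $R_\gamma$ is definable over $G$, hence (by purity of $R_\gamma$ in $G$) definable in $R_\gamma$. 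Since $R_\gamma$ is regular, Proposition~\ref{PropArchimedeangroups} then forces $R_\gamma\cong\Z$ or $R_\gamma$ dense with $\div(R_\gamma)\cong\mathbb{R}$, which is condition (2).

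The main point, and the step I expect to be hardest, is maximality of $(G,\val)$, condition (1). By Theorem~\ref{TheoremMaximalityEquivalentPseudo-completeness} the $\Z$-invariant valued group $(G,\val)$ is maximal if and only if it is pseudo-complete, so I would argue by contradiction: fix a pseudo-Cauchy sequence $(a_i)_{i<\lambda}$ in $G$ with no pseudo-limit in $G$, and set $\gamma_i:=\val(a_i-a_{i+1})$, which we may take strictly increasing. The cosets $B_i:=a_i+C_{\gamma_i}$ are convex subsets of $G$, nested and strictly decreasing, with $\bigcap_i B_i=\emptyset$ (any point of the intersection would be a pseudo-limit). Hence $c:=\big(\{x\in G\mid x<B_i\text{ for some }i\},\{x\in G\mid x>B_i\text{ for some }i\}\big)$ is a cut of $G$, with invariance group $\Inv(c)=\bigcap_i C_{\gamma_i}$, and by hypothesis $c$ is definable. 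Now one appeals to relative quantifier elimination (Theorem~\ref{TheoremQuantifierEliminationOneValuation}) to inspect the quantifier-free $\mathcal{L}$-formulas that can define a down-set: the ``valuation type'' contributions define (boundaries of) translates of definable convex subgroups, which cannot equal the genuine limit cut $c$ coming from the strictly shrinking balls $B_i$, so the boundary of $c$ must be of ``linear'' type, forcing $c=(g/n)^{\pm}$ for some $g\in G$ and $n\in\mathbb{N}_{>0}$ (via the classification of definable cuts in regular groups, Proposition~\ref{PropArchimedeangroups}); carrying this quantifier-free case analysis out cleanly is the delicate part. Then $g/n\in\div(G)$ is a pseudo-limit of $(a_i)$, so multiplying by $n$ and using $\Z$-invariance, $g\in G$ is a pseudo-limit of the pseudo-Cauchy sequence $(na_i)$. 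Finally, since $G$ satisfies (M), the set $\Delta_n(g)$ has a maximum, attained at some $g^{\ast}\in G$; this maximum exceeds every $\gamma_i$, whence $\val(a_i-g^{\ast})=\gamma_i$ eventually, i.e.\ $g^{\ast}$ is a pseudo-limit of $(a_i)$ in $G$ — the desired contradiction. With conditions (1), (2) and (3) verified, Corollary~\ref{CorMainTheorem1} applies and yields that $G$ is stably embedded.
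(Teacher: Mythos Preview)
Your forward direction and the verification of conditions (2) and (3) match the paper's proof essentially verbatim: push definability of cuts through the pure stably embedded sorts $\Gamma$ and $R_\gamma$ (Corollary~\ref{Cor:Chain-Rib-StEmb}), then invoke Corollary~\ref{FactStablyEmbeddedChainIFFAllCutDefinable} and Proposition~\ref{PropArchimedeangroups}. The extra observation that all convex subgroups become definable, forcing the regular valuation to agree with the archimedean one, is correct and harmless (the paper does not make it explicit here).

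The maximality step, however, is handled quite differently and your version has a real gap. You propose to analyse the definable cut $c$ produced by a pseudo-Cauchy sequence via the quantifier elimination of Theorem~\ref{TheoremQuantifierEliminationOneValuation}, ruling out the ``valuation type'' atoms and concluding $c=(g/n)^{\pm}$. You yourself flag this as ``the delicate part'', and indeed it is not carried out: definable initial segments in $\mathcal{L}$ can arise from rather intricate Boolean combinations involving the predicates $\equiv_{\bullet m}k_\bullet$, $=_\bullet k_\bullet$ and formulas $\psi(\val^{m_0}(\cdot),\ldots)$ in the coloured chain, not just from linear inequalities and single $\val$-conditions. Showing that none of these can produce the limit cut $c$ (whose invariance group $\bigcap_i C_{\gamma_i}$ may well be a nontrivial definable convex subgroup) is not straightforward, and you give no argument for it. Even granting the conclusion $c=(g/n)^{\pm}$, the passage from ``$g/n$ is a pseudo-limit in $\div(G)$'' to ``$g/n\in\bigcap_i B_i^{\div(G)}$'' would also need justification.

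The paper avoids this case analysis entirely. It takes a proper immediate maximal extension $G'$ of $G$, observes via the Ax--Kochen--Ershov-type criterion (Corollary~\ref{CorollaryAKEForOrderedAbelianGroups}) that $G\preccurlyeq G'$, and then argues as follows: any $a\in G'\setminus G$ determines a cut $(L_a,R_a)$ of $G$ which is definable by hypothesis; since $G'/G$ is immediate, $\{\val(x-y)\mid x\in L_a,\,y\in R_a\}$ has no maximum, so cofinal sequences in $L_a$ are pseudo-Cauchy without pseudo-limit in $G$; but this is a first-order property of the defining formula of $L_a$, hence transfers to $G'$, contradicting pseudo-completeness of the maximal group $G'$ (Theorem~\ref{TheoremMaximalityEquivalentPseudo-completeness}). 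This transfer-to-an-elementary-extension trick is both shorter and sidesteps any syntactic analysis of definable cuts.
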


\begin{proof}
    We already saw that if $G$ is stably embedded, in particular all cuts are definable. Conversely, assume that all cuts of $G$ are definable. We need to check that all the conditions of   Corollary~\ref{CorMainTheorem1} hold in $G$.
    
    First, all cuts of $\Gamma$ are definable in $G$, since their preimages under $\val$ give rise to cuts of $G$, which are definable by assumption. Thus, all cuts of $\Gamma$ are definable in $(\Gamma,<,(C_\phi)_{\phi\in \mathcal{L}_{\text{oag}}})$ by pure stable embeddedness. 
    Similarly, it follows that all cuts of $R_\gamma$ are definable in $R_\gamma$, and since $R_\gamma$ is regular, it must be archimedean and stably embedded by Proposition~\ref{PropArchimedeangroups}, so of the required form.  Maximality of $G$ follows from Lemma~\ref{LemmaPC-CutNondefinable}.
\end{proof}

    In particular, since cuts are determined by $1$-types, we deduce immediately a Marker-Steinhorn-like statement for all ordered abelian groups satisfying (M) and (UR):

\begin{corollary}\label{Cor:StEmbAllCutsDefinable}
Let $G$ be an ordered abelian group  satisfying (UR) such that $(G,\val)$ satisfies (M). Then, all types over $G$ are definable if and only if all $1$-types over $G$ are definable.
\end{corollary}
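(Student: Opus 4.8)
The plan is to deduce this from Corollary~\ref{CorMainTheorem2} together with the duality between stable embeddedness and definability of types recorded in the Fact of Subsection~\ref{S:Prelim}.1. The forward direction is immediate: if every type over $G$ is definable, then in particular every $1$-type over $G$ is.

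For the converse, suppose every $1$-type over $G$ is definable. First I would observe that this forces every cut of $G$ to be definable. Given a cut $(L,R)$ of $G$, by compactness there is an elementary extension $G \preccurlyeq G'$ and an element $a \in G'$ realising it, i.e. with $L = \{g \in G \mid g < a\}$. Applying definability of $p(x) := \tp(a/G)$ to the instance $\phi(x,y) := (y < x)$ of a defining scheme yields an $\mathcal{L}(G)$-formula $d_p\phi(y)$ such that $g < a \iff G \models d_p\phi(g)$ for all $g \in G$; hence $L = d_p\phi(G)$, so the cut is definable. As $(L,R)$ was arbitrary, all cuts of $G$ are definable.

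Now I would invoke Corollary~\ref{CorMainTheorem2}: since $G$ satisfies (M) and (UR) and all its cuts are definable, $G$ is stably embedded. Finally, by the definition of stable embeddedness as stable embeddedness in every elementary extension, together with the Fact of Subsection~\ref{S:Prelim}.1 (stable embeddedness of $\mathcal{M}$ in $\mathcal{N}$ being equivalent to definability of all types over $M$ realised in $N$), it follows that every type over $G$, in any number of variables, is definable, which is exactly the claim.

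The only step requiring a little care is the passage from ``all $1$-types definable'' to ``all cuts definable'': one must pick out the correct instance $y<x$ of the defining scheme and check that its defining formula literally cuts out the lower half $L$ (realised $1$-types and congruence data being definable for trivial reasons). Beyond threading these equivalences correctly there is no genuine obstacle, which is why the statement is labelled a corollary.
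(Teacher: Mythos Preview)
Your argument is correct and follows the same route as the paper, which simply records the corollary as an immediate consequence of Corollary~\ref{CorMainTheorem2} with the remark that ``cuts are characterised by $1$-types''. Your proof just unpacks this one line: definability of all $1$-types gives definability of all cuts, whence stable embeddedness by Corollary~\ref{CorMainTheorem2}, whence definability of all types.
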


\begin{remark}
    In an earlier version of the paper we claimed a relative version of Corollary~\ref{CorMainTheorem2}. Our proof contained a flaw. We decided to leave the statement as a question.
\end{remark}

\begin{question}
  Let $G$ be an ordered abelian group satisfying (UR) such that $(G,\val)$ satisfies (M). Let $G\preccurlyeq G'$ be an elementary extension. Is $G$ stably embedded in $G'$ if all cuts in $G$ realised in $G'$ are definable?   
\end{question}

\section{Examples and counter-examples}\label{S:Examples and Counter-examples}
In this final section we will discuss some examples and give applications of the main theorem. 

\subsection{Applications of the main theorem}
We deduce from our main theorem (Theorem~\ref{TheoremCharacterisationStableEmbeddednessCaseInterpretableArchimedeanSpineAbsolut}) the following examples:

\begin{example}\label{ExampleHprodZStablyEmbedded}
The ordered abelian group $G_1 := \hprod_{i < \omega} \Z$ is stably embedded, and up to isomorphism it is the unique model of its own theory which is stably embedded.
\end{example}

 It is indeed stably embedded since it satisfies all the requirement of Theorem~\ref{TheoremCharacterisationStableEmbeddednessCaseInterpretableArchimedeanSpineAbsolut}: it satifies (U) and so in particular (UR), it is maximal as it is a Hahn product (Fact~\ref{PropositionMaximalityofHahnproducts}), its regular ribs are copies of $\mathbb{Z}$ and are hence stably embedded (Corollary~\ref{Cor:StEmb}), and finally, its regular spine $(\omega,<)$ is stably embedded  (Examples~\ref{ExampleStablyEmbeddedChains}). One can easily see that the lexicographic sum $G_0:=\sum_{i < \omega} \Z$ is the prime model of $T=\Th(G_1)$. 

 Now assume that $H_1\models T$ is stably embedded, so we may assume that $G_0\preccurlyeq H_1$. Since no proper elementary extension of the archimedean spine $(\omega,<)$ is stably embedded and no proper elementary extension of a rib $(\mathbb{Z},+,0,<)$ is stably embedded, necessarily $H_1$ is an immediate extension of $G_0$. But $G_1$ is the only maximal immediate extension of $G_0$ up to isomorphism, so $H_1\cong G_1$.
 

\begin{example}
For $i\in\mathbb{R}$ let $R_i$ be the ordered abelian group
\[
\begin{cases}\mathbb{Z} & \text{ if $i$ is rational} \\
\mathbb{R} &
\text{ if $i$ is irrational}.
\end{cases}
\]
Then the Hahn product $G_2 = \hprod_{i \in \mathbb{R}} R_i$ is a stably embedded ordered abelian group. 
\end{example}

Again, one sees that $G_2$ satisfies (U)  and is maximal by Fact~\ref{PropositionMaximalityofHahnproducts}. Its ribs are copies of $\mathbb{R}$ and $\mathbb{Z}$, and are all stably embedded. Its regular spine is given by the coloured chain $(\Gamma, \Gamma^2)=(\mathcal{T}_2,\mathcal{S}_2)=(\mathbb{R} \cup \{\infty\},\mathbb{Q}\cup\{\infty\})$ and it is stably embedded by  Corollary~\ref{FactStablyEmbeddedChainIFFAllCutDefinable}. 
We conclude by Theorem~\ref{TheoremCharacterisationStableEmbeddednessCaseInterpretableArchimedeanSpineAbsolut} that $G_2$ is stably embedded.

\subsection{About the induced $m$-valuation}
We saw in Proposition~\ref{PropPseudoCompleteModm} that if a $\mathbb{Z}$-invariant valued group $(G,\val)$ is pseudo-complete, then so is the induced valued group modulo $m$. One may ask if a similar statement holds for pairs:  Suppose that $G \subseteq H$ is a pure embedding of ordered abelian group (e.g., $G \subseteq H$ is elementary), and that $G$ is pseudo-complete in $H$; is then $G/nG$ pseudo-complete in $H/nH$ for all $n\in \mathbb{N}$ ? We note that this does not always hold for a non-elementary extension $H$:
\begin{example}\label{ExampleGMaximalinHGmod2NonMaximalHmod2}
Let $a$ be a non-standard element in an extension $\mathcal{Z}$ of $\mathbb{Z}$ such that $a\equiv_2 1$. Consider $G:= \sum_\omega \mathbb{Z}$ and the pure extension $H$ of $G$ generated by $G$ and $(a,a, \dots)$ (with support equal to $\omega$) inside $\hprod_{i < \omega} \mathcal{Z}$. Let $G$ and $H$ be both endowed with the archimedean valuation. Then the extension $(H,\val)/(G,\val)$ is pseudo-complete, since for any $h \in H \setminus G$ and $g\in G$ one has  $\val(h-g)=0$, so in particular $h$ cannot be a pseudo-limit of a pseudo-convergent sequence in $G$. However $(G/2G,\val^2)$ is not pseudo-complete in $(H/2H,\val^2)$ since the sequence 
\[((\underbrace{1,\dots,1}_n,0,0,\dots))_n\]
in $G/2G$ pseudo-converges to $a+2H$ in $H/2H$. 
\end{example}

We do not know however any such example where $H$ is an elementary extension of $G$.


\subsection{Further examples, counter-examples and a question}\label{S:ExamplesandCounterExamples}
The following example suggests that further investigation is required to characterise all stably embedded ordered abelian groups. In particular, we see the hypothesis (UR) is required in Theorem~\ref{TheoremCharacterisationStableEmbeddednessCaseInterpretableArchimedeanSpineForPairs}.

\begin{example}
     Consider for every prime integer $p$ an archimedean group $\mathcal{Z}_{(p)}$ which is stably embedded, not divisible by $p$ and divisible by every prime different from $p$ (such a group exists by Corollary~\ref{Cor:StEmb}(3)). Set $G_3$ to be the Hahn product 
    \[\hprod_\omega \mathcal{Z}_{(p_n)}\oplus \hprod_{\omega^\star}\mathcal{Z}_{(p_n)},\]
    where $(p_n)_{n<\omega}$ is a strictly increasing sequence of prime numbers.
    Then, one may see that $G_3$ is maximal, with finite spines, that all its archimedean ribs (which are equal to the regular ribs) are stably embedded and the set of spines $\mathcal{A}:= \{ \{p_n,p_n^*\}, n\in \omega \}$ is stably embedded (since it admits no proper elementary extension). However, $G_3$ is not stably embedded by Fact~\ref{FactDefinabilityConvexSubgroups}, since $\hprod_\omega \{0\}\oplus \hprod_{\omega^\star}\mathcal{Z}_{(p_n)}$ is a convex subgroup of $G_3$ which is not definable (e.g., by Fact~\ref{FactDelonFarre}).
    
\end{example}
A reason seems to reside in the fact that the union of the spines $\bigcup\mathcal{A} := (\omega+\omega^\star, \{n,n^\star\})$ is not stably embedded (as a single coloured chain).
The next example is a non-maximal group which admits no immediate elementary extension. This leads us to think that it is stably embedded. 
\begin{example}\label{ExampleGMaximalinHGmod2NonMaximalHmod2Bis}
    Consider the ordered subgroup $G_4$ of $\hprod_\omega\mathbb{Z}$ generated by the direct sum $\sum_\omega \mathbb{Z}$ and the element $a:=(2,2,\cdots)$. It has no  proper immediate elementary extension: consider indeed an elementary extension $H_4 \subseteq \hprod_\omega\mathbb{Z}$. 
    If $b:=(b_i)_i \in H_4\setminus G_4$, consider the element $a+2b $. As $a$ is not divisible by $2$ in $G_4$, neither is $a+2b$ in $H_4$. After computation, we can identify $\Gamma_2$ with $\omega+1\cup\{\infty\}$ and we can write $\val^2(a)=\omega$.
    Since  ${G_4}^{[2]}_{\omega}/ 2G_4 = \{a\mod 2G_4,0\}$ is finite and $H$ is an elementary extension, we have ${G_4}^{[2]}_{\omega}/ 2G_4={H_4}^{[2]}_{\omega}/ 2H_4$. In particular, we have eventually that $2+2b_i=2$, or in other words, that $b_i$ is eventually equal to $0$. It follows that $b\in G_4$ which is a contradiction. Thus there is no such $b$ and $H_4=G_4$.
\end{example}

We conclude with a question, aiming to generalise Theorem~\ref{TheoremCharacterisationStableEmbeddednessCaseInterpretableArchimedeanSpineAbsolut} and to encompass the previous two examples:

\begin{question}\label{QuestionCharacterisationStablyEmbeddedOrderedAbelianGroups}
     Is it true that an ordered abelian group $(G,+,0,<)$ is stably embedded if and only if it is maximal (with respect to the regular valuation) in all its elementary extensions, its regular ribs ${(R_\gamma,+,0,<)}$ are stably embedded as  regular ordered abelian groups and its regular spine $(\bigcup_n\Gamma^n,(C_\phi)_{\phi\in \mathcal{L}_{\text{oag}}},<)$ is stably embedded as a (single) coloured chain?
\end{question}

\bibliographystyle{abbrv}
\bibliography{bibtex}

@article{ACGZ20,
    AUTHOR = {Aschenbrenner, Matthias and Chernikov, Artem and Gehret, Allen
              and Ziegler, Martin},
     TITLE = {Distality in valued fields and related structures},
   JOURNAL = {Trans. Amer. Math. Soc.},
  FJOURNAL = {Transactions of the American Mathematical Society},
    VOLUME = {375},
      YEAR = {2022},
    NUMBER = {7},
     PAGES = {4641--4710},
      ISSN = {0002-9947},
   MRCLASS = {03C45 (03C60 12J25 12L12)},
  MRNUMBER = {4439488},
       DOI = {10.1090/tran/8661},
       URL = {https://doi.org/10.1090/tran/8661},
}

@article{CHY26,
      title={Beautiful pairs}, 
      author={Cubides Kovacsics, Pablo and Hils, Martin and Ye, Jinhe},
      year={2026},
       FJOURNAL = {Journal für die reine und angewandte Mathematik},
  JOURNAL = {J. Reine Angew. Math.}, 
  note={Ahead of Print, DOI 10.1515/crelle-2026-0014}
}

@article {CD16,
    AUTHOR = {Cubides Kovacsics, Pablo and Delon, Fran\c{c}oise},
     TITLE = {Definable types in algebraically closed valued fields},
   JOURNAL = {MLQ Math. Log. Q.},
  FJOURNAL = {MLQ. Mathematical Logic Quarterly},
    VOLUME = {62},
      YEAR = {2016},
    NUMBER = {1-2},
     PAGES = {35--45},
      ISSN = {0942-5616},
   MRCLASS = {03C60 (12J10)},
  MRNUMBER = {3472177},
MRREVIEWER = {Jafar S. Eivazloo},
       DOI = {10.1002/malq.201400039},
       URL = {https://doi.org/10.1002/malq.201400039},
}

@article {CH11,
    AUTHOR = {Cluckers, Raf and Halupczok, Immanuel},
     TITLE = {Quantifier elimination in ordered abelian groups},
   JOURNAL = {Confluentes Math.},
  FJOURNAL = {Confluentes Mathematici},
    VOLUME = {3},
      YEAR = {2011},
    NUMBER = {4},
     PAGES = {587--615},
      ISSN = {1793-7442},
   MRCLASS = {03C60 (03C64 06F20)},
  MRNUMBER = {2899905},
       DOI = {10.1142/S1793744211000473},
       URL = {https://doi.org/10.1142/S1793744211000473},
}

@unpublished{CV,
author={Conant, Gabriel and Vojdani, Somayeh },
title={Definable types and f-generics in {P}resburger arithmetic},
note={unpublished note available at \texttt{https://www3.nd.edu/}$\sim$\texttt{gconant/Math/presburger$\_$note.pdf}}
}

@article{CY21,
    title={Pro-definability of spaces of definable types},
    author = {Cubides Kovacsics, Pablo and Ye, Jinhe},
    year={2021},
    journal = {Proc. Amer. Math. Soc. Ser. B},
    volume = {8},
    pages = {173-188},
}

@article {DF96,
    AUTHOR = {Delon, Fran\c{c}oise and Farr\'{e}, Rafel},
     TITLE = {Some model theory for almost real closed fields},
   JOURNAL = {J. Symbolic Logic},
  FJOURNAL = {The Journal of Symbolic Logic},
    VOLUME = {61},
      YEAR = {1996},
    NUMBER = {4},
     PAGES = {1121--1152},
      ISSN = {0022-4812},
   MRCLASS = {03C60 (06F20 12D15 12L12)},
  MRNUMBER = {1456099},
MRREVIEWER = {Danielle Gondard-Cozette},
       DOI = {10.2307/2275808},
       URL = {https://doi.org/10.2307/2275808},
}

@article{Grav55,
    author = {Gravett, Kenneth A. H.},
    title = "{Valued linear spaces}",
 JOURNAL = {Quart. J. Math. Oxford Ser. (2)},
  FJOURNAL = {The Quarterly Journal of Mathematics. Oxford. Second Series},
    volume = {6},
    number = {1},
    pages = {309-315},
    year = {1955},
    issn = {0033-5606},
    doi = {10.1093/qmath/6.1.309},
    url = {https://doi.org/10.1093/qmath/6.1.309},
    eprint = {https://academic.oup.com/qjmath/article-pdf/6/1/309/7288589/6-1-309.pdf},
}

@article {Gur64,
    AUTHOR = {Gurevich, Yuri},
     TITLE = {Elementary properties of ordered {A}belian groups},
   JOURNAL = {Algebra i Logika Sem.},
  FJOURNAL = {Akademiya Nauk SSSR. Sibirskoe Otdelenie. Institut Matematiki.
              Algebra i Logika},
    VOLUME = {3},
      YEAR = {1964},
    NUMBER = {1},
     PAGES = {5--39},
      ISSN = {0373-9252},
   MRCLASS = {02.50 (06.78)},
  MRNUMBER = {0161800},
MRREVIEWER = {R. S. Pierce},
}

@article {GS84,
    AUTHOR = {Gurevich, Yuri and Schmitt, Peter H.},
     TITLE = {The theory of ordered abelian groups does not have the
              independence property},
   JOURNAL = {Trans. Amer. Math. Soc.},
  FJOURNAL = {Transactions of the American Mathematical Society},
    VOLUME = {284},
      YEAR = {1984},
    NUMBER = {1},
     PAGES = {171--182},
      ISSN = {0002-9947},
   MRCLASS = {03C60 (06F20)},
  MRNUMBER = {742419},
MRREVIEWER = {John T. Baldwin},
       DOI = {10.2307/1999281},
       URL = {https://doi.org/10.2307/1999281},
}

@article{HH19A,
Author = {Halevi, Yatir and Hasson, Assaf},
Title = {Strongly Dependent Ordered Abelian Groups and Henselian Fields},
Year = {2019},
  JOURNAL = {Israel J. Math.},
FJournal = {Israel Journal of Mathematics},
Volume = {232},
Pages = {719-758},
}

@article{Hol63,
 ISSN = {00029947},
 URL = {http://www.jstor.org/stable/1993868},
 author = {Charles Holland},
 JOURNAL = {Trans. Amer. Math. Soc.},
   FJOURNAL = {Transactions of the American Mathematical Society},
 number = {1},
 pages = {71--82},
 publisher = {American Mathematical Society},
 title = {Extensions of Ordered Groups and Sequence Completion},
 volume = {107},
 year = {1963}
}

@article{Kap42,
author = {Irving Kaplansky},
title = {{Maximal fields with valuations}},
volume = {9},
 JOURNAL = {Duke Math. J.},
  FJOURNAL = {Duke Mathematical Journal},
number = {2},
publisher = {Duke University Press},
pages = {303 -- 321},
year = {1942},
doi = {10.1215/S0012-7094-42-00922-0},
URL = {https://doi.org/10.1215/S0012-7094-42-00922-0}
}

@misc{KuhFV,
   title =     {Valuation Theory},
   author =    {Franz-Viktor Kuhlmann},
   url =       {https://math.usask.ca/~fvk/Fvkbook.htm},
   note={https://math.usask.ca/~fvk/Fvkbook.htm, in preparation}}

@phdthesis{Lic22,
  author       = {Martina Liccardo}, 
  title        = {Some model theoretic aspects of ordered abelian groups},
  school       = {Universit\`{a} degli Studi di Napoli Federico II},
  year         = 2022,
}

@article {MS94,
    AUTHOR = {Marker, David and Steinhorn, Charles I.},
     TITLE = {Definable types in {O}-minimal theories},
   JOURNAL = {J. Symbolic Logic},
  FJOURNAL = {The Journal of Symbolic Logic},
    VOLUME = {59},
      YEAR = {1994},
    NUMBER = {1},
     PAGES = {185--198},
      ISSN = {0022-4812},
   MRCLASS = {03C45},
  MRNUMBER = {1264974},
MRREVIEWER = {Alexandre Ivanov},
       DOI = {10.2307/2275260},
       URL = {https://doi.org/10.2307/2275260},
}

@book {Poi00,
    AUTHOR = {Poizat, Bruno},
     TITLE = {A course in model theory},
    SERIES = {Universitext},
      NOTE = {An introduction to contemporary mathematical logic,
              Translated from the French by Moses Klein and revised by the
              author},
 PUBLISHER = {Springer-Verlag, New York},
      YEAR = {2000},
     PAGES = {xxxii+443},
      ISBN = {0-387-98655-3},
   MRCLASS = {03Cxx (03-02 03C07 03C45)},
  MRNUMBER = {1757487},
       DOI = {10.1007/978-1-4419-8622-1},
       URL = {https://doi.org/10.1007/978-1-4419-8622-1},
}

@article{Rid17, 
    title={SOME PROPERTIES OF ANALYTIC DIFFERENCE VALUED FIELDS},
    volume={16}, DOI={10.1017/S1474748015000183},
    number={3},
       JOURNAL = {J. Inst. Math. Jussieu},
  FJOURNAL = {Journal of the Institute of Mathematics of Jussieu. JIMJ.
              Journal de l'Institut de Math\'{e}matiques de Jussieu},
    publisher={Cambridge University Press},
    author={Rideau, Silvain},
    year={2017},
    pages={447?499}
}

@article {Rub74,
    AUTHOR = {Rubin, Matatyahu
    },
     TITLE = {Theories of linear order},
   JOURNAL = {Israel J. Math.},
  FJOURNAL = {Israel Journal of Mathematics},
    VOLUME = {17},
      YEAR = {1974},
     PAGES = {392--443},
      ISSN = {0021-2172},
   MRCLASS = {02H05 (02G15)},
  MRNUMBER = {349377},
MRREVIEWER = {John T. Baldwin},
       DOI = {10.1007/BF02757141},
       URL = {https://doi.org/10.1007/BF02757141},
}

@article {SS95,
    AUTHOR = {Saarim\"{a}ki, Mikko and Sorjonen, Pekka},
     TITLE = {Embeddings of valued groups},
   JOURNAL = {Math. Scand.},
  FJOURNAL = {Mathematica Scandinavica},
    VOLUME = {76},
      YEAR = {1995},
    NUMBER = {2},
     PAGES = {205--213},
      ISSN = {0025-5521},
   MRCLASS = {20F38},
  MRNUMBER = {1354578},
MRREVIEWER = {N. Ya. Medvedev},
       DOI = {10.7146/math.scand.a-12536},
       URL = {https://doi.org/10.7146/math.scand.a-12536},
}

@book {SS91,
    AUTHOR = {Saarim\"{a}ki, Mikko and Sorjonen, Pekka},
     TITLE = {Extensions of valued groups},
    SERIES = {Report},
    VOLUME = {51},
 PUBLISHER = {University of Jyv\"{a}skyl\"{a}, Department of Mathematics, Jyv\"{a}skyl\"{a}},
      YEAR = {1991},
     PAGES = {12},
      ISBN = {951-680-546-9},
   MRCLASS = {20E22 (06F15)},
  MRNUMBER = {1125987},
}

@incollection {Sch84,
    AUTHOR = {Schmitt, Peter H.},
     TITLE = {Model- and substructure-complete theories of ordered abelian
              groups},
 BOOKTITLE = {Models and sets ({A}achen, 1983)},
    SERIES = {Lecture Notes in Math.},
    VOLUME = {1103},
     PAGES = {389--418},
 PUBLISHER = {Springer, Berlin},
      YEAR = {1984},
   MRCLASS = {03C60 (06F20)},
  MRNUMBER = {775703},
MRREVIEWER = {A. M. W. Glass},
       DOI = {10.1007/BFb0099396},
       URL = {https://doi.org/10.1007/BFb0099396},
}

@misc{Sch82,
title={Model theory of ordered abelian groups},
author={Schmitt, Peter H.},
year={1982},
note={Habilitationsschrift},
}

@article {Tou20b,
    AUTHOR = {Touchard, Pierre},
     TITLE = {Stably embedded submodels of {H}enselian valued fields},
   JOURNAL = {Arch. Math. Logic},
  FJOURNAL = {Archive for Mathematical Logic},
    VOLUME = {63},
      YEAR = {2024},
    NUMBER = {3-4},
     PAGES = {279--315},
      ISSN = {0933-5846,1432-0665},
   MRCLASS = {03C45 (03C60 12J10)},
  MRNUMBER = {4729836},
       DOI = {10.1007/s00153-023-00894-2},
       URL = {https://doi.org/10.1007/s00153-023-00894-2},
}

@incollection {VdD86,
    AUTHOR = {van den Dries, Lou},
     TITLE = {Tarski's problem and {P}faffian functions},
 BOOKTITLE = {Logic colloquium '84 ({M}anchester, 1984)},
    SERIES = {Stud. Logic Found. Math.},
    VOLUME = {120},
     PAGES = {59--90},
 PUBLISHER = {North-Holland, Amsterdam},
      YEAR = {1986},
   MRCLASS = {03C40},
  MRNUMBER = {861419},
MRREVIEWER = {Bienvenido F. Nebres},
       DOI = {10.1016/S0049-237X(08)70457-9},
       URL = {https://doi.org/10.1016/S0049-237X(08)70457-9},
}

@article {Vic22,
    AUTHOR = {Vicar\'{\i}a, Mariana},
     TITLE = {Elimination of imaginaries in ordered abelian groups with
              bounded regular rank},
   JOURNAL = {J. Symb. Log.},
  FJOURNAL = {The Journal of Symbolic Logic},
    VOLUME = {88},
      YEAR = {2023},
    NUMBER = {4},
     PAGES = {1639--1654},
      ISSN = {0022-4812,1943-5886},
   MRCLASS = {03C64 (03C68)},
  MRNUMBER = {4679247},
       DOI = {10.1017/jsl.2023.6},
       URL = {https://doi.org/10.1017/jsl.2023.6},
}

@article {Wei81,
    AUTHOR = {Weispfenning, Volker},
     TITLE = {Elimination of quantifiers for certain ordered and
              lattice-ordered abelian groups},
   JOURNAL = {Bull. Soc. Math. Belg. S\'{e}r. B},
  FJOURNAL = {Bulletin de la Soci\'{e}t\'{e} Math\'{e}matique de Belgique. S\'{e}rie B},
    VOLUME = {33},
      YEAR = {1981},
    NUMBER = {1},
     PAGES = {131--155},
      ISSN = {0037-9476},
   MRCLASS = {03C10 (03C60 06F20)},
  MRNUMBER = {620968},
MRREVIEWER = {A. M. W. Glass},
}

@article{RoZa60,
 ISSN = {00029947},
 URL = {http://www.jstor.org/stable/1993461},
 author = {Robinson, Abraham and Zakon, Elias},
 JOURNAL = {Trans. Amer. Math. Soc.},
   FJOURNAL = {Transactions of the American Mathematical Society},
 number = {2},
 pages = {222--236},
 publisher = {American Mathematical Society},
 title = {Elementary Properties of Ordered Abelian Groups},
 volume = {96},
 year = {1960}
}

@article{Zak61,
 ISSN = {00029947},
 URL = {http://www.jstor.org/stable/1993441},
 author = {Zakon, Elias},
 JOURNAL = {Trans. Amer. Math. Soc.},
   FJOURNAL = {Transactions of the American Mathematical Society},
 number = {1},
 pages = {21--40},
 publisher = {American Mathematical Society},
 title = {Generalized Archimedean Groups},
 volume = {99},
 year = {1961}
}

@article{Con62,
title = {Regularly {Ordered} {Groups}},
volume = {13},
issn = {0002-9939},
url = {https://www.jstor.org/stable/2034163},
doi = {10.2307/2034163},
number = {5},
urldate = {2021-04-28},
journal = {Proc. Amer. Math. Soc.},
author = {Conrad, Paul F.},
year = {1962},
pages = {726--731}
}

@incollection{Wei86,
series = {Res. Exp. Math.},
title = {Quantifier eliminable ordered abelian groups},
volume = {14},
booktitle = {Algebra and Order (Luminy-Marseille, 1984)},
editor={Wolfenstein, S.},
publisher = {Heldermann Verlag},
author = {Weispfenning, Volker},
year = {1986},
pages = {113--126},
}

\end{document}